\newtheorem{definition}{Definition}[section]
\newtheorem{theorem}[definition]{Theorem}
\newtheorem{proposition}[definition]{Proposition}
\newtheorem{remark}[definition]{Remark}
\newtheorem{corollary}[definition]{Corollary}
\newtheorem{lemma}[definition]{Lemma}
\numberwithin{equation}{section}
\title{Global Well-posedness of a 2D fluid-Structure interaction problem with free surface}
\author{Thomas Alazard}
\address{CNRS - Centre de Mathématiques Laurent Schwartz, {\'E}cole Polytechnique, 
Institut Polytechnique de Paris}
\email{thomas.alazard@polytechnique.edu}
\author{Chengyang Shao}
\address{Department of Mathematics, University of Chicago}
\email{shaoc@uchicago.edu}
\author{Haocheng Yang}
\address{Centre Borelli, {\'E}cole normale supérieure Paris-Saclay}
\email{haocheng.yang@universite-paris-saclay.fr}
\DeclareMathOperator{\RE}{Re}
\DeclareMathOperator{\IM}{Im}
\DeclareMathOperator{\supp}{supp}
\DeclareMathOperator{\diff}{d}
\DeclareMathOperator{\sgn}{sign}
\begin{document}

\pagenumbering{arabic}

\newcommand{\BMO}{\mathrm{BMO}}
\newcommand{\Op}[2][]{\operatorname{Op}^{#1}\left(#2\right)}
\newcommand{\Supp}[1]{\operatorname{Supp} #1}
\newcommand{\Real}{\operatorname{Re}}
\newcommand{\Imag}{\operatorname{Im}}
\newcommand{\Id}{\mathrm{Id}}
\newcommand{\G}{\mathcal{G}}
\newcommand{\B}{\mathcal{B}}
\newcommand{\Hi}{\mathcal{H}}
\newcommand{\V}{\mathcal{V}}
\newcommand{\diver}{\operatorname{div}}
\newcommand{\curl}{\operatorname{curl}}
\newcommand{\tr}[1]{\operatorname{tr}\left(#1\right)}

\newcommand{\R}{\mathbb{R}}
\newcommand{\C}{\mathbb{C}}
\newcommand{\N}{\mathbb{N}}
\newcommand{\T}{\mathbb{T}}
\newcommand{\Z}{\mathbb{Z}}

\renewcommand{\le}{\leqslant}
\renewcommand{\ge}{\geqslant}

\def\dtau{\diff \! \tau}
\def\dt{\diff \! t}
\def\dx{\diff \! x}
\def\dz{\diff \! z}
\def\dzeta{\diff \! \zeta}
\def\dX{\diff \! X}
\def\dxdt{\dx \dt}
\def\dxi{\diff \! \xi}
\def\dy{\diff \! y}
\def\dydt{\diff \! y \diff \! t}
\def\dydx{\diff \! y \diff \! x}
\def\dydxdt{\diff \! y \diff \! x \diff \! t}
\def\fract{\frac{\diff}{\dt}}
\def\fraceps{\frac{\diff}{\diff\!\eps}}	

\def\Dx{\la D_x\ra}

\def\FU{\mathcal{F}(U)}

\numberwithin{equation}{section}
\pagestyle{plain}

\def\defn{\mathrel{:=}}
\def\Deltax{\Delta}
\def\Deltayx{\Delta_{x,y}}
\def\Deltazx{\Delta_{x,z}}
\def\eps{\varepsilon}
\def\Hs{\mathcal{H}_\sigma}
\def\Hsi{\mathcal{H}_{\sigma+\mez}}
\def\la{\left\lvert}
\def\lA{\left\lVert}
\def\le{\leq}
\def\ge{\geq}
\def\L#1{\langle{#1}\rangle}
\def\mez{\frac{1}{2}}
\def\partialx{\nabla}
\def\partialyx{\nabla_{x,y}}
\def\pxz{\nabla_{x,z}}
\def\ra{\right\rvert}
\def\rA{\right\rVert}
\def\s{\sigma}
\def\slam{a}
\def\Slam{A}
\def\tdm{\frac{3}{2}}
\def\tq{\frac{3}{4}}
\def\uq{1/4}
\def\xC{\mathbf{C}}
\def\xN{\mathbf{N}}

\maketitle

\begin{abstract}
This paper is devoted to the analysis of the incompressible Euler equation in a time-dependent fluid domain, whose interface evolution is governed by the law of linear elasticity. Our main result asserts that the Cauchy problem is globally well-posed in time for any irrotational initial data in the energy space, without any smallness assumption. We also prove continuity with respect to the initial data and the propagation of regularity. The main novelty is that no dissipative effect is assumed in the system. In the absence of parabolic regularization, the key observation is that the system can be transformed into a nonlinear Schr\"odinger-type equation, to which dispersive estimates are applied. This allows us to construct solutions that are very rough from the point of view of fluid dynamics—the initial fluid velocity has merely one-half derivative in $L^2$. The main difficulty is that the problem is critical in the energy space with respect to several key inequalities from harmonic analysis. The proof incorporates new estimates for the Dirichlet-to-Neumann operator in the low-regularity regime, including refinements of paralinearization formulas and shape derivative formulas, which played a key role in the analysis of water waves.
\end{abstract}

\section{Introduction}\label{sect:Intro}
\subsection{Description of the problem}
Consider a one-dimensional free surface, that is, a time-dependent curve \(\Gamma\), given as the graph of some function \(\eta\), so that at time \(t \geq 0\),  
\begin{equation*}
\Gamma(t) := \{ (x,\eta(t,x)): x\in\mathbb{R} \}.
\end{equation*}  
We are interested in specific free boundary problems arising from fluid-structure interactions, whose charm lies in the fact that \(\eta\) satisfies {\em two} evolution equations, both of which are classical equations from mathematical physics.

The first one is related to fluid dynamics, postulating that $\Gamma(t)$ is transported by a fluid occupying the region $\Omega(t)\subset \R^2$ located beneath $\Gamma(t)$, that is
\begin{equation*}
\Omega(t) := \{ (x,y) \in \R \times \R : y < \eta(t,x) \}.
\end{equation*}  
More precisely, we assume that $\Omega$ is occupied by a two-dimensional incompressible, irrotational ideal fluid and that the normal velocity of the fluid must coincide with the normal component of the velocity of the boundary material. This is equivalent to the so-called \emph{kinematic boundary condition}:
\begin{equation}\label{eq-intro-fsi:KinBdyCond}
	\partial_t\eta 
	= \sqrt{1+\eta_x^2} \, \bm{v}|_{y=\eta} \cdot \bm{n}_\eta,
\end{equation}
where \(\bm{v}\colon \Omega\to\R^2\) is the velocity field,  $\eta_x=\partial_x\eta$ and 
$$
\bm{n}_\eta = \frac{1}{\sqrt{1+\eta_x^2}}\begin{pmatrix}
	-\eta_x \\
	1
	\end{pmatrix}
$$
is the outward unit normal vector field to $\Gamma(t)$.

The second evolution equation for \(\eta\) expresses a balance of stress across the free surface. 
In our case, we assume that the boundary is covered by an elastic one-dimensional material that 
obeys the law of \emph{linear elasticity}, so that the stress balance equation along the boundary is  
\begin{equation}\label{eq-intro-fsi:Plate}
\rho_{\text{m}}\partial_t^2 \eta + \sigma\partial_x^4\eta=P|_{y=\eta},
\end{equation}  
where $P\colon \Omega\to\R$ represents the pressure distribution within the fluid domain (while the exterior pressure is normalized to be zero); $\rho_{\text{m}}$ is the density of the material and $\sigma$ is the flexural rigidity, both assumed to be constant. Therefore, the stress tensor of the material obeys the similar assumptions as in classical \emph{Euler-Bernoulli beam theory}.	

The main result of this paper is a global well-posedness result for the case where~\(\bm{v}\) is related to the pressure \(P\) through the incompressible Euler equations:
\begin{equation}\label{eq-intro-fsi:Euler}
\left\{\begin{aligned}
    &\partial_t \bm{v} + (\bm{v}\cdot\nabla_{x,y})\bm{v} + \frac{1}{\rho_{\text{f}}}\nabla_{x,y}P 
    = \begin{pmatrix}
	0 \\
	-g_0
	\end{pmatrix} 
	&& \text{in }\Omega(t), \\[0.5ex]
    &\diver_{x,y}\bm{v} = 0,\ \ \curl_{x,y}\bm{v} = 0&& \text{in }\Omega(t).
\end{aligned}\right.
\end{equation}  
Here $\rho_{\text{f}}$ is the density of the fluid (which is a constant) and $g_0$ is the gravitational acceleration. One difficulty is that we will work with solutions for which the velocity field is merely $L^4$ in time with values in $L^\infty_{x,y}$.

The Cauchy problem has already attracted significant attention in cases where the velocity $\bm{v}$ and pressure $P$ satisfy the Navier-Stokes equations, where damping, dissipation, or viscosity plays a crucial role. Among these works, the main differences arise from the choice of the elastic structure model. Three frequently considered models are:  
(1) the linear elastic model,  
(2) the Willmore model, and  
(3) the Koiter shell model.  
As a result, most previous works have considered systems that are either parabolic or parabolic-hyperbolic with additional damping.

The first model, which is also adopted in the present paper (see \eqref{eq-intro-fsi:Plate}), is the simplest one and accurately captures the interface behavior near the reference manifold (e.g., plane, cylinder, etc.). The study of the Cauchy problem traces back to the pioneering work~\cite{MR2166981}, in which the authors examined the existence (but not the uniqueness) of a global weak solution for an unsteady fluid-structure interaction problem governed by the Navier–Stokes equations, interacting with a flexible elastic plate governed by the linear beam equation with damping. See also~\cite{MR3466847} for the 2D case, where the authors focused on the problem with periodic boundary conditions and constructed a unique global strong solution. 

In~\cite{GHL}, the analysis of the local well-posedness theory was extended to more general linear models. Another key aspect of the problem is addressing the case where the structure is governed by a weakly damped wave equation, as originally considered by Lions in~\cite{lions1969livre} as an example of a parabolic-hyperbolic system. This problem was investigated in~\cite{MR3604365}, which establishes global well-posedness for small data in the three-dimensional case with damping. When the fluid domain is nearly cylindrical, the existence of a local-in-time weak solution was then established for both 2D and 3D cases in~\cite{MC1,MC2}.

The second model, which incorporates the curvature of the interface, originates from differential geometry and plays a crucial role in the study of biomembranes (see, for instance,~\cite{ouyang1990membranes}). The study of this problem was initiated in~\cite{MR2349865}, using a stationary equation derived from Willmore energy to describe the state of the structure, with the fluid governed by the 3D Navier-Stokes equations in a bounded domain, and local well-posedness was established. The non-stationary case was later analyzed in~\cite{MR4039524} for the 3D periodic setting, where global well-posedness for small data was proven.  

\smallskip

The third model, originating from engineering, accounts for both curvature and membrane strain. In the case of a general bounded fluid domain, local well-posedness was proved in~\cite{MR2644917}. A similar scenario was examined in~\cite{L,LR}, where the authors considered a general stress tensor and established the existence of weak solutions, which were either global in time or exhibited self-intersection in finite time. Another particularly interesting setting involves fluid contained inside a cylindrical container, with the interface remaining close to a planar configuration. This case was recently studied in~\cite{MR4450291,schwarzacher-su2023regularity}, where the authors proved weak-strong uniqueness in 3D, along with the local existence and uniqueness of strong solutions for sufficiently regular data in 2D. Cylindrical fluid domains were also analyzed in~\cite{MRR,MR4540753}, where global well-posedness in 3D was established under the condition that no self-intersection occurs.  

\smallskip

Beyond these three models, more nonlinear formulations have attracted significant mathematical interest. For instance, in~\cite{BKS2023}, the authors examined a 3D fluid contained within a cylindrical domain and proved the existence of weak solutions. The problem was formulated using the first Piola-Kirchhoff stress tensor, which describes a broader class of nonlinear elasticity compared to the three models mentioned above. Overall, the study of fluid-structure interactions is a vast and evolving field, and the discussion here provides only a limited overview. For a more comprehensive account of recent advancements, we refer readers to~\cite{zbMATH05875759,MR4188809,MR2922368,PT2011}.  

\smallskip

Let us emphasize that the main difficulty in our study lies in the fact that we consider a system (see \eqref{eq-intro-fsi:Plate}-\eqref{eq-intro-fsi:Euler}) that includes neither damping nor dissipative (viscosity) effects. The corresponding Cauchy problem was previously investigated in~\cite{AKT2024}, where the existence (but not uniqueness) of a global weak solution was established in two space dimensions for a fluid periodic in the horizontal variable $x$, in the general case without damping, for any initial data with finite energy.

\smallskip

In this paper, we go much further by proving that the Cauchy problem is globally well-posed in time in the setting of the whole real line instead of torus. The lack of damping or dissipative effects makes arguments for parabolic PDEs inappropriate, significantly complicating the analysis. While the main challenge in~\cite{AKT2024} was to give meaning to the equations for solutions with finite energy, we extend this by transforming the equations into a Schrödinger-type equation. 

\smallskip

More precisely, to reveal the central feature of the phenomenon under consideration, we perform a reduction to the boundary. To explain this, we set $\psi$ to be the trace of the velocity potential of $\bm{v}$ on the boundary, and introduce the complex-valued unknown
$$
U:=q(D_x)\eta + i (\psi+\partial_t\eta).
$$
We will prove that there exist two (nontrivial) Fourier multipliers, $p(D_x)$ and $q(D_x)$, which behave as $\partial_x^2$ at high frequencies, so that
$$
		\partial_t U + i p(D_x)U = \mathcal{F}(U),
$$
where $\mathcal{F}(U)$ denotes a nonlinear, nonlocal operator acting on $U$. 

These transformations are particularly delicate to implement at the regularity level corresponding to initial data with finite energy, without any smallness assumptions. Indeed, the problem is critical in the energy space with respect to several key inequalities from harmonic analysis, which play a central role in the proof. In particular, we will establish several results of independent interest related to the study of the Dirichlet-to-Neumann operator.

\subsection{Main result} 
We will establish three properties of the system (\ref{eq-intro-fsi:KinBdyCond})–(\ref{eq-intro-fsi:Plate}):
\begin{enumerate}
\item The system (\ref{eq-intro-fsi:KinBdyCond})–(\ref{eq-intro-fsi:Plate}) is equivalent to a Schr\"odinger-type nonlinear evolution equation; this reduction is done by paradifferential calculus.
\item The Cauchy problem for (\ref{eq-intro-fsi:KinBdyCond})–(\ref{eq-intro-fsi:Plate}) is globally well-posed in time for any initial data with finite energy.
\item The system (\ref{eq-intro-fsi:KinBdyCond})–(\ref{eq-intro-fsi:Plate}) allows for the propagation of the regularity of the initial data.  
\end{enumerate}
To achieve these goals, following~\cite{AKT2024}, we adopt a formulation that reduces the problem to a set of equations defined on the free surface. This formulation involves the displacement function $\eta$ and the trace $\psi$ of the potential flow on the free surface. The approach is deeply rooted in a rich historical context, with seminal contributions from Zakharov~\cite{Zakharov1968}, Craig and Sulem~\cite{CrSu}, and Lannes~\cite{lannes2005water}, who pioneered the analysis of water wave equations through the framework of the Dirichlet-to-Neumann operator. As in~\cite{AKT2024}, we adopt this perspective and employ the \emph{Craig-Sulem-Zakharov formulation} for (\ref{eq-intro-fsi:KinBdyCond})-(\ref{eq-intro-fsi:Plate}). 

We begin by recalling the derivation of these equations. At this stage, we perform some formal computations, which will be rigorously justified later. We emphasize that the paper is self-contained, as our analysis requires a thorough re-examination of the Dirichlet-to-Neumann operator. Specifically, we will revisit its definition, its differential with respect to $\eta$, and its microlocal properties.

It is well known that the incompressible, irrotational 
fluid occupying \(\Omega(t)\) can be completely described by the \emph{velocity potential function} \(\Psi(t,x,y)\), such that  
\begin{equation}\label{eq-intro-csz:HarExt}
\bm{v} = \nabla_{x,y}\Psi,\ \ \Delta_{x,y}\Psi = 0.
\end{equation}  
The velocity potential \(\Psi(t,x,y)\) is a harmonic function and is unique up to an addendum that depends only on \(t\). By normalizing \(\nabla_{x,y}\Psi(t,x,y) \to 0\) as \(y \to -\infty\), we find that \(\Psi(t,x,y)\) is uniquely determined by its boundary value  
\begin{equation}\label{eq-intro-csz:psi}
\psi(t,x) = \Psi(t,x,y)|_{y=\eta(t,x)}
= \Psi(t,x,\eta(t,x)).
\end{equation}  
Let us now introduce the \emph{Dirichlet-to-Neumann operator}: if the boundary value \(\psi(t,x) = \Psi(t,x,y)|_{y=\eta}\) is given, then $\Psi$ is uniquely determined, so the following function is well-defined:
$$
\mathcal{G}(\eta)\psi := \sqrt{1+\eta_x^2}\,  \frac{\partial\Psi}{\partial \bm{n}_\eta}\Big|_{y=\eta},
$$  
where $\partial\Psi/\partial \bm{n}_\eta=\bm{n}_\eta\cdot\nabla_{x,y}\Psi$ is the outward-pointing normal derivative of \(\Psi\) on the boundary. 
This is the formal definition (the rigorous one will be studied later on) 
of the Dirichlet-to-Neumann operator $\G(\eta)\psi$, which is a linear operator acting on $\psi$ that depends nonlinearly on the free surface elevation~$\eta$. 
Since the flow inside \(\Omega(t)\) is a potential one, we may integrate the velocity field \(v(t,x,y)\) along any path to obtain the Bernoulli equation:
$$
\partial_t\Psi+\frac{1}{2}|\nabla_{x,y}\Psi|^2+g_0y+\frac{1}{\rho_{\text{f}}}P
=0.
$$
Evaluating at the boundary $y=\eta(t,x)$, we find that (\ref{eq-intro-fsi:KinBdyCond})-(\ref{eq-intro-fsi:Plate}) becomes
\begin{equation}\label{eq-intro-csz:CSZ}
	\left\{\begin{aligned}
		&\partial_t \eta = \G(\eta)\psi, \\ 
		&\partial_t \psi + \left(g_0\eta + \frac{\rho_{\text{m}}}{\rho_{\text{f}}}\partial_t^2\eta + \frac{\sigma}{\rho_{\text{f}}}\partial_x^4 \eta\right)
		+ \frac{1}{2}(\partial_x\psi)^2 -\frac{1}{2}\frac{\left(\partial_x\eta\cdot\partial_x\psi + \G(\eta)\psi\right)^2}{1+(\partial_x\eta)^2} = 0, \\
		&(\eta,\psi)|_{t=0} = (\eta_0,\psi_0).
	\end{aligned}\right.
\end{equation}		
From now on, for simplicity in notation, we shall normalize $\rho_{\text{f}}=1$, $\rho_{\text{m}}=1$ and $\sigma=1$, $g_0=1$.

Compared to~\cite{AKT2024}, a technical difference is that we shall work with the unknown
\begin{equation}\label{eq-intro-csz:u-psi}
	u := \psi + \mathcal{G}(\eta)\psi = \psi + \partial_t\eta.
\end{equation}
We immediately rewrite (\ref{eq-intro-csz:CSZ}) into the following equivalent form:
\begin{equation}\label{eq-intro-csz:MainSys}
	\left\{\begin{array}{l}
		\partial_t \eta = \mathcal{G}(\eta)(\Id + \mathcal{G}(\eta))^{-1}u, \\ [0.5ex]
		\partial_t u + \partial_x^4 \eta + \eta + N(\eta,u) = 0, \\ [0.5ex]
		(\eta,u)|_{t=0} = (\eta_0,u_0),
	\end{array}\right.
\end{equation}
where the nonlinear term $N$ equals
\begin{equation}\label{eq-intro-csz:N}
    N(\eta,u) = \frac{1}{2}(\partial_x\psi)^2 -\frac{1}{2}\frac{\left(\partial_x\eta\cdot\partial_x\psi + \G(\eta)\psi\right)^2}{1+(\partial_x\eta)^2} ,
    \quad  
    \psi=(\Id + \mathcal{G}(\eta))^{-1}u.
\end{equation}
Such formulation is convenient for two reasons. The most important one is that \eqref{eq-intro-csz:MainSys} has an explicit Hamiltonian structure: the total energy
\begin{equation}\label{E}
\frac{1}{2} 
\int_\R u\cdot\G(\eta)\big(\Id+\G(\eta)\big)^{-1}u \dx 
+ \frac{1}{2}\int_\R \big((\partial_x^2\eta)^2 + \eta^2\big) \dx
\end{equation}
serves as the Hamiltonian function, so that $\eta$ and $u$ are conjugate variables. On the other hand, in the low regularity regime, the time derivative $\partial_t^2\eta$ might be ill-defined pointwise, while $\partial_tu=\partial_t(\psi+\partial_t\eta)$ as a whole still makes sense.

We are at the place to state the main result of this paper. 
\begin{theorem}\label{thm:main}
For all initial data $(\eta_0,u_0)\in H^{2}\times L^2$, the Cauchy problem \eqref{eq-intro-csz:MainSys} has a unique, global in time solution $(\eta,u)$ in the space 
$$
C^0_{\mathrm{loc}}(\R;H^2\times L^2)
\cap L^4_{\mathrm{loc}}(\R;W^{2,\infty}\times L^{\infty}),
$$
and the solution map $(\eta_0,u_0)\mapsto (\eta,u)$ is 
    continuous from $ H^{2}\times L^2$ to $C^0_{\mathrm{loc}}(\R;H^2\times L^2)$. 
Moreover:
\begin{itemize}
    \item If the initial data $(\eta_0,u_0)$ belongs to $H^{s+2}\times H^s$ with $s>0$, then the solution $(\eta,u)$ of is of class
    $$
    C^0_{\mathrm{loc}}(\R;H^{s+2}\times H^s)
    \cap C^1_{\mathrm{loc}}(\R;H^{s}\times H^{s-2}),
    $$
    and the solution map $(\eta_0,u_0)\mapsto (\eta,u)$ is continuous from $H^{s+2}\times H^s$ to $C^0_{\mathrm{loc}}(\R;H^{s+2}\times H^s)$.
    \item If in addition $s\geq2$, then
    $$
    \begin{aligned}
    \eta&\in C^0_{\mathrm{loc}}(\R;H^{s+2})\cap C^1_{\mathrm{loc}}(\R;H^{s})\cap C^2_{\mathrm{loc}}(\R;H^{s-2}),\\
    \psi&=(\Id+\G(\eta))^{-1}u\in C^0_{\mathrm{loc}}(\R;H^{s+1})\cap C^1_{\mathrm{loc}}(\R;H^{s-2})
    \end{aligned}
    $$
    do give rise to the unique classical solution of the Cauchy problem (\ref{eq-intro-csz:CSZ}) in the corresponding spaces, and the total energy~\eqref{E} is conserved.
\end{itemize}
\end{theorem}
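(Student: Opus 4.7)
The plan is to leverage the Schr\"odinger-type reformulation sketched in the introduction. Setting $U = q(D_x)\eta + i u$ with $u = \psi + \partial_t\eta$, the system \eqref{eq-intro-csz:MainSys} will be recast as
\begin{equation*}
\partial_t U + i p(D_x) U = \mathcal{F}(U),
\end{equation*}
where $p(\xi) \sim \xi^2$ for large $|\xi|$, so that the linear part is genuinely dispersive. The $H^2\times L^2$ energy level for $(\eta,u)$ translates into $U \in L^2$, while the target space $L^4_{\mathrm{loc}}(\R;W^{2,\infty}\times L^\infty)$ reads simply as $U \in L^4_{\mathrm{loc}} L^\infty$, which is exactly the $(q,r)=(4,\infty)$ Strichartz pair for the one-dimensional Schr\"odinger operator. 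Thus the strategy will be: reduce to the scalar Schr\"odinger equation, close a contraction argument in a Strichartz space at the critical regularity, globalize using the Hamiltonian \eqref{E}, and finally transfer regularity back to the original unknowns.

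First I would rigorously justify the paralinearization and the reduction to the above equation at the minimal regularity threshold. This requires the refined low-regularity structure theorems for $\mathcal{G}(\eta)$ announced in the introduction: a symbolic expansion of $\mathcal{G}(\eta)(\Id+\mathcal{G}(\eta))^{-1}$ modulo smoothing remainders, sharp bounds on the shape derivative $\delta_\eta\mathcal{G}$, and a paralinearization of the nonlinear term $N(\eta,u)$ in \eqref{eq-intro-csz:N}. The Fourier multipliers $p(D_x)$ and $q(D_x)$ arise by simultaneously diagonalizing the quadratic part of the Hamiltonian and removing the skew-symmetric coupling produced by $\mathcal{G}(\eta)$, so that what remains in $\mathcal{F}(U)$ is a genuinely subprincipal paraproduct plus a smoother remainder.

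Next I would build a local solution by a fixed-point argument in the space $X_T := C^0([0,T];H^2\times L^2) \cap L^4([0,T]; W^{2,\infty}\times L^\infty)$, using the linear Strichartz estimate for $\partial_t + i p(D_x)$ combined with paradifferential bounds on $\mathcal{F}(U)$. Uniqueness and continuity of the flow follow from analogous contraction estimates applied to differences of two solutions. The Hamiltonian \eqref{E} provides a uniform $H^2\times L^2$ bound, so local solutions extend globally as soon as one controls the $L^4 W^{2,\infty}\times L^4 L^\infty$ Strichartz norm on finite intervals; this in turn is closed by combining the Strichartz inequality with the energy bound, once the nonlinearity is shown to contribute only a subcritical factor. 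The central obstacle throughout will be the critical character of the estimates: every harmonic-analytic tool (Sobolev embedding, Kato--Ponce, commutator estimates, product rules in Besov spaces) has to be used at its endpoint, and one cannot afford a factor $\|\partial_x^2\eta\|_{L^\infty}$ multiplying a top-order quantity. The way out will be to keep the leading paralinear piece of the Dirichlet--Neumann operator \emph{inside} the linear Schr\"odinger operator via a paracomposition-type change of unknown, so that only genuinely lower-order contributions remain in the right-hand side $\mathcal{F}(U)$.

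Finally, for initial data in $H^{s+2}\times H^s$ with $s>0$, I would propagate regularity by applying $\langle D_x\rangle^s$ to the Schr\"odinger equation and controlling the commutators $[\langle D_x\rangle^s,\mathcal{F}]$ by the same paradifferential tools, using the already established critical $L^4 W^{2,\infty}$ bound as the driving norm in a Gr\"onwall argument on $\|\langle D_x\rangle^s U\|_{L^\infty_t L^2_x}$. When $s\ge 2$, all traces make classical sense: $\psi = (\Id+\mathcal{G}(\eta))^{-1}u$ lies in $C^0_{\mathrm{loc}}(\R;H^{s+1})$ by elliptic regularity for $\mathcal{G}(\eta)$, $\partial_t^2\eta = \partial_t u - \partial_t\psi$ is a genuine function in $C^0_{\mathrm{loc}}(\R;H^{s-2})$, and one recovers \eqref{eq-intro-csz:CSZ} pointwise; conservation of the Hamiltonian \eqref{E} then follows from a direct time differentiation that is now rigorously justified by the available regularity.
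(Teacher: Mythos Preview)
Your overall architecture matches the paper's, but several of the load-bearing details are either missing or misstated.

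\textbf{The $Y^0$ space.} You propose to close the fixed point in $L^4_T(W^{2,\infty}\times L^\infty)$, i.e.\ in $L^4_T L^\infty_x$ for $U$. This will not work directly: the nonlinearity estimates force you to control not just $|U|_{L^\infty}$ but also $|\mathcal{H}U|_{L^\infty}$, because the boundary vertical velocity $\mathcal{B}(\eta)\psi$ involves $|D_x|\psi$ in its leading part and you need $|\mathcal{B}(\eta)\psi|_{L^\infty}$ inside the shape-derivative commutator. The paper therefore runs the fixed point in $L^\infty_T L^2_x\cap L^4_T Y^0_x$, where $Y^0=\{f:f,\mathcal{H}f\in L^\infty\}$ is the Hardy space. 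Uniqueness is then recovered in the weaker space $L^4_T L^\infty_x$ only \emph{after} proving an ``automatic improvement'' lemma: any solution in $L^4_T L^\infty_x$ is actually in $L^4_T Y^0_x$. That lemma hinges on a nontrivial identity expressing $\mathcal{H}N$ as $\partial_x w-\mathcal{R}(\eta)g$ for explicit integrals $g,w$ of the velocity potential, which lets one put $\mathcal{H}N$ in $L^1_x+L^2_x$ and thus feed the endpoint Strichartz inequality.

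\textbf{No paracomposition is needed.} Your sentence about ``keeping the leading paralinear piece of $\mathcal{G}(\eta)$ inside the linear Schr\"odinger operator via a paracomposition-type change of unknown'' misidentifies the mechanism. In 1D the principal symbol of $\mathcal{G}(\eta)$ is $|\xi|$, independent of $\eta$; the crucial gain is purely algebraic:
\[
\mathcal{G}(\eta)(\Id+\mathcal{G}(\eta))^{-1}-|D_x|(\Id+|D_x|)^{-1}
=(\Id+|D_x|)^{-1}\mathcal{R}(\eta)(\Id+\mathcal{G}(\eta))^{-1}
\]
is of order $-2$ once one proves $\mathcal{R}(\eta)=\mathcal{G}(\eta)-|D_x|$ is bounded on $L^2$ for $\eta\in H^2\cap W^{2,\infty}$. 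The hard work is this endpoint $L^2$ boundedness of $\mathcal{R}(\eta)$ (via a refined Alinhac-good-unknown factorization) and an endpoint commutator estimate $\|[|D_x|,A]f\|_{L^2}\lesssim\|A\|_{H^1}|f|_{L^\infty}$, not any change of variables.

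\textbf{Contraction in a weaker norm.} You say ``uniqueness and continuity follow from analogous contraction estimates applied to differences.'' At the energy level this fails: the Lipschitz bound on $\mathcal{F}$ holds only in $H^{-1}$, driven by the sharp shape-derivative estimate which uses merely $\|\delta\eta\|_{H^1}$. The paper runs the Banach fixed point on a ball in $L^\infty_T L^2_x\cap L^4_T Y^0_x$ equipped with the \emph{weaker} metric $L^\infty_T H^{-1}_x\cap L^4_T Y^{-1}_x$, and checks completeness of that ball by a weak-$*$ compactness argument.

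\textbf{Global bound.} The Hamiltonian does \emph{not} give a uniform $\|u\|_{L^2}$ bound: it controls $\|\eta\|_{H^2}$ and $\int u\,\mathcal{G}(\eta)(\Id+\mathcal{G}(\eta))^{-1}u$, which degenerates at frequency zero. One needs a separate Gr\"onwall on the low-frequency piece $\|\Delta_0 u\|_{L^2}$, giving a bound that grows linearly in $t$; this is still enough for global continuation since the local existence time depends only on $\|U_0\|_{L^2}$.
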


\subsection{Relation to nonlinear Schr\"{o}dinger equation}
We would like to point out an interesting \emph{dispersive} feature of \eqref{eq-intro-csz:MainSys} that closely relates to the \emph{nonlinear Schr\"{o}dinger equation} (NLS), and explain the crucial ingredients of the proof of Theorem \ref{thm:main}. It turns out that our proof makes essential expoitation of this dispersive feature. 

The linearization of~\eqref{eq-intro-csz:MainSys} around the null solution $(\eta,u)=(0,0)$ reads
\begin{equation}\label{eq-intro-stri:LinSys}
	\left\{\begin{array}{l}
		\partial_t \eta = \Dx(\Id+\Dx)^{-1}u, \\ [0.5ex]
		\partial_t u + \Dx^4 \eta + \eta = 0.
	\end{array}\right.
\end{equation}
Since the operator $\Dx(\Id+\Dx)^{-1}$ is not invertible, it is convenient to introduce a frequency cut-off. Specifically, 
consider the Fourier multiplier $\theta(D_x)$ whose 
symbol $\theta(\xi)\geq1/4$ is a smooth, even function that equals $1/4$ for $|\xi|\leq1/2$ and $|\xi|/(1+|\xi|)$ for $|\xi|>1$. We absorb the low frequency part $\big(\Dx(\Id+\Dx)^{-1}-\theta(D_x)\big)u$ into the source term. Upon this absorption, a natural idea is to reformulate the original system \eqref{eq-intro-csz:MainSys} as the summation of linear parts and source terms, namely
\begin{equation*}
	\left\{\begin{array}{l}
		\partial_t \eta - \theta(D_x)u = \text{source terms}, \\ [0.5ex]
		\partial_t u + (\Dx^4+1)\eta = \text{source terms}, \\ [0.5ex]
		(\eta,u)|_{t=0} = (\eta_0,u_0).
	\end{array}\right.
\end{equation*}

The next step is to introduce a new complex-valued unknown $U$ defined by
$$
U:= q(D_x)\eta + i u,\ \ q(D_x)=\theta(D_x)^{-1/2}\big(\Id+\Dx^4\big)^{1/2}.
$$
Since $q(D_x)$ is an invertible elliptic operator, one can easily recover $(\eta,u)$ from $U$ by separating the real and imaginary parts. Consequently, system (\ref{eq-intro-csz:MainSys}) is transformed into
\begin{equation}\label{eq-intro-stri:MainEq}
	\left\{\begin{array}{l}
		\partial_t U + i p(D_x)U = \mathcal{F}(U), \\ [0.5ex]
		U|_{t=0} = U_0 := q(D_x)\eta_0 + i u_0.
	\end{array}\right.
\end{equation}
Here $\mathcal{F}(U)$ denotes a nonlinear, nonlocal operator acting on $U$ (see \eqref{eq-para-schr:Sour} below), and the symbol $p$ is given by
$$	
p(\xi) =  \theta(\xi)^{1/2}\left(1+|\xi|^4\right)^{1/2}.
$$
Notice that $p(D_x)$ almost equals $\partial_x^2$ for high frequencies. Therefore, (\ref{eq-intro-stri:MainEq}) can be considered as a realization of the \emph{nonlinear Schr{\"o}dinger equation} (NLS), a well-invetigated model problem in the study for dispersive PDEs 
(see \cite{BaVe} for a presentation of many 
recent results concerning the Cauchy problem in the 1d case considered in this paper). The most thouroughly studied example is the power NLS
$$
\partial_tf+i\Delta_xf=\pm i|f|^\alpha f.
$$
Despite the mathematical parallelism, we observe a notable difference between the derivation of (\ref{eq-intro-stri:MainEq}) and the power NLS. The latter is usually considered as the formal profile approximation of nonlocal nonlinear waves; see Chapter 20 of \cite{TM2005}.  However, for the fluid-structure interaction considered in this paper, the Schr{\"o}dinger type equation (\ref{eq-intro-stri:MainEq}) is a genuine reformulation of the physical laws governing the system.

Of course, all the difficulty lies in the precise analysis of the nonlinearity $\mathcal{F}(U)$. Let us then describe the strategies implemented to analyze $\mathcal{F}(U)$ in this paper.

The first set of ingredients includes various estimates for the Dirichlet-Neumann operator in the \emph{low regularity regime}. In Theorem \ref{thm:main}, only spatial derivatives of $\eta$ up to order two are considered. 
Consequently, it is essential to analyze $\G(\eta)$ under 
these restrictive conditions. Our approach is based on several harmonic 
analysis techniques for indices that are critical in a sense that will be clarified later. The outcomes can be summarized as follows:
\begin{enumerate}[label=\textbf{(\arabic*)}]
\item\label{11} The operator $\mathcal{R}(\eta) = \G(\eta) - |D_x|$ is bounded and self-adjoint on $L^2$, \emph{even when $\eta$ has only $H^2 \cap W^{2,\infty}$ regularity}. This result is sharp in the sense that the proof does not appear to remain valid if one assumes only that $\eta$ belongs to $W^{2-\varepsilon,\infty}$. A formal explanation is that the remainder $\mathcal{R}(\eta)$ necessarily involves information of the \emph{curvature} of the interface, the pointwise bound of which is necessary for the $L^2$ bound of $\mathcal{R}(\eta)$. Therefore, $\eta\in W^{2,\infty}$ is indeed the threshold regularity assumption. The proof follows the approach of paralinearizing the Laplace equation into an elliptic evolution problem, a technique previously used in the study of the water-wave problem (see~\cite{AM2009,ABZ2014}). It relies on a new cancellation that allows considering shapes $\eta$ with mere $W^{2,\infty}$ regularity instead of $W^{2+\varepsilon,\infty}$. A precise statement is given in Proposition \ref{prop-reg-dn:Main}.

\item\label{12} The \emph{shape derivative for $\G(\eta)$}, namely, the linearization of the mapping $\eta\mapsto\G(\eta)$ along an increment $\delta\eta$, can be bounded using only the $H^1$ regularity of $\delta\eta$, provided that $\eta\in H^2\cap W^{2,\infty}$. The proof relies on the $L^2$ boundedness of $\mathcal{R}(\eta)$, and also the \emph{end point case} of estimates for the commutator $[|D_x|,A]f$ with $A\in H^1$, $f\in L^\infty$. To prove this end point commutator estimate, standard facts about Carleson measure are needed. Precise statements are given in Lemma \ref{Commu_Esti}, Proposition \ref{thm-reg-shpder:Main} and \ref{prop-reg-shpder:sharp}. 
\end{enumerate}

Another crucial fact is the following observation: even if \( \mathcal{R}(\eta) = \mathcal{G}(\eta) - \Dx \) is a sub-principal term (meaning that \( \mathcal{R}(\eta) \)  
is of order \( 0 \), while \( \mathcal{G}(\eta) \) and \( \Dx \) are of order \( 1 \)),  
the difference between  
\( \mathcal{G}(\eta)(\Id+\mathcal{G}(\eta))^{-1} \) and \( \Dx(\Id+\Dx)^{-1} \) is a sub-sub-principal term, meaning an operator  
of order \( -2 \). This follows from a direct computation for linear operators:
\begin{align*}
\mathcal{G}(\eta)(\Id+\mathcal{G}(\eta))^{-1}  &= 
		\Dx(\Id+\Dx)^{-1}
		+\mathcal{Q}(\eta), \qquad \text{where}\\
	    \mathcal{Q}(\eta) &= (\Id+\Dx)^{-1}\mathcal{R}(\eta)(\Id+\mathcal{G}(\eta))^{-1}.
	    \end{align*}
     as can be easily verified by letting \( \Id+\mathcal{G}(\eta) \) act in the right, and \( \Id+\Dx \) on the left. 
As a result, by combining \ref{11} and \ref{12}, we conclude that the nonlinearity $\mathcal{F}(U)$ in \eqref{eq-intro-stri:MainEq} is well-defined when $U$ has $L^2 \cap L^\infty$ regularity in the spatial variable, thereby setting the stage for a Banach fixed-point argument.

The main estimates on $\mathcal{F}(U)$ read:
\begin{align}
	\| \mathcal{F}(U) \|_{L^2} \le& K\big(\|U\|_{L^2}\big) \big(\| U\|_{Y^0} + 1 \big), \label{eq-intro-stri:BdFU}\\
	\| \mathcal{F}(U_1) - \mathcal{F}(U_2) \|_{H^{-1}} \le& K\big( \|(U_1,U_2)\|_{L^2} \big) \label{eq-intro-stri:LipFU}\\
	&\times\big( \|U_1\|_{Y^0} + \|U_2\|_{Y^0} + 1 \big) \|U_1 - U_2\|_{H^{-1}},\nonumber
\end{align}
where $\|U\|_{Y^0} := |U|_{L^\infty} + |\mathcal{H} U|_{L^\infty}$. At first, one may not expect to have to consider the $Y^0$ norm, but it naturally arises in various places, particularly when estimating the vertical component of the fluid velocity on the free surface; see Proposition~\ref{bdr_vel_bound}. The proof relies on estimates for $\mathcal{R}(\eta)$ and will be given in Propositions \ref{prop-para-schr:SourEsti}-\ref{prop-para-schr:SourLip}.

On the other hand, the $L^2$ estimate of $\mathcal{F}(U)$ can be extended to higher regularity cases: for $s>0$, when $\langle D_x \rangle^s U \in L^2 \cap Y^0$, the function $\mathcal{F}(U)$ belongs to $H^s$ for all $s \ge 0$. More precisely, in Proposition~\ref{prop-para-schr:SourEsti}, we will show that  
\begin{equation}\label{eq-intro-stri:BdFUHighReg}
    \| \mathcal{F}(U) \|_{H^s} \leq K\big(\|U\|_{L^2}\big) \left( \|U\|_{Y^0} + 1 \right) \Big[ (\|U\|_{Y^0}+1) \|U\|_{H^s} + \|U\|_{Y^s} \Big],
\end{equation}
where $\|U\|_{Y^s} := \|\langle D_x \rangle^s U\|_{Y^0}$. We emphasize that this estimate is \emph{tame}, meaning that the right-hand side is linear in the 
strongest norms $\|U\|_{H^s}$ and $\|U\|_{Y^s}$, with coefficients depending only on the weakest norms $\|U\|_{L^2}$ and $\|U\|_{Y^0}$. Once the $L^2 \cap Y^0$ (spatial) regularity of the solution $U$ is established, the \emph{tame estimate} above enables us to deduce the \emph{propagation of regularity}—that is, if the initial data satisfies $U_0 \in H^s \cap Y^s$, then $U(t) \in H^s \cap Y^s$ for all time $t$.

The second set of ingredients consists of \emph{Strichartz estimates}, which reflect the dispersive nature of the system. 
Namely, by rewriting (\ref{eq-intro-stri:MainEq}) in its integral equation form:
\begin{equation}\label{Strong_Intro}
U(t,x)=e^{-itp(D_x)}U_0(x)+\int_0^t e^{i(\tau-t)p(D_x)}\mathcal{F}(U(\tau,x))\dtau,
\end{equation}
we see that $U(t,x)$ enjoys better integrability in $x$ compared to $U_0(x)$. Such phenomenon was originally observed by Strichartz in the context of Fourier restriction \cite{Strichartz1977}, and soon implemented by Ginibre-Velo \cite{GV1979}, Kato \cite{Kato1987}, Tsutsumi \cite{Tsutsumi1987} and Cazenave-Weissler \cite{CW1988,CW1990} to prove low-regularity local well-posedness of the Cauchy problem of power NLS. 

In our context, the need for Strichartz estimates arises from the necessity of bounding the Dirichlet-to-Neumann operator $\mathcal{G}(\eta)$. The estimates presented in \ref{11}–\ref{12} are all \emph{linear} or \emph{quadratic} in terms of $\|\eta\|_{W^{2,\infty}_x}$ and $\|u\|_{Y^0_x}$, whereas standard energy estimates fail to yield any pointwise bound for $\partial_x^2 \eta$ and $u$. This issue is resolved by the dispersive effect of $e^{itp(D_x)}$, which allows us to establish existence and uniqueness in the space $L_t^\infty L^2_x \cap L^4_t Y^0_x$ via a fixed-point argument, even when the initial data $U_0$ belongs only to $L^2_x$.  

Notably, since the $Y^0$ norm (or its higher-regularity counterpart $Y^s$) is $L^4_t$-integrable rather than uniformly bounded in time, the total exponent of these norms must be strictly less than $4$ in the estimates \eqref{eq-intro-stri:BdFU}--\eqref{eq-intro-stri:BdFUHighReg} mentioned above. This constraint ensures that a contraction can be achieved in the fixed-point argument for short times.

Another purpose of Strichartz estimates is to relax the uniqueness condition. Actually, remembering the Lipschitz regularity \eqref{eq-intro-stri:LipFU} of the nonlinearity $\mathcal{F}(U)$ in the weaker space $H^{-1}_x$, we can prove the uniqueness in the space $L_t^\infty L^2_x \cap L^4_t Y^0_x$. However, our goal is to replace this space with the more classical one, $L_t^\infty L^2_x \cap L^4_t L^\infty_x$. The Strichartz estimate provides an \emph{a priori} time-averaged pointwise bound for $U$: if $U \in L^\infty_t L^2_x \cap L^4_t L^\infty_x$, then the right-hand side of (\ref{Strong_Intro}) gives an \emph{even better} output in the space $L^\infty_t L^2_x \cap L^4_t Y^0_x$. This improvement is based on a subtle identity, established in Proposition~\ref{P:5.6}, which enables us to estimate the $L^1$-norm of the Hilbert transform of the nonlinearity $N$.

Combining these two sets of key ingredients, we prove in Section \ref{sect:Cauchy} that the right-hand side of (\ref{Strong_Intro}) defines a contraction in some closed ball in the space $L^\infty_tL^2_x\cap L^4_tY^0_x$, equipped with a \emph{weaker} norm. The local well-posedness of the Cauchy problem then follows from a Banach fixed-point argument.  

Finally, we justify the \emph{a posteriori} regularity of the solution if the initial data is sufficiently regular, using the tame estimate (\ref{eq-intro-stri:BdFUHighReg}) and proving energy conservation. On the other hand, since local well-posedness relies only on the energy, we obtain global well-posedness.

\subsection{Notations}\label{Notations}
We use $C$ (resp.\ $K$) to denote a general constant (resp.\ nondecreasing function $K\colon\R_+\to\R_+$) whose value may change from line to line in the text. 

Given two normed vector spaces $X_1$ and $X_2$, $\mathcal{L}(X_1,X_2)$ denotes the space of bounded operators from $X_1$ to $X_2$ and $\mathcal{L}(X)$ is a shorthand notation for $\mathcal{L}(X,X)$.

Given $T\geq0$, a normed vector space $X$ and a function $\varphi$ defined on $[0,T]\times \R$, we denote by $\varphi(t)$ the function $\R\ni x\mapsto \varphi(t,x)\in\C$. If $X$ is a space of functions on $\R$, then we write 
$$
L^p_TX_x=L^p([0,T];X)
$$
for the space of $X$-valued $L^p$ functions on $[0,T]$. 

In Section \ref{sect:Reg}-\ref{sect:Para}, we will be estimating the Dirichlet-to-Neumann operator itself. Therefore, the results in Section \ref{sect:Reg}-\ref{sect:Para} are stated in terms of norms of spatial function only. For example, in inequality (\ref{eq-reg-dn:Main}), which states
$$
\|\mathcal{R}(\eta)\|_{\mathcal{L}(H^\sigma)} \le K\big(\|\eta\|_{H^2}\big) \big( \|\eta\|_{W^{2,\infty}} + 1 \big),
$$
we omit the dependence of the norms on $x$, since the time variable is not involved at all. On the other hand, in Section \ref{sect:Cauchy}, we will be working with the Cauchy problem, for which we need to control norms in the space $L^4_TY^s_x$. Therefore, the results in Section \ref{sect:Cauchy} are stated in terms of norms of spacetime functions. For example, in Proposition \ref{prop-cauchy-lwp:Unique}, we clearly indicate that the spacetime function norm of the solution map $\mathscr{G}(U_0;U)$ should be controlled by the $L^2_x$ norm of initial value $U_0$, which is a function of $x$ only.

\section{Analytic Preliminaries}\label{sect:Pre}

In this section, we introduce the function spaces that we shall work with and 
collect several variants of standard results in harmonic analysis. They will serve as the requested background for what follows. 

\subsection{Function spaces}\label{subsect-pre:Fct}
Given a real number $s\in\R$, the \emph{Sobolev space} $H^s=H^s(\R)$ is equipped with the norm
$$
\lA u\rA_{H^s}^2=\frac{1}{2\pi}\int_{\R}\langle \xi\rangle^{2s}\vert \hat{u}(\xi)\vert^2 \dxi,
$$
where $\hat{u}$ is the Fourier transform and $\langle \xi\rangle=(1+\la\xi\ra^2)^{1/2}$. An equivalent definition of Sobolev spaces through dyadic decomposition is given in Definition~\ref{def-besov-def:Sobo}. Besides $H^s$, we shall also need the H\"{o}lder spaces and the space of Lipschitz functions.

\begin{definition}\label{def-intro-main:HolSpace}
For $s\in\N$, we denote by $W^{s,\infty}$ the usual $L^\infty$-based Sobolev space, consisting of those $f\in L^\infty$ whose derivatives $\partial_x^\alpha f$ belong to $L^\infty$ for all $\alpha \le s$. For $s\in[0,+\infty)\setminus\N$, we define $W^{s,\infty}$ as the usual H{\"o}lder space $C^{[s],s-[s]}$ (which coincides with the Zygmund space $C^s_*$ when $s\not\in\N$; see~Definition~\ref{def-besov-def:Sobo} in the appendix for the definition of Zygmund spaces).
\end{definition}

We then introduce another scale of Banach spaces 
that will play a crucial role to estimate the action of the Dirichlet-to-Neumann operator on Lipschitz functions.

\begin{definition}\label{def-intro-main:HolSpaceVar}
Denote by $\mathcal{H}$ the Hilbert transform, that is the Fourier multiplier with symbol $-i\sgn(\xi)$. The space $Y^0$ consists of those functions $u\in L^\infty$ such that $\mathcal{H}u$ belongs to  $L^\infty$, equipped with the norm 
\begin{equation}\label{eq-intro-main:HolSpaceVar0}
	    \|u\|_{Y^0} := |u|_{L^\infty} + |\mathcal{H} u|_{L^\infty}.
\end{equation}
More generally, for $s\in\R$, we denote by $Y^s$ the space of those tempered distributions $u\in \mathcal{S}'(\R)$ satisfying 
\begin{equation}\label{eq-intro-main:HolSpaceVar}
	    \|u\|_{Y^s} := 
	    |\langle D_x \rangle^{s} u|_{L^\infty} + |\mathcal{H}\langle D_x \rangle^{s} u|_{L^\infty}<+\infty,
	\end{equation}	
where $\langle D_x\rangle^s$ is the Fourier multiplier with symbol $\langle \xi\rangle^s=(1+\la\xi\ra^2)^{s/2}$.
\end{definition}

It will be justified shortly that $Y^0$ is in fact the Hardy space with index $\infty$. 
It is an interesting feature that the theory of analytic functions enters into the study of this problem.

Let us state some elementary properties of $Y^s$.
\begin{proposition}\label{prop-pre-fct:Basic}For all \(s \geq 0\), we have the following properties:
\begin{enumerate}
    \item For all \(u \in Y^s\), \(\|u\|_{Y^s} = \|\langle D_x \rangle^s u\|_{Y^0}\);
    
    \item \(Y^s\) is invariant with respect to the Hilbert transform, i.e., \(\|u\|_{Y^s} = \|\mathcal{H} u\|_{Y^s}\);
    
    \item There is a continuous embedding \(H^{s + 1/2+\epsilon} \subset Y^s\) for any $\epsilon>0$;
    
    \item When \(s \notin \mathbb{N}\), we have \(Y^s \subset W^{s, \infty}\) with continuous embedding.
\end{enumerate}

\end{proposition}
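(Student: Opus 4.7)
The plan is to dispatch items (1)--(3) as essentially bookkeeping and concentrate on (4), which is the only assertion with real content.

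For (1), the equality is immediate from unwinding definitions: the right-hand side of \eqref{eq-intro-main:HolSpaceVar} is literally the $Y^0$-norm of $\langle D_x\rangle^s u$ by \eqref{eq-intro-main:HolSpaceVar0}. For (2), I would combine item (1) with the two identities $[\mathcal{H},\langle D_x\rangle^s]=0$ (both are Fourier multipliers) and $\mathcal{H}^2=-\Id$ (its symbol squares to $(-i\sgn\xi)^2=-1$). Writing
$\|\mathcal{H}u\|_{Y^s}=|\langle D_x\rangle^s \mathcal{H}u|_{L^\infty}+|\mathcal{H}\langle D_x\rangle^s \mathcal{H}u|_{L^\infty}$
and pushing the Hilbert transform past $\langle D_x\rangle^s$ turns this into $|\mathcal{H}\langle D_x\rangle^s u|_{L^\infty}+|\langle D_x\rangle^s u|_{L^\infty}=\|u\|_{Y^s}$: the two summands simply trade places. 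For (3), again by (1) it suffices to verify $H^{1/2+\epsilon}\hookrightarrow Y^0$, which follows from the classical Sobolev embedding $H^{1/2+\epsilon}(\R)\hookrightarrow L^\infty(\R)$ applied to both $u$ and $\mathcal{H}u$, since $\mathcal{H}$ is bounded on every $H^\sigma$ as a Fourier multiplier of order zero.

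The only nontrivial assertion is (4). I would prove it through the chain of continuous inclusions
\begin{equation*}
Y^s \;\hookrightarrow\; \langle D_x\rangle^{-s}L^\infty \;\hookrightarrow\; \langle D_x\rangle^{-s}B^0_{\infty,\infty} \;=\; B^s_{\infty,\infty} \;=\; W^{s,\infty},
\end{equation*}
where the first step is the trivial inequality $|\langle D_x\rangle^s u|_{L^\infty}\le\|u\|_{Y^s}$, the second is the standard inclusion $L^\infty\hookrightarrow B^0_{\infty,\infty}$ (a direct consequence of the Littlewood--Paley definition, since each dyadic block is bounded in $L^\infty$ by a universal constant times $|u|_{L^\infty}$), the third is the lifting property of $\langle D_x\rangle^s$ between Besov spaces, and the last is the identification of $B^s_{\infty,\infty}$ with the Hölder space $W^{s,\infty}=C^{[s],s-[s]}$ recorded in Definition~\ref{def-intro-main:HolSpace} and recalled in the appendix via Definition~\ref{def-besov-def:Sobo}. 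The main obstacle is precisely this last equality: it fails at integer $s$, where $B^s_{\infty,\infty}$ is the strictly larger Zygmund class, so the hypothesis $s\notin\mathbb{N}$ is mandatory rather than cosmetic. Notice that the $\mathcal{H}u\in L^\infty$ information in the definition of $Y^s$ is not actually needed for (4); it is the $L^\infty$ control of $\langle D_x\rangle^s u$ alone that yields the Hölder regularity.
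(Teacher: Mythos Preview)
Your proof is correct. The paper states this proposition without proof, treating all four items as elementary, so there is no argument to compare against. Your handling of (1)--(3) matches the obvious reasoning one would supply, and your route through Besov spaces for (4)---using $L^\infty\hookrightarrow B^0_{\infty,\infty}$, the lifting property of $\langle D_x\rangle^{-s}$, and the identification $B^s_{\infty,\infty}=W^{s,\infty}$ for non-integer $s$---is exactly the standard justification and is consistent with the paper's later use of Zygmund spaces $C^s_*$ (Definition~\ref{def-besov-def:Sobo}). Your remark that the hypothesis $s\notin\mathbb{N}$ is essential, not cosmetic, is also on point: the paper handles the integer case separately in Proposition~\ref{prop-pre-fct:Embd}, using Proposition~\ref{prop-pre-fct:BdMult} rather than the Besov identification.
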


Next, we check the boundedness of Fourier multipliers on $Y^s$. 
\begin{definition}\label{def-pre-fct:Multi}
    Let $m\in\R$. A function $a = a(\xi)$ is said to be a \textit{radial symbol} of order $m$, if the following three conditions are satisfied:
    \begin{enumerate}
        \item there exists $\tilde{a}\in C^\infty(\R)$ such that $a(\xi) = \tilde{a}(|\xi|)$ for all $\xi\in\R$;
        
        \item for all $|\xi|>1$ and all $k\in\N$,
        \begin{equation*}
            \left|\partial_{\xi}^k a(\xi)\right| \lesssim_k |\xi|^{m-k};
        \end{equation*}
        
        \item as $|\xi|\rightarrow +\infty$, the symbol $a$ admits the following expansion
        \begin{equation*}
            a(\xi) = c|\xi|^m + O(|\xi|^{m-1}),
        \end{equation*}
        where $c\neq 0$ is a constant.
    \end{enumerate}
\end{definition}

\begin{proposition}\label{prop-pre-fct:BdMultLinfty}
    If $a$ is a radial symbol of order $-\varepsilon<0$, then the associated Fourier multiplier $a(D_x)$ is bounded on $L^\infty$.
\end{proposition}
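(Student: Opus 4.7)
The plan is to show that the convolution kernel $K := \mathcal{F}^{-1}(a)$ lies in $L^1(\R)$; once this is established, Young's inequality gives $\|a(D_x)f\|_{L^\infty}\le \|K\|_{L^1}\|f\|_{L^\infty}$ and the proposition follows. I would split $a=a_L+a_H$ via a smooth cutoff $\chi\in C^\infty_c(\R)$ with $\chi=1$ on $|\xi|\le 1$ and $\chi=0$ on $|\xi|\ge 2$, setting $a_L:=\chi a$ and $a_H:=(1-\chi)a$, and estimate each kernel separately.

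For the high-frequency piece $a_H$, which is smooth and satisfies $|\partial_\xi^n a_H(\xi)|\lesssim\langle\xi\rangle^{-\varepsilon-n}$ for every $n\ge 0$, I would perform a dyadic decomposition $a_H=\sum_{k\ge 0}a_k$ with $a_k(\xi):=a(\xi)\phi(2^{-k}\xi)$, where $\phi\in C^\infty_c$ is supported in the annulus $|\xi|\sim 1$ and $\sum_{k\ge 0}\phi(2^{-k}\xi)=1-\chi(\xi)$. Rescaling, the symbol $h_k(\xi):=2^{k\varepsilon}a(2^k\xi)\phi(\xi)$ is uniformly bounded in $C^N(\R)$ for every $N$ thanks to the symbol estimates on $a$, so the family $\mathcal{F}^{-1}(h_k)$ is uniformly Schwartz. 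The identity $\mathcal{F}^{-1}(a_k)(x)=2^{k(1-\varepsilon)}\mathcal{F}^{-1}(h_k)(2^k x)$ then gives $\|\mathcal{F}^{-1}(a_k)\|_{L^1}\lesssim 2^{-k\varepsilon}$, and summing over $k$---valid since $\varepsilon>0$---yields $\mathcal{F}^{-1}(a_H)\in L^1(\R)$.

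The delicate part is $a_L$, since $a(\xi)=\tilde a(|\xi|)$ need not be $C^1$ at the origin when $\tilde a'(0)\neq 0$ (for instance, a term like $\tilde a'(0)|\xi|$ may be present). Nonetheless, $a_L$ is continuous with compact support and smooth away from $\xi=0$; a direct calculation shows that its distributional second derivative has the form $a_L''=g+2\tilde a'(0)\chi(0)\delta_0$, where $g$ is a bounded function supported in $|\xi|\le 2$. Two integrations by parts, valid since $a_L$ is continuous at $0$ and vanishes outside $|\xi|\le 2$, then give, for $x\neq 0$,
\[
|\mathcal{F}^{-1}(a_L)(x)|
=\frac{1}{2\pi x^2}\Bigl|\int_\R e^{ix\xi}g(\xi)\,d\xi+2\tilde a'(0)\chi(0)\Bigr|
\lesssim\frac{1}{x^2},
\]
which is integrable at infinity; combined with the trivial boundedness of $\mathcal{F}^{-1}(a_L)$ near the origin (since $a_L\in L^1$), this shows $\mathcal{F}^{-1}(a_L)\in L^1(\R)$. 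The main obstacle is therefore the non-smoothness of $a$ at the origin, which is overcome by carefully tracking the Dirac contribution in $a_L''$; the rest is standard dyadic Littlewood-Paley theory.
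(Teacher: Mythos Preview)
Your proof is correct and follows the same overall strategy as the paper's: split into low- and high-frequency pieces, and for each piece show the associated convolution kernel is in $L^1$ (or, equivalently, that the multiplier is bounded on $L^\infty$). The execution differs in two places. For high frequencies, the paper invokes the standard fact that a smooth Fourier multiplier of order $-\varepsilon$ maps $C^0_*$ to $C^\varepsilon_*\subset L^\infty$, whereas you carry out an explicit dyadic rescaling argument to sum $\|\mathcal{F}^{-1}(a_k)\|_{L^1}\lesssim 2^{-k\varepsilon}$; your version is more self-contained but proves essentially the same $L^1$-kernel bound. For low frequencies, both arguments show $\mathcal{F}^{-1}(\chi a)\in L^1$ by exploiting the $|z|^{-2}$ decay at infinity: the paper Taylor-expands $\tilde a(|\xi|)=a_0+a_1|\xi|+a_2\xi^2+b(\xi)$ with $b\in C^2$ and treats the non-smooth piece $a_1|\xi|\chi(\xi)$ separately, while you integrate by parts twice and track the Dirac mass $2\tilde a'(0)\delta_0$ appearing in $a_L''$. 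These are two equally valid bookkeeping schemes for the same singularity at $\xi=0$.
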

\begin{proof}
Consider a function $u\in L^\infty$. Directly from the definition of the Zygmund spaces 
(see Definition~ \ref{def-besov-def:Sobo}), it is clear that $u$ belongs to $C^0_*$. We shall verify that $a(D_x)$ maps $u$ to a function in $C^\varepsilon_*$, which is contained in the space $L^\infty$. Since
    \begin{equation*}
        \|a(D_x)u\|_{C^\varepsilon_*} = \sup_{j\in\N} 2^{j \varepsilon}|a(D_x)\Delta_j u|_{L^\infty},
    \end{equation*}
    the problem reduces to establishing the following two estimates of Fourier multipliers:
    \begin{align}
        &\sup_{\lambda \ge 1} \lambda^\varepsilon \| a(D_x)\varphi(D_x/\lambda) \|_{\mathcal{L}(L^\infty)} \lesssim 1, \label{eq-pre-fct:BdMultLinfty-1} \\
        &\| a(D_x)\chi(D_x) \|_{\mathcal{L}(L^\infty)} \lesssim 1, \label{eq-pre-fct:BdMultLinfty-2}
    \end{align}
    where $\varphi$ and $\chi$ are smooth radial bump functions supported in an annulus and in a ball centered at zero, respectively (see \eqref{eq-besov-def:DyaBlk}). 
    
    In fact, the proof of~\eqref{eq-pre-fct:BdMultLinfty-1} follows from well-known results on pseudo-differential operators  
    which ensure that a Fourier multiplier of order $m$ with a {\em smooth} symbol is bounded from $C^s_*$ to $C^{s-m}_*$ for any $s\in\R$.

    To prove the estimate~\eqref{eq-pre-fct:BdMultLinfty-2}, since the kernel associated with a Fourier multiplier is of convolution type, we only need to show that the Fourier transform 
    of the symbol $a(\xi)\chi(\xi)$ belongs to~$L^1$. To do so, we begin by using the fact that $a(\xi) = \tilde{a}(|\xi|)$ for some $\tilde{a}\in C^\infty(\R)$, to get that, by considering the Taylor expansion of $\tilde{a}$ at zero,
    \begin{equation*}
        a(\xi)\chi(\xi) = \left( a_0 + a_1|\xi| + a_2\xi^2 + b(\xi) \right) \chi(\xi),
    \end{equation*}
    where $a_0,a_1,a_2\in\C$ are constants and $b\in C^2(\R)$. Obviously, the Fourier transforms of $a_0\chi(\xi)$ and $a_2\xi^2\chi(\xi)$ are Schwartz functions, thus belong to $L^1$. To study the contribution of 
    $b(\xi)\chi(\xi)$, we integrate 
    by parts, to get
    \begin{equation*}
        \left| \int_{\R} e^{-i\xi z} b(\xi)\chi(\xi) \dxi \right| \lesssim \langle z \rangle^{-2},
    \end{equation*}
    which ensures the integrability of the Fourier transform of $b(\xi)\chi(\xi)$. 
    Eventually, notice that the Fourier transform of the remainder term  $a_1|\xi|\chi(\xi)$ 
    is smooth (since it is compactly supported), and 
    as $|z|\rightarrow+\infty$, it decays like $|z|^{-2}$ since $\chi(\xi)|\xi|$ equals $|\xi|$ when $|\xi| \ll 1$. 
    This completes the proof.
\end{proof}

\begin{proposition}\label{prop-pre-fct:BdMult}
    Let $a$ be a radial symbol of order $m\in\R$. Then the Fourier multipliers $a(D_x)$ and $a(D_x)\mathcal{H}$ are bounded from $Y^{s}$ to $Y^{s-m}$ for all $s\in\R$.
\end{proposition}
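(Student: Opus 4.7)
The strategy consists of two successive reductions followed by a dyadic analysis of a residual Fourier multiplier.

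First, since $\mathcal{H}$ is bounded on $Y^s$ by property~(2) of Proposition~\ref{prop-pre-fct:Basic}, the assertion for $a(D_x)\mathcal{H}$ follows at once from the assertion for $a(D_x)$. Next, by property~(1) of the same proposition, $\|u\|_{Y^s}=\|\langle D_x\rangle^s u\|_{Y^0}$; substituting $v=\langle D_x\rangle^s u$ and using the commutativity of Fourier multipliers, one rewrites
\[
\|a(D_x)u\|_{Y^{s-m}}=\|\tilde b(D_x)v\|_{Y^0},\qquad \tilde b(\xi)\defn \langle\xi\rangle^{-m}a(\xi).
\]
A direct verification, via the Leibniz rule and the three defining properties of the radial symbol $a$, shows that $\tilde b$ is itself a radial symbol, now of order $0$, with the same leading coefficient $c\ne 0$ as $a$. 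Thus the problem reduces to proving $\tilde b(D_x):Y^0\to Y^0$ for an arbitrary radial symbol $\tilde b$ of order $0$. Since $\tilde b(D_x)$ commutes with $\mathcal{H}$, it suffices to establish the seemingly stronger bound $\tilde b(D_x):L^\infty\to L^\infty$: then for any $v\in Y^0$ both $v$ and $\mathcal{H}v$ lie in $L^\infty$, which in turn makes $\tilde b(D_x)v$ and $\mathcal{H}\tilde b(D_x)v=\tilde b(D_x)\mathcal{H}v$ lie in $L^\infty$, so that $\tilde b(D_x)v\in Y^0$.

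Next, write $\tilde b=c+r$; the constant part is trivially bounded on $L^\infty$, and only the remainder $r$ requires attention. By condition~(3) of Definition~\ref{def-pre-fct:Multi}, $|r(\xi)|\lesssim |\xi|^{-1}$ as $|\xi|\to\infty$, while condition~(2) yields only the derivative bound $|\partial_\xi^k r(\xi)|\lesssim |\xi|^{-k}$ for $|\xi|>1$ and $k\ge 1$. Split $r=r\chi+r(1-\chi)$ with $\chi$ a smooth cutoff to $\{|\xi|\le 2\}$. The low-frequency piece $r\chi$ is handled exactly as in the proof of Proposition~\ref{prop-pre-fct:BdMultLinfty}: a Taylor expansion at $\xi=0$ yields $r(\xi)\chi(\xi)=(r_0+r_1|\xi|+r_2\xi^2+\tilde r(\xi))\chi(\xi)$ with $\tilde r\in C^2$, and each summand has an $L^1$ convolution kernel. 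For the high-frequency piece, perform a Littlewood--Paley decomposition $r(1-\chi)=\sum_{j\ge 1}\rho_j$ with $\rho_j(\xi)=r(\xi)\varphi(\xi/2^j)$ supported in $|\xi|\sim 2^j$ and obeying $|\rho_j|\lesssim 2^{-j}$, $|\partial_\xi^k \rho_j|\lesssim 2^{-jk}$ for $k\ge 1$. After rescaling $\eta=\xi/2^j$, the kernel $\check\rho_j$ admits both the trivial bound $|\check\rho_j(x)|\lesssim 2^{-j}$ and the integration-by-parts bound $|\check\rho_j(x)|\lesssim 2^{-jk}|x|^{-k}$; balancing at $|x|\sim 2^{j/k}$ yields $\|\check\rho_j\|_{L^1}\lesssim 2^{-j(1-1/k)}$. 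Taking $k=2$ makes the geometric series $\sum_{j\ge 1}2^{-j/2}$ convergent, so $\sum_{j}\rho_j(D_x)$ converges absolutely in the $L^\infty$-operator norm to $r(1-\chi)(D_x)$.

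The main obstacle is that condition~(3) of Definition~\ref{def-pre-fct:Multi} improves only the size of the remainder $r=\tilde b-c$, not its derivatives, which still satisfy the order-$0$ bound $|\partial^k r|\lesssim |\xi|^{-k}$. Consequently $r$ is \emph{not} a radial symbol of order $-1$ in the sense of Definition~\ref{def-pre-fct:Multi}, so Proposition~\ref{prop-pre-fct:BdMultLinfty} cannot be directly invoked. The degradation of the kernel bound from the naive $2^{-j}$ to $2^{-j/2}$ reflects this loss, but summability is preserved, and that is all the argument requires.
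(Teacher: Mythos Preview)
Your proof is correct and follows the same overall strategy as the paper: split off the leading part $c\langle\xi\rangle^m$ and show that the remainder defines an $L^\infty$-bounded multiplier. The substantive difference lies in how the remainder is handled. The paper writes $a(\xi)=c\langle\xi\rangle^m+b(\xi)$, asserts that $b$ is a radial symbol of order $m-1$, and then applies Proposition~\ref{prop-pre-fct:BdMultLinfty} to $b(\xi)\langle\xi\rangle^{-m}$. You correctly note that, under the literal reading of Definition~\ref{def-pre-fct:Multi}, condition~(3) controls only the \emph{size} of the remainder, not its derivatives; generically one only has $|\partial^k r|\lesssim|\xi|^{-k}$ (take for instance $a(\xi)=|\xi|^m+|\xi|^{m-1}\sin|\xi|$ for large $|\xi|$), so $r$ need not be a radial symbol of order $-1$ and Proposition~\ref{prop-pre-fct:BdMultLinfty} cannot be invoked verbatim. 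Your direct dyadic computation handles this cleanly: combining $|\rho_j|\lesssim 2^{-j}$ with the weaker derivative bounds still yields a summable kernel estimate $\|\check\rho_j\|_{L^1}\lesssim 2^{-j/2}$. This is a genuine tightening of the argument; in the concrete applications of the paper the symbols are classical, so the remainders do satisfy the improved derivative bounds and the paper's shortcut happens to be safe there.

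One minor remark: the intermediate pointwise bounds you quote for $\check\rho_j$ appear to be written in the rescaled variable $y=2^jx$ (where indeed the trivial bound is $\lesssim 2^{-j}$ and the balance occurs at $|y|\sim 2^{j/k}$); in the original $x$-variable they are off by a factor of $2^j$ coming from the measure of the Fourier support. The final estimate $\|\check\rho_j\|_{L^1}\lesssim 2^{-j(1-1/k)}$ is correct either way, so the argument stands.
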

\begin{proof}
    From the asymptotic behavior of $a$ at high frequency, one can decompose $a$ as:
    \begin{equation*}
        a(\xi) = c\langle\xi\rangle^m + b(\xi),
    \end{equation*}
    where $b$ is a radial symbol of order $m-1$ and $c\neq 0$ is a constant. For arbitrary $u\in Y^s$, we apply the decomposition above and obtain
    \begin{align*}
        \|a(D_x)u\|_{Y^{s-m}} 
        &\lesssim \|\langle D_x \rangle^m u\|_{Y^{s-m}} + \|b(D_x)u\|_{Y^{s-m}} \\
        &\lesssim \|u\|_{Y^{s}} + |b(D_x)\langle D_x \rangle^{-m}\langle D_x \rangle^{s}u|_{L^\infty} + |b(D_x)\langle D_x \rangle^{-m}\mathcal{H}\langle D_x \rangle^{s}u|_{L^\infty}.
    \end{align*}
    Since $b(\xi)\langle\xi\rangle^{-m}$ is a radial symbol of order $-1$, 
    the previous proposition implies 
    that $b(D_x)\langle D_x \rangle^{-m}$ is bounded on $L^\infty$. Then the boundedness of $a(D_x)$ follows:
    \begin{equation*}
        \|a(D_x)u\|_{Y^{s-m}} \lesssim \|u\|_{Y^{s}} + |\langle D_x \rangle^{s}u|_{L^\infty} + |\langle D_x\rangle^{s} \mathcal{H}u|_{L^\infty} 
        \simeq\|u\|_{Y^{s}}.
    \end{equation*}
    As for $a(D_x)\mathcal{H}$, it suffices to use the fact that $Y^{s}$ is invariant by $\mathcal{H}$.
\end{proof}

Since the usual derivative $\partial_x$ can be expressed as $\Dx\mathcal{H}$, Proposition \ref{prop-pre-fct:BdMult} implies the following inclusion:
\begin{proposition}\label{prop-pre-fct:Embd}
    For all $s\in\N$, the inclusion $Y^s\subset W^{s,\infty}$ is continuous.
\end{proposition}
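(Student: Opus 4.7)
The plan is to use the factorization $\partial_x = -|D_x|\mathcal{H}$ (at the symbol level, $|\xi|(-i\sgn \xi) = -i\xi$). First I would verify that $|D_x|$ fits Definition~\ref{def-pre-fct:Multi} as a radial symbol of order $1$: its profile $\tilde a(r) = r$ is smooth on $\R$, the $\xi$-derivatives satisfy the required bounds for $|\xi|>1$ (they even vanish for $k \ge 2$), and the asymptotic expansion $|\xi| = |\xi|^{1} + O(|\xi|^{-1})$ holds trivially with $c = 1$. Proposition~\ref{prop-pre-fct:BdMult} then provides $|D_x| : Y^t \to Y^{t-1}$ continuously, and since $\mathcal{H}$ preserves every $Y^t$ by Proposition~\ref{prop-pre-fct:Basic}(2), we deduce that $\partial_x : Y^t \to Y^{t-1}$ is continuous for every $t \in \R$. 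Iterating yields $\partial_x^k : Y^s \to Y^{s-k}$ boundedly for every integer $0 \le k \le s$.

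The second step is to embed $Y^t$ into $L^\infty$ for each $t \ge 0$. For $t = 0$ this is direct from the definition $\|w\|_{Y^0} = |w|_{L^\infty} + |\mathcal{H}w|_{L^\infty}$. For $t > 0$ the symbol $\langle\xi\rangle^{-t}$ is a radial symbol of negative order, so Proposition~\ref{prop-pre-fct:BdMultLinfty} gives the boundedness of $\langle D_x\rangle^{-t}$ on $L^\infty$; writing $w = \langle D_x\rangle^{-t}\langle D_x\rangle^{t} w$ yields $|w|_{L^\infty} \lesssim |\langle D_x\rangle^{t} w|_{L^\infty} \le \|w\|_{Y^t}$. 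Applying this with $w = \partial_x^k u$ and $t = s-k$ produces
$$
|\partial_x^k u|_{L^\infty} \;\lesssim\; \|\partial_x^k u\|_{Y^{s-k}} \;\lesssim\; \|u\|_{Y^s}
\qquad \text{for every } 0 \le k \le s,
$$
which is precisely the continuous embedding $Y^s \hookrightarrow W^{s,\infty}$.

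The argument is essentially bookkeeping through Propositions~\ref{prop-pre-fct:BdMultLinfty}--\ref{prop-pre-fct:BdMult}, so I do not anticipate any real obstacle. The one subtlety worth flagging is that Definition~\ref{def-pre-fct:Multi} asks for smoothness of the radial \emph{profile} $\tilde a(r)$ on $\R$ rather than of the symbol $a(\xi)$ itself; this is precisely what lets $|D_x|$ (whose symbol $|\xi|$ has a corner at the origin) fit the framework without any auxiliary low-frequency cutoff, making the clean iteration above possible.
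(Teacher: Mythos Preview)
Your proof is correct and follows the same route as the paper: the paper's argument is the one-line observation that $\partial_x$ factors as $|D_x|\mathcal{H}$ (up to sign), so Proposition~\ref{prop-pre-fct:BdMult} gives $\partial_x\colon Y^t\to Y^{t-1}$, and the rest is the obvious embedding $Y^{t}\subset L^\infty$ for $t\ge 0$. You have simply unfolded this sketch in full detail, including the explicit check that $|\xi|$ fits Definition~\ref{def-pre-fct:Multi} and the $\langle D_x\rangle^{-t}$ step for $Y^t\hookrightarrow L^\infty$ when $t>0$.
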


Eventually, we will need the following product estimate.

\begin{proposition}\label{prop-pre-fct:Prod}
Fix a real number $s>1/2$. Then 
the product is continuous from $Y^0\times H^s$ to $Y^0$: there exists a positive constant $C_s$ such that, for all 
$a\in H^s$ and for all $u\in Y^0$,
    \begin{equation}\label{eq-pre-fct:Prod}
        \|au\|_{Y^0} \leq C_s \|a\|_{H^s} \|u\|_{Y^0}.
    \end{equation}
\end{proposition}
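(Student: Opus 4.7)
The plan is to bound the two components of $\|au\|_{Y^0} = |au|_{L^\infty} + |\mathcal{H}(au)|_{L^\infty}$ separately. The first bound is immediate from the Sobolev embedding $H^s \hookrightarrow L^\infty$ (which holds since $s > 1/2$):
\begin{equation*}
    |au|_{L^\infty} \leq |a|_{L^\infty}|u|_{L^\infty} \leq C_s\|a\|_{H^s}\|u\|_{Y^0}.
\end{equation*}

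The substance of the argument lies in bounding $|\mathcal{H}(au)|_{L^\infty}$. I would apply Bony's paraproduct decomposition
\begin{equation*}
    au = T_a u + T_u a + R(a, u),
\end{equation*}
and handle each piece. The terms $T_u a$ and $R(a, u)$ are ``good'': the standard paraproduct and remainder estimates yield
\begin{equation*}
    \|T_u a\|_{H^s} + \|R(a, u)\|_{H^s} \leq C|u|_{L^\infty}\|a\|_{H^s},
\end{equation*}
using only $u \in L^\infty$ and $a \in H^s$ with $s > 0$. Since the Hilbert transform preserves $H^s$ and $H^s \hookrightarrow L^\infty$, their contributions to $|\mathcal{H}(au)|_{L^\infty}$ are controlled by $C_s\|a\|_{H^s}\|u\|_{Y^0}$.

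The central step is to control $\mathcal{H}T_a u$, which cannot afford any regularity gain. Here the key observation is that $\mathcal{H}$ commutes exactly with $T_a$: the summand $S_{j-N}a \cdot \Delta_j u$ has Fourier transform supported where $\sgn(\xi)$ is determined by $\sgn(\eta)$ for $\eta$ in the support of $\widehat{\Delta_j u}$, provided the integer $N$ in the paraproduct is taken large enough (so that $|\supp \widehat{S_{j-N}a}|$ is much smaller than the inner radius of the annulus of $\widehat{\Delta_j u}$). A direct computation in Fourier variables then gives the exact identity $\mathcal{H}(S_{j-N}a \cdot \Delta_j u) = S_{j-N}a \cdot \mathcal{H}\Delta_j u$, and summing over $j$ yields $\mathcal{H}T_a u = T_a\mathcal{H}u$. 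Since $\mathcal{H}u \in L^\infty$ by the definition of $Y^0$, I then apply Bony's decomposition in reverse:
\begin{equation*}
    T_a \mathcal{H}u = a\cdot\mathcal{H}u - T_{\mathcal{H}u}a - R(a, \mathcal{H}u),
\end{equation*}
where the first term is bounded in $L^\infty$ by $|a|_{L^\infty}|\mathcal{H}u|_{L^\infty}$, and the last two lie in $H^s \hookrightarrow L^\infty$ by the same paraproduct estimates applied with $u$ replaced by $\mathcal{H}u$. Combining all pieces gives the desired bound $|\mathcal{H}(au)|_{L^\infty} \leq C_s\|a\|_{H^s}\|u\|_{Y^0}$.

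The main obstacle is the commutation $\mathcal{H}T_a u = T_a \mathcal{H}u$ as an \emph{exact} (not approximate) identity, which requires the careful Fourier-support bookkeeping described above. Once this identity is in hand, everything else is a routine invocation of standard paraproduct bounds and the Sobolev embedding, with the smoothness gain placing the residual pieces into $H^s$ and hence into $L^\infty$.
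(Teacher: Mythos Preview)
Your proof is correct and follows essentially the same approach as the paper's: paraproduct decomposition, the residual terms $T_u a + R(a,u)$ placed in $H^s\hookrightarrow L^\infty$, and the key exact commutation of $T_a$ with the Hilbert transform via Fourier-support bookkeeping. The only cosmetic difference is that the paper phrases the commutation using the frequency projections $\mathbbm{1}_{\R_\pm}(D_x)=\tfrac{1}{2}(I\pm i\mathcal{H})$ rather than $\mathcal{H}$ directly, which is an equivalent statement.
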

\begin{proof}
The proof of this claim relies on decomposing each function into two components: its nonpositive and nonnegative frequency parts. Specifically, we introduce the Fourier multipliers $\mathbbm{1}_{\R_\pm}(D_x)$ defined by
$$
\mathbbm{1}_{\R_+}(D_x)=\mez(I+i\Hi),\quad \mathbbm{1}_{\R_-}(D_x)=\mez(I-i\Hi),
$$
so that, given a function $u$, the function  $u_\pm=\mathbbm{1}_{\R_\pm}(D_x)u$ is defined by 
    \begin{equation}\label{n21}
    \hat{u}_\pm(\xi)=\hat{u}(\xi) \quad\text{if}\quad \pm\xi\ge 0,\quad 
        \hat{u}_\pm(\xi)=0 \quad\text{if}\quad \pm\xi< 0.
    \end{equation}
    From Definition \ref{def-intro-main:HolSpaceVar} of $Y^0$, its norm can be characterized as
    \begin{equation}\label{n30}
        \|u\|_{Y^0} \simeq |\mathbbm{1}_{\R_+}(D_x)u|_{L^\infty} + |\mathbbm{1}_{\R_-}(D_x)u|_{L^\infty}.
    \end{equation}
    Now, we claim that, by paraproduct decomposition (see \eqref{eq-besov-bnoy:Decomp}), it suffices to estimate the $L^\infty$ norm of $\mathbbm{1}_{\R_\pm}(D_x)\left( T_a u \right)$, since the remaining terms satisfy
    \begin{align*}
        |\mathbbm{1}_{\R_\pm}(D_x)\left( T_u a + R(a,u) \right)|_{L^\infty} &\lesssim \|T_u a + R(a,u)\|_{H^s}+\|\Hi\left( T_u a + R(a,u) \right)\|_{H^s} \\
        &\le 2 \| T_u a + R(a,u) \|_{H^s} \lesssim
        |u|_{L^\infty} \|a\|_{H^s},
    \end{align*}
    which is a consequence of\eqref{eq-besov-bony:EstiParaLinfty} and \eqref{eq-besov-bony:EstiRem} and the Sobolev embedding $H^s\subset L^\infty$.
    
    An observation on the paraproduct (see definition \eqref{eq-besov-bony:DefPara} in the appendix) is that the operator $T_a$ can be chosen 
    such that it does not significantly alter the the frequency. More precisely, we claim that $T_a$ commutes with the Fourier multipliers $\mathbbm{1}_{\R_\pm}(D_x)$ provided that $N$ is large enough. Indeed, recall that, 
    by definition \eqref{eq-besov-bony:DefPara},
    \begin{equation*}
        T_a u = \sum_{j\ge N} S_{j-N}a \Delta_j u,
    \end{equation*}
    where $S_{j-N}a$ has frequency restricted in a ball centered at zero of radius $\sim 2^{j-N}$. When $u$ contains only positive frequencies, for all $j\ge N$, the Fourier transform of $\Delta_j u$ is supported in the interval $[c2^j, C2^j]$ for some constants $C>c>0$. Thus, for all $j\ge N$, the product $S_{j-N}a \Delta_j u$ merely has frequency $\xi\in[c2^j-c'2^{j-N}, C2^j+C'2^{j-N}] \subset \R_+$ for some $C'>c'>0$ and all $N \gg 1$, which proves the claim $[T_a,\mathbbm{1}_{\R_+}(D_x)]=0$. The same argument applies when $\supp\hat{u}\subset\R_-$.
    
    Based on this fact, one can write $\mathbbm{1}_{\R_\pm}(D_x)\left( T_a u \right)$ as $T_a \mathbbm{1}_{\R_\pm}(D_x)u$. An application of estimates \eqref{eq-besov-bony:EstiParaLinfty} and \eqref{eq-besov-bony:EstiRem}, together with the Sobolev embedding $H^s\subset L^\infty$, yields
    \begin{align*}
        |\mathbbm{1}_{\R_\pm}(D_x)&\left( T_a u \right)|_{L^\infty} = |T_a \mathbbm{1}_{\R_\pm}(D_x)u|_{L^\infty} \\
        &\le |a \mathbbm{1}_{\R_\pm}(D_x)u|_{L^\infty} + \left| T_{\mathbbm{1}_{\R_\pm}(D_x)u} a \right|_{L^\infty} + \left|R\left( a,\mathbbm{1}_{\R_\pm}(D_x)u \right)\right|_{L^\infty} \\
        &\lesssim |a|_{L^\infty} |\mathbbm{1}_{\R_\pm}(D_x)u|_{L^\infty} + \left\| T_{\mathbbm{1}_{\R_\pm}(D_x)u} a \right\|_{H^s} + \left\|R\left( a,\mathbbm{1}_{\R_\pm}(D_x)u \right)\right\|_{H^s} \\
        &\lesssim \|a\|_{H^s} |\mathbbm{1}_{\R_\pm}(D_x)u|_{L^\infty} \le \|a\|_{H^s} \|u\|_{Y^0}.
    \end{align*}
    This concludes the desired inequality \eqref{eq-pre-fct:Prod}.
\end{proof}

	Finally, we justify that $Y^0$ is in fact the Hardy space with index $\infty$; namely, there is some bounded holomorphic function $F(x+iy)$ defined on the upper half plane $\{y>0\}$, such that the \emph{non-tangential limit} of $\RE F$ on the real axis is $f$.

\begin{proposition}\label{prop-pre-fct:EquiHardy}
A real-valued function $f$ belongs to $Y^0$ if and only if $f+i\Hi f$ is the boundary value of a bounded holomorphic function on the upper half plane.
\end{proposition}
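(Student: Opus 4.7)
The plan is to identify the bounded holomorphic extension with the Cauchy (equivalently, Poisson + conjugate Poisson) integral of $f$, and then use that $f \in Y^0$ precisely guarantees boundedness of \emph{both} the Poisson extension of $f$ and its harmonic conjugate.

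\textbf{The easy direction ($\Leftarrow$).} Suppose $F = U + iV$ is a bounded holomorphic function on the upper half plane with boundary value $f + i\mathcal{H}f$ (non-tangentially a.e.). Then $U$ and $V$ are bounded harmonic functions with $|U|_{L^\infty}, |V|_{L^\infty} \le |F|_{L^\infty}$, so their non-tangential boundary values $f$ and $\mathcal{H} f$ both lie in $L^\infty$. Hence $f \in Y^0$, with $\|f\|_{Y^0} \le 2|F|_{L^\infty}$.

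\textbf{The hard direction ($\Rightarrow$).} Given $f \in Y^0$, both $f$ and $\mathcal{H} f$ lie in $L^\infty$. Let $P_y(x) = \frac{y}{\pi(x^2+y^2)}$ be the Poisson kernel on the upper half plane. Since $P_y \in L^1(\R)$, I define
\begin{equation*}
F(x + iy) \defn (P_y * f)(x) + i\, (P_y * \mathcal{H} f)(x), \qquad y>0.
\end{equation*}
The boundedness $|F(x+iy)| \le |f|_{L^\infty} + |\mathcal{H} f|_{L^\infty} = \|f\|_{Y^0}$ is immediate from $|P_y|_{L^1}=1$, and each summand is a harmonic function of $(x,y)$.

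\textbf{Holomorphicity.} The crucial step is to show that $F$ satisfies the Cauchy--Riemann equations. For this I use the identity
\begin{equation*}
(P_y * \mathcal{H} f)(x) = (Q_y * f)(x), \qquad Q_y(x) := \frac{x}{\pi(x^2+y^2)},
\end{equation*}
which on the Fourier side reduces to $e^{-y|\xi|} \cdot (-i\sgn \xi) \hat f(\xi) = (-i\sgn\xi) e^{-y|\xi|} \hat f(\xi)$. For Schwartz $f$ it is trivial; for $f \in L^\infty$ with $\mathcal{H} f \in L^\infty$ I justify it by testing against a Schwartz function $\varphi$: both sides equal $\langle f, P_y * \mathcal{H}\varphi\rangle = \langle f, \mathcal{H}(P_y * \varphi)\rangle$ using self-adjointness of $\mathcal{H}$ (up to sign) and its commutation with convolution. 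Granted this identity,
\begin{equation*}
F(x+iy) = \bigl((P_y + iQ_y) * f\bigr)(x),
\end{equation*}
and a direct computation gives $P_y(x) + i Q_y(x) = \frac{i}{\pi(x+iy)}$, so the convolution kernel is holomorphic in $z = x+iy$. Differentiating under the integral sign (legitimate since $f \in L^\infty$ and $\partial_z\bigl(\frac{i}{\pi(t-z)}\bigr)$ is integrable in $t$ for each $z$ in a compact subset of the upper half plane), one concludes that $F$ is holomorphic.

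\textbf{Boundary value.} Standard Poisson theory yields $(P_y * g)(x) \to g(x)$ non-tangentially a.e.\ whenever $g \in L^\infty$. Applying this to $g = f$ and $g = \mathcal{H} f$ gives $F(x+iy) \to f(x) + i \mathcal{H} f(x)$ non-tangentially a.e., finishing the proof.

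The only subtle point is the justification of the identity $P_y * \mathcal{H} f = Q_y * f$ at the $L^\infty$ level; all other steps are immediate from the fact that membership in $Y^0$ was defined so as to give bounded harmonic extensions of \emph{both} $f$ and $\mathcal{H} f$, which is precisely what is needed for the conjugate pair to assemble into a bounded holomorphic function.
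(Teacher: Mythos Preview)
Your proof is correct and follows essentially the same approach as the paper's: both directions are handled by identifying $F(x+iy)=(P_y*f)(x)+i(P_y*\mathcal{H}f)(x)$ with the Cauchy integral and using that $\{P_y\}$ is an approximate identity. The paper is terser---it simply writes down the Cauchy integral formula and asserts boundedness---whereas you spell out the intermediate identity $P_y*\mathcal{H}f=Q_y*f$ and the boundary-value argument.

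One small technical point (present in both your argument and the paper's): the Cauchy integral $\int\frac{f(t)}{z-t}\,dt$ does not converge absolutely for $f\in L^\infty$, so the step ``differentiate under the integral sign'' needs care. The cleanest fix is to bypass the Cauchy integral entirely and verify the Cauchy--Riemann equations for $U=P_y*f$, $V=P_y*\mathcal{H}f$ directly, using that $\partial_xP_y,\partial_yP_y\in L^1(\R)$ and the Fourier identities $\widehat{\partial_yP_y}(\xi)=-|\xi|e^{-y|\xi|}$, $\widehat{\partial_xP_y}(\xi)=i\xi e^{-y|\xi|}$, from which $\partial_yP_y=\mathcal{H}(\partial_xP_y)$ as $L^1$ functions; then $\partial_xU=(\partial_xP_y)*f$ and $\partial_yV=(\partial_yP_y)*\mathcal{H}f=(\mathcal{H}\partial_xP_y)*\mathcal{H}f=(\partial_xP_y)*f$ by moving $\mathcal{H}$ across the convolution. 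This avoids any convergence issue.
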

\begin{proof}
If \( f \in Y^0 \), then with \( P_y \) being the Poisson kernel, the function  
\[
F(x+iy) = (P_y * f)(x) + i\big(P_y * (\mathcal{H} f)\big)(x)
= \frac{1}{i\pi} \int_{-\infty}^{+\infty} \frac{f(\zeta)}{(x+iy) - \zeta} \diff\!\zeta
\]
is holomorphic in the upper half-plane. Since \( \{P_y\}_{y>0} \) forms an approximate identity, it follows that \( F(z) \) is a bounded holomorphic function satisfying \( |F|_{L^\infty} \leq \|f\|_{Y^0} \).  

Conversely, if \( F(z) \) is a bounded holomorphic function defined for \( z \) in the upper half-plane, then the \emph{non-tangential limit} of \( F(z) \) as \( z = x+iy \to x \) for \( x \in \mathbb{R} \) exists for almost every \( x \), and is given by \( f + i\mathcal{H} f \) for some locally integrable function \( f \). Since \( f + i\mathcal{H} f \) is bounded, we conclude that \( f \in Y^0 \).
\end{proof}

\subsection{Endpoint commutator estimate}
In this subsection, we prove a commutator estimate needed for the study of the 
derivative of the Dirichlet-to-Neumann operator. This estimate complements the following general commutator estimate
$$
\big\|[ \Dx^s,A ]f\big\|_{L^2}
\lesssim_s \|A\|_{H^s}|f|_{L^\infty}
\quad\text{for}\quad 0<s<1,
$$
proved in Section $2.2$ of Taylor \cite{Taylor2000}, as well as the estimate 
$$
\big\|[ \Dx,A ]f\big\|_{L^r}
\lesssim_{p,q} \|\nabla A\|_{L^q}\|f\|_{L^p},
\quad \frac{1}{r}=\frac{1}{p}+\frac{1}{q},
\quad 1<p<\infty,\,1<q\leq \infty,
$$
proved in Calder\'on \cite{Calderon1965} and Coifman--Meyer \cite{CM1975}. The point is that the previous estimates exclude the endpoint case $s=1$ or $p=\infty$. We shall make use of some delicate estimates of paraproduct operator to prove the following endpoint estimate:

\begin{lemma}\label{Commu_Esti}
Suppose the function $A\in H^1(\R)$, while $f\in L^\infty(\R)$. Then 
$$
\big\|[\Dx,A]f\big\|_{L^2}
\leq C\|A\|_{H^1}|f|_{L^\infty}.
$$
\end{lemma}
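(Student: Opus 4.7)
The natural first step is to use the factorization $\Dx = \mathcal H\partial_x$ together with the Leibniz rule to split
\begin{equation*}
[\Dx, A] f = \mathcal H(A'f) + [\mathcal H, A]\partial_x f.
\end{equation*}
The first summand is trivially controlled, since $\mathcal H$ is an $L^2$-isometry:
$\|\mathcal H(A'f)\|_{L^2} \le \|A'\|_{L^2}|f|_{L^\infty} \le \|A\|_{H^1}|f|_{L^\infty}$.
Thus the whole difficulty lies in $[\mathcal H, A]\partial_x f$, where $\partial_x f$ is only a tempered distribution.

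For this second term, I would apply Bony's paraproduct decomposition $A g = T_A g + T_g A + R(A,g)$ to $g = \partial_x f$, together with the key observation---already exploited in the proof of Proposition~\ref{prop-pre-fct:Prod}---that $\mathcal H$ commutes with $T_A$ when the truncation parameter $N$ in the paraproduct is chosen large enough, because $T_A$ preserves Fourier supports confined to a half-line. This reduces the problem to estimating the two pieces
\begin{equation*}
\mathcal H T_{\partial_x f}A - T_{\mathcal H\partial_x f}A
\qquad\text{and}\qquad
\mathcal H R(A,\partial_x f) - R(A,\mathcal H\partial_x f).
\end{equation*}

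For the paraproduct piece, set $B_j = S_{j-N}f$ and $C_j = \Delta_j A$. Splitting every factor into its $\pm$-frequency projections $g_\pm = \mathbbm 1_{\R_\pm}(D_x) g$ (cf.~\eqref{n21}) and using $\mathcal H = -i\mathbbm 1_{\R_+}(D_x) + i\mathbbm 1_{\R_-}(D_x)$, a direct calculation yields the pointwise cancellation identity
\begin{equation*}
(\partial_x B_j)\mathcal H C_j - \mathcal H(\partial_x B_j)\, C_j = 2i\bigl[(\partial_x B_j)_+\,(C_j)_- - (\partial_x B_j)_-\,(C_j)_+\bigr].
\end{equation*}
The scale-dependent bound $\|(\partial_x B_j)_\pm\|_{L^\infty}\lesssim 2^{j-N}|f|_{L^\infty}$ follows from (the proof of) Proposition~\ref{prop-pre-fct:BdMultLinfty}: although the symbol $i\xi\,\chi_{j-N}(\xi)\mathbbm 1_{\R_\pm}(\xi)$ is not smooth across the origin, it vanishes continuously there, so its inverse Fourier transform is in $L^1$ with norm scaling as $2^{j-N}$. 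Since each summand lives in a thin annulus of width $\lesssim 2^{j-N}$ on a single side of the real line, almost-orthogonality in $j$ combined with $\|(C_j)_\mp\|_{L^2}\le\|\Delta_j A\|_{L^2}$ yields
\begin{equation*}
\bigl\|\mathcal H T_{\partial_x f}A - T_{\mathcal H\partial_x f}A\bigr\|_{L^2}^2 \lesssim 2^{-2N}|f|_{L^\infty}^2\sum_j 2^{2j}\|\Delta_j A\|_{L^2}^2 \lesssim |f|_{L^\infty}^2\|A\|_{H^1}^2.
\end{equation*}

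The remainder $\mathcal H R(A,\partial_x f) - R(A,\mathcal H\partial_x f)$ is treated by the same sign-split identity, but now both factors $\Delta_j A$ and $\widetilde\Delta_j\partial_x f$ live at frequency $\sim 2^j$, so the product has Fourier support in a ball around zero rather than an annulus, and naive almost-orthogonality in $j$ is lost. The ``standard facts about Carleson measures'' alluded to just before the statement enter here: after applying an additional Littlewood--Paley projector $\Delta_k$ to the output and using the cancellation $\mathcal H(PQ) - P\mathcal H Q = -2i(P_+Q_-)_+ + 2i(P_-Q_+)_-$, one uses that the dyadic square-function measure $\sum_j|\widetilde\Delta_j f|^2 \,\delta_{t=2^{-j}}\,dx\,dt$ is a Carleson measure on the upper half-plane with mass $\lesssim|f|_{L^\infty}^2$, and pairs it against $\|A\|_{H^1}^2\simeq\sum_j 2^{2j}\|\Delta_j A\|_{L^2}^2$ through the Carleson embedding theorem. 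The main obstacle throughout is the critical regularity of the estimate: in one dimension $H^1$ just fails to embed in $W^{1,\infty}$, so Calderón's classical commutator theorem is unavailable, and it is precisely the combination of the frequency-sign cancellation with the Carleson-measure argument that preserves the optimal $L^2$ bound at this endpoint scaling.
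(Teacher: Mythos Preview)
Your proposal is correct and closely parallel to the paper's proof, though the organization differs. The paper paraproduct-decomposes $[\Dx, A]f$ directly as $[\Dx, T_A]f + \Dx(T_fA + R(A,f)) - T_{\Dx f}A - R(A,\Dx f)$, whereas you first factor $\Dx = \mathcal{H}\partial_x$ and apply Leibniz, then paraproduct-decompose $[\mathcal{H}, A]\partial_x f$; both routes isolate the same two nontrivial pieces (a paraproduct-type commutator and a diagonal remainder) and both hinge on the fact that $T_A$ commutes with $\mathbbm{1}_{\R_\pm}(D_x)$. For the paraproduct piece, your explicit sign-split plus almost-orthogonality is somewhat more hands-on than the paper's route, which instead computes $[\Dx, T_A]f = T_{A'}\mathcal{H}f$ via the decomposition $f=f_++f_-$ and then invokes the Coifman--Meyer bilinear estimate $\|\tau(g,f)\|_{L^2}\lesssim\|g\|_{L^2}\|f\|_{\mathrm{BMO}}$ (Proposition~\ref{Bilinear}). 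For the remainder piece, both approaches ultimately rest on the same Carleson-measure machinery: the paper applies Proposition~\ref{Bilinear} directly to $R(A,\Dx f)$ after rewriting it as $\sum_l 2^{-l}\sum_j(|2^{-j}D_x|^{-1}\Delta_j\,|D_x|A)\cdot(\Delta_{j+l}|2^{-j-l}D_x|\,f)$, which is precisely the Carleson-embedding argument you sketch. One minor correction: in your paraproduct step the Fourier support of $(\partial_x B_j)_+(C_j)_-$ is an annulus of scale $\sim 2^j$, not width $\sim 2^{j-N}$, but this does not affect the almost-orthogonality in $j$ that you need.
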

\begin{remark}Noticing that $\Dx$ is the Dirichlet-to-Neumann operator for the lower-half space, Lemma \ref{Commu_Esti} is, in fact, covered by the more general commutator estimates for Dirichlet-to-Neumann operators (see Theorem 5.1 of Kenig-Lin-Shen \cite{KLS2014}, and also Shen \cite{Shen2013}). However, the proof in these references relies on a subtle bilinear estimate obtained by Dahlberg \cite{Dahlberg1986}. Therefore, it is of interest to find a proof of Lemma \ref{Commu_Esti} that relies solely on classical tools, such as the Littlewood-Paley dyadic decomposition, standard properties of the Hilbert transform, and Carleson measures.
\end{remark}
\begin{proof}[Proof of Lemma \ref{Commu_Esti}]
We can cast paraproduct decomposition to the product operator with $A$, and write
\begin{equation*}
[\Dx,A]f = [\Dx,T_A]f + \Dx\big( T_fA + R(A,f) \big) - T_{\Dx f}A - R(A,\Dx f).
\end{equation*}
The second and third terms in the right-hand side can be estimated by a more or less 
standard computation. For example, by (2.3)-(2.6) in Chapter 2 of \cite{Taylor2000}, there holds
\begin{align*}
\|T_f A\|_{H^1} 
&\lesssim  |f|_{L^\infty} \|A\|_{H^1}, \\
\|R(A,f)\|_{H^1} 
&\lesssim |f|_{L^\infty} \|A\|_{H^1}, \\
\| T_{\Dx f}A \|_{L^2} 
&\lesssim |f|_{L^\infty} \|A'\|_{L^2}.
\end{align*}

The major difficulty is with the commutator $[|D_x|,T_A]f$ and the paraproduct remainder $R(A,\Dx f)$. We shall make use of the following classical bilinear estimate:

\begin{proposition}[cf.\ Theorem 33 in \cite{CM1978}]\label{Bilinear}
Suppose $\varphi,\eta$ are Schwartz functions on $\R^d$, such that $\eta(0)=0$. Define a bilinear operator
$$
\tau(g,f)=\sum_{j\in\N}\varphi(2^{-j}D_x)g\cdot\eta(2^{-j}D_x)f.
$$
Then for functions $g,f$ on $\R^d$, there holds
$$
\|\tau(g,f)\|_{L^2}\lesssim_{\varphi,\eta}\|g\|_{L^2}\|f\|_{\BMO}.
$$
See \cite{CM1978} for the definition of the space $\BMO$.
\end{proposition}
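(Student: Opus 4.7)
The plan is to prove the bilinear estimate by combining $L^2$-duality, the Fefferman--Stein characterization of $\BMO$ via Carleson measures, and a paraproduct-style Littlewood--Paley decomposition. Denote $\varphi_j:=\varphi(2^{-j}D_x)$ and $\eta_j:=\eta(2^{-j}D_x)$. By $L^2$-duality, it suffices to establish, for every $h\in L^2$,
\[
|I|:=\Big|\sum_{j\in\N}\int_{\R^d}(\varphi_j g)(x)(\eta_j f)(x)\,h(x)\,dx\Big|\lesssim \|g\|_{L^2}\|h\|_{L^2}\|f\|_{\BMO}.
\]

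The central input is that, because $\eta$ is Schwartz with $\eta(0)=0$, the convolution kernel of $\eta_j$ has zero mean. The John--Nirenberg inequality then yields the uniform pointwise bound $\|\eta_j f\|_{L^\infty}\lesssim \|f\|_{\BMO}$, and more refinedly the family $\{|\eta_j f|^2\,dx\otimes\delta_j\}_{j\in\N}$ is a discrete Carleson measure on $\R^d\times \N$ with Carleson norm $\lesssim \|f\|_{\BMO}^2$; this is the classical Fefferman--Stein characterization of $\BMO$.

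With these inputs in hand, my plan is to insert Littlewood--Paley decompositions $g=\sum_k\Delta_k g$ and $h=\sum_{k'}\Delta_{k'}h$ into $I$ and analyse the resulting triple sum over $(j,k,k')$. The Schwartz decay of $\varphi,\eta$ together with the frequency supports of $\Delta_k,\Delta_{k'}$ and Parseval's identity confine the non-negligible contributions to the region $k\sim k'\le j$ (up to $O(1)$); in that range the symbol $\varphi(2^{-j}\xi)$ is essentially constant on the support of $\Delta_k$, while $\eta_j f$ contributes an $L^\infty$-factor controlled by $\|f\|_{\BMO}$. On this diagonal H\"older's inequality yields
\[
\Big|\int\varphi_j\Delta_k g\cdot \eta_j f\cdot\Delta_{k'}h\,dx\Big|\lesssim \|\Delta_k g\|_{L^2}\,\|f\|_{\BMO}\,\|\Delta_{k'}h\|_{L^2},
\]
and Cauchy--Schwarz in the diagonal index $k\sim k'$ combined with the $\ell^2$-identities $\sum_k\|\Delta_k g\|_{L^2}^2\simeq \|g\|_{L^2}^2$ (and analogously for $h$) recovers the desired bound.

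The main obstacle will be the careful handling of the off-diagonal interactions, in particular the ``cascade'' terms in which one of $j,k,k'$ is substantially larger than the other two. A naive H\"older argument does not close these terms; rather, one exploits the Fourier-support constraint $\xi_1+\xi_2+\xi_3=0$ forced by the spatial integral, which compels at least two of the frequency scales $2^k,2^j,2^{k'}$ to be comparable, together with the Schwartz decay of $\varphi$ and $\eta$ at infinity, to extract a geometrically decaying factor in $\max(|k-k'|,|k-j|,|k'-j|)$. The overall scheme is a bilinear $T(1)$-style argument in the spirit of Coifman--Meyer, for which we refer to \cite{CM1978} for the full treatment.
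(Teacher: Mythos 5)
You should first note that the paper does not actually prove this proposition: it is quoted as Theorem 33 of Coifman--Meyer, with only the remark that the discrete sum over $j$ is handled exactly like the continuous integral $\int_0^\infty\cdots\,m(t)\,t^{-1}\dt$ treated there, ``making use of some basic properties of Carleson measure''. Your two stated inputs --- the uniform bound $\lvert\eta(2^{-j}D_x)f\rvert_{L^\infty}\lesssim\|f\|_{\BMO}$ and the discrete Carleson property of the family $\{\lvert\eta(2^{-j}D_x)f\rvert^2\dx\}_{j}$, both consequences of $\eta(0)=0$ --- are indeed the correct ingredients, so your toolbox matches the paper's pointer to the literature.

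However, the argument as sketched has a genuine gap at its central step: the sum over the scale index $j$ is never controlled. Your diagonal estimate
\[
\Bigl|\int\varphi(2^{-j}D_x)\Delta_k g\cdot \eta(2^{-j}D_x) f\cdot\Delta_{k'}h\dx\Bigr|\lesssim \|\Delta_k g\|_{L^2}\,\|f\|_{\BMO}\,\|\Delta_{k'}h\|_{L^2}
\]
is uniform in $j$; Cauchy--Schwarz over $k\sim k'$ then gives $\lesssim\|g\|_{L^2}\|h\|_{L^2}\|f\|_{\BMO}$ \emph{for each fixed $j$}, and the remaining sum over $j\in\N$ diverges. This divergence is precisely the difficulty the proposition addresses, and it is where the Carleson measure must be used quantitatively rather than only through the $L^\infty$ bound. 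The standard resolution is quadratic: one proves $\sum_j\int\lvert\varphi(2^{-j}D_x)g\rvert^2\,\lvert\eta(2^{-j}D_x)f\rvert^2\dx\lesssim\|f\|_{\BMO}^2\|g\|_{L^2}^2$ by the Carleson embedding theorem (using that $\lvert\varphi(2^{-j}D_x)g\rvert$ is dominated by the Hardy--Littlewood maximal function of $g$), and then converts this square-function bound into a bound for the $L^2$ norm of the full sum via almost orthogonality of the output frequencies, after decomposing the Schwartz symbols $\varphi,\eta$ into compactly supported pieces with rapidly decaying coefficients. Neither the quadratic use of the Carleson measure nor the almost-orthogonality step appears in your estimates. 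A secondary issue: your identification of the dominant regime as $k\sim k'\le j$ is backwards. Since $\eta(2^{-j}\xi)=O(2^{-j}\lvert\xi\rvert)$ near the origin, the trilinear integral with $k\sim k'\le j-O(1)$ forces the relevant frequencies of $\eta(2^{-j}D_x)f$ to be $\lesssim 2^{k}$ and hence carries an extra factor $2^{k-j}$, which makes that region summable in $j$ --- it is an error term. The delicate region is $k\sim k'\ge j$, where the weight is $O(1)$ and only the square-function/Carleson argument closes the sum over $j\le k$; as written, your plan relegates it to the off-diagonal terms deferred to the reference.
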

In fact, the original statement of Proposition \ref{Bilinear} in \cite{CM1978} is concerned with a \emph{continuous} version
$$
\tau(g,f)=\int_0^\infty \varphi(tD_x)g\cdot\eta(tD_x)f\frac{m(t)}{t}\dt,
$$
where $m$ is a bounded function, but the proof is essentially the same, making use of some basic properties of Carleson measure. 

Then, we estimate the remainder term $R(A,\Dx f)$. From $\Dx=D_x\mathcal{H}$ and the definition~\eqref{eq-besov-bony:DefPara} of $R(\cdot,\cdot)$, we can write $R(A,\Dx f)$ as
\begin{align*}
    R(A,\Dx f) =& \sum_{j\ge0} \sum_{k:|j-k|\le N} \Delta_j A \cdot \Delta_k \Dx f \\
    =& \sum_{j=0}^N \sum_{k:|j-k|\le N} \Delta_j A \cdot \Delta_k \partial_x \Hi f 
    + \sum_{l=-N}^N\sum_{j\ge N+1} \Delta_j A \cdot \Delta_{j+l} \Dx f,
\end{align*}
where $N\in\N$ is fixed. To estimate the first term in the right-hand side, 
we observe that, 
for all indices $j,k$ such that $0 \le j \le N$ and $|j-k|\le N$, we have
\begin{align*}
\left\| \Delta_j A \cdot \Delta_k D_x \Hi f \right\|_{L^2} 
&\leq \| \Delta_j A\|_{L^2}\big|\Delta_k\Dx f\big|_{L^\infty}\\
&\lesssim_{k} \| A \|_{L^2} \la  \Dx f \ra_{C^{-1}_*} \\
&\lesssim_N  \| A \|_{H^1} \la  f  \ra_{C^{0}_*}\lesssim_N  \| A \|_{H^1} \la f  \ra_{L^\infty}
\end{align*}
As for the second term, we shall use Proposition~\ref{Bilinear}. To do so, we rewrite it as
\begin{align*}
    &\sum_{l=-N}^N\sum_{j\ge N+1} \Delta_j A \cdot \Delta_{j+l} \Dx f \\
    &\hspace{4em}= \sum_{l=-N}^N 2^{-l} \sum_{j\ge N+1}  \Big( |2^{-j}D_x|^{-1} \Delta_j\ |D_x|A \Big) \cdot \Big( \Delta_{j+l} |2^{-j-l}D_x|\ f \Big),
\end{align*}
where the symbol of the Fourier multipliers $|2^{-j}D_x|^{-1} \Delta_j$, $|2^{-j}D_x|^{-1} \Delta_j$ takes the form $\varphi(2^{-j}\xi)$, $\eta(2^{-j-l}\xi)$, respectively, for some Schwartz functions $\varphi$, $\eta$ vanishing at zero. Then we adopt Proposition~\ref{Bilinear} with $(g,f)$ replaced by $(|D_x|A,f)$ and obtain 
$$
\left\| \sum_{l=-N}^N\sum_{j\ge N+1} \Delta_j A \cdot \Delta_{j+l} \Dx f \right\|_{L^2} 
\lesssim \big\||D_x|A\big\|_{L^2} \|f\|_{\BMO} 
\lesssim \| A \|_{H^1} | f |_{L^\infty}.
$$
This completes the estimate of the $L^2$ norm of $R(A,\Dx f)$.

Finally, the commutator $[\Dx, T_A]f$ can be estimated by Proposition \ref{Bilinear} again. We decompose $f$ as $f=f_-+f_+$, where $f_\pm=\mathbbm{1}_{\R_\pm}(D_x)f$, and observe that,  
$$
\Dx f_+=D_x f_+\quad,\quad \Dx f_-=-D_x f_-.
$$
Furthermore, the proof of Proposition~\ref {prop-pre-fct:Prod} shows that
$$
T_A f_+=(T_Af)_+ \quad\text{and}\quad T_Af_-=(T_Af)_-.
$$
Combining these two observations, we conclude that
\begin{align*}
[\Dx, T_A]f 
& = \sum_{\pm} [\Dx, T_A]f_\pm \\
& = \sum_{\pm}  \big(\Dx(T_Af_\pm) - T_A \Dx f_\pm \big) \\
& = \sum_{\pm}\pm \big( D_x(T_Af_\pm) - T_A D_xf_\pm \big)\\
& = \sum_{\pm} \mp i T_{A'} f_\pm = T_{A'} \Hi f.
\end{align*}
The paraproduct $T_{A'} \Hi f$ is again of the form discussed in Proposition \ref{Bilinear}, so we obtain the $L^2$ estimate of $[\Dx, T_A]f$. 
\end{proof}

\subsection{Strichartz estimate}\label{subsect-pre:Stri}
In this subsection, we state the local-in-time Strichartz type inequality that we will use later on. The symbol to be studied is
\begin{equation*}
	p(\xi) = \theta(\xi)^{1/2}\left(1+|\xi|^4\right)^{1/2},
\end{equation*}
where $\theta(\xi)\geq1/4$ is a smooth, even function that equals $1/4$ for $|\xi|\leq1/2$ and $|\xi|/(1+|\xi|)$ for $|\xi|>1$. For later purpose a simple but crucial observation 
is that one can estimate not only the $L^p_tL^\infty_x$-norm of the unknown through Strichartz estimates, but also the $L^p_tY^0_x$-norm, where $Y^0$ is the Hardy space as in Definition \ref{def-intro-main:HolSpaceVar}.

\begin{proposition}\label{prop-pre-stri:DisperEsti}
Let $(q,r)$ be an \emph{admissible pair} of positive numbers, that is,
$$
\frac{2}{q}+\frac{1}{r}=\frac{1}{2},
\quad 1\leq q,r\leq\infty.
$$
Suppose $T\in (0,1]$, and consider an initial data $U_0\in L^2_x$ and a source term $F\in L^{q'}_TL^{r'}_x$. Then the unique solution $U\in L^\infty_TL^2_x$ to the Cauchy problem
	\begin{equation*}
		\partial_t U - ip(D_x)U = F,
		\quad U(0,x)=U_0(x)
	\end{equation*}
satisfies the following estimates:
	\begin{equation}\label{eq-pre-stri:DisperEsti}
	\begin{aligned}
	\big\|U;{L^\infty_T L^2_x\cap L^4_T Y^0_x}\big\|
	&\lesssim_{r}
		\|U_0\|_{L^2_x} + \big\|F;{L^{q'}_TL^{r'}_x}\big\|,
		\quad 2\leq r<+\infty;\\
	\big\|U;{L^\infty_T L^2_x}\cap{L^4_T Y^0_x}\big\|
	&\lesssim
		\|U_0\|_{L^2_x} + \big\|F;{L^{4/3}_TL^1_x}\big\|
		+\big\|\Hi F;{L^{4/3}_TL^1_x}\big\|,
		\quad r=+\infty.
	\end{aligned}
	\end{equation}
\end{proposition}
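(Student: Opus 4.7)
The plan is to follow the classical Strichartz scheme (dispersive bound combined with $TT^*$ duality) and then transfer the resulting $L^\infty_x$ bounds into $Y^0_x$ bounds by exploiting that the Hilbert transform commutes with the Fourier multiplier $p(D_x)$. First, since $p$ is real-valued and smooth, the group $e^{itp(D_x)}$ is unitary on $L^2_x$, which yields existence and uniqueness of $U\in L^\infty_TL^2_x$ through the Duhamel representation
\[
U(t)=e^{itp(D_x)}U_0+\int_0^t e^{i(t-s)p(D_x)}F(s)\,ds.
\]
The key analytic input is the dispersive estimate $\|e^{itp(D_x)}f\|_{L^\infty_x}\lesssim |t|^{-1/2}\|f\|_{L^1_x}$ for $0<t\le 1$. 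To establish it, I would split $e^{itp(D_x)}=\chi(D_x)e^{itp(D_x)}+(1-\chi)(D_x)e^{itp(D_x)}$ with $\chi$ a smooth cutoff to $|\xi|\le 2$. The low-frequency piece has a Schwartz convolution kernel depending smoothly on $t\in[0,1]$, hence is bounded on $L^1\to L^\infty$ uniformly. For the high-frequency piece, the point is that $p''(\xi)=2+O(|\xi|^{-2})$ as $|\xi|\to\infty$, so $p''$ is uniformly bounded from below on $\{|\xi|\ge 2\}$; a standard dyadic decomposition followed by van der Corput's lemma then delivers the $|t|^{-1/2}$ decay.

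Once the dispersive bound is in hand, the abstract Keel-Tao machinery, combined with unitarity on $L^2_x$, yields the homogeneous and inhomogeneous Strichartz estimates
\[
\|U\|_{L^\infty_TL^2_x\cap L^4_TL^\infty_x}\lesssim \|U_0\|_{L^2_x}+\|F\|_{L^{q'}_TL^{r'}_x}
\]
for every admissible pair $(q,r)$. To upgrade the $L^\infty_x$ component to the $Y^0_x$ component, observe that $\|V\|_{Y^0}=|V|_{L^\infty}+|\Hi V|_{L^\infty}$, so it remains to bound $|\Hi U|_{L^4_TL^\infty_x}$. Since $\Hi$ and $p(D_x)$ are Fourier multipliers, they commute with $e^{itp(D_x)}$ and with the Duhamel integral, giving
\[
\Hi U(t)=e^{itp(D_x)}\Hi U_0+\int_0^t e^{i(t-s)p(D_x)}\Hi F(s)\,ds.
\]
Applying the preceding Strichartz bound to the pair $(\Hi U_0,\Hi F)$ produces
\[
\|\Hi U\|_{L^4_TL^\infty_x}\lesssim \|\Hi U_0\|_{L^2_x}+\|\Hi F\|_{L^{q'}_TL^{r'}_x}.
\]

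Finally, I would treat the two ranges of $r$ separately. For $2\le r<\infty$ one has $1<r'\le 2$, so $\Hi$ is bounded on $L^2_x$ and on $L^{r'}_x$, and the terms $\|\Hi U_0\|_{L^2_x}$ and $\|\Hi F\|_{L^{q'}_TL^{r'}_x}$ can be absorbed into $\|U_0\|_{L^2_x}$ and $\|F\|_{L^{q'}_TL^{r'}_x}$, giving the first estimate of \eqref{eq-pre-stri:DisperEsti}. For the endpoint $r=\infty$, i.e.\ $r'=1$, the Hilbert transform fails to be bounded on $L^1_x$, so $\|\Hi F\|_{L^{4/3}_TL^1_x}$ cannot be absorbed and must appear explicitly on the right-hand side as a separate datum, yielding the second estimate. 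The main obstacle I expect is the dispersive estimate itself: handling the non-trivial interpolation of $p(\xi)$ between the low-frequency regime (where $p$ is bounded smooth) and the high-frequency regime (where it is genuinely quadratic) requires a careful frequency decomposition and a stationary-phase argument uniform in the dyadic scale.
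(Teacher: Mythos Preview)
Your proposal is correct and follows essentially the same approach as the paper. The paper's proof simply cites the general result from Kenig--Ponce--Vega \cite{KPV1991} for the underlying $L^\infty_TL^2_x\cap L^4_TL^\infty_x$ Strichartz estimate (where your sketch of the dispersive bound plus Keel--Tao would fit), and then upgrades $L^\infty_x$ to $Y^0_x$ exactly as you do: apply the Hilbert transform to the equation, use that $\Hi$ commutes with $e^{itp(D_x)}$, and invoke boundedness of $\Hi$ on $L^{r'}_x$ when $1<r'\le 2$, keeping the $\|\Hi F\|_{L^{4/3}_TL^1_x}$ term in the endpoint case $r=\infty$.
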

\begin{remark}
Since the time $T$ is finite, this is a local result. We may not expect a global-in-time Strichartz inequality since $p''(\xi)$ must vanish at some point.
\end{remark}
\begin{proof}
The Strichartz estimate is a consequence of a general result proved in Section 2 of \cite{KPV1991}, which implies that
	\begin{equation*}
	\begin{aligned}
	\big\|U;{L^\infty_T L^2_x\cap L^4_T L^\infty_x}\big\|
	&\lesssim_{r}
		\|U_0\|_{L^2_x} + \big\|F;{L^{q'}_TL^{r'}_x}\big\|,
		\quad 2\leq r<+\infty;\\
	\big\|U;{L^\infty_T L^2_x}\cap{L^4_T L^\infty_x}\big\|
	&\lesssim
		\|U_0\|_{L^2_x} + \big\|F;{L^{4/3}_TL^1_x}\big\|,
		\quad r=+\infty.
	\end{aligned}
	\end{equation*}
	To get the missing estimate about the $L^4_TL^\infty_x$-norm of $\mathcal{H}U(t,x)$, we then observe that the Hilbert transform of $U$ satisfies the same equation as $U$ does, with $F$ replaced by $\mathcal{H}F$. This immediately gives the wanted result since the Hilbert transform is a bounded linear operator in $L^p(\R)$ for any $p\in (1,+\infty)$. 
\end{proof}

\section{Low Regularity Dirichlet-to-Neumann Operator}\label{sect:Reg}
In this section, we establish refined estimates for the Dirichlet-to-Neumann operator for a one-dimensional interface. As explained in Subsection \ref{Notations}, the results are stated in terms of norms of spatial functions only.
\subsection{Basics of the Dirichlet-to-Neumann operator}\label{subsect-reg:Bas }
One approach to analyzing a boundary value problem is to flatten the boundary.
To do so, the most direct method is to use a change of variables\footnote{It may be tempting to use a general change of variables of the form $y=\rho(x,z)$. 
For instance, this idea is used in \cite{CM2,KLM,lannes2005water}.
An interesting feature of our approach is that we do not need
to consider such a change of variables.} involving the unknown $\eta$:
\begin{equation*}
z=y-\eta(x).
\end{equation*}
With $\Psi$ being the variational solution of $\Delta_{x,y}\Psi=0$, we define 
$v\colon \R\times (-\infty,0] \rightarrow \R$ by
\begin{equation}\label{defiv}
v(x,z)=\Psi(x,z+\eta(x)),
\end{equation}
the expression of $\Psi$ under the new coordinate $(x,z)$. Therefore, $v$ solves the elliptic boundary value problem
\begin{equation}\label{eq-reg-bas:EllipEq}
	\left\{\begin{array}{ll}
		\Delta_g v = 0, & \text{when }z<0, \\ [0.5ex]
		v|_{z=0} = \psi, & \\ [0.5ex]
		\lim_{z\rightarrow -\infty} \nabla_{x,z}v = 0, & 
	\end{array}\right.
\end{equation}
where the Laplacian $\Delta_g$ has expression
\begin{equation}\label{eq-reg-bas:DefDeltag}
	\Delta_g := \alpha\partial_z^2 + \beta\partial_x\partial_z + \partial_x^2 - \gamma\partial_z
\end{equation}
under the new coordinates, with
\begin{equation}\label{eq-reg-bas:DefDeltagCoeff}
	\alpha := 1 + \eta_x^2,\ \ \beta := -2\eta_x,\ \ \gamma :=\eta_{xx}.
\end{equation}

\begin{proposition}\label{prop-reg-bas:EllipReg}
    Consider two real numbers $s\ge 0$ and $1/2\le \sigma \le s+3/2$. There exists a non-decreasing function $K\colon\R_+\to\R_+$ such that the following properties holds. Given $\eta\in H^{s+2}$ and $\psi\in H^{\sigma}$, the boundary value problem \eqref{eq-reg-bas:EllipEq} admits a unique variational solution $v$ (see Definition~\ref{def-ellip-var:VarSol}) satisfying
	\begin{equation}\label{eq-reg-bas:EllipRegSobo}
	    \begin{aligned}
	        &\|\nabla_{x,z} v\|_{C^0_z([-1,0];H^{\sigma-1})} + \|\nabla_{x,z} v\|_{L^2_z(-1,0;H^{\sigma-1/2})} \\
	        &\hspace{10em}\le K\big(\|\eta\|_{H^2}\big) 
	        \big( \|\eta\|_{H^{s+2}} \|\psi\|_{H^{1/2}} + \| \psi \|_{H^{\sigma}} \big).
	    \end{aligned}
	\end{equation}
	Moreover, the second order derivatives satisfy
	\begin{equation}\label{eq-reg-bas:EllipRegSoboOrd2}
	    \|\nabla_{x,z}^2 v\|_{L^2_z(-1,0;H^{\sigma-3/2})} \le K\big(\|\eta\|_{H^2}\big) 
	    \big( \|\eta\|_{H^{s+2}} \|\psi\|_{H^{1/2}} + \| \psi \|_{H^{\sigma}} \big).
	\end{equation}
\end{proposition}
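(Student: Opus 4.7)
The plan combines a standard variational construction with a two-step regularity bootstrap: first the base case $\sigma = 1/2$ from energy estimates, then climbing up to $\sigma = s + 3/2$ through tangential Littlewood-Paley commutators together with inversion of the normal principal part of $\Delta_g$. Existence and uniqueness of the variational solution I would obtain by transferring back to $\Omega$: the change $y = z + \eta(x)$ is bi-Lipschitz since $\|\eta_x\|_{L^\infty} \lesssim \|\eta\|_{H^2}$ by Sobolev embedding, so the Dirichlet integral for $\Delta \Psi = 0$ on $\Omega$ pulls back to a positive-definite quadratic form on $\{z<0\}$ whose ellipticity constants depend only on $\|\eta_x\|_{L^\infty}$. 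Lax-Milgram, applied after lifting $\psi$ to a background function in $H^1$ with the correct decay at $z \to -\infty$, then produces the unique variational solution of Definition~\ref{def-ellip-var:VarSol}.

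For $\sigma = 1/2$, testing the variational equation against $v$ minus such a lifting yields $\|\nabla_{x,z} v\|_{L^2(\R\times\R_-)} \le K(\|\eta\|_{H^2}) \|\psi\|_{H^{1/2}}$. The pointwise-in-$z$ control in $H^{-1/2}$ comes from reading \eqref{eq-reg-bas:DefDeltag} as an expression for $\partial_z(\nabla_{x,z} v)$ in $L^2_z H^{-3/2}$, then applying the fundamental theorem of calculus in $z$ with the Dirichlet condition. For $\sigma > 1/2$, I would apply the tangential multiplier $\langle D_x \rangle^{\sigma - 1/2}$ to the equation and commute through:
$$\Delta_g \bigl( \langle D_x \rangle^{\sigma-1/2} v \bigr) = \bigl[\Delta_g,\langle D_x \rangle^{\sigma-1/2}\bigr] v.$$
The commutator is handled by Bony paraproduct decomposition of the coefficients $\alpha, \beta, \gamma$: when the full $s+2$ derivatives fall on $\eta$, the accompanying factor of $v$ carries only $\nabla v$, which by the base case costs $\|\psi\|_{H^{1/2}}$ and produces the $\|\eta\|_{H^{s+2}}\|\psi\|_{H^{1/2}}$ contribution; every other term is paired with a low-regularity norm of $\eta$ (absorbed into $K(\|\eta\|_{H^2})$) and $\|\psi\|_{H^\sigma}$. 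A finite induction on $\sigma$ closes the bootstrap, and the second-order estimate \eqref{eq-reg-bas:EllipRegSoboOrd2} follows by solving algebraically
$$\partial_z^2 v = -\alpha^{-1} \bigl(\beta\, \partial_x \partial_z v + \partial_x^2 v - \gamma\, \partial_z v\bigr)$$
and invoking the already-established $L^2_z H^{\sigma - 1/2}$ control on first-order derivatives together with product estimates in Sobolev spaces.

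The main obstacle is the critical endpoint $s = 0$, $\sigma = 1/2$, where $\gamma = \eta_{xx}$ sits only in $L^2$ and $\alpha$ is merely in $H^1 \cap L^\infty$. Brute multiplication is forbidden; the commutator and product estimates must be cast in paradifferential form, with the rough factor $\gamma$ systematically sitting at low frequency relative to its partner and with $\alpha^{-1}$ handled as a paraproduct to avoid derivative loss. The subtlest bookkeeping is at the boundary $z=0$, where one must cleanly separate the trace contribution of $\psi$ from the interior regularity of $v$ so that the tame structure $\|\eta\|_{H^{s+2}}\|\psi\|_{H^{1/2}} + \|\psi\|_{H^\sigma}$ on the right-hand side is preserved; once this is done, the paradifferential machinery recalled in Section~\ref{sect:Pre} should drive the argument through uniformly in $(s,\sigma)$.
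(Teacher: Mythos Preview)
Your overall strategy---variational existence followed by an iteration in~$\sigma$ via tangential differentiation---is sound and genuinely different from the paper's proof in Appendix~\ref{app:Ellip}. There the authors also iterate in~$\sigma$, but instead of commuting $\langle D_x\rangle^{\sigma-1/2}$ through~$\Delta_g$ they paralinearize the coefficients (Lemma~\ref{lem-ellip-reg:Paralin}), then \emph{factor} the principal part as $T_\alpha(\partial_z+T_a)(\partial_z-T_A)$ with first-order symbols $a,A$ satisfying $\RE a,\RE A\ge c|\xi|$ (Lemma~\ref{lem-ellip-reg:Facto}), and apply parabolic maximal regularity (Proposition~\ref{prop-paradiff:Parabolic}) twice to gain half a derivative per first-order factor. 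This is the same factorization machinery that drives the Dirichlet--to--Neumann paralinearization in Section~\ref{subsect-reg:DN}, so the appendix proof dovetails with the rest of the paper; your more elementary route would work on its own but does not feed into Proposition~\ref{prop-reg-dn:Main}.

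There is, however, a concrete inaccuracy in your commutator step. Commuting $\langle D_x\rangle^{\sigma-1/2}$ with the non-divergence form $\Delta_g$ produces $[\langle D_x\rangle^{\sigma-1/2},\alpha]\partial_z^2 v$ and $[\langle D_x\rangle^{\sigma-1/2},\beta]\partial_x\partial_z v$, so the ``accompanying factor of~$v$'' is $\nabla_{x,z}^2 v$, not $\nabla v$ as you write; only the $\gamma\partial_z v$ term pairs high-frequency $\eta$ with a first-order factor of~$v$. To close your iteration you must therefore feed back the second-order estimate~\eqref{eq-reg-bas:EllipRegSoboOrd2} from the previous step---which you do obtain algebraically, so the loop can be closed, but the bookkeeping is not what you describe. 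A cleaner route, much closer to the paper's variational set-up, is to commute through the divergence form $\nabla_{x,z}\cdot(A\nabla_{x,z}v)=0$: then $[\langle D_x\rangle^m,A]$ acts only on $\nabla_{x,z}v$ and the right-hand side is automatically in divergence form, ready for the energy estimate~\eqref{eq-ellip-var:MainBasicReg}. Either fix makes your scheme viable; the paper's factorization simply avoids the issue by never confronting a raw $\partial_z^2$ commutator.
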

The proof of this proposition is postponed to Appendix~\ref{app:Ellip}. Note that, compared to similar results obtained in \cite{alvarez2008water} (see also \cite{lannes2013water} and \cite{ABZ2014}), the key distinction is that the estimates presented here are \textit{tame}. Namely, the right-hand sides are linear in the highest regularity norms ($\|\eta\|_{H^s}$ and $\|\psi\|_{H^\sigma}$), while the coefficients $K\big(\|\eta\|_{H^2}\big)$ and $\|\psi\|_{H^{1/2}}$ involve norms no more regular than the energy level, where $(\eta, \psi) \in H^2 \times H^1$ (see \eqref{E}). This will play a crucial role to establish the propagation of regularity in Subsection~\ref{High_Reg}.

	Now let us define the \textit{Dirichlet-to-Neumann operator} $\G(\eta)$ by
	\begin{equation}\label{eq-reg-bas:DN}
		\mathcal{G}(\eta)\psi := \left[ \left(1+\eta_x^2\right)\partial_zv -
		\eta_x\partial_x v \right]\big\arrowvert_{z=0}.
	\end{equation}
	It is convenient to introduce also the following notations:
	\begin{equation}\label{eq-reg-bas:BV}
	    \mathcal{B}(\eta)\psi := \partial_y\Psi\arrowvert_{y=\eta} = \partial_zv\arrowvert_{z=0},\ \ \mathcal{V}(\eta)\psi := \partial_x\Psi\arrowvert_{y=\eta}= \left(\partial_x v - 
	    \eta_x \partial_zv\right)\arrowvert_{z=0},
	\end{equation}
so that,
\begin{equation}\label{eq-reg-bas:GbyBV}
    \mathcal{G}(\eta)\psi = \mathcal{B}(\eta)\psi - \eta_x \mathcal{V}(\eta)\psi.
\end{equation}
The following proposition, which is a direct consequence of Proposition~\ref{prop-reg-bas:EllipReg}, ensures that $\G(\eta)\psi$ is well-defined. 

\begin{proposition}\label{prop-reg-bas:Bd}
	Let $s\ge 0$ and $1/2\le \sigma \le s+3/2$. There exists a non-decreasing function $K\colon\R_+\to\R_+$ such that, 
	for all $\eta \in H^{s+2}$ and for all $\psi \in H^{\sigma}$,
	\begin{equation}\label{eq-reg-bas:BdSobo-ext}
		\|\mathcal{G}(\eta)\psi\|_{H^{\sigma-1}} 
		\le K\big(\|\eta\|_{H^2}\big) 
		\big( \|\eta\|_{H^{s+2}} \|\psi\|_{H^{1/2}} + \| \psi \|_{H^{\sigma}} \big).
	\end{equation}
	Moreover, $\G(\eta)$ is self-adjoint in the following sense: for all Schwartz functions $\psi,\varphi$,
	$$
	\int_\R \psi \mathcal{G}(\eta)\varphi\dx=\int_\R
	\varphi \mathcal{G}(\eta)\psi\dx.
	$$	
	In particular, $\G(\eta)$ can be extended as a bounded operator from $H^\sigma$ to $H^{\sigma-1}$ for all $-1/2 \le \sigma \le 3/2$, together with the operator norm estimate:
	\begin{equation}\label{eq-reg-bas:BdSobo}
		\|\mathcal{G}(\eta)\|_{\mathcal{L}(H^{\sigma},H^{\sigma-1})} \le K\big(\|\eta\|_{H^2}\big).
	\end{equation}

\end{proposition}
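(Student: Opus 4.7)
The plan is to obtain the tame bound~\eqref{eq-reg-bas:BdSobo-ext} on the range $\sigma\in[1/2,s+3/2]$ as a direct consequence of Proposition~\ref{prop-reg-bas:EllipReg}, then prove self-adjointness by a Green identity in the fluid domain, and finally extend~\eqref{eq-reg-bas:BdSobo} to the symmetric range $\sigma\in[-1/2,3/2]$ by duality.

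\smallskip

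\textbf{Step 1: the direct bound for $\sigma\in[1/2,s+3/2]$.} Since $v|_{z=0}=\psi$ gives $\partial_x v|_{z=0}=\partial_x\psi$, the definition~\eqref{eq-reg-bas:DN} reads
\begin{equation*}
\mathcal{G}(\eta)\psi = (1+\eta_x^2)\,\mathcal{B}(\eta)\psi - \eta_x\,\partial_x\psi,
\qquad \mathcal{B}(\eta)\psi = \partial_z v|_{z=0}.
\end{equation*}
The $C^0_z([-1,0];H^{\sigma-1})$ component of~\eqref{eq-reg-bas:EllipRegSobo}, evaluated at $z=0$, yields
\begin{equation*}
\|\mathcal{B}(\eta)\psi\|_{H^{\sigma-1}} \le K\bigl(\|\eta\|_{H^2}\bigr)\bigl(\|\eta\|_{H^{s+2}}\|\psi\|_{H^{1/2}}+\|\psi\|_{H^\sigma}\bigr).
\end{equation*}
I would then multiply by $1+\eta_x^2$ and $\eta_x$ using a Bony-type paraproduct decomposition. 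The Sobolev embedding $H^2\hookrightarrow W^{1,\infty}$ gives $\|\eta_x\|_{L^\infty}\lesssim\|\eta\|_{H^2}$, so the low-frequency pieces (such as $T_{\eta_x}\mathcal{B}(\eta)\psi$ and $T_{\eta_x^2}\mathcal{B}(\eta)\psi$) are bounded on $H^{\sigma-1}$ with constant $K(\|\eta\|_{H^2})$. The high-frequency pieces, where $\eta_x$ or $\eta_{xx}$ carries the top norm, get paired with low-norm factors only, after if needed an integration by parts $\eta_x\,\partial_x\psi=\partial_x(\eta_x\psi)-\eta_{xx}\psi$. In this way, every occurrence of $\|\eta\|_{H^{s+2}}$ enters linearly and is multiplied by $\|\psi\|_{H^{1/2}}$ only, preserving the tame structure.

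\smallskip

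\textbf{Step 2: self-adjointness.} For Schwartz $\psi,\varphi$ let $v^\psi,v^\varphi$ be the associated variational solutions of~\eqref{eq-reg-bas:EllipEq}. Applying Green's identity on the straightened strip $\mathbb{R}\times(-M,0)$ for the metric $g$ associated to $\Delta_g$ in~\eqref{eq-reg-bas:DefDeltag}, and sending $M\to\infty$ using the decay of $\nabla_{x,z}v$ as $z\to-\infty$, one obtains
\begin{equation*}
\int_{\mathbb{R}\times(-\infty,0)} \langle\nabla_{x,z}v^\psi,\nabla_{x,z}v^\varphi\rangle_g\,\dx\dz = \int_{\mathbb{R}} \psi\,\mathcal{G}(\eta)\varphi\,\dx,
\end{equation*}
where the boundary term collapses to the right-hand side precisely because the Jacobian weight built into~\eqref{eq-reg-bas:DN} cancels the one coming from the change of variables. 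The left-hand side is symmetric in $(v^\psi,v^\varphi)$, so exchanging $\psi$ and $\varphi$ yields the stated self-adjointness. For general $\psi,\varphi$ only variational, one regularizes $\eta,\psi,\varphi$ and passes to the limit using the bound from Step~1.

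\smallskip

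\textbf{Step 3: duality on $\sigma\in[-1/2,1/2]$.} For such $\sigma$ we have $1-\sigma\in[1/2,3/2]$, so Step~1 with $s=0$ (and parameter $1-\sigma$) gives $\|\mathcal{G}(\eta)\varphi\|_{H^{-\sigma}}\le K(\|\eta\|_{H^2})\|\varphi\|_{H^{1-\sigma}}$. By Step~2 and the density of Schwartz functions in $H^{1-\sigma}$, for any Schwartz $\psi$,
\begin{equation*}
\|\mathcal{G}(\eta)\psi\|_{H^{\sigma-1}} = \sup_{\|\varphi\|_{H^{1-\sigma}}=1}\Bigl|\int \psi\,\mathcal{G}(\eta)\varphi\,\dx\Bigr| \le K\bigl(\|\eta\|_{H^2}\bigr)\|\psi\|_{H^\sigma}.
\end{equation*}
Combined with Step~1 taken at $s=0$ on $\sigma\in[1/2,3/2]$, this proves~\eqref{eq-reg-bas:BdSobo} on the full range after extending $\mathcal{G}(\eta)$ by continuity.

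\smallskip

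\textbf{Main obstacle.} The delicate point is the tame product estimate in Step~1 when $\sigma$ is close to $1/2$. In this regime $\mathcal{B}(\eta)\psi$ lies only in $H^{-1/2}$ and the Moser estimate $\|fg\|_{H^s}\lesssim\|f\|_{L^\infty}\|g\|_{H^s}+\|f\|_{H^s}\|g\|_{L^\infty}$ is not directly applicable. One must handle $\eta_x\cdot\mathcal{B}(\eta)\psi$ via Bony's decomposition, exploit the subprincipal structure of $T_{\eta_x}$, and move derivatives through integration by parts to ensure that each appearance of $\|\eta\|_{H^{s+2}}$ is multiplied only by $\|\psi\|_{H^{1/2}}$. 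A secondary technicality is the rigorous justification of Green's identity in Step~2 for variational (rather than classical) solutions, which can be performed by smooth approximation and the uniform bounds of Step~1.
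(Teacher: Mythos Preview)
Your approach is correct and matches the paper's, which states only that Proposition~\ref{prop-reg-bas:Bd} ``is a direct consequence of Proposition~\ref{prop-reg-bas:EllipReg}'' without further detail: take the trace bound from~\eqref{eq-reg-bas:EllipRegSobo}, multiply by the coefficients $1+\eta_x^2$ and $\eta_x$, prove self-adjointness by Green's identity, and extend to $\sigma\in[-1/2,1/2]$ by duality. Steps~2 and~3 are exactly right.

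One correction to your Step~1 and your ``main obstacle'': the case $\sigma$ near $1/2$ is \emph{not} the delicate one. For $\sigma\in[1/2,3/2]$, Corollary~\ref{cor-besov-bony:ProdLaw} gives directly
\[
\|\eta_x\, f\|_{H^{\sigma-1}}\lesssim \|\eta_x\|_{H^1}\|f\|_{H^{\sigma-1}}\le K(\|\eta\|_{H^2})\cdot\bigl(\text{tame bound on }f\bigr),
\]
since $1+(\sigma-1)>0$ and $\sigma-1\le\min(1,\sigma-1,\sigma-1/2)$; no paraproduct splitting, and certainly no integration by parts $\eta_x\partial_x\psi=\partial_x(\eta_x\psi)-\eta_{xx}\psi$, is needed. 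That trick would in fact make things worse by introducing $\eta_{xx}\in H^s$ against $\psi\in H^\sigma$ with no gain. If there is any subtlety in the tame structure of~\eqref{eq-reg-bas:BdSobo-ext} it lies at the \emph{top} of the range, $\sigma$ close to $s+3/2$, where one must avoid a term of the form $\|\eta\|_{H^{s+2}}\|\psi\|_{H^\sigma}$; there the paraproduct split you describe (bounding $T_{\eta_x}f$ by $|\eta_x|_{L^\infty}\|f\|_{H^{\sigma-1}}$ and $R(\eta_x,f)$ via~\eqref{eq-besov-bony:EstiRem} with the low norm $\|\eta_x\|_{H^1}$) is indeed what is required. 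Since the paper treats the whole thing as immediate, your level of detail already exceeds what is written there.
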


Since $\psi=\Psi\arrowvert_{y=\eta}$, the chain rule implies that
\begin{equation}\label{eq-reg-bas:PsibyBV}
    \partial_x\psi = \left[ \partial_x\Psi + \eta_x \Psi_y \right]|_{y=\eta} = \mathcal{V}(\eta)\psi + \eta_x \mathcal{B}(\eta)\psi,
\end{equation}
which, together with \eqref{eq-reg-bas:GbyBV}, yields that
\begin{equation}\label{eq-reg-bas:BVbyG}
    \mathcal{B}(\eta)\psi = \frac{\mathcal{G}(\eta)\psi + \eta_x\psi_x}{1+\eta_x^2},\ \ 
    \mathcal{V}(\eta)\psi = \psi_x - \eta_x \mathcal{B}(\eta)\psi.
\end{equation}
In terms of $\B(\eta)$ and $\V(\eta)$, the nonlinear term $N(\eta,u)$ defined in \eqref{eq-intro-csz:N} can also be written as
\begin{equation}\label{eq-reg-bas:NAlt}
        N(\eta,u)  = BV\eta_x + \frac{V^2 - B^2}{2},
\end{equation}
where
$$
\begin{aligned}
B & = \B(\eta)(\Id + \mathcal{G}(\eta))^{-1}u,\\ 
V & = \V(\eta)(\Id + \G(\eta))^{-1}u.
\end{aligned}
$$
We shall prove in Proposition~\ref{prop-reg-bas:InverBd} 
that $(\Id + \G(\eta))^{-1}$ is a well-defined linear operator. 
Concerning the boundedness of the operators $\B(\eta)$ and $\V(\eta)$, we easily deduce from  Proposition \ref{prop-reg-bas:Bd} and the product rule in Sobolev spaces (see Corollary \ref{cor-besov-bony:ProdLaw}) 
that they can be estimated in the same way as $\mathcal{G}(\eta)$.

\begin{proposition}\label{prop-reg-bas:BdBV}
    Let $s\ge 0$ and $1/2\le \sigma \le s+3/2$. There exists a non-decreasing function $K\colon\R_+\to\R_+$ such that, 
	for all $\eta \in H^{s+2}$ and for all $\psi \in H^{\sigma}$,
	\begin{equation}\label{eq-reg-bas:BdBV-tame}
		\|\B(\eta)\psi\|_{H^{\sigma-1}} + \|\V(\eta)\psi\|_{H^{\sigma-1}} 
		\le K\big(\|\eta\|_{H^{s+2}}\big) 
		\big( \|\eta\|_{H^{s+2}} \|\psi\|_{H^{1/2}} + \| \psi \|_{H^{\sigma}} \big).
	\end{equation}
	In particular, for all $1/2\le \sigma \le 3/2$, there holds
	\begin{equation}\label{eq-reg-bas:BdBV}
		\|\mathcal{B}(\eta)\psi\|_{H^{\sigma-1}} + \|\mathcal{V}(\eta)\psi\|_{H^{\sigma-1}} \le K\big(\|\eta\|_{H^2}\big) \|\psi\|_{H^{\sigma}}.
	\end{equation}
\end{proposition}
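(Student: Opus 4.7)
The plan is to argue directly from the explicit formulas \eqref{eq-reg-bas:BVbyG}, which express $\B(\eta)\psi$ and $\V(\eta)\psi$ in terms of $\G(\eta)\psi$, the derivatives $\eta_x,\psi_x$, and the rational factor $1/(1+\eta_x^2)$. Since Proposition~\ref{prop-reg-bas:Bd} already delivers the desired tame bound for $\G(\eta)\psi$ in $H^{\sigma-1}$, the task reduces to controlling the remaining algebraic pieces with a matching tame structure.

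First I would bound the numerator $\G(\eta)\psi+\eta_x\psi_x$ of $\B(\eta)\psi$ in $H^{\sigma-1}$. The Dirichlet-to-Neumann contribution is handled by \eqref{eq-reg-bas:BdSobo-ext}. For the product $\eta_x\psi_x$, I would invoke the tame product rule in Sobolev spaces (Corollary~\ref{cor-besov-bony:ProdLaw}): since $s+1\ge 1>1/2$, the space $H^{s+1}$ acts by multiplication on $H^{\sigma-1}$ for the range $\sigma-1\in[-1/2,s+1/2]$ considered, giving $\|\eta_x\psi_x\|_{H^{\sigma-1}}\lesssim \|\eta\|_{H^{s+2}}\|\psi\|_{H^{\sigma}}$. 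The borderline case $s=0,\sigma=1/2$ can be dispatched by duality, using $H^1\cdot H^{1/2}\hookrightarrow H^{1/2}$.

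Next I would absorb the prefactor $1/(1+\eta_x^2)$ by writing it as $1+h(\eta_x)$ with $h(t)=-t^2/(1+t^2)$, which vanishes at $t=0$. A Moser-type composition estimate, combined with the embedding $H^2\hookrightarrow L^\infty$ in one dimension, then yields $\|h(\eta_x)\|_{H^{s+1}}\le K(\|\eta\|_{H^2})\|\eta\|_{H^{s+2}}$. A further use of the same product rule allows multiplication by $1+h(\eta_x)$ while preserving the tame structure, and I obtain \eqref{eq-reg-bas:BdBV-tame} for $\B(\eta)\psi$ with a nondecreasing prefactor $K(\|\eta\|_{H^{s+2}})$ absorbing $1+\|h(\eta_x)\|_{H^{s+1}}$. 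The bound for $\V(\eta)\psi=\psi_x-\eta_x\B(\eta)\psi$ then follows immediately by the same product-rule argument applied to $\eta_x\B(\eta)\psi$, since $\|\psi_x\|_{H^{\sigma-1}}=\|\psi\|_{H^\sigma}$. The simpler estimate \eqref{eq-reg-bas:BdBV} is obtained by specializing to $s=0$ and using that for $\sigma\ge1/2$ one has $\|\psi\|_{H^{1/2}}\le\|\psi\|_{H^\sigma}$, so the factor $\|\eta\|_{H^2}$ can be absorbed into the constant $K(\|\eta\|_{H^2})$.

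The main technical subtlety I anticipate is making sure the tame structure — linearity in the highest norms $\|\eta\|_{H^{s+2}}$ and $\|\psi\|_{H^\sigma}$ — survives multiplication by $1/(1+\eta_x^2)$; this is exactly why the sharp Moser composition bound is essential, rather than a crude $L^\infty$ control of the rational coefficient. The only other mildly delicate point is the marginal product estimate at the critical endpoint $s=0,\sigma=1/2$, which is why the duality argument is invoked separately in that case.
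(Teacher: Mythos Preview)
Your approach is correct and matches the paper's own argument: the paper simply states that the result ``easily follows from Proposition~\ref{prop-reg-bas:Bd} and the product rule in Sobolev spaces (see Corollary~\ref{cor-besov-bony:ProdLaw}),'' which is precisely what you carry out via the formulas~\eqref{eq-reg-bas:BVbyG}, the tame bound~\eqref{eq-reg-bas:BdSobo-ext} for $\G(\eta)\psi$, and Moser composition for the rational factor. Two minor remarks: your separate duality treatment of the endpoint $s=0,\sigma=1/2$ is unnecessary since Corollary~\ref{cor-besov-bony:ProdLaw} already covers it (one checks $(s+1)+(\sigma-1)=1/2>0$ and $\sigma-1\le\min(s+1,\sigma-1,s+\sigma-1/2)$ directly), and the tameness bookkeeping is harmless here because the constant in~\eqref{eq-reg-bas:BdBV-tame} is allowed to depend on $\|\eta\|_{H^{s+2}}$, so any extra factors of $\|\eta\|_{H^{s+2}}$ arising from the products can simply be absorbed into $K$.
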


Simple manipulations with Fourier multipliers show that when $\eta=0$, the Dirichlet-to-Neumann operator satisfies $\mathcal{G}(0)=\Dx$. For non-trivial $\eta$, one can show that the difference between $\mathcal{G}(\eta)$ and $\Dx$ is of order strictly less than $1$.

\begin{proposition}[Proposition 3.13 of \cite{ABZ2014}]\label{prop-reg-bas:Paralin}
Let $1/2\le \sigma \le 2$ and consider $0<\varepsilon<1/2$. 
Fix $\eta \in H^{2}$ and define $\mathcal{R}(\eta)$ by
	\begin{equation}\label{eq-reg-bas:Paralin}
		\mathcal{G}(\eta) = \Dx + \mathcal{R}(\eta).
	\end{equation}
Then $\mathcal{R}(\eta)$ is a linear self-adjoint operator, satisfying 
	\begin{equation}\label{eq-reg-bas:ParalinRem}
		\|\mathcal{R}(\eta)\|_{\mathcal{L}(H^{\sigma};H^{\sigma-1+\varepsilon})} \le K\big(\|\eta\|_{H^2}\big).
	\end{equation}
\end{proposition}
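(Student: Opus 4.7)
The plan is to follow the Alazard--M\'etivier/Alazard--Burq--Zuily paralinearization strategy: paralinearize the flattened elliptic problem~\eqref{eq-reg-bas:EllipEq} for $v$, factor the resulting paradifferential operator as a product of two first-order factors near the boundary, and read off the principal symbol of $\mathcal{G}(\eta)$ from the ``incoming'' root. Self-adjointness of $\mathcal{R}(\eta)$ then follows automatically from Proposition~\ref{prop-reg-bas:Bd} together with the self-adjointness of $\Dx$, so the real content is the operator-norm estimate~\eqref{eq-reg-bas:ParalinRem}.

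For the factorization, note that the coefficients $\alpha=1+\eta_x^2$, $\beta=-2\eta_x$, $\gamma=\eta_{xx}$ satisfy $4\alpha-\beta^2\equiv 4$, so the roots of the characteristic polynomial $\alpha\tau^2+i\beta\xi\tau-\xi^2=0$ are the clean expressions
\begin{equation*}
a(x,\xi)=\frac{|\xi|+i\eta_x\xi}{1+\eta_x^2},\qquad
A(x,\xi)=\frac{-|\xi|+i\eta_x\xi}{1+\eta_x^2},
\end{equation*}
with $\Real a>0$ and $\Real A<0$. Decomposing each coefficient times a derivative into a paraproduct plus a Bony remainder (using $\eta_x\in H^1\hookrightarrow L^\infty$ and $\eta_{xx}\in L^2$), the equation $\Delta_g v=0$ becomes $T_\alpha\partial_z^2 v+T_\beta\partial_x\partial_z v+\partial_x^2 v=f_0$, where $f_0$ gains some derivative. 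One then verifies the symbolic factorization $T_\alpha\partial_z^2+T_\beta\partial_x\partial_z+\partial_x^2=T_\alpha(\partial_z-T_A)(\partial_z-T_a)+E$ with $E$ of strictly negative order. Because $\Real A<0$, the decaying solution satisfies the forward equation $(\partial_z-T_a)v=w$ with $w$ smoother than $v$; propagating this estimate down to $z=0$ using Proposition~\ref{prop-reg-bas:EllipReg} gives $\partial_z v|_{z=0}=T_a\psi+R_1\psi$ with $R_1\in\mathcal{L}(H^\sigma,H^{\sigma-1+\varepsilon})$.

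To extract $\mathcal{G}(\eta)$ itself, I would use the identity $\mathcal{G}(\eta)\psi=(1+\eta_x^2)\partial_z v|_{z=0}-\eta_x\partial_x\psi$ coming from the chain rule (cf.~\eqref{eq-reg-bas:GbyBV}). Applying standard paraproduct identities, $(1+\eta_x^2)T_a=T_{(1+\eta_x^2)a}$ up to smoother terms, and $\eta_x\partial_x\psi=T_{i\eta_x\xi}\psi$ up to terms controlled by $\|\eta_x\|_{H^1}\|\psi\|_{L^\infty}$ via Proposition~\ref{prop-pre-fct:Prod} and Bony's remainder estimates. Since $(1+\eta_x^2)a-i\eta_x\xi=|\xi|$ exactly, the combination collapses to $T_{|\xi|}\psi+R_2\psi$ with $R_2\in\mathcal{L}(H^\sigma,H^{\sigma-1+\varepsilon})$. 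Finally, $T_{|\xi|}$ is a Fourier multiplier whose symbol coincides with $|\xi|$ outside a compact frequency set, so $T_{|\xi|}-\Dx$ is an infinitely smoothing operator. Combining these steps yields $\mathcal{R}(\eta)=\mathcal{G}(\eta)-\Dx\in\mathcal{L}(H^\sigma,H^{\sigma-1+\varepsilon})$ with the desired tame dependence $K(\|\eta\|_{H^2})$.

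The main obstacle is the bookkeeping of regularity in Steps~2--3 under the critical assumption $\eta\in H^2$. The coefficient $\gamma=\eta_{xx}$ lies only in $L^2$, which does not embed into any H\"older--Zygmund space, so the symbolic calculus used to justify the factorization and the associated elliptic regularization must be implemented in a low-regularity framework; this is precisely why the gain saturates at any $\varepsilon<1/2$ rather than a full derivative. Concretely, this forces one to rely on a paradifferential symbolic calculus where the symbol classes admit $L^\infty$ (or even $L^2$) dependence in $x$, and to carefully track how each paralinearization remainder interacts with the elliptic parametrix $(\partial_z-T_A)^{-1}$. All other steps--the factorization algebra, the chain-rule identity, and the conversion $T_{|\xi|}\rightsquigarrow\Dx$--are essentially formal once the right paradifferential framework is in place.
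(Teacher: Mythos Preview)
Your outline is correct and follows the standard Alazard--Burq--Zuily paralinearization scheme; note, however, that the paper does not actually prove this proposition but cites it directly from \cite{ABZ2014}. What the paper \emph{does} prove is the sharper Proposition~\ref{prop-reg-dn:Main} (boundedness of $\mathcal{R}(\eta)$ on $H^\sigma$ when $\eta\in H^2\cap W^{2,\infty}$), and that proof follows exactly the four-step plan you describe---paralinearize (Lemma~\ref{lem-reg-dn:Paralin}), factor (Lemma~\ref{lem-reg-dn:Facto}), solve the forward parabolic evolution (Lemma~\ref{lem-reg-dn:Parabolic} and Proposition~\ref{prop-paradiff:Parabolic}), then read off $\mathcal{G}(\eta)-\Dx$ from the trace identity. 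So your strategy matches both the cited source and the paper's own sharper argument.

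Two small corrections. First, your citation of Proposition~\ref{prop-pre-fct:Prod} for controlling $\eta_x\partial_x\psi$ is misplaced: that result concerns the Hardy space $Y^0$, not Sobolev paraproduct remainders; the relevant tools are the Bony estimates~\eqref{eq-besov-bony:EstiParaLinfty}--\eqref{eq-besov-bony:EstiRem}. Second, your diagnosis of the $\varepsilon<1/2$ ceiling is slightly off: the saturation comes primarily from $\eta_x\in H^1\hookrightarrow C^{1/2}_*$, which limits the symbolic calculus to $\Gamma^1_{1/2}$ (Proposition~\ref{prop-paradiff:SymbCal} with $\rho=1/2$), not from the $\gamma\partial_z$ term itself. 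Indeed, with $\gamma\in L^2$ and $\partial_z v\in L^2_z H^{\sigma-1/2}_x$, the product $\gamma\partial_z v$ lands in $L^2_z H^{\sigma-1}_x$ by Corollary~\ref{cor-besov-bony:ProdLaw}, which is already good enough for any $\varepsilon\le 1/2$. The $\gamma$ term only becomes the genuine bottleneck in the \emph{sharper} result, where the paper introduces the modified good-unknown correction (Lemma~\ref{lem-reg-dn:GoodUnk}) to squeeze out the full derivative gain.
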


From this proposition, one may expect that the Dirichlet-to-Neumann operator $\G(\eta)$ shares 
some properties with the Fourier multiplier $|D_x|$. For instance, it induces a coercive bilinear form on $\dot{H}^{1/2}$, which is deduced from the following \textit{trace estimate}.
\begin{proposition}[Proposition 4.3.2 of \cite{alazard2021note}]\label{prop-reg-bas:Trace}
There exists a constant $C>0$ such that, for all $\psi\in H^{1/2}$ and all $\eta\in W^{1,\infty}$,
\begin{equation}\label{eq-reg-bas:Trace}
\|\psi\|_{\dot{H}^{1/2}}^2
		\le C\big(1+|\eta_x|_{L^\infty} \big) \int_{\R}\psi \G(\eta)\psi \dx.
\end{equation}
Here the $\dot{H}^{1/2}$-norm stands for the homogeneous Sobolev norm, defined by
\begin{equation*}
\|\psi\|_{\dot{H}^{1/2}}^2 := \int_\R |\xi||\hat{\psi}(\xi)|^2 \dxi.
\end{equation*}
\end{proposition}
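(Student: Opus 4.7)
The proof rests on two complementary variational facts. On the one hand, Green's identity applied to the harmonic extension $\Psi$ of $\psi$ to $\Omega$ (furnished by Proposition~\ref{prop-reg-bas:EllipReg}, with $\nabla_{x,y}\Psi\to 0$ as $y\to-\infty$), together with the definition~\eqref{eq-reg-bas:DN} of $\mathcal{G}(\eta)$, gives
\[
\int_\R \psi\,\mathcal{G}(\eta)\psi\, dx \;=\; \iint_\Omega |\nabla_{x,y}\Psi|^2\, dx\, dy.
\]
On the other hand, the $\dot{H}^{1/2}$-norm has the Dirichlet-principle characterization $\|\psi\|_{\dot{H}^{1/2}}^2 = \inf_\phi \iint_{\{y<0\}} |\nabla\phi|^2\, dx\, dy$, the infimum being taken over all finite-energy extensions $\phi$ of $\psi$ to the lower half-plane. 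It therefore suffices to exhibit a single competitor $\phi$ whose Dirichlet energy is bounded by $C(1+|\eta_x|_{L^\infty})\int_\R \psi\,\mathcal{G}(\eta)\psi\, dx$.

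The natural choice is the flattened pull-back $\phi(x,y) := \Psi(x,\,y+\eta(x))$ for $y<0$. Since $y<0$ forces $y+\eta(x)<\eta(x)$, this is well-defined in $\Omega$, has boundary trace $\psi$ at $y=0$, and decays as $y\to-\infty$. By the chain rule, $\partial_x\phi = \partial_x\Psi + \eta_x\,\partial_y\Psi$ and $\partial_y\phi = \partial_y\Psi$ (both evaluated at $(x,y+\eta(x))$). Substituting $z := y+\eta(x)$ (Jacobian one), the Dirichlet energy of $\phi$ becomes
\[
\iint_\Omega\!\bigl[(\partial_x\Psi+\eta_x\,\partial_y\Psi)^2 + (\partial_y\Psi)^2\bigr] dx\, dz,
\]
so the proposition reduces to dominating the quadratic form $q(a,b)=(a+\eta_x b)^2+b^2$, with $(a,b)=(\partial_x\Psi,\partial_y\Psi)$, by a factor $C(1+|\eta_x|_{L^\infty})$ times $a^2+b^2 = |\nabla_{x,y}\Psi|^2$ after integration over $\Omega$.

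\textbf{Main obstacle.} The delicate point is recovering the \emph{linear} power of $|\eta_x|_{L^\infty}$. A direct pointwise Cauchy--Schwarz on the cross term $2\eta_x\,\partial_x\Psi\,\partial_y\Psi$ produces only the quadratic bound $C(1+|\eta_x|_{L^\infty}^2)$, since the largest eigenvalue of the matrix $\bigl(\begin{smallmatrix} 1 & \eta_x \\ \eta_x & 1+\eta_x^2 \end{smallmatrix}\bigr)$ grows like $\eta_x^2$; this already proves a weaker form of the proposition that would suffice for many applications. The sharp linear dependence---attained on the tilted flat surface $\eta(x)=Mx$ where the optimal constant is $\sqrt{1+M^2}$---is obtained by integrating the cross term $\iint_\Omega \eta_x\,\partial_x\Psi\,\partial_y\Psi\, dx\, dz$ by parts, using the harmonic conjugacy $\partial_x^2\Psi+\partial_y^2\Psi=0$ to move one derivative onto $\eta_x$, and absorbing the resulting boundary contribution at $\{z=\eta(x)\}$ into $|\eta_x|_{L^\infty}\iint_\Omega|\nabla\Psi|^2$. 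This refined integration-by-parts argument is the crux of the matter and is the essential content of Proposition~4.3.2 of \cite{alazard2021note}, to which we refer for the details.
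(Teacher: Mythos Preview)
The paper gives no proof of this proposition; it is simply quoted from \cite{alazard2021note}. So there is nothing in the paper to compare against, and the question is whether your argument stands on its own.

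Your variational setup is correct and standard: Green's identity gives $\int_\R\psi\,\mathcal{G}(\eta)\psi\,dx=\iint_\Omega|\nabla_{x,y}\Psi|^2$, and the Dirichlet characterization of $\dot H^{1/2}$ together with the competitor $\phi(x,y)=\Psi(x,y+\eta(x))$ yields
\[
\|\psi\|_{\dot H^{1/2}}^2\le\iint_\Omega\bigl[(\Psi_x+\eta_x\Psi_y)^2+\Psi_y^2\bigr]\,dx\,dy.
\]
This does produce the inequality with the quadratic factor $C(1+|\eta_x|_{L^\infty}^2)$, and that weaker version is in fact all the paper ever needs (both in Proposition~\ref{prop-reg-bas:InverBd} and in \S\ref{subsect-cauchy:GWP} the estimate is used only in the form $K(|\eta_x|_{L^\infty})$).

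Your sketched upgrade to the \emph{linear} factor, however, has genuine gaps. First, you misidentify the obstruction: expanding $(\Psi_x+\eta_x\Psi_y)^2+\Psi_y^2=|\nabla\Psi|^2+2\eta_x\Psi_x\Psi_y+\eta_x^2\Psi_y^2$, the cross term $2\eta_x\Psi_x\Psi_y$ is already bounded pointwise by $|\eta_x|_{L^\infty}|\nabla\Psi|^2$, which is linear; the quadratic growth comes solely from $\eta_x^2\Psi_y^2$, and integrating the cross term by parts does nothing to that. Second, ``moving one derivative onto $\eta_x$'' requires $\eta_{xx}$, unavailable for $\eta\in W^{1,\infty}$. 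Third, your tilted-plane calculation is wrong: for $\eta(x)=Mx$ the flattened equation has constant coefficients and one checks directly (via the Fourier symbol of the decaying root) that $\mathcal{G}(Mx)=|D_x|$ independently of $M$, so the inequality is an \emph{equality} with constant $1$ there, not $\sqrt{1+M^2}$. In fact this same computation shows that on the tilted plane your specific competitor $\phi$ overshoots by the factor $(2+M^2)/2$, so the linear constant---if one wants it---cannot be reached with this extension and must come from a genuinely different argument, for which one should indeed consult~\cite{alazard2021note}.
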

	
Another feature of the Dirichlet-to-Neumann operator $\G(\eta)$ is that the $L^2$-norm of $\G(\eta)\psi$ is equivalent to that of $\partial_x\psi$, which is a direct consequence of the \textit{Rellich inequality}  (see~\cite{McLean,Necas}). 
\begin{proposition}\label{prop-reg-bas:Rellich}
    There exists a non-decreasing function $K\colon\R_+\to\R_+$ such that, for all $\eta\in W^{1,\infty}$ and for all $\psi\in H^1$, 
    \begin{equation}\label{eq-reg-bas:Rellich}
        \frac{1}{K\big(|\partial_x\eta|_{L^\infty}\big)} \int_{\mathbb{R}}(\partial_x\psi)^2\dx \leq \int_{\mathbb{R}}(\mathcal{G}(\eta)\psi)^2\dx \leq K\big(|\partial_x\eta|_{L^\infty}\big) \int_{\mathbb{R}}(\partial_x\psi)^2\dx.
    \end{equation}
\end{proposition}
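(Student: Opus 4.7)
The plan is to derive a Rellich-type integral identity by multiplying the elliptic equation $\Delta_g v = 0$ (with $\Delta_g$ as in~\eqref{eq-reg-bas:DefDeltag} and $v$ the variational solution on the flattened strip $\{z<0\}$) by the vertical derivative $\partial_z v$, and integrating over $\{z<0\}$. This is the flattened-coordinate counterpart of the classical identity $\operatorname{div}_{x,y}(\Psi_y\nabla_{x,y}\Psi - \tfrac{1}{2}e_2|\nabla_{x,y}\Psi|^2) = 0$ for a harmonic function $\Psi$, from which the sought inequality is ultimately a consequence.

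I would compute the four resulting integrals one by one. Since $\alpha = 1+\eta_x^2$ does not depend on $z$, the term $\int\!\!\int (1+\eta_x^2) v_{zz} v_z \dx \dz$ integrates directly in $z$ and produces the boundary contribution $\tfrac{1}{2}\int(1+\eta_x^2) B^2 \dx$ at $z=0$ (using $\nabla v \to 0$ as $z \to -\infty$). The term $\int\!\!\int v_{xx} v_z \dx\dz$ yields $-\tfrac{1}{2}\int \psi_x^2 \dx$ after successive integrations by parts in $x$ and in $z$. The cross term $-\int\!\!\int 2\eta_x v_{xz} v_z \dx\dz$, integrated by parts in $x$, produces the bulk integral $\int\!\!\int \eta_{xx} v_z^2 \dx\dz$, which exactly cancels the contribution $-\int\!\!\int \eta_{xx} v_z^2 \dx\dz$ coming from the $-\eta_{xx}\partial_z$ part of $\Delta_g$. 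This precise cancellation of the $\eta_{xx}$ contributions is what enables the identity to survive at the regularity level of mere $W^{1,\infty}$, and yields the clean formula
\begin{equation*}
\int_{\R}(1+\eta_x^2) B^2 \dx = \int_{\R} \psi_x^2 \dx.
\end{equation*}

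From this identity, combined with the algebraic relations $\partial_x\psi = V + \eta_x B$ and $\mathcal{G}(\eta)\psi = B - \eta_x V$ (cf.~\eqref{eq-reg-bas:GbyBV}--\eqref{eq-reg-bas:BVbyG}), a short rearrangement extracts the equivalent form $\int (B^2 - V^2)\dx = 2\int \eta_x VB\dx$, which in turn leads to the twin identity
\begin{equation*}
\int_{\R}(1+\eta_x^2) V^2 \dx = \int_{\R}(\mathcal{G}(\eta)\psi)^2 \dx.
\end{equation*}
The two inequalities in~\eqref{eq-reg-bas:Rellich} then follow by Cauchy--Schwarz: for the upper bound I would substitute $V = \psi_x - \eta_x B$ into the second identity and control the cross term by the first identity, while for the lower bound I would substitute $B = \mathcal{G}(\eta)\psi + \eta_x V$ into the first identity and control the cross term by the second. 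Both bounds yield the constant $K(|\eta_x|_{L^\infty}) = 2 + 4|\eta_x|_{L^\infty}^2$.

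The main obstacle is the justification of the integration by parts at the stated regularity $\eta\in W^{1,\infty}$, where the coefficient $\gamma = \eta_{xx}$ of $\Delta_g$ is only distributional and $v$ is not a priori of class $H^2_{\mathrm{loc}}$, so $v_z$ cannot literally serve as a test function. My plan is to proceed by approximation: I would regularize $\eta^\varepsilon = \rho_\varepsilon * \eta \in C^\infty_b$ with $\eta^\varepsilon \to \eta$ in $W^{1,\infty}$, construct the associated solution $v^\varepsilon$ via Proposition~\ref{prop-reg-bas:EllipReg} (where it enjoys the required smoothness for the manipulations to be rigorous), establish the identity and the inequalities in the smooth case, and pass to the limit $\varepsilon\to 0$. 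The convergence step relies on the uniform bounds of Proposition~\ref{prop-reg-bas:BdBV} on $\mathcal{B}(\eta^\varepsilon)\psi$ and $\mathcal{V}(\eta^\varepsilon)\psi$, together with the continuity of the Dirichlet-to-Neumann operator with respect to $\eta$.
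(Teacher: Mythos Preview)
Your argument is correct and is essentially the classical Rellich identity approach. The paper does not give its own proof of this proposition; it simply cites the references \cite{McLean,Necas} and moves on. What you have written is precisely the computation those references contain, specialized to the graph domain and flattened coordinates: the multiplier $v_z$ applied to $\Delta_g v = 0$, the cancellation of the two $\eta_{xx}$ bulk terms, and the resulting pair of identities $\int(1+\eta_x^2)B^2\dx = \int\psi_x^2\dx$ and $\int(1+\eta_x^2)V^2\dx = \int(\mathcal{G}(\eta)\psi)^2\dx$. Your extraction of the explicit constant $K = 2 + 4|\eta_x|_{L^\infty}^2$ via Cauchy--Schwarz is clean.

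One small point on the approximation step: mollification $\eta^\varepsilon = \rho_\varepsilon * \eta$ does not in general give $\eta^\varepsilon \to \eta$ in $W^{1,\infty}$ (only uniform boundedness $|\partial_x\eta^\varepsilon|_{L^\infty} \le |\partial_x\eta|_{L^\infty}$ plus convergence in $W^{1,p}_{\mathrm{loc}}$ for $p<\infty$). This is harmless for your purposes---the uniform bound is what controls the constant $K$, and passing to the limit in the two identities only requires $L^2$ convergence of $B^\varepsilon, V^\varepsilon, \mathcal{G}(\eta^\varepsilon)\psi$, which follows from the stability of the variational solution $v^\varepsilon$ under weak-$*$ convergence of the coefficient matrix $A^\varepsilon$ (cf.\ the first bullet of Proposition~\ref{prop-ellip-var:Main}, which needs only $\eta\in W^{1,\infty}$). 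Just phrase the convergence claim accordingly.
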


Since the Dirichlet-to-Neumann operator $\mathcal{G}(\eta)$ is self-adjoint and positive, one may expect it to be invertible, at least in homogeneous Sobolev spaces. We refer to Section 3.7.6 of Lannes' book~\cite{lannes2013water} for a detailed study of $\mathcal{G}(\eta)^{-1}$. In what follows, we shall check that $\Id+\mathcal{G}(\eta)$ is invertible in non-homogeneous Sobolev spaces and its inverse is of order $-1$. 

\begin{proposition}\label{prop-reg-bas:InverBd}
	Let $\eta \in H^{2}$. The operator $\Id+\mathcal{G}(\eta)\colon H^\mez\to H^{-\mez}$ is invertible and moreover 
	$(\Id+\mathcal{G}(\eta))^{-1}$ is self-adjoint. The inverse $(\Id+\mathcal{G}(\eta))^{-1}$ can be extended 
	to a bounded operator from $H^\sigma$ to $H^{\sigma+1}$ for any real number $\sigma$ such that $-3/2\le\sigma\le 1/2$, and 
	there exists a non-decreasing function $K\colon\R_+\to\R_+$ such that
	\begin{equation}\label{eq-reg-bas:InverBdSobo}
		\|(\Id+\mathcal{G}(\eta))^{-1}u\|_{H^{\sigma+1}} \le K\big(\|\eta\|_{H^2}\big) \|u\|_{H^{\sigma}}.
	\end{equation}
\end{proposition}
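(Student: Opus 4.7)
The plan is to combine a variational (Lax--Milgram) existence argument in $H^{1/2}$ with an elliptic-regularity bootstrap based on the paralinearization $\mathcal{G}(\eta) = |D_x| + \mathcal{R}(\eta)$, and then to cover the remaining Sobolev exponents by interpolation and duality.

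\emph{Existence in $H^{1/2}$.} First I would introduce the symmetric bilinear form
\[
B(\psi,\varphi) = \int_{\R} \psi\,\varphi \dx + \int_{\R} \psi\,\mathcal{G}(\eta)\varphi \dx
\]
on $H^{1/2}(\R)$. Continuity follows from the boundedness $\mathcal{G}(\eta)\colon H^{1/2} \to H^{-1/2}$ given by Proposition~\ref{prop-reg-bas:Bd}. For coercivity I would use that $\eta \in H^2 \hookrightarrow W^{1,\infty}$, so the trace estimate~\eqref{eq-reg-bas:Trace} yields $\int \psi\,\mathcal{G}(\eta)\psi \dx \ge c(\|\eta\|_{H^2})\,\|\psi\|_{\dot H^{1/2}}^2$; combined with the $L^2$ term this gives $B(\psi,\psi) \gtrsim \|\psi\|_{H^{1/2}}^2$. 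Lax--Milgram then produces, for every $u \in H^{-1/2}$, a unique $\psi \in H^{1/2}$ with $(\Id + \mathcal{G}(\eta))\psi = u$ distributionally, satisfying the estimate~\eqref{eq-reg-bas:InverBdSobo} for $\sigma = -1/2$. Self-adjointness of $(\Id+\mathcal{G}(\eta))^{-1}$ is then immediate from that of $\Id+\mathcal{G}(\eta)$ (Proposition~\ref{prop-reg-bas:Bd}).

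\emph{Regularity up to $\sigma = 1/2$.} Substituting the paralinearization of Proposition~\ref{prop-reg-bas:Paralin} into $(\Id+\mathcal{G}(\eta))\psi = u$ gives
\[
(\Id + |D_x|)\psi = u - \mathcal{R}(\eta)\psi,
\]
hence $\psi = (\Id+|D_x|)^{-1}(u - \mathcal{R}(\eta)\psi)$. The Fourier multiplier $(\Id+|D_x|)^{-1}$ is bounded from $H^{\sigma-1}$ to $H^\sigma$ for every $\sigma$, and $\mathcal{R}(\eta)$ is of order $1-\varepsilon$ for some small $\varepsilon > 0$, so each iteration of this identity gains $\varepsilon$ derivatives. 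Starting from $\psi,u \in H^{1/2}$, a finite bootstrap thus yields $\psi \in H^{3/2}$. To close the tame estimate with a constant depending only on $\|\eta\|_{H^2}$, I would complex-interpolate the bounds on $\mathcal{R}(\eta)$ between $\sigma=1/2$ and $\sigma=2$ to obtain $\mathcal{R}(\eta)\colon H^{3/2-\varepsilon'} \to H^{1/2}$, then absorb the resulting term using the interpolation--Young inequality $\|\psi\|_{H^{3/2-\varepsilon'}} \le \delta\|\psi\|_{H^{3/2}} + C_\delta\|\psi\|_{H^{1/2}}$, exploiting the already established $H^{-1/2}\to H^{1/2}$ bound to control the low-regularity remainder.

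\emph{Intermediate and negative exponents.} The range $\sigma \in (-1/2, 1/2)$ follows by complex interpolation between the two endpoint estimates. For $\sigma \in [-3/2, -1/2)$ I would invoke duality: with $\tau = -\sigma - 1 \in (-1/2, 1/2]$, the established bound $(\Id+\mathcal{G}(\eta))^{-1}\colon H^{\tau}\to H^{\tau+1}$ transposes, thanks to self-adjointness, to the same operator viewed as a bounded map $H^{\sigma}\to H^{\sigma+1}$. The principal difficulty I anticipate lies in the bootstrap step: arranging that the accumulated constant remains a nondecreasing function of $\|\eta\|_{H^2}$ alone requires precisely the tame form of the bound in Proposition~\ref{prop-reg-bas:Paralin}, together with a careful balancing of the interpolation constants so that the critical term $\mathcal{R}(\eta)\psi$ is absorbed rather than iterated away.
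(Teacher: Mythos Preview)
Your proposal is correct and follows essentially the same route as the paper: Lax--Milgram at $\sigma=-1/2$ using the trace estimate for coercivity, a bootstrap via the identity $\psi=(\Id+|D_x|)^{-1}(u-\mathcal{R}(\eta)\psi)$ to reach $\sigma=1/2$, then interpolation and duality for the remaining exponents.

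The only point worth flagging is that the difficulty you anticipate in the final paragraph does not actually arise. Proposition~\ref{prop-reg-bas:Paralin} already gives $\|\mathcal{R}(\eta)\|_{\mathcal{L}(H^{\sigma},H^{\sigma-1+\varepsilon})}\le K(\|\eta\|_{H^2})$ at every step, so a direct finite iteration (the paper takes $\varepsilon=1/4$, hence four steps) produces the tame bound automatically: if $\|\psi\|_{H^{\sigma_0+1}}\le K(\|\eta\|_{H^2})\|u\|_{H^{\sigma_0}}$, then
\[
\|\psi\|_{H^{\sigma_0+5/4}}\le \|u\|_{H^{\sigma_0+1/4}}+\|\mathcal{R}(\eta)\psi\|_{H^{\sigma_0+1/4}}\le \|u\|_{H^{\sigma_0+1/4}}+K(\|\eta\|_{H^2})\|\psi\|_{H^{\sigma_0+1}},
\]
and one substitutes the induction hypothesis. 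Your interpolation--absorption maneuver would also work, but it is an unnecessary detour.
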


\begin{proof}
    We first check that $\Id+\mathcal{G}(\eta)$ is invertible. Let us fix $u\in H^{-1/2}$ and consider the bilinear form $b$ 
    defined on $H^{1/2}$ by
    \begin{equation*}
        b(\psi_1,\psi_2) := \int_{\R} \left( \psi_1\psi_2 + \psi_1\mathcal{G}(\eta)\psi_2 \right) \dx,
    \end{equation*}
    Since the Dirichlet-to-Neumann operator $\mathcal{G}(\eta)$ is self-adjoint and bounded from $H^{1/2}$ to $H^{-1/2}$, $b$ is symmetric and bounded, provided that $\eta\in H^2$ (see \eqref{eq-reg-bas:BdSobo}). Moreover, due to the trace estimate given by 
    Proposition~\ref{prop-reg-bas:Rellich}, $b$ is also coercive. Specifically, 
    there exist positive constants $C$ and $C'$ 
    such that, for all $\psi\in H^{1/2}$,
    \begin{equation*}
        b(\psi,\psi) \ge \|\psi\|_{L^2}^2 + \frac{1}{1+|\partial_x\eta |_{L^\infty}} \|\psi\|_{\dot{H}^{1/2}}^2 \ge C'\left( 1+ \|\eta\|_{H^2} \right)^{-1} \|\psi\|_{H^{1/2}}^2.
    \end{equation*}
    Note that, by duality, any function $u$ in $H^{-1/2}$ can be regarded as a bounded linear functional on $H^{1/2}$. 
    Therefore, by Riesz's theorem, 
    the equation $\psi + \mathcal{G}(\eta)\psi = u$ admits a unique solution in $H^{1/2}$ which we denote by $(\Id+\mathcal{G}(\eta))^{-1}u$. 
    Furthermore, this unique solution $\psi\in H^{1/2}$ satisfies
    \begin{equation*}
        \|(\Id+\mathcal{G}(\eta))^{-1}u\|_{H^{1/2}} \le K\big(\|\eta\|_{H^2}\big) \|u\|_{H^{-1/2}},
    \end{equation*}
    which is \eqref{eq-reg-bas:InverBdSobo} with $\sigma=-1/2$.
    
    Next, we prove the estimate \eqref{eq-reg-bas:InverBdSobo} for $\sigma\in (-1/2,1/2]$ by iteration. 
    Let us assume that \eqref{eq-reg-bas:InverBdSobo} is true for some $\sigma = \sigma_0\in [-1/2,1/4]$. 
    Our aim is to show that \eqref{eq-reg-bas:InverBdSobo} also holds for $\sigma=\sigma_0+1/4$. 
    Note that, once it is proved, the intermediate case $\sigma\in(\sigma_0,\sigma_0+1/4)$ can be concluded by interpolation. 
    Consider now an arbitrary function $u$ in $H^{\sigma_0}$. 
    From the assumption of iteration, $\psi$ lies in $H^{\sigma_0+1}$. 
    Since \begin{equation*}
        u = \psi+ \mathcal{G}(\eta)\psi = \left( \Id+\Dx\right) \psi + \mathcal{R}(\eta)\psi,
    \end{equation*}
    we can 
    apply Proposition \ref{prop-reg-bas:Paralin} with $\sigma$ replaced by $\sigma_0+1 \in [1/2,5/4]$ and $\varepsilon=1/4$, to get
    \begin{align*}
        \| \psi \|_{H^{\sigma_0+5/4}} =& \left\| \left( \Id+\Dx\right)^{-1}\left( u- \mathcal{R}(\eta)\psi\right) \right\|_{H^{\sigma_0+5/4}} \\
        \le& \left\| u- \mathcal{R}(\eta)\psi \right\|_{H^{\sigma_0+1/4}} \\
        \le& \| u\|_{H^{\sigma_0+1/4}} + K\big(\|\eta\|_{H^2}\big) \|\psi\|_{H^{\sigma_0+1}} \le K\big(\|\eta\|_{H^2}\big)\| u\|_{H^{\sigma_0+1/4}}.
    \end{align*}
    This completes the proof of \eqref{eq-reg-bas:InverBdSobo} for $\sigma\in [-1/2,1/2]$. Since the Dirichlet-to-Neumann operator $\mathcal{G}(\eta)$ is self-adjoint, the inverse $(\Id+\mathcal{G}(\eta))^{-1}$ is also self-adjoint. Therefore, by duality, the adjoint of $(\Id+\mathcal{G}(\eta))^{-1}$ (which equals itself) is bounded from $H^{-\sigma-1}$ to $H^{-\sigma}$ with $\sigma\in [-1/2,1/2]$, which corresponds to the case $\sigma\in [-3/2,-1/2]$ of the desired estimate \eqref{eq-reg-bas:InverBdSobo}.
\end{proof}

Proposition~\ref{prop-reg-bas:InverBd} can be extended to higher regularity cases, where $(\eta, u) \in H^{s+2} \times H^s$ for $s \geq 0$, with the corresponding estimate being tame. Achieving this extension necessitates a detailed analysis of the paralinearization of the Dirichlet-to-Neumann operator $\mathcal{G}(\eta)$, which is the objective of the following section.

\begin{remark}
The theory of elliptic boundary value problems indeed provides the definition of \(\G(\eta)\) 
for Lipschitz domains, along with delicate estimates of the operator norms. 
We refer the reader to \cite{Kenig1985,Kenig1994} for details. 
However, these well-established results do not seem to be sufficient 
for our purpose: the key Proposition~\ref{prop-reg-dn:Main} below cannot be directly deduced from them.
\end{remark}

\subsection{Paralinearization of the Dirichlet-to-Neumann operator}\label{subsect-reg:DN}

The core of this section is the proof of a sharp paralinearization formula of the Dirichlet-to-Neumann operator. Assuming that $\eta$ has two derivatives in $L^\infty$, we shall prove that the difference between $\G(\eta)$ and $\Dx$ is of order $0$. When $\eta$ has $2+\varepsilon$ derivatives in~$L^\infty$, this follows from the analysis in~\cite{AM2009,ABZ2014}. However, for 
our purposes, we need to consider the case $\varepsilon=0$, which poses some interesting difficulties. 

Moreover, to establish the propagation of regularity in Section~\ref{High_Reg}, we also consider the higher regularity case: the difference between $\mathcal{G}(\eta)$ and its principal part $\Dx$ is bounded on higher order Sobolev spaces $H^{s+1}$ with $s\ge 0$, provided that $\eta \in W^{2+s,\infty}$. At the same time, we need to ensure that the corresponding estimates are tame. In fact, tame estimates are expected, as the proof primarily relies on paradifferential calculus (see Appendices~\ref{app:Besov} and~\ref{app:ParaDiff}), where such estimates naturally arise.

\begin{proposition}\label{prop-reg-dn:Main}
    $i)$ Consider a real number $s\ge 0$. Then, for all $\eta\in H^{s+2} \cap W^{s+2,\infty}$, the operator $\mathcal{R}(\eta)$ defined by 
	\begin{equation*}
		\mathcal{R}(\eta) = \mathcal{G}(\eta) - \Dx,
	\end{equation*}
	is a bounded operator from $H^{s+1}$ to itself, and self-adjoint in the following sense:
	$$
	\langle \psi , \mathcal{R}(\eta)\varphi\rangle=\langle \varphi , \mathcal{R}(\eta)\psi\rangle
	$$
	for all functions $\psi,\varphi$ in the Schwartz space $\mathcal{S}(\R)$.
	Moreover, there exists a non-decreasing function $K\colon \R_+\to\R_+$ such that, for all $\eta\in H^{s+2} \cap W^{s+2,\infty}$ and $\psi\in H^{s+1}$,
	\begin{equation}\label{eq-reg-dn:Main-tame}
	\begin{aligned}
	    \|\mathcal{R}(\eta)\psi\|_{H^{s+1}} \le K\big(\|\eta\|_{H^2}\big) &\Big[ \big( \|\eta\|_{W^{2,\infty}}+1 \big)  \big( \|\eta\|_{H^{s+2}} \|\psi\|_{H^{1/2}} + \|\psi\|_{H^{s+1}} \big) \\
		&+ \|\eta\|_{W^{s+2,\infty}}\|\psi\|_{H^{1}} \Big].
	\end{aligned}
	\end{equation}
	$ii)$ For all $\eta\in H^{2} \cap W^{2,\infty}$ and for all index $\sigma\in[-1,1]$, the operator~$\mathcal{R}(\eta)$ can be extended as a bounded operator from $H^\sigma$ to itself, together with the operator norm estimate
	\begin{equation}\label{eq-reg-dn:Main}
		\|\mathcal{R}(\eta)\|_{\mathcal{L}(H^\sigma)} \le K\big(\|\eta\|_{H^2}\big) \big( \|\eta\|_{W^{2,\infty}} + 1 \big).
	\end{equation}
\end{proposition}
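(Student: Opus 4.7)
The plan is to paralinearize the elliptic equation \eqref{eq-reg-bas:EllipEq} for $v$ in the flattened strip, factor the resulting paradifferential operator as a composition of two parabolic paradifferential operators, and then read off the Dirichlet-to-Neumann operator at $z=0$ via \eqref{eq-reg-bas:DN}. This is the strategy of \cite{AM2009, ABZ2014}, but because we insist on $\eta \in W^{2,\infty}$ rather than $W^{2+\varepsilon,\infty}$, one sub-principal coefficient must be chosen to \emph{exactly cancel} a dangerous piece in the symbolic composition, rather than allowing it to be absorbed in a H\"older remainder. Part (ii) will then follow from part (i) with $s=0$, together with self-adjointness, duality, and complex interpolation.

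\textbf{Step 1 (paralinearize the equation).} Using Bony's paralinearization on each of the products $\alpha \partial_z^2 v$, $\beta\,\partial_x\partial_z v$ and $\gamma\, \partial_z v$, one rewrites \eqref{eq-reg-bas:EllipEq} as
\begin{equation*}
T_\alpha \partial_z^2 v + T_\beta \partial_x \partial_z v + \partial_x^2 v - T_\gamma \partial_z v = F_0(\eta,v),
\end{equation*}
where $F_0$ collects the paraproduct/remainder pieces. The critical endpoint point is that $\gamma=\eta_{xx}\in L^\infty$ and the Bony remainder $R(\gamma,\partial_z v)$ only requires $\gamma\in L^\infty$ together with $\partial_z v\in L^2_z H^{1/2}_x$, which is exactly what is supplied by Proposition~\ref{prop-reg-bas:EllipReg}. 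Tame paraproduct estimates, together with the tame bounds \eqref{eq-reg-bas:EllipRegSobo}--\eqref{eq-reg-bas:EllipRegSoboOrd2}, then control $F_0$ by the right-hand side of \eqref{eq-reg-dn:Main-tame}.

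\textbf{Step 2 (factorize, with the endpoint cancellation).} I look for paradifferential symbols $A,B$ of order $1$ such that at the symbol level $\alpha\zeta^2+(i\beta\xi-\gamma)\zeta-\xi^2=\alpha(\zeta-A)(\zeta-B)$. The principal parts are forced:
\begin{equation*}
A^{(1)}(x,\xi)=\frac{i\eta_x\xi-|\xi|}{1+\eta_x^2},\qquad B^{(1)}(x,\xi)=\frac{i\eta_x\xi+|\xi|}{1+\eta_x^2},
\end{equation*}
with $\operatorname{Re} A^{(1)}<0<\operatorname{Re} B^{(1)}$. The sub-principal parts must satisfy $A^{(0)}+B^{(0)}=\gamma/\alpha$, and the new ingredient is to impose also $A^{(1)}B^{(0)}+A^{(0)}B^{(1)}=0$, which determines $A^{(0)}, B^{(0)}$ uniquely in terms of $\eta_x$ and $\eta_{xx}$. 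With this choice the endpoint paradifferential composition calculus (Appendix \ref{app:ParaDiff}) gives
\begin{equation*}
T_\alpha \partial_z^2 + T_\beta \partial_x\partial_z + \partial_x^2 - T_\gamma \partial_z = T_\alpha(\partial_z-T_A)(\partial_z-T_B) + E,
\end{equation*}
where $E$ is a paradifferential operator of strictly negative order, whose operator norm on $H^{s+1}$ is controlled by the right-hand side of \eqref{eq-reg-dn:Main-tame}. Setting $w=(\partial_z-T_B)v$ and using the fact that $\operatorname{Re} A^{(1)}<0$ (so that $\partial_z-T_A$ is backward parabolic on $z\le 0$ with zero data at $-\infty$), a standard energy estimate controls $w|_{z=0}$ tamely, yielding $\partial_z v|_{z=0}=T_B\psi+r$ with $r$ bounded by the right-hand side of \eqref{eq-reg-dn:Main-tame}. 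Plugging into \eqref{eq-reg-bas:DN} and computing the principal symbol, $(1+\eta_x^2)B^{(1)}-i\eta_x\xi=|\xi|$, so $\mathcal{R}(\eta)\psi=\mathcal{G}(\eta)\psi-|D_x|\psi$ reduces to the sub-principal terms plus $\alpha r$, which are estimated by \eqref{eq-reg-dn:Main-tame}. Self-adjointness of $\mathcal{R}(\eta)$ on Schwartz functions is inherited from the self-adjointness of $\mathcal{G}(\eta)$ (Proposition~\ref{prop-reg-bas:Bd}) and of $|D_x|$. For part (ii), specializing \eqref{eq-reg-dn:Main-tame} at $s=0$ gives the case $\sigma=1$ of \eqref{eq-reg-dn:Main}; by self-adjointness and duality the case $\sigma=-1$ follows, and complex interpolation fills $\sigma\in[-1,1]$.

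The hard part is Step 2: at $\gamma\in L^\infty$, the composition $T_A T_B$ produces, via the subprincipal composition formula, a correction term proportional to $(\partial_\xi A^{(1)})\,\partial_x T_{B^{(0)}}$ which throws a spatial derivative onto $B^{(0)}\sim \gamma/(|\xi|\alpha)$, i.e.\ onto $\eta_{xx}$, and would therefore require $\eta\in W^{2+\varepsilon,\infty}$. The constraint $A^{(1)}B^{(0)}+A^{(0)}B^{(1)}=0$ is precisely the algebraic identity that makes this term drop out, and it is this cancellation that lets the factorization close at the $W^{2,\infty}$ threshold.
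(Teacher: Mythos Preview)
Your overall architecture (paralinearize, factor, solve a backward parabolic problem, then read off $\mathcal{R}(\eta)$ from \eqref{eq-reg-bas:DN}; deduce (ii) from (i) at $s=0$ by self-adjointness, duality and interpolation) matches the paper's. But the specific mechanism you propose for the endpoint $\eta\in W^{2,\infty}$ has a real gap, and it is precisely where the paper does something different.

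\textbf{Step 1 already leaks half a derivative.} In $\gamma\partial_z v=T_\gamma\partial_z v+T_{\partial_z v}\gamma+R(\gamma,\partial_z v)$, the Bony remainder $R(\gamma,\partial_z v)$ is indeed fine with $\gamma\in L^\infty$ and $\partial_z v\in L^2_zH^{s+1/2}_x$. The reversed paraproduct $T_{\partial_z v}\gamma$ is not: with $\partial_z v\in L^2_zL^2_x$ and $\gamma=\eta_{xx}\in W^{s,\infty}$, the available estimates (Proposition~\ref{prop-besov-para:NegInd}, Corollary~\ref{cor-besov-para:L2Hol}) only place it in $L^2_zH^{s}_x$, not $L^2_zH^{s+1/2}_x$. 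That half-derivative loss propagates through the parabolic step and leaves $\mathcal{R}(\eta)\psi$ only in $H^{s+1/2}$. This is why the paper explicitly does \emph{not} paralinearize $\gamma\partial_z v$ (see Lemma~\ref{lem-reg-dn:Paralin}, where $\gamma\partial_z v$ stays intact on the right).

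\textbf{The Step 2 cancellation does not do what you claim.} The constraint $A^{(1)}B^{(0)}+A^{(0)}B^{(1)}=0$ kills the order-$1$ part of the \emph{symbol product} $AB$, so that $\alpha AB=-\xi^2+O(|\xi|^0)$. It does not kill the order-$1$ \emph{operator} error in $T_AT_B-T_{AB}$. Since $B^{(0)}\in\Gamma^0_0$ only (its coefficients involve $\eta_{xx}\in L^\infty$), the piece $T_{A^{(1)}}T_{B^{(0)}}-T_{A^{(1)}B^{(0)}}$ cannot be shown to be of order $\le 0$ by Proposition~\ref{prop-paradiff:SymbCal}: any Poisson-bracket correction would require $\partial_x B^{(0)}$, i.e., $\eta_{xxx}$. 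Your algebraic constraint simply does not touch this composition error.

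\textbf{What the paper does instead.} The paper keeps the factorization symbols purely principal, $a,A\in\Gamma^1_1$ (see \eqref{eq-reg-dn:FactoSymb}), so all composition remainders are genuinely of order $0$. The whole term $\gamma\partial_z v$ is handled \emph{after} factorization via the identity of Lemma~\ref{lem-reg-dn:GoodUnk},
\[
\eta_{xx}\,\partial_z v=-T_\alpha(\partial_z+T_a)T_AT_{v_z}\eta+f_3,
\]
which rewrites the dangerous product as an operator acting on $T_{v_z}\eta$ --- a paraproduct where the \emph{smooth} function $\eta\in W^{s+2,\infty}$ sits on the right. This is the Alinhac good-unknown correction, but inserted at the factorization level so that $\partial_z^3 v$ is never needed. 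The resulting equation $(\partial_z+T_a)(v_z-T_Av-T_AT_{v_z}\eta)=f$ has $f\in L^2_zH^{s+1/2}_x$, and the extra piece $T_AT_{v_z}\eta$ is harmless at $z=0$ because $\|T_{v_z}\eta\|_{H^{s+2}}\lesssim\|v_z\|_{L^2}\|\eta\|_{W^{s+2,\infty}}$.
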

This proposition shows that the ``principal part" $\Dx$ of $\G(\eta)$ does not depend on $\eta$, whence the symbol of the Dirichlet-to-Neumann operator simply reads $|\xi|$ without any lower order term. 
In general dimension, the symbol does depend on $\eta$, as shown in \cite{Calderon63,SU,AM2009}.

The proof of Proposition \ref{prop-reg-dn:Main} follows a similar idea to that of Proposition \ref{prop-reg-bas:Paralin}, but is technically much more involved. We divide it into four steps:
\begin{enumerate}[label=\textbf{(\arabic*)}]
\item\label{S1} Paralinearize the elliptic equation \eqref{eq-reg-bas:EllipEq}.

\item\label{S2} Factorize the paralinearized version of \eqref{eq-reg-bas:EllipEq}.

\item\label{S3} Solve a paralinear elliptic evolution problem, allowing comparison between the normal and tangential derivatives. 

\item\label{S4} Deduce the estimate \eqref{eq-reg-dn:Main}.
\end{enumerate}
Compared to the proof of Proposition \ref{prop-reg-bas:Paralin}, the main difference lies in the use of the H\"{o}lder regularity of \(\eta\) to improve the regularity of the remainder terms at each step.

Most of the calculations are elementary adaptations of those used in the proof of Proposition \ref{prop-reg-bas:Paralin}. However, the term \(\gamma\partial_z\) in the expression \eqref{eq-reg-bas:DefDeltag} of the Laplacian \(\Delta_g\) poses significant difficulties. Specifically, the low regularity of the coefficient \(\gamma\) prevents us from paralinearizing \(\gamma\partial_z\) simply as \(T_\gamma\partial_z\). When \(\eta\) has sufficient regularity so that \(\partial_z^3 v\) can be estimated, a possible solution is to use \textit{Alinhac's good unknown} (see, for instance, Section 4 of \cite{AM2009}). Unfortunately, in our case, it is impossible to directly apply \textit{Alinhac's good unknown} because \(\partial_z^3 v\) cannot be defined due to the low regularity of \((\eta, \psi)\). Instead, we incorporate the correction from \textit{Alinhac's good unknown} via the factorization introduced in the second step. This approach eliminates the need for \(\partial_z^3 v\), and the final formula matches that of the high regularity cases.

Our first step is to fufill \ref{S1}: replace the multiplication by $\alpha$ (resp.\ by $\beta$) by the paramultiplication 
by $T_\alpha$ (resp.\ $T_\beta$). 

\begin{lemma}\label{lem-reg-dn:Paralin}
	Fix a real number  $s\ge 0$. 
	For all $\eta\in H^{s+2}\cap W^{s+2,\infty}$ and for all $\psi\in H^{s+1}$, denote by $v$ the unique solution to the elliptic equation \eqref{eq-reg-bas:EllipEq} (see Proposition \ref{prop-reg-bas:EllipReg}). Then,
	\begin{equation}\label{eq-reg-dn:Paralin}
		\left( T_\alpha \partial_z^2 + T_\beta \partial_x\partial_z +\partial_x^2\right) v  = \gamma\partial_z v + f_1,
	\end{equation}
	where $\alpha,\beta,\gamma$ are as defined in \eqref{eq-reg-bas:DefDeltagCoeff} and the remainder $f_1$ satisfies
	\begin{equation}\label{eq-reg-dn:EstiF1}
	\begin{aligned}
	\| f_1 \|_{L^2_z(-1,0;H^{s+1/2})} 
	&\le K\big(\|\eta\|_{H^2}\big) (\|\eta\|_{W^{s+2,\infty}}+1) \|\psi\|_{H^{1}}\\
	&\quad	+K\big(\|\eta\|_{H^2}\big) (\|\eta\|_{W^{2,\infty}}+1)\|\psi\|_{H^{s+1}}
	\end{aligned}
	\end{equation}
\end{lemma}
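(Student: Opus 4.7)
The starting point is the equation $\Delta_g v = 0$,
\[
\alpha\,\partial_z^2 v + \beta\,\partial_x\partial_z v + \partial_x^2 v = \gamma\,\partial_z v,
\]
in which we wish to replace $\alpha$ by $T_\alpha$ and $\beta$ by $T_\beta$ in the left-hand side. Applying Bony's identity $ab = T_a b + T_b a + R(a,b)$ to each of the two nonlinear products and rearranging yields \eqref{eq-reg-dn:Paralin} with
\[
f_1 = -T_{\partial_z^2 v}\alpha - R(\alpha,\partial_z^2 v) - T_{\partial_x\partial_z v}\beta - R(\beta,\partial_x\partial_z v).
\]
The problem thus reduces to estimating each of these four pieces in $L^2_z(-1,0; H^{s+1/2})$.

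For the $v$-side, the key input is Proposition~\ref{prop-reg-bas:EllipReg} applied with $\sigma = s+1 \in [1/2, s+3/2]$, which provides the tame second-order bound
\[
\|\nabla_{x,z}^2 v\|_{L^2_z(-1,0;H^{s-1/2})} \le K(\|\eta\|_{H^2}) \bigl(\|\eta\|_{H^{s+2}}\|\psi\|_{H^{1/2}} + \|\psi\|_{H^{s+1}}\bigr),
\]
together with the first-order bound \eqref{eq-reg-bas:EllipRegSobo}. For the $\eta$-side, the tame product laws in $H^k\cap W^{k,\infty}$ control $\alpha - 1 = \eta_x^2$ and $\beta = -2\eta_x$ in $H^{s+1}\cap W^{s+1,\infty}$ in terms of $\|\eta\|_{H^{s+2}}$ and $\|\eta\|_{W^{s+2,\infty}}$. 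Then, on each piece of $f_1$, I would apply the standard paraproduct and remainder estimates from the paradifferential appendix,
\[
\|T_a b\|_{H^\sigma} \lesssim \|a\|_{C^\mu_*}\|b\|_{H^{\sigma-\mu}} \quad (\mu\in\R),\qquad
\|R(a,b)\|_{H^\sigma} \lesssim \|a\|_{C^\mu_*}\|b\|_{H^{\sigma-\mu}} \quad (\mu>0),
\]
each time choosing $\mu$ to land in $H^{s+1/2}$.

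The main obstacle is to arrange these bounds in a genuinely \emph{tame} way, so that the final estimate decomposes as (high regularity of $\eta$) $\times$ (low regularity of $\psi$) $+$ (low regularity of $\eta$) $\times$ (high regularity of $\psi$), rather than as a quadratic product of intermediate norms. For example, a naive estimate of $R(\beta,\partial_x\partial_z v)$ would produce $\|\beta\|_{H^{s+1}}\|\partial_x\partial_z v\|_{L^2_z H^{-1/2}}\sim \|\eta\|_{H^{s+2}}\|\psi\|_{H^{s+1}}$, which is not tame. The remedy is to exploit the bilinearity of the paraproducts to split every piece in two: in one split $\eta$ is kept in the low norm $W^{2,\infty}$ and $v$ carries the $s$ extra derivatives (using the high-regularity half of \eqref{eq-reg-bas:EllipRegSoboOrd2}); in the other, $v$ is kept at the $s=0$ level while $\eta$ carries the $s$ extra derivatives in $W^{s+2,\infty}$. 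The second option is the delicate one, since after passing a derivative through a paraproduct the high-frequency factor of $\beta$ becomes $\eta_{xx}\in W^{s,\infty}$, which is the weakest input available; this bookkeeping is precisely what produces the $(\|\eta\|_{W^{s+2,\infty}}+1)\|\psi\|_{H^1}$ contribution in \eqref{eq-reg-dn:EstiF1}, while the first option, combined with the tame factor in \eqref{eq-reg-bas:EllipRegSoboOrd2}, produces the $(\|\eta\|_{W^{2,\infty}}+1)\|\psi\|_{H^{s+1}}$ contribution.
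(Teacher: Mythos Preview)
Your decomposition of $f_1$ is correct (modulo the constant $1$ in $\alpha = 1 + \eta_x^2$, which only contributes the smoothing term $(1-T_1)\partial_z^2 v$). However, the estimation strategy you describe is more complicated than needed and rests on the wrong paraproduct tool.

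The paper's argument is simpler and uses \emph{only} the low-regularity elliptic bound $\|\nabla_{x,z}^2 v\|_{L^2_z(-1,0;H^{-1/2})} \le K(\|\eta\|_{H^2})\|\psi\|_{H^1}$ (Proposition~\ref{prop-reg-bas:EllipReg} at $\sigma=1$), never the $\sigma=s+1$ version. The key tool is Proposition~\ref{prop-besov-para:NegInd},
\[
\|T_a b\|_{H^{r+\mu}} \lesssim \|a\|_{H^r}\|b\|_{C^\mu_*}\quad (r<0),
\]
which places the \emph{Sobolev} norm on the low-frequency factor and the \emph{H\"older} norm on the high-frequency one --- the opposite pairing from the estimate $\|T_a b\|_{H^\sigma}\lesssim\|a\|_{C^\mu_*}\|b\|_{H^{\sigma-\mu}}$ that you wrote down. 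Taking $a=\partial_z^2 v\in H^{-1/2}$ and $b=\eta_x^2\in C^{s+1}_*$ (with $\|\eta_x^2\|_{C^{s+1}_*}\le K(\|\eta\|_{H^2})\|\eta\|_{W^{s+2,\infty}}$), and similarly for the remainder terms, one lands directly in $H^{s+1/2}$ with the single bound $K(\|\eta\|_{H^2})\|\eta\|_{W^{s+2,\infty}}\|\psi\|_{H^1}$. No splitting is required; the second line of \eqref{eq-reg-dn:EstiF1} is not needed for $f_1$ at all and is only there for consistency with the later bounds on $f_2,f_3$.

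Your ``Option~1'' (high regularity on $v$, low on $\eta$) does not produce the clean contribution you claim. Using \eqref{eq-reg-bas:EllipRegSoboOrd2} at $\sigma=s+1$ brings in the full right-hand side $\|\eta\|_{H^{s+2}}\|\psi\|_{H^{1/2}}+\|\psi\|_{H^{s+1}}$; multiplying by any $W^{2,\infty}$-based constant then leaves a term of the form $\|\eta\|_{H^{s+2}}\|\psi\|_{H^{1/2}}$, and no $\|\eta\|_{H^{s+2}}$ appears anywhere in \eqref{eq-reg-dn:EstiF1}. You cannot keep only the $\|\psi\|_{H^{s+1}}$ half of a tame estimate and discard the other.
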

\begin{proof}
	By comparing \eqref{eq-reg-dn:Paralin} with the original equation \eqref{eq-reg-bas:EllipEq}, we see that 
	the remainder $f_1$ reads
	\begin{equation}\label{eq-reg-dn:DefF1}
		\begin{aligned}
			f_1 =& - \left( \alpha - T_{\alpha} \right)\partial_z^2 v - \left( \beta - T_{\beta} \right)\partial_x\partial_z v \\
			=& - (1 - T_{1}) \partial_z^2 v - T_{\partial_z^2v}(\eta_x^2) - R(\partial_z^2v,\eta_x^2) + 2T_{\partial_x\partial_zv}\eta_x + 2 R(\partial_x\partial_zv,\eta_x),
		\end{aligned}
	\end{equation}
	where we used $\alpha=1+\eta_x^2$ and $\beta=-2\eta_x$ (see \eqref{eq-reg-bas:DefDeltagCoeff}).  Recall from Proposition~\ref{prop-reg-bas:EllipReg} that the function $v$ verifies
	$$
	\| \nabla_{x,z}^2v \|_{L^2(-1,0;H^{-1/2})} \le K\big(\|\eta\|_{H^2}\big) \|\psi\|_{H^1}.
	$$
	Then the first term in the right-hand side of \eqref{eq-reg-dn:DefF1} belong to $L^2(-1,0;H^s)$ for all $s\in\R$, since $\Id-T_{1}$ is a smoothing operator. To deal with the remaining terms, we first observe that Proposition~\ref{prop-besov-bony:Paralin} implies that
	\begin{equation}\label{eq-reg-dn:EstiAlpha}
	    \|\eta_x^2\|_{C^{s+1}_*} \le K\big( |\eta_x|_{L^\infty} \big)\|\eta_x\|_{C^{s+1}_*} \le K\big(\|\eta\|_{H^2}\big) \|\eta\|_{W^{s+2,\infty}}.
	\end{equation}
	Then we utilize \eqref{eq-besov-para:NegInd} and \eqref{eq-besov-bony:EstiRem} to estimate the paraproducts and reminders, respectively, to get
	\begin{align*}
	\|T_{\partial_z^2v}(\eta_x^2)\|_{L^2_zH^{s+1/2}} & + \|R(\partial_z^2v,\eta_x^2)\|_{L^2_zH^{s+1/2}} \\
	& \quad+ \|T_{\partial_x\partial_zv}\eta_x\|_{L^2_zH^{s+1/2}}+ \|R(\partial_x\partial_zv,\eta_x)\|_{L^2_zH^{s+1/2}} \\
		&\lesssim \|\partial_z^2v\|_{L^2_zH^{-1/2}} \|\eta_x^2\|_{C^{s+1}_*} + \|\partial_x\partial_zv\|_{L^2_zH^{-1/2}} \|\eta_x\|_{C^{s+1}_*} \\
		&\le K\big(\|\eta\|_{H^2}\big) \|\eta\|_{W^{s+2,\infty}} \| \nabla_{x,z}^2v \|_{L^2_zH^{-1/2}} \\
		&\le K\big(\|\eta\|_{H^2}\big) \|\eta\|_{W^{s+2,\infty}} \|\psi\|_{H^1},
	\end{align*}
	where $L^2_zH^{r}$ is a compact notation for $L^2(-1,0;H^{r})$. This completes the proof.
\end{proof}

The next task is to perform a decoupling into a forward and a backward elliptic evolution equations, fulfilling \ref{S2}.

\begin{lemma}\label{lem-reg-dn:Facto}
	Consider $\eta\in H^{2}\cap W^{2,\infty}$ and $\psi\in H^{s+1}$ with $s\ge 0$. Then there exist two symbols $a,A$ in $\Gamma^{1}_{1}$ (see Definition~\ref{def-paradiff:SymbClass}) such that
	\begin{equation}\label{eq-reg-dn:Facto}
		T_{\alpha}( \partial_z + T_{a}) (\partial_z - T_{A})v =\gamma\partial_z v+f_1+f_2,
	\end{equation}
	for some remainder $f_2$ satisfying
	\begin{equation}\label{eq-reg-dn:EstiF2}
		\lA f_2 \rA_{L^2_z(-1,0;H^{s+1/2})} 
		\le K\big(\|\eta\|_{H^2}\big) 
		( \|\eta\|_{W^{2,\infty}}+1)  \big( \|\eta\|_{H^{s+2}} \|\psi\|_{H^{1/2}} + \|\psi\|_{H^{s+1}} \big).
	\end{equation}
\end{lemma}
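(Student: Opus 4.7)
The plan is to choose the two symbols explicitly as the roots, in the normal variable, of the principal symbol of the paralinearized operator, namely
\begin{equation*}
a(x,\xi) \defn \frac{|\xi| - i\eta_x(x)\xi}{\alpha(x)},
\qquad
A(x,\xi) \defn \frac{|\xi| + i\eta_x(x)\xi}{\alpha(x)}.
\end{equation*}
An elementary algebraic check shows that these symbols satisfy the two key identities
\begin{equation*}
\alpha(a - A) = i\beta\xi,
\qquad
\alpha\, a\, A = \xi^2,
\end{equation*}
which together ensure that, at the level of principal symbols, the operator $T_\alpha(\partial_z + T_a)(\partial_z - T_A)$ matches $T_\alpha\partial_z^2 + T_\beta\partial_x\partial_z + \partial_x^2$ appearing on the left-hand side of \eqref{eq-reg-dn:Paralin}. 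Since $\eta \in H^2 \cap W^{2,\infty}$ and $\alpha \ge 1$, the functions $1/\alpha$ and $\eta_x$ both lie in $W^{1,\infty}$, so both symbols belong to the class $\Gamma^1_1$, with seminorms controlled by $K(\|\eta\|_{H^2})(1 + \|\eta\|_{W^{2,\infty}})$.

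Using that $a$ and $A$ are independent of $z$, expanding the composition yields
\begin{equation*}
T_\alpha(\partial_z + T_a)(\partial_z - T_A)v
= T_\alpha\partial_z^2 v + T_\alpha(T_a - T_A)\partial_z v - T_\alpha T_a T_A v.
\end{equation*}
Substituting \eqref{eq-reg-dn:Paralin} identifies the remainder as $f_2 = R_1 \partial_z v + R_2 v$, where
\begin{equation*}
R_1 \defn T_\alpha(T_a - T_A) - T_\beta \partial_x,
\qquad
R_2 \defn -T_\alpha T_a T_A - \partial_x^2.
\end{equation*}
Both $R_1$ and $R_2$ are composition remainders in the paradifferential calculus: thanks to the two identities above their principal symbols vanish, so by the $\Gamma^m_1$ symbolic calculus at the Lipschitz endpoint $\rho = 1$ they gain exactly one order of regularity, with operator norms bounded by $K(\|\eta\|_{H^2})(\|\eta\|_{W^{2,\infty}} + 1)$ on the relevant Sobolev spaces.

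The required bound \eqref{eq-reg-dn:EstiF2} will then follow by combining these operator norms with the tame elliptic regularity estimates of Proposition~\ref{prop-reg-bas:EllipReg} applied to $\partial_z v$ and $v$ at appropriate indices in the admissible range $\sigma \in [1/2, s + 3/2]$, thereby producing the two contributions $\|\eta\|_{H^{s+2}}\|\psi\|_{H^{1/2}}$ and $\|\psi\|_{H^{s+1}}$ exactly as stated.

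The main obstacle is the execution of the symbolic calculus at the Lipschitz endpoint $\rho = 1$, which is the minimal regularity for which the one-order gain holds, so that every lost order must be paid for by exactly one Lipschitz factor. The tame structure of \eqref{eq-reg-dn:EstiF2}, with a single factor $\|\eta\|_{W^{2,\infty}}+1$, will be preserved only by consistently pairing the $W^{1,\infty}$-seminorm of the $\xi$-dependent symbols $a, A$ with the $L^\infty$-seminorm of the leading coefficient $\alpha$; the latter is controlled purely in terms of $\|\eta\|_{H^2}$ through the one-dimensional Sobolev embedding $H^2 \hookrightarrow W^{1,\infty}$. This careful bookkeeping of where each factor of $\|\eta\|_{W^{2,\infty}}$ is spent is the direct analog of the cancellation mechanism used in Lemma~\ref{lem-reg-dn:Paralin} to control $f_1$, and mirrors the strategy employed in the high-regularity case via Alinhac's good unknown, but now incorporated through the factorization instead of a change of unknown—thereby circumventing the need to define $\partial_z^3 v$, which is unavailable at this level of regularity.
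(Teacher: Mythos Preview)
Your proof is correct and follows essentially the same route as the paper: the symbols $a,A$ you choose coincide with the paper's (recalling $\beta=-2\eta_x$), the algebraic identities $\alpha(a-A)=i\beta\xi$ and $\alpha aA=\xi^2$ are the same, and your remainders $R_1,R_2$ are precisely the symbolic-calculus remainders the paper isolates, with the same tame pairing $M^0_1(\alpha)M^1_0(a)+M^0_0(\alpha)M^1_1(a)$ to produce a single factor of $\|\eta\|_{W^{2,\infty}}+1$. The only cosmetic difference is that the paper factors $T_A=T_{A/|\xi|}\,|D_x|$ so that the order-$1$ piece $R_2$ acts on $|D_x|v$ rather than on $v$ itself---which is what you implicitly need anyway, since Proposition~\ref{prop-reg-bas:EllipReg} controls $\nabla_{x,z}v$ rather than $v$.
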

\begin{proof}
	We set $a,A$ as
	\begin{equation}\label{eq-reg-dn:FactoSymb}
		a:= \frac{|\xi|}{\alpha} + \frac{\beta}{2\alpha}i\xi,\ \ A:= \frac{|\xi|}{\alpha} - \frac{\beta}{2\alpha}i\xi,
	\end{equation}
	where $\alpha,\beta$ are defined in \eqref{eq-reg-bas:DefDeltagCoeff}. Since $\eta\in W^{2,\infty}$, the symbols $a,A$ belong to $\Gamma^1_1$ with
	\begin{equation}\label{eq-reg-dn:FactoSymbEsti}
		M^1_1(a) + M^1_1(A) \le K\big(\|\eta\|_{H^2}\big) \|\eta\|_{W^{2,\infty}}, \quad M^1_0(a) + M^1_0(A) \le K\big(\|\eta\|_{H^2}\big),
	\end{equation}
	where the semi-norms $M^1_1, M^1_0$ are as defined in \eqref{eq-paradiff:SymbNorm}. Herefater, we use the tangential  paradifferential calculus: given a symbol $p=p(x,\xi)$, we still denote by $T_p$ the operator acting on functions $u=u(x,z)$ so that for each fixed $z$, $(T_p u)(z)=T_{p}u(z)$. For such symbols $p$ independent of $z$, notice that $\partial_z$ commutes with $T_p$. Hereafter, we also make extensive use of the fact that, given a paradifferential operator $T_p$ with symbol $p=p(x,\xi)$ and a Fourier multiplier $m(D_x)$ with symbol $m(\xi)$, we have $T_p m(D_x)=T_{pm}$.

    Now, concerning the symbols $a$ and $A$, the important facts are that
	\begin{equation*}
		aA = \frac{4+\beta^2}{4\alpha^2}\xi^2=\frac{ \xi^2}{\alpha},\quad a-A=\frac{\beta}{\alpha}i\xi.
	\end{equation*}
	Hence
	\begin{align*}
		&\hspace{-2em}T_{\alpha}( \partial_z + T_{a}) (\partial_z - T_{A}) \\
		=& T_{\alpha}\partial_z^2 + T_{\alpha}T_{a-A}\partial_z - T_{\alpha}T_{a}T_{A} \\
		=& T_{\alpha}\partial_z^2 + T_{\alpha(a-A)}\partial_z - T_{\alpha aA} + \left(T_{\alpha}T_{a-A} - T_{\alpha(a-A)}\right)\partial_z - \left( T_{\alpha}T_{a}T_{A} - T_{\alpha aA} \right) \\
		=& T_\alpha \partial_z^2 + T_\beta \partial_x\partial_z +\partial_x^2 \\
		&+ \left( -\partial_x^2 - T_{\xi^2} \right) + \left(T_{\alpha}T_{a-A} - T_{\alpha(a-A)}\right)\partial_z - \left( T_{\alpha}T_{a}T_{A/|\xi|} - T_{\alpha aA/|\xi|} \right)\Dx
	\end{align*}
	This gives~\eqref{eq-reg-dn:Facto} with 
	\begin{equation*}
		f_2 = \left( -\partial_x + T_{i\xi} \right) \partial_xv + \left(T_{\alpha}T_{a-A} - T_{\alpha(a-A)}\right)\partial_z v - \left( T_{\alpha}T_{a}T_{A/|\xi|} - T_{\alpha aA/|\xi|} \right) |D_x| v,
	\end{equation*}
	where the first term lies in $L^2_zH^{1/2}$ since $\left( -\partial_x + T_{i\xi} \right)$ is a smoothing operator and $v$ satisfies
	\begin{equation}\label{eq-reg-dn:EllipRegOrd1}
		\| \nabla_{x,z}v \|_{L^2(-1,0;H^{s+1/2})} \le K\big(\|\eta\|_{H^2}\big) \big( \|\eta\|_{H^{s+2}} \|\psi\|_{H^{1/2}} + \|\psi\|_{H^{s+1}} \big),
	\end{equation}
	due to Proposition~\ref{prop-reg-bas:EllipReg}. To handle the remaining two terms, it suffices to show that the operators $\left(T_{\alpha}T_{a-A} - T_{\alpha(a-A)}\right)$ and $\left( T_{\alpha}T_{a}T_{A/|\xi|} - T_{\alpha aA/|\xi|} \right)$ are of order $0$. To do so, we first use the Sobolev embedding $H^2 \subset W^{1,\infty}$ to verify that
	$$
	M^0_1(\alpha) \le K\big(\|\eta\|_{H^2}\big) \big( \|\eta\|_{W^{2,\infty}} + 1 \big), \quad M^0_0(\alpha) \le K\big(\|\eta\|_{H^2}\big).
	$$
	Then the symbolic calculus for paradifferential operators (see Proposition~\ref{prop-paradiff:SymbCal}) implies that, for all $r\in\R$,
	\begin{align*}
		\| T_{\alpha}T_{a-A} - T_{\alpha(a-A)} \|_{\mathcal{L}(H^{r})} \lesssim& M^0_1(\alpha)M^1_0(a-A) + M^0_0(\alpha)M^1_1(a-A) \\
		\le& K\big(\|\eta\|_{H^2}\big) \big( \|\eta\|_{W^{2,\infty}}+1 \big),
	\end{align*}
	where we used \eqref{eq-reg-dn:FactoSymbEsti}, and similarly,
	\begin{equation}\label{eq-reg-dn:FactoLap}
		\begin{aligned}
			&\hspace{-2em}\| T_{\alpha}T_{a}T_{A/|\xi|} - T_{\alpha aA/|\xi|} \|_{\mathcal{L}(H^{r})} \\
			\lesssim& M^0_1(\alpha)M^1_0(a)M^1_0(A) + M^0_0(\alpha)M^1_1(a)M^1_0(A) + M^0_0(\alpha)M^1_0(a)M^1_1(A) \\
			\le& K\big(\|\eta\|_{H^2}\big) \big( \|\eta\|_{W^{2,\infty}}+1 \big),
		\end{aligned}
	\end{equation}
	which completes the proof of \eqref{eq-reg-dn:EstiF2}.
\end{proof}

Now, we are in position to deal with the term $\gamma\partial_z v = \eta_{xx}\partial_zv$. This is where we need to do something different from~\cite{AM2009,ABZ2014}.
\begin{lemma}\label{lem-reg-dn:GoodUnk}
	Consider $\eta\in H^{s+2}\cap W^{s+2,\infty}$ and $\psi\in H^{s+1}$ with $s\ge 0$. Then the unique solution $v$ to \eqref{eq-reg-bas:EllipEq} satisfies
	\begin{equation}\label{eq-reg-dn:GoodUnk}
		\eta_{xx}\partial_zv = -T_\alpha (\partial_z + T_a) T_A T_{v_z}\eta + f_3,
	\end{equation}
	where $\alpha,a$ and $A$ are as above and $f_3$ verifies
	\begin{equation}\label{eq-reg-dn:EstiF3}
	    \begin{aligned}
	        \| f_3 \|_{L^2_z(-1,0;H^{s+1/2})} \le &K\big(\|\eta\|_{H^2}\big) \Big[ \big( \|\eta\|_{W^{2,\infty}}+1 \big)  \big( \|\eta\|_{H^{s+2}} \|\psi\|_{H^{1/2}} + \|\psi\|_{H^{s+1}} \big) \\
	        & + \|\eta\|_{W^{s+2,\infty}}\|\psi\|_{H^{1}} \Big].
	    \end{aligned}
	\end{equation}
\end{lemma}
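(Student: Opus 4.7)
\medskip

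\noindent\textbf{Proof proposal.} The plan is to verify the identity by expanding the right-hand side via paradifferential symbolic calculus and matching its principal part against the worst contribution in the Bony decomposition of $\eta_{xx}v_z$. The key insight is that the right-hand side is nothing but a paradifferential incarnation of Alinhac's good unknown $v+T_{v_z}\eta$, but reorganized so that no derivative $\partial_z^3 v$ ever appears; only the second normal derivative $v_{zz}$, controlled by Proposition~\ref{prop-reg-bas:EllipReg}, is needed.

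First, decompose $\eta_{xx} v_z = T_{\eta_{xx}} v_z + T_{v_z}\eta_{xx} + R(\eta_{xx},v_z)$. The terms $T_{\eta_{xx}}v_z$ and $R(\eta_{xx},v_z)$ will be absorbed into $f_3$: indeed $\eta_{xx}\in L^\infty\cap C^{s}_*$, so the paraproduct and remainder estimates (analogous to those invoked in the proof of Lemma~\ref{lem-reg-dn:Paralin}) yield bounds of the form $\|\eta\|_{W^{2,\infty}}\|v_z\|_{L^2_zH^{s+1/2}}+\|\eta\|_{W^{s+2,\infty}}\|v_z\|_{L^2_zH^{1/2}}$, which combined with \eqref{eq-reg-bas:EllipRegSobo} fit \eqref{eq-reg-dn:EstiF3}.

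The nontrivial task is to show that $-T_{v_z}\eta_{xx}$ is the principal part of $-T_\alpha(\partial_z+T_a)T_A T_{v_z}\eta$ modulo an acceptable remainder. Since $\eta$ is independent of $z$ and $\partial_z$ commutes with the paradifferential operators $T_a, T_A$ of symbols in $\Gamma^1_1$, we expand
\begin{equation*}
-T_\alpha(\partial_z+T_a)T_A T_{v_z}\eta = -T_\alpha T_A T_{v_{zz}}\eta - T_\alpha T_a T_A T_{v_z}\eta.
\end{equation*}
For the second piece, Proposition~\ref{prop-paradiff:SymbCal}, exactly as used in \eqref{eq-reg-dn:FactoLap}, gives $T_\alpha T_a T_A = T_{\alpha aA} + R_1 = -\partial_x^2 + R_1$ with $\|R_1\|_{\mathcal{L}(H^r)}\le K(\|\eta\|_{H^2})(\|\eta\|_{W^{2,\infty}}+1)$, using the identity $\alpha aA=\xi^2$. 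Composing further with $T_{v_z}$ through symbolic calculus and the identity $T_{v_z\xi^2}\eta = -T_{v_z}\partial_x^2\eta$ yields $-T_\alpha T_a T_A T_{v_z}\eta = T_{v_z}\eta_{xx}$ plus a remainder bounded pointwise in $z$ by $K(\|\eta\|_{H^2})(\|\eta\|_{W^{2,\infty}}+1)\|v_z\|_{H^{s+1/2}}$. Integrating in $z$ and applying \eqref{eq-reg-bas:EllipRegSobo}, this is acceptable for $f_3$.

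It remains to estimate $-T_\alpha T_A T_{v_{zz}}\eta$, which is where the low regularity of $v$ forces care: $v_{zz}$ is only in $L^2_z H^{-1/2}$. Here we use the low-regularity paraproduct estimate $\|T_a u\|_{H^{s+1/2}}\lesssim \|a\|_{H^{-1/2}}\|u\|_{C^{s+1}_*}$ already invoked after \eqref{eq-reg-dn:EstiAlpha}: applying $T_\alpha T_A$ to $T_{v_{zz}}\eta$ and using once more the symbolic calculus $T_\alpha T_A = T_{\alpha A}+R_2$ of order zero times $K(\|\eta\|_{H^2})(\|\eta\|_{W^{2,\infty}}+1)$, the principal part produces terms of the form $T_{v_{zz}}\bigl(\langle D_x\rangle \eta\bigr)$ and $T_{v_{zz}\eta_x}\eta_x$, whose $L^2_zH^{s+1/2}$-norms are bounded by
\begin{equation*}
\|v_{zz}\|_{L^2_zH^{-1/2}}\bigl(\|\eta\|_{C^{s+2}_*}+\|\eta_x\|_{C^{s+1}_*}\|\eta_x\|_{L^\infty}\bigr)\le K(\|\eta\|_{H^2})\|\eta\|_{W^{s+2,\infty}}\|\psi\|_{H^1},
\end{equation*}
again via Proposition~\ref{prop-reg-bas:EllipReg}. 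Collecting all the above remainders gives \eqref{eq-reg-dn:EstiF3}.

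The main obstacle is bookkeeping the tame distribution of norms: since $\eta_{xx}$ is only $L^\infty$, every appearance of $\|v_{zz}\|$ must be paired with a H\"older norm of $\eta$ rather than a Sobolev norm, and every symbolic-calculus remainder coming from the non-smooth symbols $\alpha,a,A$ must be absorbed into the prefactor $K(\|\eta\|_{H^2})(\|\eta\|_{W^{2,\infty}}+1)$. Ensuring that $\partial_z^3 v$ never appears — which is why the good-unknown correction is baked into the factorization $(\partial_z+T_a)T_A$ instead of being applied to $v$ directly — is the conceptual key to making this work at the minimal regularity $\eta\in W^{2,\infty}$.
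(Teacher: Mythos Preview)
Your overall strategy matches the paper exactly: Bony-decompose $\eta_{xx}v_z$, cancel $T_{v_z}\eta_{xx}$ against the expansion of $-T_\alpha(\partial_z+T_a)T_AT_{v_z}\eta$, and absorb everything else into $f_3$. However, there is a genuine gap at the step where you write ``composing further with $T_{v_z}$ through symbolic calculus'' and likewise where you decompose $T_\alpha T_A T_{v_{zz}}\eta$ via $T_\alpha T_A = T_{\alpha A}+R_2$ and then commute Fourier multipliers through $T_{v_{zz}}$. Proposition~\ref{prop-paradiff:SymbCal} requires the symbols to lie in $\Gamma^m_\rho$ with $\rho>0$, i.e.\ to have $W^{\rho,\infty}$-regularity in $x$. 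At $s=0$ the function $v_z(z,\cdot)$ is only in $H^{1/2}$ and $v_{zz}(z,\cdot)$ only in $H^{-1/2}$; neither is even bounded, so symbolic calculus is unavailable for these compositions. Relatedly, your claimed bound $K(\|\eta\|_{H^2})(\|\eta\|_{W^{2,\infty}}+1)\|v_z\|_{H^{s+1/2}}$ for the remainder of ``$-T_\alpha T_aT_AT_{v_z}\eta - T_{v_z}\eta_{xx}$'' is not correct for general $s$: for instance, bounding $\|T_{v_{zx}}\eta_x\|_{H^{s+1/2}}$ this way fails once $s\ge 1/2$, and forcing it through $|v_{zx}|_{L^\infty}\|\eta_x\|_{H^{s+1/2}}$ produces the non-tame factor $\|\eta\|_{H^{s+2}}^2$.

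The paper avoids both pitfalls. For the piece $-T_\alpha T_aT_AT_{v_z}\eta$, after writing $T_\alpha T_aT_A + \partial_x^2$ as a bounded operator of order $1$, it uses the \emph{exact} Leibniz identity
\[
\partial_x^2(T_{v_z}\eta)=T_{v_{xxz}}\eta+2T_{v_{xz}}\eta_x+T_{v_z}\eta_{xx},
\]
and then estimates the cross terms via the negative-index paraproduct (Proposition~\ref{prop-besov-para:NegInd}), pairing the \emph{low} Sobolev regularity $\|v_{xxz}\|_{H^{-3/2}}$, $\|v_{xz}\|_{H^{-1/2}}$ with the \emph{full} H\"older norm $\|\eta\|_{C^{s+2}_*}$; this is what produces the tame contribution $\|\eta\|_{W^{s+2,\infty}}\|\psi\|_{H^1}$ in \eqref{eq-reg-dn:EstiF3}, not the $(\|\eta\|_{W^{2,\infty}}+1)$-type bound you claim. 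For the piece $T_\alpha T_AT_{v_{zz}}\eta$, no further decomposition is needed: one simply uses that $T_\alpha$ and $T_A$ are of order $0$ and $1$ (controlled by $\|\eta\|_{H^2}$ alone) and bounds $\|T_{v_{zz}}\eta\|_{H^{s+3/2}}\lesssim \|v_{zz}\|_{H^{-1/2}}\|\eta\|_{C^{s+2}_*}$ directly.
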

\begin{proof}
	Note that
	\begin{align*}
		&\hspace{-2em}T_\alpha (\partial_z + T_a) T_A T_{v_z}\eta \\
		=& T_\alpha T_a T_A T_{v_z}\eta + T_\alpha T_A T_{v_{zz}}\eta \\
		=& \left( T_\alpha T_a T_A + \partial_x^2 \right) \left( T_{v_z}\eta \right) + T_\alpha T_A T_{v_{zz}}\eta - \partial_x^2\left( T_{v_z}\eta \right) \\
		=& \left( T_\alpha T_a T_A + \partial_x^2 \right) \left( T_{v_z}\eta \right) + T_\alpha T_A T_{v_{zz}}\eta - T_{v_{xxz}}\eta - 2 T_{v_{xz}} \eta_x - T_{v_z}\eta_{xx}.
	\end{align*}
	By comparing this with \eqref{eq-reg-dn:GoodUnk}, we can write $f_3$ as
	\begin{equation}\label{eq-reg-dn:DefF3}
	\begin{aligned}
	f_3 
	& = \left( T_\alpha T_a T_A + \partial_x^2 \right) \left( T_{v_z}\eta \right) + T_\alpha T_A T_{v_{zz}}\eta \\
	&\quad- T_{v_{xxz}}\eta - 2 T_{v_{xz}} \eta_x + T_{\eta_{xx}}v_z + R(\eta_{xx},v_z).
	\end{aligned}
	\end{equation}
	In what follows, we will check that each term in the right-hand side belongs to $L^2_zH^{s+1/2}$ by using the regularity of $v$ given by Proposition~\ref{prop-reg-bas:EllipReg}, that is
	\begin{align*}
		&\| \nabla_{x,z}v \|_{L^2(-1,0;H^{s+1/2})} + \| \nabla_{x,z}^2v \|_{L^2(-1,0;H^{s-1/2})} \\
		&\hspace{10em}\le K\big(\|\eta\|_{H^2}\big) \big( \|\eta\|_{H^{s+2}} \|\psi\|_{H^{1/2}} + \|\psi\|_{H^{s+1}} \big).
	\end{align*}
	Recall that this estimate holds for all real number $s\ge 0$.
	
	The first term can be estimated through \eqref{eq-reg-dn:FactoLap} and Proposition~\ref{prop-besov-para:NegInd},
	\begin{align*}
	\big\| \left( T_\alpha T_a T_A + \partial_x^2 \right) \left( T_{v_z}\eta \right) \big\|_{L^2_zH^{s+1/2}}
		&\le K\big(\|\eta\|_{H^2}\big) \big( \|\eta\|_{W^{2,\infty}}+1 \big) \| T_{v_z}\eta \|_{L^2_zH^{s+3/2}} \\
		&\le K\big(\|\eta\|_{H^2}\big) \big( \|\eta\|_{W^{2,\infty}}+1 \big) \| v_z \|_{L^2_zL^2} \|\eta\|_{C_*^{s+3/2}} \\
		&\le K\big(\|\eta\|_{H^2}\big) \big( \|\eta\|_{W^{2,\infty}}+1 \big) \|\eta\|_{W^{s+2,\infty}} \|\psi\|_{H^1}.
	\end{align*}
	To deal with the second term, we use the fact that, from definition \eqref{eq-reg-dn:FactoSymb} of $A$, $\alpha\in\Gamma^0_0$ and $A\in\Gamma^1_0$ (see Definition~\ref{def-paradiff:SymbClass}), provided that $\eta\in H^2\subset W^{1,\infty}$. Then due to Proposition~\ref{prop-paradiff:Bd}, the operators $T_\alpha$ and $T_A$ are of order $0$ and $1$, respectively. Consequently,
	\begin{equation*}
		\| T_\alpha T_A T_{v_{zz}}\eta \|_{L^2_zH^{s+1/2}} \le K\big(\|\eta\|_{H^2}\big) \| T_{v_{zz}}\eta \|_{L^2_zH^{s+3/2}}.
	\end{equation*}
	The paraproduct $T_{v_{zz}}\eta$ in the right-hand side needs to be controlled via Proposition~\ref{prop-besov-para:NegInd},
	\begin{equation*}
		\| T_{v_{zz}}\eta \|_{L^2_zH^{s+3/2}} \lesssim \|v_{zz}\|_{L^2_zH^{-1/2}} \|\eta\|_{C^{s+2}_*} \le K\big(\|\eta\|_{H^2}\big) \|\psi\|_{H^1} \|\eta\|_{W^{s+2,\infty}}.
	\end{equation*}
	In a similar way, we can estimate the third and fourth terms in the right-hand side of \eqref{eq-reg-dn:EstiF3},
	\begin{align*}
		\| T_{v_{xxz}}\eta \|_{L^2_zH^{s+1/2}} \lesssim& \| v_{xxz} \|_{L^2_zH^{-3/2}} \| \eta \|_{C^{s+2}_*} \le K\big(\|\eta\|_{H^2}\big) \|\psi\|_{H^1} \|\eta\|_{W^{s+2,\infty}}, \\
		\| T_{v_{xz}} \eta_x \|_{L^2_zH^{s+1/2}} \lesssim& \| v_{xz} \|_{L^2_zH^{-1/2}} \| \eta_x \|_{C^{s+1}_*} \le K\big(\|\eta\|_{H^2}\big) \|\psi\|_{H^1} \|\eta\|_{W^{s+2,\infty}}.
	\end{align*}
	The last two terms in the right-hand side of \eqref{eq-reg-dn:EstiF3} can be studied via Proposition~\ref{prop-besov-bony:EstiPara} and~\ref{prop-besov-bony:EstiRem}, which imply that
	\begin{align*}
	    &\hspace{-2em}\| T_{\eta_{xx}}v_z \|_{L^2_zH^{s+1/2}} + \| R(\eta_{xx},v_z) \|_{L^2_zH^{s+1/2}} \\
		&\lesssim \| v_z \|_{L^2_zH^{s+1/2}} |\eta_{xx}|_{L^\infty} \\
		&\le K\big(\|\eta\|_{H^2}\big) \big( \|\eta\|_{H^{s+2}} \|\psi\|_{H^{1/2}} + \|\psi\|_{H^{s+1}} \big) \|\eta\|_{W^{2,\infty}},
	\end{align*}
	where the last inequality follows from Proposition~\ref{prop-reg-bas:EllipReg}. The proof of \eqref{eq-reg-dn:EstiF3} is completed.
\end{proof}

From the lemmas above, we can deduce that $v$ solves the following elliptic evolution problem, therefore fufilling \ref{S3}:
\begin{lemma}\label{lem-reg-dn:Parabolic}
	Consider $\eta\in H^{s+2}\cap W^{s+2,\infty}$ and $\psi\in H^{s+1}$ with $s\ge 0$. Then the unique solution $v$ to \eqref{eq-reg-bas:EllipEq} satisfies
	\begin{equation}\label{eq-reg-dn:MainEqV}
		(\partial_z + T_a)(v_z - T_Av - T_A T_{v_z}\eta ) = f,
	\end{equation}
	where the source term $f$ belongs to $L^2_z(-1,0;H^{s+1/2})$ with
	\begin{equation}\label{eq-reg-dn:EstiF}
	    \begin{aligned}
	        \| f \|_{L^2_z(-1,0;H^{s+1/2})} \le &K\big(\|\eta\|_{H^2}\big) \Big[ \big( \|\eta\|_{W^{2,\infty}}+1 \big)  \big( \|\eta\|_{H^{s+2}} \|\psi\|_{H^{1/2}} + \|\psi\|_{H^{s+1}} \big) \\
            & + \|\eta\|_{W^{s+2,\infty}}\|\psi\|_{H^{1}} \Big].
	    \end{aligned}
	\end{equation}
\end{lemma}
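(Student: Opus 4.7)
My plan is to combine the three preceding lemmas algebraically and then remove the paramultiplication $T_\alpha$ from the left. First, I substitute the identity of Lemma \ref{lem-reg-dn:GoodUnk}, which expresses $\gamma \partial_z v = \eta_{xx} \partial_z v$ in the form $-T_\alpha(\partial_z + T_a) T_A T_{v_z}\eta + f_3$, into the factorized identity $T_\alpha(\partial_z + T_a)(\partial_z - T_A) v = \gamma\partial_z v + f_1 + f_2$ from Lemma \ref{lem-reg-dn:Facto}. Transposing the $T_A T_{v_z}\eta$ contribution to the left-hand side and factoring out the common first-order operator $T_\alpha(\partial_z + T_a)$ yields
$$
T_\alpha(\partial_z + T_a)\bigl(v_z - T_A v - T_A T_{v_z}\eta\bigr) = f_1 + f_2 + f_3,
$$
up to tracking signs through the cancellation. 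This is precisely where the ``good unknown" $w := v_z - T_A v - T_A T_{v_z}\eta$ is identified: the paralinearized Laplace equation factors through a forward evolution equation for $w$.

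Second, I remove the paramultiplication by $\alpha$. Since $\alpha = 1 + \eta_x^2 \geq 1$ with $\eta \in H^2 \cap W^{2,\infty}$, the symbol $1/\alpha$ is bounded and lies in $W^{1,\infty}$, so the symbolic calculus of Proposition \ref{prop-paradiff:SymbCal} gives $T_{1/\alpha} T_\alpha = \mathrm{Id} + R_0$, where $R_0$ is a bounded operator of order $-1$ with norm controlled by $K(\|\eta\|_{H^2})(\|\eta\|_{W^{2,\infty}} + 1)$. Applying $T_{1/\alpha}$ to both sides and absorbing the correction $-R_0(\partial_z + T_a) w$ into the right-hand side produces the desired identity
$$
(\partial_z + T_a)\bigl(v_z - T_A v - T_A T_{v_z}\eta\bigr) = f,
$$
with $f$ equal to $T_{1/\alpha}(f_1 + f_2 + f_3)$ plus the error term from the approximate inversion.

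The estimate \eqref{eq-reg-dn:EstiF} then follows by the triangle inequality from \eqref{eq-reg-dn:EstiF1}--\eqref{eq-reg-dn:EstiF3} together with the $H^r$-boundedness of $T_{1/\alpha}$ (Proposition \ref{prop-paradiff:Bd}). I expect the main obstacle to be the error term $R_0(\partial_z + T_a) w$ generated by the approximate inversion: it requires an a priori bound on $(\partial_z + T_a) w$ in a norm one order below the target. This gain is supplied precisely by the order $-1$ nature of $R_0$ combined with the second-order elliptic regularity estimate \eqref{eq-reg-bas:EllipRegSoboOrd2}, which provides $\nabla_{x,z}^2 v \in L^2_z(-1,0;H^{s-1/2})$. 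Tracking this gain carefully, and verifying that the dependence on $\|\eta\|_{W^{2,\infty}}$ and $\|\eta\|_{W^{s+2,\infty}}$ stays linear in the highest norms, is what ultimately ensures \eqref{eq-reg-dn:EstiF} closes with the claimed tame structure.
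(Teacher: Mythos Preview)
Your proposal is correct and follows the same route as the paper: insert Lemma~\ref{lem-reg-dn:GoodUnk} into Lemma~\ref{lem-reg-dn:Facto} to get $T_\alpha(\partial_z+T_a)w=f_1+f_2+f_3$ for $w=v_z-T_Av-T_AT_{v_z}\eta$, then left-multiply by $T_{\alpha^{-1}}$ and absorb the defect of the approximate inversion into $f$. The paper records the inversion error as $(1-T_{\alpha^{-1}}T_\alpha)w$ and bounds it using only the $\mathcal{L}(H^{s+1/2})$-norm of $1-T_{\alpha^{-1}}T_\alpha$ together with the first-order elliptic estimate for $\|w\|_{L^2_zH^{s+1/2}}$; your version, which keeps the algebraically accurate error $R_0(\partial_z+T_a)w$ and closes it via the order~$-1$ gain of $R_0$ combined with the second-order estimate~\eqref{eq-reg-bas:EllipRegSoboOrd2}, is an equally valid way to reach~\eqref{eq-reg-dn:EstiF}.
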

\begin{proof}
	By inserting \eqref{eq-reg-dn:GoodUnk} into \eqref{eq-reg-dn:Facto}, the equation of $v$ can be written as
	\begin{equation*}
		T_\alpha(\partial_z + T_a)(v_z - T_Av - T_A T_{v_z}\eta ) = f_1 +f_2 + f_3.
	\end{equation*}
	We apply $T_{\alpha^{-1}}$ on both sides and obtain \eqref{eq-reg-dn:MainEqV} with $f$ equal to
	\begin{equation}\label{eq-reg-dn:DefF}
		f = (1-T_{\alpha^{-1}}T_\alpha) (v_z - T_Av - T_A T_{v_z}\eta ) + T_{\alpha^{-1}} (f_1 +f_2 + f_3).
	\end{equation}
	
	To obtain the estimate \eqref{eq-reg-dn:EstiF}, we notice that, under the condition $\eta\in H^2\subset W^{1,\infty}$, the operator $T_{A/|\xi|}$ and $T_{\alpha^{-1}}$ are of order $0$. Thus, through Proposition~\ref{prop-besov-bony:EstiPara} and the estimates \eqref{eq-reg-dn:EstiF1}, \eqref{eq-reg-dn:EstiF2}, and \eqref{eq-reg-dn:EstiF3}, we have
	\begin{align*}
		&\hspace{-2em}\|T_{\alpha^{-1}} (f_1 +f_2 + f_3)\|_{L^2_zH^{s+1/2}} \\
		\le& K\big(\|\eta\|_{H^2}\big) \|f_1 +f_2 + f_3\|_{L^2_zH^{s+1/2}} \\
		\le& K\big(\|\eta\|_{H^2}\big) \Big[ \big( \|\eta\|_{W^{2,\infty}}+1 \big)  \big( \|\eta\|_{H^{s+2}} \|\psi\|_{H^{1/2}} + \|\psi\|_{H^{s+1}} \big) + \|\eta\|_{W^{s+2,\infty}}\|\psi\|_{H^{1}} \Big].
	\end{align*}
	Thanks to the regularity of $v$ (see Proposition~\ref{prop-reg-bas:EllipReg}) and Proposition~\ref{prop-besov-para:NegInd}, we also have
	\begin{align*}
		\|v_z - T_Av & - T_A T_{v_z}\eta\|_{L^2_zH^{s+1/2}} \\
		&\le K\big(\|\eta\|_{H^2}\big) \left( \|v_z\|_{L^2_zH^{s+1/2}} + \||D_x|v\|_{L^2_zH^{s+1/2}} + \|T_{v_z}\eta\|_{L^2_zH^{s+3/2}} \right) \\
		&\le K\big(\|\eta\|_{H^2}\big) \left( \|\nabla_{x,z}v\|_{L^2_zH^{s+1/2}} + \|v_z\|_{L^2_zL^2} \|\eta\|_{C_*^{s+3/2}} \right) \\
		&\le K\big(\|\eta\|_{H^2}\big) \big( \|\eta\|_{H^{s+2}} \|\psi\|_{H^{1/2}} + \|\psi\|_{H^{s+1}} + \|\eta\|_{W^{s+2,\infty}}\|\psi\|_{H^{1}} \big).
	\end{align*}
	
	Besides, a direct calculus gives that 
	\begin{equation*}
		M^0_1(\alpha) + M^0_1(\alpha^{-1}) \le K\big(\|\eta\|_{H^2}\big) \big( \|\eta\|_{W^{2,\infty}} + 1 \big), \quad M^0_0(\alpha) + M^0_0(\alpha^{-1}) \le K\big(\|\eta\|_{H^2}\big),
	\end{equation*}
	which, thanks to symbolic calculus (see Proposition~\ref{prop-paradiff:SymbCal}), guarantees that
	\begin{align*}
		\| 1-T_{\alpha^{-1}}T_\alpha \|_{\mathcal{L}(H^{s+1/2})} \lesssim& M^0_1(\alpha^{-1}) M^0_0(\alpha) + M^0_0(\alpha^{-1}) M^0_1(\alpha) \\
		\le& K\big(\|\eta\|_{H^2}\big) \big( \|\eta\|_{W^{2,\infty}} + 1 \big).
	\end{align*}
	Then the estimate \eqref{eq-reg-dn:EstiF} follows.
\end{proof}

Once we have derived the parabolic equation \eqref{eq-reg-dn:MainEqV}, our next step will be to apply Proposition~\ref{prop-paradiff:Parabolic} to prove that the normal derivative $v_z$ can be expressed in terms of the action of the tangential operator $T_A$ on $v$, leading to the desired result.

\begin{proof}[Proof of Proposition~\ref{prop-reg-dn:Main}]
	From the definition \eqref{eq-reg-dn:FactoSymb} of $a$, we see that $a\in \Gamma^1_{1/2}$ (requiring only $\eta\in H^2$) and that $\RE a\ge c|\xi|$ for some constant $c>0$ depending on the $H^2$ norm of $\eta$. This fact enables us to apply Proposition~\ref{prop-paradiff:Parabolic} to the equation \eqref{eq-reg-dn:MainEqV} whose source term $f$ is estimated in \eqref{eq-reg-dn:EstiF}, to obtain
	\begin{align*}
		&\left\| v_z - T_Av - T_A T_{v_z}\eta \right\|_{C^0([-1,0];H^{s+1})} \\
		&\hspace{1em}\le K\big(\|\eta\|_{H^2}\big) \Big[ \big( \|\eta\|_{W^{2,\infty}}+1 \big)  \big( \|\eta\|_{H^{s+2}} \|\psi\|_{H^{1/2}} + \|\psi\|_{H^{s+1}} \big) + \|\eta\|_{W^{s+2,\infty}}\|\psi\|_{H^{1}} \Big],
	\end{align*}
	while the estimate \eqref{eq-reg-bas:EllipRegSobo} of $v$ and Proposition~\ref{prop-besov-para:NegInd} guarantee that
	\begin{align*}
		\left\| T_A T_{v_z}\eta \right\|_{C^0([-1,0];H^{s+1})} \le& K\big(\|\eta\|_{H^2}\big) \left\| T_{v_z}\eta \right\|_{C^0([-1,0];H^{s+2})} \\
		\le& K\big(\|\eta\|_{H^2}\big) \left\| v_z \right\|_{C^0([-1,0];L^2)} \|\eta\|_{W^{s+2,\infty}} \\
		\le& K\big(\|\eta\|_{H^2}\big) \|\eta\|_{W^{s+2,\infty}} \|\psi\|_{H^1}.
	\end{align*}
	Therefore, the trace of $v_z$ at $z=0$ satisfies
	\begin{equation}\label{eq-reg-dn:Nor-Tan-tame}
	\begin{aligned}
	    &\left\| v_z|_{z=0} - T_A \psi \right\|_{H^{s+1}} \\
		&\hspace{1em}\le K\big(\|\eta\|_{H^2}\big) \Big[ \big( \|\eta\|_{W^{2,\infty}}+1 \big)  \big( \|\eta\|_{H^{s+2}} \|\psi\|_{H^{1/2}} + \|\psi\|_{H^{s+1}} \big) + \|\eta\|_{W^{s+2,\infty}}\|\psi\|_{H^{1}} \Big].
	\end{aligned}
	\end{equation}
	Note that, when $s=0$, this estimate can be simplified as
	\begin{equation}\label{eq-reg-dn:Nor-Tan}
	    \left\| v_z|_{z=0} - T_A \psi \right\|_{H^{1}} \le K\big(\|\eta\|_{H^2}\big) \big( \|\eta\|_{W^{2,\infty}}+1 \big)  \|\psi\|_{H^{1}}.
	\end{equation}

	From the definition \eqref{eq-reg-bas:DN} of the Dirichlet-to-Neumann operator $\G(\eta)$, we have
	\begin{align*}
		\G(\eta)\psi =& (1+\eta_x^2)v_z|_{z=0} - \eta_x \partial_x\psi \\
		=& \left((1+\eta_x^2)T_A - \eta_x \partial_x \right)\psi + (1+\eta_x^2) (v_z|_{z=0} - T_A \psi) \\
		=& \left((1+\eta_x^2)\left(\frac{|D_x|}{\alpha} - \frac{\beta\partial_x}{2\alpha}\right) - \eta_x \partial_x \right)\psi \\
		&- (1+\eta_x^2)\left(\frac{|D_x|}{\alpha} - \frac{\beta\partial_x}{2\alpha} - T_A\right)\psi + (1+\eta_x^2) (v_z|_{z=0} - T_A \psi).
	\end{align*}
	Consequently, from the definition \eqref{eq-reg-bas:DefDeltagCoeff} of $\alpha,\beta$, the calculus above gives $\G(\eta)\psi = |D_x|\psi + \mathcal{R}(\eta)\psi$, with
	\begin{equation}\label{eq-reg-dn:DefR}
		\mathcal{R}(\eta)\psi = - (1+\eta_x^2)\left(\alpha^{-1}|D_x| - \beta\alpha^{-1}\partial_x - T_A\right)\psi + (1+\eta_x^2) (v_z|_{z=0} - T_A \psi).
	\end{equation}
	In the sequel, we will show that the right-hand side of \eqref{eq-reg-dn:DefR} belongs to 
	$H^{s+1}$ and satisfies the wanted estimate~\eqref{eq-reg-dn:Main-tame}, that is
	\begin{align*}
		\|\mathcal{R}(\eta)\psi\|_{H^{s+1}} \le K\big(\|\eta\|_{H^2}\big) &\Big[ \big( \|\eta\|_{W^{2,\infty}}+1 \big)  \big( \|\eta\|_{H^{s+2}} \|\psi\|_{H^{1/2}} + \|\psi\|_{H^{s+1}} \big) \\
		&+ \|\eta\|_{W^{s+2,\infty}}\|\psi\|_{H^{1}} \Big].
	\end{align*}
	In particular, this immediately implies the estimate \eqref{eq-reg-dn:Main} for $\sigma=1$. 
	By a duality argument, the operator $\mathcal{R}(\eta)$ can be extended as a bounded operator from $H^{-1}$ (the dual space of $H^1$) to itself with the same bound for the operator norm $\mathcal{L}(H^{-1})$, which proves~\eqref{eq-reg-dn:Main} for the case $\sigma=-1$. Then the intermediate cases where $\sigma\in(-1,1)$ can be obtained by interpolation.
	
	The second term in the right-hand side of \eqref{eq-reg-dn:DefR} can be treated simply by using the estimate \eqref{eq-reg-dn:Nor-Tan-tame}, \eqref{eq-reg-dn:Nor-Tan}, and Proposition~\ref{lemPa},~\ref{prop-besov-para:NegInd},
	\begin{align*}
		&\hspace{-2em}\| (1+\eta_x^2) (v_z|_{z=0} - T_A \psi) \|_{H^{s+1}} \\
		\le& \left\| \left( 1 - T_{1} \right) (v_z|_{z=0} - T_A \psi) \right\|_{H^{s+1}} + \left\| \left( \eta_x^2 - T_{\eta_x^2} \right) (v_z|_{z=0} - T_A \psi) \right\|_{H^{s+1}} \\
		&+ \left\| T_{\eta_x^2} (v_z|_{z=0} - T_A \psi) \right\|_{H^{s+1}} \\
		\lesssim& \|v_z|_{z=0} - T_A\|_{H^1} + \|v_z|_{z=0} - T_A\|_{H^{1}} \|\eta_x^2\|_{H^{s+1}} +  \|\eta_x^2\|_{H^1} \|v_z|_{z=0} - T_A\|_{H^{s+1}} \\
		\le& K\big(\|\eta\|_{H^2}\big) \Big[ \big( \|\eta\|_{W^{2,\infty}}+1 \big)  \big( \|\eta\|_{H^{s+2}} \|\psi\|_{H^{1/2}} + \|\psi\|_{H^{s+1}} \big) + \|\eta\|_{W^{s+2,\infty}}\|\psi\|_{H^{1}} \Big].
	\end{align*}
	In the last inequality, we used the fact that 
	$$
	\|\eta^2_x\|_{H^{s+1}} \le K\big(\|\eta\|_{H^2}\big) \|\eta\|_{H^{s+1}},
	$$
	which is a consequence of Proposition~\ref{prop-besov-bony:Paralin}.
	
	As for the first term, notice that, from the definition \eqref{eq-reg-dn:FactoSymb} and \eqref{eq-reg-bas:DefDeltagCoeff} of $A$ and $\alpha,\beta$, respectively, we have
	\begin{align*}
		&\hspace{-2em}- (1+\eta_x^2)\left(\frac{|D_x|}{\alpha} - \frac{\beta\partial_x}{2\alpha} - T_A\right)\psi \\
		=& (\alpha T_{\alpha^{-1}} -1 )|D_x|\psi + ( \alpha T_{\alpha^{-1}\eta_x} -\eta_x ) \partial_x\psi \\
		=& (1+\eta_x^2) \left(T_{|D_x|\psi} \alpha^{-1} + R(|D_x|\psi, \alpha^{-1}) + T_{\partial_x\psi} (\alpha^{-1}\eta_x) + R(\partial_x\psi, \alpha^{-1}\eta_x)  \right) .
	\end{align*}
	Thanks to Proposition~\ref{prop-besov-bony:Paralin}, the functions $\alpha^{-1}$ and $\alpha^{-1}\eta_x$ belong to $C_*^{s+1}$ with
	$$
	\|\alpha^{-1}\|_{C_*^{s+1}} + \|\alpha^{-1}\eta_x\|_{C_*^{s+1}} \le K\big(\|\eta\|_{H^2}\big) \|\eta\|_{W^{s+2,\infty}}.
	$$
	As a result, we may apply \eqref{eq-besov-bony:EstiRem} and \eqref{eq-besov-para:L2Hol} to obtain the following estimates
	\begin{align*}
	    &\left\| T_{|D_x|\psi} \alpha^{-1} + R(|D_x|\psi, \alpha^{-1}) + T_{\partial_x\psi} (\alpha^{-1}\eta_x) + R(\partial_x\psi, \alpha^{-1}\eta_x) \right\|_{H^{s+1}} \\
	    &\hspace{2em}\le K\big(\|\eta\|_{H^2}\big) \|\eta\|_{W^{s+2,\infty}} \|\psi\|_{H^{1}}.
	\end{align*}
	By repeating the argument for the second term, we are able to conclude the estimate \eqref{eq-reg-dn:Main-tame}.
\end{proof}

As a corollary, we extend Proposition~\ref{prop-reg-bas:InverBd} to the case with $\eta$ belonging to $H^{s+2}\cap W^{s+2,\infty}$. Moreover, a tame estimate will be established below.
\begin{corollary}\label{cor-reg-dn:InverBd}
    Given a real number $s\ge 0$, we assume that $\eta\in H^{s+2}\cap W^{s+2,\infty}$ and $u \in H^{s}$. Recall that the operator $\Id + \G(\eta) \colon H^{1/2} \to H^{-1/2}$ is invertible (see Proposition~\ref{prop-reg-bas:InverBd}). Then its inverse $(\Id+\G(\eta))^{-1}$ satisfies
    \begin{equation}\label{eq-reg-dn:InverBd}
    \begin{aligned}
        \left\| (\Id+\G(\eta))^{-1} u \right\|_{H^{s+1}} \le K\big(\|\eta\|_{H^2}\big) &\Big[ (\|\eta\|_{W^{2,\infty}} + 1) (\|\eta\|_{H^{s+2}}\|u\|_{H^{-1/2}} + \|u\|_{H^{s}}) \\
        &+ \|\eta\|_{W^{s+2,\infty}} \|u\|_{L^2} \Big].
    \end{aligned}
    \end{equation}
\end{corollary}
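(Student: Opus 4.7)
The plan is to derive the tame bound \eqref{eq-reg-dn:InverBd} by bootstrapping in $s$, using the paralinearization $\G(\eta) = \Dx + \mathcal{R}(\eta)$ furnished by Proposition~\ref{prop-reg-dn:Main}. Setting $\psi := (\Id + \G(\eta))^{-1} u$, whose existence in $H^{1/2}$ is guaranteed by Proposition~\ref{prop-reg-bas:InverBd}, the defining relation $\psi + \G(\eta)\psi = u$ rewrites as $(\Id + \Dx)\psi = u - \mathcal{R}(\eta)\psi$. Since $\Id + \Dx$ is an elliptic Fourier multiplier realizing an isomorphism $H^{s+1}\to H^s$ with uniformly bounded inverse, this yields the recursion
\[
\|\psi\|_{H^{s+1}} \le C\|u\|_{H^s} + C\|\mathcal{R}(\eta)\psi\|_{H^s}.
\]
The term $\|\mathcal{R}(\eta)\psi\|_{H^s}$ will be controlled by Proposition~\ref{prop-reg-dn:Main}(i) applied with $s$ replaced by $s-1$ (valid for $s\ge 1$), giving a tame bound in terms of $\|\psi\|_{H^{1/2}}$, $\|\psi\|_{H^1}$, $\|\psi\|_{H^s}$, with prefactors involving $L := \|\eta\|_{W^{2,\infty}} + 1$, $\|\eta\|_{H^{s+1}}$, and $\|\eta\|_{W^{s+1,\infty}}$.

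For the base case $s \in [0, 1/2]$, Proposition~\ref{prop-reg-bas:InverBd} directly supplies the clean bound $\|\psi\|_{H^{s+1}} \le K(\|\eta\|_{H^2}) \|u\|_{H^s}$, which is dominated by the right-hand side of \eqref{eq-reg-dn:InverBd} since $L \ge 1$. In particular, the two \emph{baseline} estimates $\|\psi\|_{H^{1/2}} \le K_0 \|u\|_{H^{-1/2}}$ and $\|\psi\|_{H^1} \le K_0 \|u\|_{L^2}$ are retained and will feed into the tame estimate of $\mathcal{R}(\eta)\psi$ at every subsequent step. For the inductive step at level $s_0 \ge 1/2$, one substitutes these baseline bounds together with the inductive control on $\|\psi\|_{H^{s_0+1}}$ into the estimate furnished by Proposition~\ref{prop-reg-dn:Main}(i), and absorbs $\|\eta\|_{H^\sigma}$-factors with $\sigma\le 2$ into the nondecreasing function $K(\|\eta\|_{H^2})$ to produce \eqref{eq-reg-dn:InverBd} at the next level.

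The principal obstacle is to keep the prefactor $L$ linear throughout the iteration rather than letting it accumulate to $L^2, L^3, \ldots$: each application of Proposition~\ref{prop-reg-dn:Main}(i) introduces a fresh $L$ multiplying $\|\psi\|_{H^{s_0+1}}$, while the inductive bound on $\|\psi\|_{H^{s_0+1}}$ itself already carries an $L$, so naive substitution produces an $L^2$ in the high-regularity term. To circumvent this, I would exploit the logarithmic convexity of the $H^\sigma$-scale: the interpolation $\|u\|_{H^{\sigma}} \le \|u\|_{H^s}^{\sigma/s}\|u\|_{L^2}^{1-\sigma/s}$ combined with a weighted Young's inequality redistributes each intermediate $L^k\|u\|_{H^{s-j}}$ produced in the substitution into a controlled $L\|u\|_{H^s}$ term plus a residual in $\|u\|_{L^2}$, the latter being absorbed into the ``high Hölder'' term $\|\eta\|_{W^{s_0+2,\infty}}\|u\|_{L^2}$ via the monotonicity $\|\eta\|_{W^{2,\infty}}\le\|\eta\|_{W^{s_0+2,\infty}}$. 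An analogous convexity argument applied to the $\|\eta\|_{H^\sigma}$-factors handles the term $L^2 \|\eta\|_{H^{s_0+1}}\|u\|_{H^{-1/2}}$, restoring the desired tame form.
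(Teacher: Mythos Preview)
Your approach coincides with the paper's: set $\psi=(\Id+\G(\eta))^{-1}u$, rewrite as $(\Id+|D_x|)\psi=u-\mathcal R(\eta)\psi$, and bootstrap in $s$ using the tame bound \eqref{eq-reg-dn:Main-tame} from Proposition~\ref{prop-reg-dn:Main}, with Proposition~\ref{prop-reg-bas:InverBd} supplying both the base case and the baseline controls on $\|\psi\|_{H^{1/2}}$, $\|\psi\|_{H^1}$. The paper runs the same induction in steps of size at most one and simply substitutes the inductive bound on $\|\psi\|_{H^{s_1+1}}$ without further comment.

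Where you go beyond the paper is in flagging the $L$-accumulation: the inductive step produces $K_0L\,\|\psi\|_{H^{s_1+1}}$, and substituting the level-$s_1$ hypothesis turns this into $K_0^2\big[L^2(\|\eta\|_{H^{s_1+2}}\|u\|_{H^{-1/2}}+\|u\|_{H^{s_1}})+L\,\|\eta\|_{W^{s_1+2,\infty}}\|u\|_{L^2}\big]$, not the claimed linear-in-$L$ form. The paper's proof does not address this point. However, your proposed remedy does not close either. Interpolating $\|u\|_{H^{s_1}}\le\|u\|_{H^{s_1+1}}^\theta\|u\|_{L^2}^{1-\theta}$ with $\theta=s_1/(s_1+1)$ and applying weighted Young gives
\[
L^2\|u\|_{H^{s_1}}\le L\,\|u\|_{H^{s_1+1}}+L^{(2-\theta)/(1-\theta)}\|u\|_{L^2}=L\,\|u\|_{H^{s_1+1}}+L^{s_1+2}\|u\|_{L^2},
\]
and $L^{s_1+2}\|u\|_{L^2}$ cannot be absorbed into $\|\eta\|_{W^{s_1+3,\infty}}\|u\|_{L^2}$ by the monotonicity $\|\eta\|_{W^{2,\infty}}\le\|\eta\|_{W^{s_1+3,\infty}}$ alone, since that would require $(\|\eta\|_{W^{2,\infty}}+1)^{s_1+2}\lesssim \|\eta\|_{W^{s_1+3,\infty}}+1$. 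The same obstruction hits $L^2\|\eta\|_{H^{s_1+2}}\|u\|_{H^{-1/2}}$ and the cross term $L\,\|\eta\|_{W^{s_1+2,\infty}}\|u\|_{L^2}$. So your diagnosis is sharper than the paper's, but the interpolation fix as sketched does not deliver \eqref{eq-reg-dn:InverBd} exactly; both arguments, as written, yield a bound whose $L$-power grows with~$s$.
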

\begin{proof}
    As in the proof of Proposition~\ref{prop-reg-bas:InverBd}, we will use an induction in $s$ to prove the tame estimate \eqref{eq-reg-dn:InverBd}. Notice that the case $s=0$ is covered by Proposition~\ref{prop-reg-bas:InverBd}. In what follows, we assume that \eqref{eq-reg-dn:InverBd} holds for $s\in[1,s_1]$ for some $s_1\ge 0$, and show that \eqref{eq-reg-dn:InverBd} also holds for $s=s_1+\epsilon$ and any $\epsilon\in[0,1]$.

    Set $\psi=(\Id+\G(\eta))^{-1}u$. 
	According to the formula \eqref{eq-reg-bas:Paralin}, we can write $\G(\eta)$ as $|D_x| + \mathcal{R}(\eta)$ and obtain
	\begin{equation*}
		u = \psi + \mathcal{G}(\eta)\psi = (\Id+\Dx)\psi + \mathcal{R}(\eta) \psi,
	\end{equation*}
	which implies
	\begin{equation}\label{eq-reg-dn:InverFormu}
		\psi = (\Id+\Dx)^{-1} u - (\Id+\Dx)^{-1}\mathcal{R}(\eta) \psi.
	\end{equation}
	Then the $H^{s_1+\epsilon+1}$ estimate of $\psi$ reduces to the bound of $H^{s_1+\epsilon}$ norm of $\mathcal{R}(\eta)\psi$, which is given by \eqref{eq-reg-dn:Main-tame}. More precisely, we have
	\begin{align*}
	    &\hspace{-2em}\|\psi\|_{H^{s_1+\epsilon+1}} \lesssim \|u\|_{H^{s_1+\epsilon}} + \|\mathcal{R}(\eta) \psi\|_{H^{s_1+\epsilon}} \lesssim \|u\|_{H^{s_1+\epsilon}} + \|\mathcal{R}(\eta) \psi\|_{H^{s_1+1}} \\
	    \le& C\|u\|_{H^{s_1+\epsilon}} + K\big(\|\eta\|_{H^2}\big) \Big[ (\|\eta\|_{W^{2,\infty}} + 1) (\|\eta\|_{H^{s_1+1}}\|\psi\|_{H^{1}} + \|\psi\|_{H^{s_1+1}}) \\
	    &+ \|\eta\|_{W^{s_1+2,\infty}} \|\psi\|_{H^1} \Big].
	\end{align*}
	Note that the control of $H^1$ and $H^{s_1+1}$ norm of $\psi$ has been given by \eqref{eq-reg-bas:InverBdSobo} and the assumption that \eqref{eq-reg-dn:InverBd} is true for $s\in[1,s_1]$, respectively. By inserting these estimates into the inequality above, we complete the proof of \eqref{eq-reg-dn:InverBd} for $s=s_1+\epsilon$.
\end{proof}

To end this section, we use a similar argument to obtain the $Y^{s+1}$ (defined in Definition~\ref{def-intro-main:HolSpaceVar}) estimate for $(\Id+\G(\eta))^{-1}u$. Roughly speaking, when $\eta\in H^2 \cap Y^2$, $(\Id+\G(\eta))^{-1}$ elevates $1$ regularity in the sense that it sends functions in $H^s\cap Y^s$ into the space $H^{s+1}\cap Y^{s+1}$. For the sake of simplicity, we focus on the case $s\in[-1,0]$, which is enough for its application in the next section.

\begin{corollary}\label{cor-reg-dn:RegPsi}
	Consider a real number $s\in [-1 ,0]$. 
	There exists a non-decreassing function $K\colon \R_+\to\R_+$ such that, for all 
	$\eta\in H^2\cap Y^2$ and for all 
	$u\in H^s\cap Y^s$, 
	\begin{equation}\label{eq-reg-dn:RegPsi}
		\|(\Id+\G(\eta))^{-1}u\|_{Y^{s+1}} 
		\le K\big( \|\eta\|_{H^{2}} \big) \Big( \|u\|_{Y^s} + \big(\|\eta\|_{W^{2,\infty}} + 1\big)\|u\|_{H^{s}} \Big).
	\end{equation}
\end{corollary}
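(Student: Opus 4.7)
The plan is to adapt the identity already used in the proof of Corollary \ref{cor-reg-dn:InverBd}, namely the consequence of the paralinearization formula $\G(\eta) = \Dx + \mathcal{R}(\eta)$:
\begin{equation*}
\psi = (\Id+\Dx)^{-1} u - (\Id+\Dx)^{-1} \mathcal{R}(\eta)\psi,
\qquad \psi := (\Id+\G(\eta))^{-1}u,
\end{equation*}
and estimate the two terms on the right-hand side separately. The first term is purely linear and behaves exactly as one would wish for a Hardy-space estimate; the second term needs to be handled via a Sobolev/embedding argument that trades regularity gained by the elliptic inverse $(\Id+\Dx)^{-1}$ for the $L^\infty$-type control packaged in $Y^{s+1}$.

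For the first term, I would apply Proposition \ref{prop-pre-fct:BdMult} with the radial symbol $1/(1+|\xi|)$, which is of order $-1$. This immediately gives
\begin{equation*}
\big\|(\Id+\Dx)^{-1} u\big\|_{Y^{s+1}} \le C\|u\|_{Y^s}.
\end{equation*}
For the second term, I would fix some $\epsilon\in(0,1/2)$ and use the embedding $H^{s+3/2+\epsilon}\subset Y^{s+1}$ from Proposition \ref{prop-pre-fct:Basic}(3) together with the fact that $(\Id+\Dx)^{-1}$ is bounded from $H^{\sigma}$ to $H^{\sigma+1}$ for every $\sigma\in\R$. Concretely,
\begin{equation*}
\big\|(\Id+\Dx)^{-1} \mathcal{R}(\eta)\psi\big\|_{Y^{s+1}} \lesssim \big\|(\Id+\Dx)^{-1} \mathcal{R}(\eta)\psi\big\|_{H^{s+3/2+\epsilon}} \lesssim \|\mathcal{R}(\eta)\psi\|_{H^{s+1/2+\epsilon}}.
\end{equation*}
Since $s\in[-1,0]$ and $\epsilon<1/2$, the target regularity $s+1/2+\epsilon$ lies in $[-1,1]$, so Proposition \ref{prop-reg-dn:Main}(ii) applies and yields
\begin{equation*}
\|\mathcal{R}(\eta)\psi\|_{H^{s+1/2+\epsilon}} \le K\big(\|\eta\|_{H^2}\big)\big(\|\eta\|_{W^{2,\infty}}+1\big)\|\psi\|_{H^{s+1/2+\epsilon}}.
\end{equation*}
Finally, Proposition \ref{prop-reg-bas:InverBd} provides
$\|\psi\|_{H^{s+1/2+\epsilon}} \le K(\|\eta\|_{H^2}) \|u\|_{H^{s-1/2+\epsilon}} \le K(\|\eta\|_{H^2}) \|u\|_{H^{s}}$,
since $s-1/2+\epsilon\in[-3/2,1/2]$ falls in the range of validity of that proposition. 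Summing the two contributions produces exactly the bound \eqref{eq-reg-dn:RegPsi}.

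No step in this argument is genuinely hard, because all the nontrivial work has been done upstream: the $Y$-boundedness of order-$(-1)$ radial Fourier multipliers, the order-zero bound on $\mathcal{R}(\eta)$, and the $H^{s+1}$-boundedness of $(\Id+\G(\eta))^{-1}$. The only matter requiring care is the bookkeeping of indices, making sure that the auxiliary Sobolev index $s+1/2+\epsilon$ we pay in order to reach $Y^{s+1}$ via embedding stays simultaneously within the range $[-1,1]$ where $\mathcal{R}(\eta)$ is bounded and within the range $[-3/2,1/2]$ where the inverse of $\Id+\G(\eta)$ is controlled. For $s\in[-1,0]$ and $\epsilon$ chosen small enough this is automatic, which is what forces the restriction $s\in[-1,0]$ in the statement. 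The $Y^2$ hypothesis on $\eta$ is not strictly used by this argument beyond the implicit embedding $Y^2\subset W^{2,\infty}$ (Proposition \ref{prop-pre-fct:Embd}), and just reflects the natural scale of regularity in which this corollary will be applied in the next section.
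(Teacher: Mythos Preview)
Your proof is correct and follows essentially the same approach as the paper: the same identity $\psi=(\Id+\Dx)^{-1}u-(\Id+\Dx)^{-1}\mathcal{R}(\eta)\psi$, the same use of Proposition~\ref{prop-pre-fct:BdMult} for the first term, and the same Sobolev-embedding route for the second. The only cosmetic difference is that the paper embeds via $H^{s+2}\subset Y^{s+1}$ and ends up at $\|\mathcal{R}(\eta)\psi\|_{H^{s+1}}$, whereas you pass through $H^{s+3/2+\epsilon}$ and $\|\mathcal{R}(\eta)\psi\|_{H^{s+1/2+\epsilon}}$; both choices land safely inside the admissible ranges of Propositions~\ref{prop-reg-dn:Main} and~\ref{prop-reg-bas:InverBd}.
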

\begin{proof}
Set $\psi=(\Id+\G(\eta))^{-1}u$. Recall that, in the proof of Corollary~\ref{cor-reg-dn:InverBd}, we have obtained the formula \eqref{eq-reg-dn:InverFormu},
	\begin{equation*}
		\psi = (\Id+\Dx)^{-1} u - (\Id+\Dx)^{-1}\mathcal{R}(\eta) \psi.
	\end{equation*}
	Since $(\Id+\Dx)^{-1}$ is a Fourier multiplier whose symbol $(1+|\xi|)^{-1}$ is a radial of order $-1$, it follows from 
	Proposition \ref{prop-pre-fct:BdMult} combined with part (3) of Proposition \ref{prop-pre-fct:Basic}, that
	\begin{align*}
		\|\psi\|_{Y^{s+1}} &\le \|(\Id+\Dx)^{-1} u\|_{Y^{s+1}} + \|(\Id+\Dx)^{-1}\mathcal{R}(\eta) \psi\|_{Y^{s+1}} \\[1ex]
		&\lesssim \|u\|_{Y^s} + \|(\Id+\Dx)^{-1}\mathcal{R}(\eta) \psi\|_{H^{s+2}} \lesssim \|u\|_{Y^s} + \|\mathcal{R}(\eta) \psi\|_{H^{s+1}}.
	\end{align*}
	Now, using \eqref{eq-reg-bas:InverBdSobo} and \eqref{eq-reg-dn:Main}, we get
	\begin{align*}
		\|\psi\|_{Y^{s+1}} &\lesssim \|u\|_{Y^s} + \|\mathcal{R}(\eta) (\Id+\mathcal{G}(\eta))^{-1}u\|_{H^{s+1}} \\
		\lesssim & \|u\|_{Y^s} + K\left( \|\eta\|_{H^{2}}\right)\left(\|\eta\|_{W^{2,\infty}} + 1\right)\|(\Id+\mathcal{G}(\eta))^{-1}u\|_{H^{s+1}} \\
		\lesssim & \|u\|_{Y^s} + K\left( \|\eta\|_{H^{2}}\right)\left(\|\eta\|_{W^{2,\infty}} + 1\right) \|u\|_{H^{s}},
	\end{align*}
	which gives the desired inequality.
\end{proof}

\subsection{Shape derivative in low regularity}\label{subsect-reg:ShpDer}
Finally, we recall the \emph{shape derivative formula} due to Lannes, which clarifies the dependence of the Dirichlet-to-Neumann operator \(\mathcal{G}(\eta)\) on \(\eta\). For a comprehensive treatment of the general theory, we refer to Appendix A.1 of Lannes' book \cite{lannes2013water}. For our purposes, we extend Lannes' result in one direction: we allow for more general functions \(\psi\). Specifically, we consider \(\psi \in H^{1/2}\) (instead of \(\psi \in H^{3/2}\) in \cite{lannes2013water}), which is of independent interest since this is optimal in view of the variational definition of \(\mathcal{G}(\eta)\).

\begin{theorem}[Shape derivative]\label{thm-reg-shpder:Main}
    Consider two real numbers $s,\sigma$ satisfying
    \begin{equation*}
    	s\ge 1\quad \text{and} \quad \frac{1}{2} \le \sigma \le s+\frac{1}{2}.
    \end{equation*}
    For all $\eta,\delta\eta\in H^{s+1}$ and $\psi\in H^\sigma$, the following limit exists in $H^{\sigma-1}$,
    \begin{equation}\label{eq-reg-shpder:Main}
        \lim_{\varepsilon\rightarrow 0}\frac{ \mathcal{G}(\eta+\varepsilon\delta\eta)\psi - \mathcal{G}(\eta)\psi }{\varepsilon} 
        = -\mathcal{G}(\eta)\left(\delta\eta \mathcal{B}(\eta)\psi \right) - \partial_x\left(\delta\eta\mathcal{V}(\eta)\psi \right).
    \end{equation}
    Note that the right-hand side is defined in the sense of distribution $\mathcal{D}'(\R)$. Namely, 
    for all test function $\varphi\in C^\infty_0(\R)$,
    \begin{equation*}
    	\begin{aligned}
    		&\langle -\mathcal{G}(\eta)\left(\delta\eta \, \mathcal{B}(\eta)\psi \right) - \partial_x\left(\delta\eta\, \mathcal{V}(\eta)\psi \right), \varphi \rangle_{\mathcal{D}'(\R)\times\mathcal{D}(\R)} \\
    		&\hspace{6em}= \int_{\R}\left[ -(\delta\eta\,\G(\eta)\varphi) \mathcal{B}(\eta)\psi + (\delta\eta\,\partial_x\varphi)\mathcal{V}(\eta)\psi \right] \dx,
    	\end{aligned}
    \end{equation*}
    where the integral in the right-hand side makes sense due to Corollary~\ref{cor-besov-bony:ProdLaw}. Moreover, it satisfies
    \begin{equation}\label{eq-reg-shpder:BasicEsti}
        \big\|\mathcal{G}(\eta)\left(\delta\eta \mathcal{B}(\eta)\psi \right) 
        +\partial_x\left(\delta\eta\mathcal{V}(\eta)\psi \right) \big\|_{H^{\sigma-1}} 
        \le K(\|\eta\|_{H^{s+1}}) \|\delta\eta\|_{H^{s+1}} \|\psi\|_{H^{\sigma}}.
    \end{equation}
\end{theorem}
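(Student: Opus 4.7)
My plan is a regularization argument: first establish the formula and the estimate \eqref{eq-reg-shpder:BasicEsti} when $\psi$ is smooth, then extend to $\psi \in H^\sigma$ by density. For the smooth case (say $\psi \in H^{s+3/2}$, where all quantities are classical), I would follow Lannes' argument. Letting $v_\varepsilon$ denote the harmonic extension corresponding to boundary height $\eta + \varepsilon\delta\eta$ with trace $\psi$, the shape derivative $\dot v := \partial_\varepsilon v_\varepsilon|_{\varepsilon=0}$ can be characterized as the solution of a Dirichlet boundary value problem with data $-\delta\eta\, \B(\eta)\psi$, arising formally from differentiating $v_\varepsilon|_{z=0} = \psi$ in $\varepsilon$. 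Computing the conormal trace of $\dot v$ and simplifying produces the formula \eqref{eq-reg-shpder:Main}.

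The technical core is the estimate \eqref{eq-reg-shpder:BasicEsti}. Individually, $\G(\eta)(\delta\eta\, \B(\eta)\psi)$ and $\partial_x(\delta\eta\, \V(\eta)\psi)$ each appear to lie only in $H^{\sigma-2}$, since $\B(\eta)\psi, \V(\eta)\psi$ are only in $H^{\sigma-1}$ by Proposition~\ref{prop-reg-bas:BdBV} and the operators $\G(\eta)$ and $\partial_x$ each cost one derivative. Gaining the extra derivative must come from a cancellation between the two terms. To extract it, I would substitute $\V(\eta)\psi = \partial_x\psi - \eta_x \B(\eta)\psi$ and use the paralinearization $\G(\eta) = \Dx + \mathcal{R}(\eta)$ from Proposition~\ref{prop-reg-dn:Main}. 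The contribution of $\mathcal{R}(\eta)$ is controlled in $H^{\sigma-1}$ directly via that proposition, while the principal contribution rearranges, using $\partial_x = \Dx \mathcal{H}$, into
\begin{equation*}
\Dx\Bigl[\delta\eta\, \B(\eta)\psi + \mathcal{H}\bigl(\delta\eta\, \partial_x\psi\bigr) - \mathcal{H}\bigl(\delta\eta\, \eta_x \B(\eta)\psi\bigr)\Bigr],
\end{equation*}
which should be handled by the endpoint commutator bound of Lemma~\ref{Commu_Esti} together with standard paraproduct estimates, exploiting the fact that $\delta\eta \in H^{s+1}$ with $s \ge 1$ acts as a multiplier on the relevant Sobolev spaces.

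With \eqref{eq-reg-shpder:BasicEsti} in hand, the density step is routine: approximate $\psi$ by Schwartz functions $\psi_n \to \psi$ in $H^\sigma$, apply the smooth-case identity for each $\psi_n$, and pass to the limit, using the estimate applied to $\psi - \psi_n$ to control the right-hand side and Proposition~\ref{prop-reg-bas:Bd} to bound the difference quotient on the left-hand side in $H^{\sigma-1}$ uniformly in small $\varepsilon$. The main obstacle is therefore the cancellation-based proof of \eqref{eq-reg-shpder:BasicEsti} at the endpoint $\sigma = 1/2$: classical arguments yield the formula only for $\psi \in H^{3/2}$, and the present extension down to $H^{1/2}$ relies crucially on the sharp low-regularity tools developed earlier in the paper, namely the boundedness of $\mathcal{R}(\eta)$ on Sobolev spaces from Proposition~\ref{prop-reg-dn:Main} and the endpoint commutator estimate of Lemma~\ref{Commu_Esti}.
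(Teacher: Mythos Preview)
Your overall plan—establish the formula for smooth data via Lannes' argument, then pass to the limit by density—matches the paper's approach in Appendix~\ref{app:ShpDer}. The gap is in your proposed proof of the estimate~\eqref{eq-reg-shpder:BasicEsti}. You invoke Proposition~\ref{prop-reg-dn:Main} and Lemma~\ref{Commu_Esti}, but neither applies under the hypotheses of Theorem~\ref{thm-reg-shpder:Main}: Proposition~\ref{prop-reg-dn:Main} requires $\eta\in W^{2,\infty}$, while here only $\eta\in H^{s+1}$ (hence $H^2$) is assumed; Lemma~\ref{Commu_Esti} needs the second factor in $L^\infty$, but $B=\B(\eta)\psi$ lies merely in $H^{\sigma-1}$, which at the endpoint $\sigma=1/2$ is $H^{-1/2}$. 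You are in effect transplanting the argument of Proposition~\ref{prop-reg-shpder:sharp}, where those extra hypotheses ($\eta\in W^{2,\infty}$, $\psi\in Y^1$, hence $B\in L^\infty$) are genuinely present, into a setting where they are not. Moreover, your rearrangement into $\Dx\bigl[\delta\eta\,B+\mathcal{H}(\delta\eta\,V)\bigr]$ still requires showing the bracket is in $H^\sigma$; this ultimately needs $\mathcal{R}(\eta)B\in H^{\sigma-1}$, which the available bound on $\mathcal{R}(\eta)$ (Proposition~\ref{prop-reg-bas:Paralin}, valid for $\eta\in H^2$) does not provide.

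The paper proves~\eqref{eq-reg-shpder:BasicEsti} by a completely different route that avoids any cancellation in the boundary formula. It works with the harmonic extensions: writing $v^\varepsilon$ and $v$ for the variational solutions associated to $\eta+\varepsilon\delta\eta$ and $\eta$, one has the identity $v^\varepsilon-v=\mathscr{R}(\eta)\bigl((A^\varepsilon-A)\nabla_{x,z}v^\varepsilon\bigr)$, and similarly $\delta v:=\partial_\varepsilon v^\varepsilon|_{\varepsilon=0}=\mathscr{R}(\eta)(\delta A\,\nabla_{x,z}v)$. The elliptic regularity estimate~\eqref{eq-ellip-var:Main} then gives $\|\delta v\|_{\mathcal{X}^{\sigma-1}}\le K(\|\eta\|_{H^{s+1}})\|\delta\eta\|_{H^{s+1}}\|\psi\|_{H^\sigma}$ directly (Propositions~\ref{prop-shp:Esti}--\ref{prop-shp:Main}), and the bound~\eqref{eq-reg-shpder:BasicEsti} is read off from the trace of $\nabla_{x,z}\delta v$. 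No paralinearization of $\G(\eta)$ and no commutator estimate enter this argument; the one derivative you were trying to recover through cancellation is instead supplied by the elliptic gain in $\mathscr{R}(\eta)$.
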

\begin{proof}	A detailed proof in general dimension is given in Appendix~\ref{app:ShpDer}.\end{proof}

To introduce our next result, the important observation is that the three operators
\[
\psi \mapsto \mathcal{G}(\eta)\psi, \quad \psi \mapsto \mathcal{B}(\eta)\psi, \quad\text{and}\quad 
\psi \mapsto \mathcal{V}(\eta)\psi\] are of order \(1\). More precisely, they are bounded from \(H^\sigma\) to \(H^{\sigma-1}\) under appropriate assumptions on \(\sigma\) and regularity of \(\eta\). In this context, the right-hand side of \eqref{eq-reg-shpder:Main} might initially appear to be an operator of order \(2\). However, according to the estimate \eqref{eq-reg-shpder:BasicEsti}, it is actually an operator of order \(1\) in the sense that it belongs to \(L^2\) for any \(\eta, \delta\eta \in H^2\) and \(\psi \in H^1\). This indicates the presence of a hidden cancellation. The purpose of this section is to clarify this cancellation and to prove an alternative version of \eqref{eq-reg-shpder:BasicEsti}, tailored to our problem, that requires \emph{less regularity} of~\(\delta\eta\). 

This is one reason why we need to work with the Hardy spaces $Y^1$ introduced in Section~\ref{subsect-pre:Fct}. Indeed, we shall need to control the $L^\infty$-norm of  $\B(\eta)\psi$ and $\V(\eta)\psi$. Now, notice that, already when $\eta=0$, we have  $\B(0)\psi=\Dx \psi$ 
and $\V(0)\psi=\partial_x\psi$. Therefore,
$$
\lA \psi\rA_{Y^1}\sim \la \psi\ra_{L^\infty}+\la \B(0)\psi\ra_{L^\infty}+\la \V(0)\psi\ra_{L^\infty}.
$$

\begin{proposition}\label{prop-reg-shpder:sharp}
Suppose $\eta\in H^2\cap W^{2,\infty}$ and $\psi\in H^1\cap Y^1$. Then for all $\delta\eta\in H^2\cap W^{2,\infty}$, we have
    \begin{equation}\label{eq-reg-shpder:sharp}
        \begin{aligned}
            &\big\| \G(\eta)(\B(\eta)\psi \cdot\delta\eta) + \partial_x(\V(\eta)\psi \cdot\delta\eta) \big\|_{L^2} \\
            &\hspace{8em}\le K\big(\|\eta\|_{H^2}\big)\big( \|\psi\|_{Y^1} + \left( \|\eta\|_{W^{2,\infty}}+1 \big)\|\psi\|_{H^1} \right)\|\delta\eta\|_{H^1}.
        \end{aligned}
    \end{equation}
\end{proposition}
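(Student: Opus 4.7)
The plan rests on three tools from earlier in the paper: the paralinearization $\G(\eta)=\Dx+\mathcal{R}(\eta)$ with $\|\mathcal{R}(\eta)\|_{\mathcal{L}(L^2)}\le K(\|\eta\|_{H^2})(\|\eta\|_{W^{2,\infty}}+1)$ (Proposition~\ref{prop-reg-dn:Main}); the endpoint commutator estimate $\|[\Dx,A]f\|_{L^2}\lesssim\|A\|_{H^1}|f|_{L^\infty}$ (Lemma~\ref{Commu_Esti}); and the observation that, for $\psi\in Y^1$, both $\Dx\psi$ and $\partial_x\psi$ belong to $Y^0\subset L^\infty$ with norms controlled by $\|\psi\|_{Y^1}$. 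The last point follows from Proposition~\ref{prop-pre-fct:BdMult} applied to the order-zero radial symbol $|\xi|/\langle\xi\rangle$, combined with $\partial_x=-\Hi\Dx$ and the $\Hi$-invariance of $Y^0$.

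First I would invert the $2\times 2$ linear system relating $(B,V)=(\B(\eta)\psi,\V(\eta)\psi)$ to $\partial_x\psi$ and $\G(\eta)\psi$. Writing $\alpha=1+\eta_x^2$ and using $\G(\eta)=\Dx+\mathcal{R}(\eta)$, this yields the explicit formulas
\begin{equation*}
B=\alpha^{-1}\bigl(\eta_x\partial_x\psi+\Dx\psi+\mathcal{R}(\eta)\psi\bigr),\qquad V=\alpha^{-1}\bigl(\partial_x\psi-\eta_x\Dx\psi-\eta_x\mathcal{R}(\eta)\psi\bigr).
\end{equation*}
Defining the weighted increment $\delta\eta^{\ast}:=\delta\eta/\alpha$, a direct product-rule computation using $\alpha\ge1$ and $H^2\hookrightarrow W^{1,\infty}$ yields $\|\delta\eta^{\ast}\|_{H^1}+\|\delta\eta^{\ast}\eta_x\|_{H^1}\le K(\|\eta\|_{H^2})\|\delta\eta\|_{H^1}$. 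Substituting these expressions into $\mathcal{I}:=\G(\eta)(\B(\eta)\psi\cdot\delta\eta)+\partial_x(\V(\eta)\psi\cdot\delta\eta)$ and regrouping, $\mathcal{I}$ splits as
\begin{equation*}
\mathcal{I}=\bigl[\G(\eta)(\delta\eta^{\ast}\Dx\psi)+\partial_x(\delta\eta^{\ast}\partial_x\psi)\bigr]+\bigl[\G(\eta)(\delta\eta^{\ast}\eta_x\partial_x\psi)-\partial_x(\delta\eta^{\ast}\eta_x\Dx\psi)\bigr]+\mathcal{Z},
\end{equation*}
where $\mathcal{Z}$ collects the two remaining contributions involving $\mathcal{R}(\eta)\psi$.

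For the first two brackets I would decompose $\G(\eta)=\Dx+\mathcal{R}(\eta)$ and exploit the algebraic identities, valid for any function $f$,
\begin{align*}
\Dx(f\Dx\psi)+\partial_x(f\partial_x\psi)&=[\Dx,f]\Dx\psi+\partial_x f\cdot\partial_x\psi,\\
\Dx(f\partial_x\psi)-\partial_x(f\Dx\psi)&=[\Dx,f]\partial_x\psi-\partial_x f\cdot\Dx\psi,
\end{align*}
which follow by expanding the product rule and using the exact cancellations $\Dx^2+\partial_x^2=0$ and $[\Dx,\partial_x]=0$. Applied with $f=\delta\eta^{\ast}$ to the first bracket and $f=\delta\eta^{\ast}\eta_x$ to the second, they reduce each nominally order-$2$ expression to a commutator $[\Dx,f]g$ with $f\in H^1$ and $g\in\{\Dx\psi,\partial_x\psi\}\subset L^\infty$, plus a pointwise product $\partial_x f\cdot g$. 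Lemma~\ref{Commu_Esti} then bounds the commutator by $K(\|\eta\|_{H^2})\|\delta\eta\|_{H^1}\|\psi\|_{Y^1}$, while Cauchy--Schwarz handles the pointwise product with the same bound. The residual $\mathcal{R}(\eta)$ contributions $\mathcal{R}(\eta)(\delta\eta^{\ast}\Dx\psi)$ and $\mathcal{R}(\eta)(\delta\eta^{\ast}\eta_x\partial_x\psi)$ are controlled in $L^2$ by placing $\delta\eta^{\ast}$ and $\delta\eta^{\ast}\eta_x$ in $L^\infty$ (via $H^1\hookrightarrow L^\infty$) and the derivatives of $\psi$ in $L^2$; this produces the $(\|\eta\|_{W^{2,\infty}}+1)\|\psi\|_{H^1}$ piece of the right-hand side of~\eqref{eq-reg-shpder:sharp}.

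Finally $\mathcal{Z}$ is estimated using $\mathcal{R}(\eta)\psi\in H^1$ with the tame bound of Proposition~\ref{prop-reg-dn:Main}: multiplication by $\delta\eta^{\ast}$ or $\delta\eta^{\ast}\eta_x$, both in $H^1$, preserves $H^1$ by the standard 1D product inequality, and then $\G(\eta):H^1\to L^2$ and $\partial_x:H^1\to L^2$ close the estimate. The main obstacle, and the actual novelty compared to the standard shape-derivative bound~\eqref{eq-reg-shpder:BasicEsti}, is finding the correct factorization: it is the $\alpha^{-1}$-weighting of $\delta\eta$ that exposes the cancellation $\Dx^2+\partial_x^2=0$ and trades an ostensibly second-order estimate for commutators of $H^1$ functions against derivatives of $\psi$ in $L^\infty$. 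Without the hypothesis $\psi\in Y^1$, these derivatives would lie only in $\BMO$ and Lemma~\ref{Commu_Esti} could not be invoked.
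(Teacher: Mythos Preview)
Your proof is correct and reaches the same conclusion, but the route differs from the paper's. The paper's argument hinges on the nonlinear cancellation identity $\G(\eta)B+\partial_xV=0$ (with $B=\B(\eta)\psi$, $V=\V(\eta)\psi$), which encodes the fact that $(V,B)$ is the boundary trace of a divergence-free harmonic gradient. Using it, the paper writes
\[
\G(\eta)(B\,\delta\eta)+\partial_x(V\,\delta\eta)=-(\mathcal{R}(\eta)B)\,\delta\eta+\mathcal{R}(\eta)(B\,\delta\eta)+[\Dx,\delta\eta]B+(\partial_x\delta\eta)\,V,
\]
and then invokes Lemma~\ref{Commu_Esti} with $B\in L^\infty$, the latter supplied by Proposition~\ref{bdr_vel_bound}. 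Your approach instead substitutes the explicit formulas for $B,V$ in terms of $\Dx\psi$, $\partial_x\psi$, $\mathcal{R}(\eta)\psi$, and locates the cancellation at the level of Fourier multipliers via $\Dx^2+\partial_x^2=0$; this is exactly the $\eta=0$ specialization of the paper's identity, with all $\eta$-dependent corrections absorbed into $\mathcal{R}(\eta)$ and the weight $\alpha^{-1}$. Your decomposition is more hands-on and, notably, bypasses Proposition~\ref{bdr_vel_bound}: you never need $B\in L^\infty$ as such, only $\Dx\psi,\partial_x\psi\in L^\infty$, which is immediate from $\psi\in Y^1$. The price is a bit more algebra with $\delta\eta^\ast=\delta\eta/\alpha$. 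The paper's version is conceptually cleaner, isolating a single geometric cancellation; yours is more elementary in that it does not rely on the external identity $\G(\eta)B=-V_x$.
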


The key feature of Proposition \ref{prop-reg-shpder:sharp} is that the right-hand side involves \emph{only the $H^1$ norm of $\delta\eta$}, although it is assumed to be of higher regularity. We would like to explain the core difficulty of using only the $H^1$ information of the variation $\delta\eta$. Through the cancellation \eqref{eq-reg-shpder:Cancel} below, we can rewrite the right-hand side of \eqref{eq-reg-shpder:Main} as
$$
-[\G(\eta),\delta\eta]\B(\eta)\psi - (\partial_x\delta\eta)\V(\eta)\psi.
$$
Thus Proposition~\ref{prop-reg-shpder:sharp} follows from the fact that the commutator $[\G(\eta),\delta\eta]$ maps $H^1$ to $L^2$, which can be further reduced to a commutator estimate of the form
$$
\big\|[ \Dx,\delta\eta ]B\big\|_{L^2}
\lesssim \|\delta\eta\|_{H^1}|B|_{L^\infty},
$$
established in Lemma \ref{Commu_Esti}. 

Before proving Proposition~\ref{prop-reg-shpder:sharp}, we first show that if the boundary value $\psi$ is in $H^1 \cap Y^1$, then the boundary velocities $\B(\eta)\psi$ and $\V(\eta)\psi$ belong to $Y^0$.

\begin{proposition}\label{bdr_vel_bound}
    Suppose $\eta\in H^2\cap W^{2,\infty}$ and $\psi\in H^1\cap Y^1$. Then $\B(\eta)\psi$ and $\V(\eta)\psi$ belong to $L^2\cap Y^0$, together with the estimate
    \begin{equation}\label{eq-reg-shpder:bdr_vel_bound}
        \|\B(\eta)\psi\|_{Y^0} + \|\V(\eta)\psi\|_{Y^0} \le K\big(\|\eta\|_{H^2}\big)\big( \|\psi\|_{Y^1} + \big( \|\eta\|_{W^{2,\infty}}+1 \big)\|\psi\|_{H^1} \big).
    \end{equation}
\end{proposition}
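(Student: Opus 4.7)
The plan is to decompose $\B(\eta)\psi$ using the paralinearization of $\G(\eta)$ and estimate each piece in $Y^0$. Combining \eqref{eq-reg-bas:BVbyG} with $\G(\eta)=|D_x|+\mathcal{R}(\eta)$ from Proposition~\ref{prop-reg-bas:Paralin} yields
$$
\B(\eta)\psi = \frac{|D_x|\psi + \mathcal{R}(\eta)\psi + \eta_x\psi_x}{1+\eta_x^2},\qquad \V(\eta)\psi = \psi_x - \eta_x\B(\eta)\psi.
$$
The $L^2$ bound for both quantities is already contained in Proposition~\ref{prop-reg-bas:BdBV}, so I focus on the $Y^0$ bound of $\B(\eta)\psi$, from which the one for $\V(\eta)\psi$ follows.

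The first step is to show that $|D_x|\colon Y^1\to Y^0$ and that $\partial_x\colon Y^1\to Y^0$. I write $|D_x|=\langle D_x\rangle - b(D_x)$ with $b(\xi)=\langle\xi\rangle-|\xi|=1/(\langle\xi\rangle+|\xi|)$, and verify that $b$ is a radial symbol of order $-1$ in the sense of Definition~\ref{def-pre-fct:Multi}; Proposition~\ref{prop-pre-fct:BdMultLinfty} together with the Hilbert transform invariance of $Y^0$ (Proposition~\ref{prop-pre-fct:Basic}(2)) then gives $\||D_x|\psi\|_{Y^0}\lesssim\|\psi\|_{Y^1}$. Likewise, the embedding $Y^1\subset W^{1,\infty}$ (Proposition~\ref{prop-pre-fct:Embd}) together with $\mathcal{H}\partial_x=\partial_x\mathcal{H}$ and Proposition~\ref{prop-pre-fct:Basic}(2) gives $\|\psi_x\|_{Y^0}\lesssim\|\psi\|_{Y^1}$.

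The second step controls the remainder. The tame bound \eqref{eq-reg-dn:Main-tame} of Proposition~\ref{prop-reg-dn:Main} at regularity $\sigma=1$ gives
$$
\|\mathcal{R}(\eta)\psi\|_{H^1}\le K(\|\eta\|_{H^2})(\|\eta\|_{W^{2,\infty}}+1)\|\psi\|_{H^1},
$$
and the continuous embedding $H^1\subset H^{1/2+\epsilon}\subset Y^0$ from Proposition~\ref{prop-pre-fct:Basic}(3) transfers this to a $Y^0$ estimate. It is crucial that $\|\eta\|_{W^{2,\infty}}$ appears here only against $\|\psi\|_{H^1}$, not against $\|\psi\|_{Y^1}$.

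The last step handles the products. The product law of Proposition~\ref{prop-pre-fct:Prod} with $s=1>1/2$ yields $\|\eta_x\psi_x\|_{Y^0}\le C\|\eta_x\|_{H^1}\|\psi_x\|_{Y^0}\le K(\|\eta\|_{H^2})\|\psi\|_{Y^1}$. For the division by $1+\eta_x^2$, I write $(1+\eta_x^2)^{-1}=1-\eta_x^2/(1+\eta_x^2)$, observe that $\eta_x^2/(1+\eta_x^2)$ belongs to $H^1$ with norm $\le K(\|\eta\|_{H^2})$ by a direct Moser-type computation (using $|F'|\le 1$ for $F(x)=x^2/(1+x^2)$), and conclude by Proposition~\ref{prop-pre-fct:Prod} that multiplication by $(1+\eta_x^2)^{-1}$ is bounded on $Y^0$ with operator norm $K(\|\eta\|_{H^2})$. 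Combining the three estimates yields \eqref{eq-reg-shpder:bdr_vel_bound} for $\B(\eta)\psi$, and a further application of Proposition~\ref{prop-pre-fct:Prod} to $\eta_x\B(\eta)\psi$ gives the bound for $\V(\eta)\psi$.

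The main obstacle is the tameness of the inequality: $\|\eta\|_{W^{2,\infty}}$ must multiply only the weaker norm $\|\psi\|_{H^1}$, while the stronger norm $\|\psi\|_{Y^1}$ is allowed to appear only with a coefficient $K(\|\eta\|_{H^2})$. This forces the splitting $\G(\eta)=|D_x|+\mathcal{R}(\eta)$: the flat principal part must be handled directly by Fourier-multiplier arguments on $Y^0$ (which is what the first step does), whereas the curvature-sensitive remainder $\mathcal{R}(\eta)$ is routed through $H^1$ where the tame paralinearization estimate of Proposition~\ref{prop-reg-dn:Main} applies.
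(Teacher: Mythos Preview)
Your proof is correct and follows essentially the same strategy as the paper: split $\G(\eta)=|D_x|+\mathcal{R}(\eta)$, handle the principal part via Fourier-multiplier bounds on $Y^s$, route $\mathcal{R}(\eta)\psi$ through $H^1\hookrightarrow Y^0$ using Proposition~\ref{prop-reg-dn:Main}, and finish with the product rule of Proposition~\ref{prop-pre-fct:Prod} together with the decomposition $(1+\eta_x^2)^{-1}=1-\eta_x^2/(1+\eta_x^2)$. The only cosmetic difference is that the paper invokes Proposition~\ref{prop-pre-fct:BdMult} directly for both $|D_x|$ and $\partial_x=|D_x|\mathcal{H}$ (since $|\xi|$ is itself a radial symbol of order $1$), whereas you take the slightly longer route $|D_x|=\langle D_x\rangle-b(D_x)$; both arguments are valid.
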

\begin{proof}
    As indicated in \eqref{eq-reg-bas:BV}, $\B(\eta)\psi,\V(\eta)\psi$ can be expressed in terms of $\G(\eta)\psi$ and $\partial_x\psi$. Hence, our strategy is to check that $\G(\eta)\psi, \partial_x\psi \in L^2\cap Y^0$ and then apply Proposition \ref{prop-pre-fct:Prod} to complete the proof.
    
    Writing $\partial_x = \Dx\Hi$, we can apply Proposition \ref{prop-pre-fct:BdMult} to deduce that
    \begin{equation*}
        \|\partial_x\psi\|_{Y^0} \lesssim \|\psi\|_{Y^1}.
    \end{equation*}
    To show that $\G(\eta)\psi \in Y^0$, one may use the decomposition $\G(\eta) = \Dx + \mathcal{R}(\eta)$ (see \eqref{eq-reg-bas:Paralin}), to infer that
    \begin{align*}
        \|\G(\eta)\psi\|_{Y^0} &\le \|\Dx\psi\|_{Y^0} + \|\mathcal{R}(\eta)\psi\|_{Y^0} \\
        &\le C \|\psi\|_{Y^1} + K\big(\|\eta\|_{H^2}\big)\big( \|\eta\|_{W^{2,\infty}}+1 \big)\|\psi\|_{H^1},
    \end{align*}
    where we used Proposition \ref{prop-pre-fct:BdMult}, the continuous embedding $H^1\subset Y^0$ and the fact that $\mathcal{R}(\eta)$ is bounded on $H^1$ (see Proposition \ref{prop-reg-dn:Main}).
    
    To estimate $\B(\eta)\psi$, we express it as in \eqref{eq-reg-bas:BV},
    \begin{equation*}
    \begin{aligned}
    \B(\eta)\psi
    &= \frac{1}{1+\eta_x^2}\G(\eta)\psi + \frac{\eta_x}{1+\eta_x^2}\partial_x\psi \\
    &= \G(\eta)\psi - \frac{\eta_x^2}{1+\eta_x^2}\G(\eta)\psi + \frac{\eta_x}{1+\eta_x^2}\partial_x\psi.
    \end{aligned}
    \end{equation*}
    Thanks to Proposition \ref{prop-besov-bony:Paralin}, the coefficients before $\G(\eta)\psi$ and $\partial_x\psi$ belong to $H^1$, provided that $\eta\in H^2$. Then the desired estimate \eqref{eq-reg-shpder:bdr_vel_bound} for $\B(\eta)\psi$ follows from \eqref{eq-pre-fct:Prod}. The estimate for $\V(\eta)\psi$ can be obtained in the same way. Indeed, it suffices to observe from \eqref{eq-reg-bas:PsibyBV} that 
    $$
    \V(\eta)\psi = \partial_x\psi - \eta_x \B(\eta)\psi,
    $$
    where $\eta_x\in H^1$ and $\B(\eta)\psi \in Y^0$. Then, the desired estimate \eqref{eq-reg-shpder:bdr_vel_bound} for $\V(\eta)\psi$ also follows from \eqref{eq-pre-fct:Prod}.
\end{proof}

\begin{proof}[Proof of Proposition \ref{prop-reg-shpder:sharp}]
For simplicity, we denote
\begin{equation*}
B := \mathcal{B}(\eta)\psi,\ \ V := \mathcal{V}(\eta)\psi,
\end{equation*}
which, due to Proposition \ref{prop-reg-bas:BdBV} and \ref{bdr_vel_bound}, satisfy
\begin{align}
	&\|B\|_{L^2} + \|V\|_{L^2} \le K\big(\|\eta\|_{H^2}\big)\|\psi\|_{H^1},\label{n31}\\
&\|B\|_{Y^0} + \|V\|_{Y^0} \le K\big(\|\eta\|_{H^2}\big)\left( \|\psi\|_{Y^1} + \big( \|\eta\|_{W^{2,\infty}}+1 \big)\|\psi\|_{H^1} \right).\label{n32}
\end{align}
The crux of the proof is to use the cancellation 
\begin{equation}\label{eq-reg-shpder:Cancel}
   \mathcal{G}(\eta)B =- V_x, 
\end{equation}
proved for instance in Lemma 1 of \cite{bona2008asynptotic} (see also Remark 2.13 of \cite{ABZ2011}). To use this identity, we shall commute $\delta\eta$ and $\G(\eta)$ which in turn will be made possible by paralinearization of the Dirichlet-to-Neumann operator. More precisely, we begin by writing $\mathcal{G}(\eta)$ as $\Dx+\mathcal{R}(\eta)$ (see \eqref{eq-reg-bas:Paralin}). Then, we compute
\begin{equation}\label{DN-Shape-Cancel}
\begin{aligned}
\mathcal{G}(\eta)&(B \delta\eta) + \partial_x(V \delta\eta) \\
&= \Dx(B \delta\eta) + \mathcal{R}(\eta)(B \delta\eta) + V_x\delta\eta + V (\delta\eta)_x \\
&= ( \Dx B + V_x )\delta\eta + \mathcal{R}(\eta)(B \delta\eta) + [ \Dx,\delta\eta ]B + (\delta\eta)_x V \\
&= (\mathcal{G}(\eta)B + V_x) \delta\eta - (\mathcal{R}(\eta)B) \delta\eta + \mathcal{R}(\eta)(B \delta\eta) 
+ [ \Dx,\delta\eta ]B + (\delta\eta)_xV\\
&=  - (\mathcal{R}(\eta)B) \delta\eta + \mathcal{R}(\eta)(B \delta\eta) + (\delta\eta)_x V + [ \Dx,\delta\eta ]B.
\end{aligned}
\end{equation}
Once the cancellation (\ref{eq-reg-shpder:Cancel}) is used, we will estimate the $L^2$-norm of each term separately.

The first three terms in the right-hand-side of (\ref{DN-Shape-Cancel}) are easily estimated. Using \eqref{eq-reg-dn:Main} applied with $\sigma=0$ and \eqref{n31}, we find
\begin{align*}
\|(\mathcal{R}(\eta)B) \delta\eta\|_{L^2} 
&\le \|\mathcal{R}(\eta)B\|_{L^2} |\delta\eta|_{L^\infty} \\
&\le K\big(\|\eta\|_{H^2}\big) \big( \|\eta\|_{W^{2,\infty}} + 1 \big)\|B\|_{L^2} \|\delta\eta\|_{H^1} \\
&\le K\big(\|\eta\|_{H^2}\big) \big( \|\eta\|_{W^{2,\infty}} + 1 \big)\|\psi\|_{H^1} \|\delta\eta\|_{H^1},
\end{align*}
and similarly
\begin{align*}
\|\mathcal{R}(\eta)(B \delta\eta)\|_{L^2} 
&\le K\big(\|\eta\|_{H^2}\big) \big( \|\eta\|_{W^{2,\infty}} + 1 \big)\|B \delta\eta\|_{L^2} \\
&\le K\big(\|\eta\|_{H^2}\big) \big( \|\eta\|_{W^{2,\infty}} + 1 \big)\|B\|_{L^2} \|\delta\eta\|_{H^1} \\
&\le K\big(\|\eta\|_{H^2}\big) \big( \|\eta\|_{W^{2,\infty}} + 1 \big)\|\psi\|_{H^1} \|\delta\eta\|_{H^1}.
\end{align*}
The third term in (\ref{DN-Shape-Cancel}) is estimated by Proposition \ref{bdr_vel_bound}:
$$
\begin{aligned}
\|(\delta\eta)_xV\|_{L^2}
&\le \|\delta\eta\|_{H^1}|V|_{L^\infty}\\
&\le K\big(\|\eta\|_{H^2}\big)
\big( \|\psi\|_{Y^1} + \big( \|\eta\|_{W^{2,\infty}}+1 \big)\|\psi\|_{H^1} \big) \|\delta\eta\|_{H^1}.
\end{aligned}
$$

The most problematic term in (\ref{DN-Shape-Cancel}) is the fourth one. This is where we use the commutator estimate given by  Lemma \ref{Commu_Esti}. Namely, we write
$$
\begin{aligned}
\big\|[ \Dx,\delta\eta ]B\big\|_{L^2}
&\le C\|\delta\eta\|_{H^1}|B|_{L^\infty}\\
&\le
K\big(\|\eta\|_{H^2}\big)
\big( \|\psi\|_{Y^1} + \left( \|\eta\|_{W^{2,\infty}}+1 \big)\|\psi\|_{H^1} \right) \|\delta\eta\|_{H^1}.
\end{aligned}
$$
This completes the proof.
\end{proof}

\section{Paralinearization of the Problem}\label{sect:Para}

Recall that our aim is to solve the following Cauchy problem:
\begin{equation}\label{n40}
	\left\{\begin{aligned}
		&\partial_t \eta = \mathcal{G}(\eta)\psi, \\ 
		&\partial_t \psi + \eta + \partial_t^2\eta + \partial_x^4 \eta+ \frac{1}{2}(\partial_x\psi)^2 -\frac{1}{2}\frac{\left(\partial_x\eta\cdot\partial_x\psi + \mathcal{G}(\eta)\psi\right)^2}{1+(\partial_x\eta)^2} = 0, \\
		&(\eta,\psi)|_{t=0} = (\eta_0,\psi_0).
	\end{aligned}\right.
\end{equation}
As explained in the introduction, in the low regularity regime, the time derivative $\partial_t^2\eta$ might be ill-defined, while $\partial_tu=\partial_t(\psi+\partial_t\eta)$ as a whole can still make sense. Therefore, it is convenient to introduce the unknown
\begin{equation}\label{n41}
	u := \psi + \mathcal{G}(\eta)\psi = \psi + \partial_t\eta.
\end{equation}
The equations satisfied by $(\eta,u)$ then become
\begin{equation}\label{n42}
	\left\{\begin{array}{l}
		\partial_t \eta = \mathcal{G}(\eta)(\Id + \mathcal{G}(\eta))^{-1}u, \\ [0.5ex]
		\partial_t u + \partial_x^4 \eta + \eta + N(\eta,u) = 0, \\ [0.5ex]
		(\eta,u)|_{t=0} = (\eta_0,u_0),
	\end{array}\right.
\end{equation}
where 
\begin{equation}\label{n40.5}
    N(\eta,u) = \frac{1}{2}(\partial_x\psi)^2 -\frac{1}{2}\frac{\left(\partial_x\eta\cdot\partial_x\psi + \mathcal{G}(\eta)\psi\right)^2}{1+(\partial_x\eta)^2}\quad\text{with}\quad\psi=(\Id+\mathcal{G}(\eta))^{-1}u.
\end{equation}
Notice that, by Proposition~\ref{prop-reg-bas:InverBd}, the term  $(\Id+\mathcal{G}(\eta))^{-1}u$ is well-defined for $u\in L^2_x$. 

The key to deal with the nonlinear system \eqref{n42} is to extract the nonlinear components and treat them as source terms. In this section, we shall examine the regularity of these source terms and prove their Lipschitz dependence on the unknown $(\eta,u)$ in a weaker norm. Importantly, all estimates derived in this section are uniform in time. As explained in Subsection \ref{Notations}, the results are therefore stated in terms of norms of spatial functions only.

\subsection{Paralinearization of the system}\label{subsect-para:Sys}

In this section, we establish the regularity of the nonlinear components, assuming that the regularity of $(\eta, u)$ corresponds to the energy level $H^2 \times L^2$ or a higher level, $H^{s+2} \times H^s$ with $s > 0$. The high order estimates obtained here will be used to deduce the propagation of regularity in Subsection~\ref{High_Reg}, which requires all the estimates to be tame. Namely, the upper bounds of the nonlinear components should be linear in the highest regularity norm, with coefficients depending only on the norms at the energy level. Furthermore, as suggested by the Strichartz estimate (see Proposition~\ref{prop-pre-stri:DisperEsti}), the total power of the H{\"o}lder norms $W^{2,\infty}$ and $W^{s+2,\infty}$ need to be strictly less than $4$ to ensure that the high order estimates in Subsection~\ref{High_Reg} hold at least for small time.

We begin by paralinearizing the operator $\mathcal{G}(\eta)(\Id + \mathcal{G}(\eta))^{-1}$ which appears in \eqref{n42}. 
To do so, we apply Proposition \ref{prop-reg-dn:Main}. Remembering the notation $\mathcal{R}(\eta)=\mathcal{G}(\eta)-\Dx$, and using the estimates \eqref{eq-reg-bas:InverBdSobo} and \eqref{eq-reg-dn:Main}, we obtain the following proposition.

\begin{proposition}\label{prop-para-sys:ParaEq1}
Consider a real number $s\ge 0$. Let $\theta(\xi)\geq1/4$ be a smooth, even function that equals $1/4$ for $|\xi|\leq1/2$ and $|\xi|/(1+|\xi|)$ for $|\xi|>1$. If $\eta\in H^{s+2} \cap W^{s+2,\infty}$, then for all $u\in H^{s+2}$, the right-hand side of the first equation in \eqref{n42} 
	admits the following decomposition
	\begin{equation}\label{eq-para-sys:ParaEq1}
		\mathcal{G}(\eta)(\Id+\mathcal{G}(\eta))^{-1} u = 
		\theta(D_x)u
		+Q(\eta,u)
	\end{equation}
	where the remainder $Q(\eta,u)$ equals 
	\begin{equation}\label{eq-para-sys:Q}
	    Q(\eta,u) = \big(\Dx(\Id+\Dx)^{-1}-\theta(D_x)\big)u + (\Id+\Dx)^{-1}\mathcal{R}(\eta)(\Id+\mathcal{G}(\eta))^{-1}u.
	\end{equation}
	
	Moreover, $Q(\eta,u)$ is linear in $u$ and satisfies
	\begin{equation}\label{eq-para-sys:ParaEq1Rem}
		\begin{aligned}
        \left\| Q(\eta,u) \right\|_{H^{s+2}} \le K\big(\|\eta\|_{H^2}\big) &\Big[ (\|\eta\|_{W^{2,\infty}} + 1) (\|\eta\|_{H^{s+2}}\|u\|_{L^2} + \|u\|_{H^{s}}) \\
        &+ \|\eta\|_{W^{s+2,\infty}} \|u\|_{L^2} \Big].
    \end{aligned}
	\end{equation}
\end{proposition}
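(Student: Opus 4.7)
The plan begins with the algebraic identity. Writing $\mathcal{G}(\eta)(\Id+\mathcal{G}(\eta))^{-1} = \Id - (\Id+\mathcal{G}(\eta))^{-1}$ and $\Dx(\Id+\Dx)^{-1} = \Id - (\Id+\Dx)^{-1}$, the resolvent identity
$$
(\Id+\Dx)^{-1} - (\Id+\mathcal{G}(\eta))^{-1} = (\Id+\Dx)^{-1}\bigl[\mathcal{G}(\eta) - \Dx\bigr](\Id+\mathcal{G}(\eta))^{-1}
$$
combined with $\mathcal{R}(\eta) = \mathcal{G}(\eta) - \Dx$ yields, after adding and subtracting $\theta(D_x) u$, exactly the decomposition \eqref{eq-para-sys:Q}. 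Linearity of $Q(\eta,u)$ in $u$ follows automatically.

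To establish \eqref{eq-para-sys:ParaEq1Rem}, I would split $Q = Q_1 + Q_2$ along the two summands in \eqref{eq-para-sys:Q}. The first piece $Q_1 = \bigl(\Dx(\Id+\Dx)^{-1} - \theta(D_x)\bigr)u$ is a Fourier multiplier whose symbol $|\xi|/(1+|\xi|) - \theta(\xi)$ vanishes identically for $|\xi|>1$ by construction of $\theta$; hence $Q_1$ is smoothing and $\|Q_1\|_{H^{s+2}} \le C_s\|u\|_{L^2}$, which is absorbed into the right-hand side of \eqref{eq-para-sys:ParaEq1Rem}. The second piece $Q_2 = (\Id+\Dx)^{-1}\mathcal{R}(\eta)\psi$, with $\psi := (\Id+\mathcal{G}(\eta))^{-1}u$, reduces to an $H^{s+1}$ bound on $\mathcal{R}(\eta)\psi$ since $(\Id+\Dx)^{-1}$ is a Fourier multiplier of order $-1$. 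I would then invoke the tame paralinearization estimate \eqref{eq-reg-dn:Main-tame},
\begin{align*}
\|\mathcal{R}(\eta)\psi\|_{H^{s+1}} \le K(\|\eta\|_{H^2})\Bigl[&(\|\eta\|_{W^{2,\infty}}+1)\bigl(\|\eta\|_{H^{s+2}}\|\psi\|_{H^{1/2}} + \|\psi\|_{H^{s+1}}\bigr)\\
&+ \|\eta\|_{W^{s+2,\infty}}\|\psi\|_{H^1}\Bigr],
\end{align*}
and control the low-regularity norms $\|\psi\|_{H^{1/2}}, \|\psi\|_{H^1}$ by $K(\|\eta\|_{H^2})\|u\|_{L^2}$ via Proposition \ref{prop-reg-bas:InverBd} applied with $\sigma = -1/2$ and $\sigma = 0$ respectively. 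This already handles $s=0$, where the middle term simply disappears into the $\|u\|_{L^2}$ contribution and reproduces the desired linear dependence on $(\|\eta\|_{W^{2,\infty}}+1)$.

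The main technical obstacle is to estimate $\|\psi\|_{H^{s+1}}$ for $s>0$ without degrading the tame structure of \eqref{eq-para-sys:ParaEq1Rem}: directly inserting the tame bound \eqref{eq-reg-dn:InverBd} from Corollary \ref{cor-reg-dn:InverBd} into the prefactor $(\|\eta\|_{W^{2,\infty}}+1)$ would a priori generate a term quadratic in $(\|\eta\|_{W^{2,\infty}}+1)$. To avoid this, I would mimic the induction-on-$s$ argument used in Corollary \ref{cor-reg-dn:InverBd}: apply the resolvent identity $\psi = (\Id+\Dx)^{-1}u - (\Id+\Dx)^{-1}\mathcal{R}(\eta)\psi$, so that
$$
\mathcal{R}(\eta)\psi = \mathcal{R}(\eta)(\Id+\Dx)^{-1}u - \mathcal{R}(\eta)(\Id+\Dx)^{-1}\mathcal{R}(\eta)\psi.
$$
The first term is estimated directly by \eqref{eq-reg-dn:Main-tame} applied to $v := (\Id+\Dx)^{-1}u$, for which the Fourier multiplier bounds $\|v\|_{H^{1/2}}, \|v\|_{H^1} \lesssim \|u\|_{L^2}$ and $\|v\|_{H^{s+1}} \lesssim \|u\|_{H^s}$ immediately produce the right-hand side of \eqref{eq-para-sys:ParaEq1Rem}. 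The correction term carries an extra factor $(\Id+\Dx)^{-1}\mathcal{R}(\eta)$, which is of order $0$ and whose contribution can be absorbed by induction on the Sobolev index, bootstrapping from the lower-order estimates on $\mathcal{R}(\eta)\psi$ that are already available from the previous induction step.
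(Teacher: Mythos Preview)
Your proposal is correct and follows the same route as the paper, which in fact offers only a one-sentence proof: it cites the resolvent identity implicit in the introduction (the displayed formula for $\mathcal{Q}(\eta)$), Proposition~\ref{prop-reg-dn:Main}, and the inverse bounds \eqref{eq-reg-bas:InverBdSobo}--\eqref{eq-reg-dn:InverBd}, and declares the result. Your derivation of the decomposition \eqref{eq-para-sys:Q}, the smoothing argument for $Q_1$, and the treatment of $Q_2$ via \eqref{eq-reg-dn:Main-tame} are exactly the details the paper omits.

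You are in fact more scrupulous than the paper on one point: the observation that naively feeding \eqref{eq-reg-dn:InverBd} into the $\|\psi\|_{H^{s+1}}$ slot of \eqref{eq-reg-dn:Main-tame} produces a factor $(\|\eta\|_{W^{2,\infty}}+1)^2$ rather than the linear factor stated in \eqref{eq-para-sys:ParaEq1Rem}. The paper does not address this. Your proposed fix via the identity $\psi = (\Id+\Dx)^{-1}u - (\Id+\Dx)^{-1}\mathcal{R}(\eta)\psi$ and induction is a reasonable strategy, though as written it is not fully conclusive (the correction term still carries factors that reintroduce the issue). In practice this wrinkle is harmless: the only place \eqref{eq-para-sys:ParaEq1Rem} is used is in deriving \eqref{eq-para-schr:SourEsti-tame}, whose right-hand side already contains $(\|U\|_{Y^0}+1)^2\|U\|_{H^s}$ and therefore absorbs a quadratic factor in $\|\eta\|_{W^{2,\infty}}+1$ without change.
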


Regarding the nonlinear term $N(\eta,u)$ in the evolution equation for $u$, we have the following estimate.

\begin{proposition}\label{prop-para-sys:ParaEq2}
	Given a real number $s\ge 0$, if $\eta\in H^{s+2} \cap W^{s+2,\infty}$ and $u\in H^{s} \cap Y^{s}$, the nonlinearity $N(\eta,u)$ in \eqref{eq-intro-csz:MainSys} is a symmetric bilinear form of $u$, depending on $\eta$, satisfying
	\begin{equation}\label{eq-para-sys:EstiF2}
		\|N(\eta,u)\|_{L^2} \le K\big( \|\eta\|_{H^{2}} \big) \Big(\big(\|\eta\|_{W^{2,\infty}}+1\big)\|u\|_{L^2} + \|u\|_{Y^0} \Big) \|u\|_{L^2},
	\end{equation}
	and 
	\begin{equation}\label{eq-para-sys:EstiF2-tame}
	\begin{aligned}
	    \|N(\eta,u)\|_{H^{s}} \le& K\big( \|\eta\|_{H^{2}} \big) \Big(\big(\|\eta\|_{W^{2,\infty}}+1\big)\|u\|_{L^2} + \|u\|_{Y^0} \Big) \Big[\|\eta\|_{H^{s+2}}\|u\|_{L^2}  \\
        &+ \|\eta\|_{W^{s+2,\infty}} \|u\|_{L^2} + (\|\eta\|_{W^{2,\infty}} + 1) (\|\eta\|_{H^{s+2}}\|u\|_{L^2} + \|u\|_{H^{s}}) \Big].
	\end{aligned}
	\end{equation}
\end{proposition}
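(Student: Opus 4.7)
The starting point is the alternative formula \eqref{eq-reg-bas:NAlt},
\begin{equation*}
N(\eta,u) = BV\eta_x + \tfrac{1}{2}(V^2-B^2),\qquad B=\B(\eta)\psi,\ V=\V(\eta)\psi,\ \psi=(\Id+\G(\eta))^{-1}u.
\end{equation*}
Both $B$ and $V$ depend linearly on $\psi$ and, through $\psi=(\Id+\G(\eta))^{-1}u$, linearly on $u$; consequently $N(\eta,u)$ is a quadratic form in $u$, whose polarization $N(\eta;u_1,u_2)$ produces the announced symmetric bilinear form. The proof then reduces to controlling $B$ and $V$ in three norms --- $L^2$, $Y^0$, and $H^s$ --- and combining them with paraproduct-based product rules applied to $BV\eta_x$, $V^2$, and $B^2$.

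The three auxiliary estimates are obtained as follows. First, Proposition~\ref{prop-reg-bas:BdBV} combined with Proposition~\ref{prop-reg-bas:InverBd} gives
$\|B\|_{L^2}+\|V\|_{L^2}\le K(\|\eta\|_{H^2})\|u\|_{L^2}$.
Second, Proposition~\ref{bdr_vel_bound} together with the $Y^1$ bound for $\psi$ from Corollary~\ref{cor-reg-dn:RegPsi} yields
\begin{equation*}
\|B\|_{Y^0}+\|V\|_{Y^0}\le K(\|\eta\|_{H^2})\bigl((\|\eta\|_{W^{2,\infty}}+1)\|u\|_{L^2}+\|u\|_{Y^0}\bigr).
\end{equation*}
Third, for the $H^s$ control of $B$ and $V$, a direct application of the tame estimate \eqref{eq-reg-bas:BdBV-tame} would produce an undesirable prefactor $K(\|\eta\|_{H^{s+2}})$. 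Instead, I would start from the expressions \eqref{eq-reg-bas:BVbyG}, insert the paralinearization $\G(\eta)=\Dx+\mathcal{R}(\eta)$, use the sharp tame remainder bound \eqref{eq-reg-dn:Main-tame}, and combine this with the tame control of $\|\psi\|_{H^{s+1}}$ provided by Corollary~\ref{cor-reg-dn:InverBd} together with paraproduct rules for the products with $\eta_x$ and $1/(1+\eta_x^2)$; this produces an $H^s$ bound on $B,V$ whose prefactor depends only on $\|\eta\|_{H^2}$ and whose right-hand side is linear in $\|\eta\|_{H^{s+2}}$, $\|\eta\|_{W^{s+2,\infty}}$, and $\|u\|_{H^s}$.

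With these three estimates in hand the $L^2$ inequality \eqref{eq-para-sys:EstiF2} follows by Hölder: write $\|B^2\|_{L^2}\le\|B\|_{Y^0}\|B\|_{L^2}$, similarly for $V^2$, and $\|BV\eta_x\|_{L^2}\le|\eta_x|_{L^\infty}\|B\|_{Y^0}\|V\|_{L^2}$, the factor $|\eta_x|_{L^\infty}$ being absorbed into $K(\|\eta\|_{H^2})$ by Sobolev embedding. The tame $H^s$ bound \eqref{eq-para-sys:EstiF2-tame} follows from the tame product rule $\|fg\|_{H^s}\lesssim|f|_{L^\infty}\|g\|_{H^s}+\|f\|_{H^s}|g|_{L^\infty}$, applied carefully so that in every sub-product one factor is measured in $Y^0$ and the other in $H^s$ (or $L^2$), never both in $L^\infty$ at once. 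The main obstacle lies precisely in this bookkeeping: one must distribute the product estimates so that the weak factor $(\|\eta\|_{W^{2,\infty}}+1)\|u\|_{L^2}+\|u\|_{Y^0}$ appears exactly once, the strongest norms $\|\eta\|_{H^{s+2}}$, $\|\eta\|_{W^{s+2,\infty}}$ and $\|u\|_{H^s}$ each appear linearly, and all remaining factors are absorbed into $K(\|\eta\|_{H^2})$. The reason this is possible at all is the availability of the tame paralinearization \eqref{eq-reg-dn:Main-tame} with constant $K(\|\eta\|_{H^2})$ rather than $K(\|\eta\|_{H^{s+2}})$, which is what makes the resulting estimate genuinely tame.
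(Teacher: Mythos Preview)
Your approach is sound and runs parallel to the paper's, but there is one subtle point in the tame $H^s$ estimate that the plain product rule does not settle. The paper works with the original form \eqref{n40.5}, writing $N$ as a sum of terms $F(\eta_x)\,\mathcal{A}_1\psi\,\mathcal{A}_2\psi$ with $\mathcal{A}_j\in\{\partial_x,\G(\eta)\}$, whereas you use the equivalent form $BV\eta_x+\tfrac12(V^2-B^2)$. Either decomposition works; the three auxiliary bounds you list for $B,V$ in $L^2$, $Y^0$, and $H^s$ are exactly the right ingredients, and your $L^2$ argument is correct.

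The gap is in the triple product $BV\eta_x$. Iterating the tame rule $\lA fg\rA_{H^s}\lesssim\la f\ra_{L^\infty}\lA g\rA_{H^s}+\lA f\rA_{H^s}\la g\ra_{L^\infty}$ twice inevitably produces the term $\la B\ra_{L^\infty}\la V\ra_{L^\infty}\lA\eta_x\rA_{H^s}$, which is (weak factor)$^2\cdot\lA\eta\rA_{H^{s+1}}$. To fit this into (weak factor)$\times[\text{bracket}]$ you would need $\lA u\rA_{Y^0}\lA\eta\rA_{H^{s+1}}$ to be controlled by the bracket in \eqref{eq-para-sys:EstiF2-tame}, but the bracket contains no $\lA u\rA_{Y^0}$. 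So the constraint ``never both in $L^\infty$ at once'' cannot be achieved by the tame rule alone on a three-factor product.

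The paper sidesteps this with a paraproduct decomposition on the $\eta$-coefficient: writing $F(\eta_x)\,(\mathcal{A}_1\psi\,\mathcal{A}_2\psi)=T_{F(\eta_x)-F(0)}(\mathcal{A}_1\psi\,\mathcal{A}_2\psi)+\bigl(F(\eta_x)-F(0)-T_{F(\eta_x)-F(0)}\bigr)(\mathcal{A}_1\psi\,\mathcal{A}_2\psi)+F(0)\,\mathcal{A}_1\psi\,\mathcal{A}_2\psi$ and applying Proposition~\ref{lemPa} to the second piece gives $\lA F(\eta_x)-F(0)\rA_{H^{s+1}}\lA\mathcal{A}_1\psi\,\mathcal{A}_2\psi\rA_{L^2}$ rather than $\lA\cdots\rA_{H^s}\la\mathcal{A}_1\psi\,\mathcal{A}_2\psi\ra_{L^\infty}$. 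The $L^2$ norm on the right then costs only one $L^\infty$ factor, not two. Your argument is fixed in exactly the same way: for $\eta_x(BV)$, split as $T_{\eta_x}(BV)+(\eta_x-T_{\eta_x})(BV)$ and bound the second term by $\lA\eta_x\rA_{H^{s+1}}\lA BV\rA_{L^2}\le K\lA\eta\rA_{H^{s+2}}\cdot(\text{weak factor})\cdot\lA u\rA_{L^2}$, which is of the required form.
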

\begin{proof}
Let us recall that 
$$
N(\eta,u)
=\frac{1}{2}(\partial_x\psi)^2-\frac{1}{2}\frac{\left(\partial_x\eta\partial_x\psi + \mathcal{G}(\eta)\psi\right)^2}{1+(\partial_x\eta)^2},
$$
where $u$ and $\psi$ are related by $u = (\Id+ \mathcal{G}(\eta))\psi$. Note that the right-hand side can be considered as the sum of terms taking the form
$$
F(\eta_x) \mathcal{A}_1\psi \mathcal{A}_2 \psi,
$$
where $F$ is a smooth function, and the operators $\mathcal{A}_1,\mathcal{A}_2$ are taken from $\{ \partial_x,\ \G(\eta) \}$. The proof of $H^s$ regularity of $N(\eta,u)$ is based on the following decomposition :
\begin{align*}
    F(\eta_x) \mathcal{A}_1\psi \mathcal{A}_2 \psi =& \left( (F(\eta_x)-F(0)) - T_{F(\eta_x)-F(0)} \right) (\mathcal{A}_1\psi \mathcal{A}_2 \psi) \\
    &+ T_{\mathcal{A}_1\psi \mathcal{A}_2 \psi} (F(\eta_x)-F(0)) + F(0)\mathcal{A}_1\psi \mathcal{A}_2 \psi.
\end{align*}
In fact, on the right-hand side of the formula above, one may apply Proposition~\ref{lemPa} and~\ref{prop-besov-para:NegInd} to the first term and second term, respectively, together with Proposition~\ref{prop-besov-bony:Paralin} to deal with $F(\eta_x)-F(0)$. More precisely, for all $s\ge0$, we have
\begin{align*}
    \left\| F(\eta_x) \mathcal{A}_1\psi \mathcal{A}_2 \psi \right\|_{H^s} \lesssim& \left( \left\| F(\eta_x)-F(0) \right\|_{H^1} + |F(0)| \right) \|\mathcal{A}_1\psi \mathcal{A}_2 \psi\|_{H^s} \\
    &+ \left\| F(\eta_x)-F(0) \right\|_{H^{s+1}} \|\mathcal{A}_1\psi \mathcal{A}_2 \psi\|_{L^2} \\
    \le& \Big( K\big(|\eta_x|_{L^\infty}\big)\|\eta_x\|_{H^1} + |F(0)| \Big) \|\mathcal{A}_1\psi \mathcal{A}_2 \psi\|_{H^s} \\
    &+ K\big(|\eta_x|_{L^\infty}\big)\|\eta_x\|_{H^{s+1}} \|\mathcal{A}_1\psi \mathcal{A}_2 \psi\|_{L^2} \\
    \le& K\big(\|\eta\|_{H^2}\big) \left( \|\mathcal{A}_1\psi \mathcal{A}_2 \psi\|_{H^s} + \|\eta\|_{H^{s+2}} \|\mathcal{A}_1\psi \mathcal{A}_2 \psi\|_{L^2} \right).
\end{align*}
Then it suffices to study the $H^s$ and $L^2$ norm of $\mathcal{A}_1\psi \mathcal{A}_2 \psi$. Thanks to the estimate \eqref{eq-besov-bony:ProdLaw-tame}, we have, for all $s\ge 0$,
$$
\|\mathcal{A}_1\psi \mathcal{A}_2 \psi\|_{H^s} \lesssim  |\mathcal{A}_1\psi |_{L^\infty} \|\mathcal{A}_2\psi\|_{H^s} + |\mathcal{A}_2\psi |_{L^\infty} \|\mathcal{A}_1\psi\|_{H^s},
$$
which reduces the problem to the following inequalities :
\begin{align}
    &|\mathcal{A}_j \psi|_{L^\infty} \le K\big( \|\eta\|_{H^{2}} \big) \Big(\big(\|\eta\|_{W^{2,\infty}}+1\big)\|u\|_{L^2} + \|u\|_{Y^0} \Big), \label{eq-para-sys:EstiF2-S1} \\
    &\|\mathcal{A}_j \psi\|_{L^2} \le K\big( \|\eta\|_{H^{2}} \big) \|u\|_{L^2} \label{eq-para-sys:EstiF2-S2} \\
    &\begin{aligned}
        \|\mathcal{A}_j \psi\|_{H^s} \le& K\big( \|\eta\|_{H^{2}} \big) \Big[\|\eta\|_{H^{s+2}}\|u\|_{L^2} + \|\eta\|_{W^{s+2,\infty}} \|u\|_{L^2} \\
        &+ (\|\eta\|_{W^{2,\infty}} + 1) (\|\eta\|_{H^{s+2}}\|u\|_{L^2} + \|u\|_{H^{s}}) \Big], \label{eq-para-sys:EstiF2-S3}
    \end{aligned}
\end{align}
where $j=1,2$. In fact, the proof of \eqref{eq-para-sys:EstiF2-S2} and \eqref{eq-para-sys:EstiF2-S3} is no more than a combination of Proposition~\ref{prop-reg-bas:Bd} and Corollary~\ref{cor-reg-dn:InverBd} (one should apply Proposition~\ref{prop-reg-bas:InverBd} instead of Corollary~\ref{cor-reg-dn:InverBd} to deduce \eqref{eq-para-sys:EstiF2-S2}). To prove the first estimate \eqref{eq-para-sys:EstiF2-S1}, we use Proposition~\ref{prop-pre-fct:BdMult} and Corollary~\ref{cor-reg-dn:RegPsi} to conclude the case $\mathcal{A}_j=\partial_x$,
$$
|\partial_x\psi|_{L^\infty}
\lesssim \|\psi\|_{Y^1}
\le C\big( \|\eta\|_{H^{2}} \big) \Big(\big(\|\eta\|_{W^{2,\infty}}+1\big)\|u\|_{L^2} + \|u\|_{Y^0} \Big).
$$
When $\mathcal{A}_j=\G(\eta)$, by writing $\mathcal{G}(\eta) = \Dx + \mathcal{R}(\eta)$ and $\psi=(\Id + \mathcal{G}(\eta))^{-1}u$, we have
\begin{equation}\label{DN_Linfty}
\begin{aligned}
    |\mathcal{G}(\eta)\psi|_{L^\infty} &\le \left| \Dx\psi \right|_{L^\infty} + |\mathcal{R}(\eta)(\Id + \mathcal{G}(\eta))^{-1}u|_{L^\infty} \\
    &\lesssim \|\psi\|_{Y^1} 
    + \big\|\mathcal{R}(\eta)(\Id + \mathcal{G}(\eta))^{-1}u\big\|_{H^1} \\
    &\lesssim 
    \|\psi\|_{Y^1} + \big\|\mathcal{R}(\eta);\mathcal{L}(H^1)\big\|
    \cdot \big\|(\Id + \mathcal{G}(\eta))^{-1};\mathcal{L}(L^2;H^1)\big\|
    \cdot\|u\|_{L^2} \\
    &\le K\big( \|\eta\|_{H^{2}} \big) \Big(\big(\|\eta\|_{W^{2,\infty}}+1\big)\|u\|_{L^2} + \|u\|_{Y^0} \Big),
\end{aligned}
\end{equation}
where, to obtain the last inequality, we used the previous inequality for  $\|\psi\|_{Y^1}$ together with the boundedness of $\mathcal{R}(\eta)$ and $(\Id + \mathcal{G}(\eta))^{-1}$, as stated in Proposition~\ref{prop-reg-dn:Main} and Proposition~\ref{prop-reg-bas:InverBd}, respectively.
\end{proof}

In conclusion, with $\theta(\xi)\geq1/4$ being a smooth, even function that equals $1/4$ for $|\xi|\leq1/2$ and $|\xi|/(1+|\xi|)$ for $|\xi|>1$, the differential system \eqref{n42} admits the following paralinearization:
\begin{equation}\label{Sec4_Main}
	\left\{\begin{array}{l}
		\partial_t \eta - \theta(D_x)u = Q(\eta,u), \\ [0.5ex]
		\partial_t u + (\Dx^4+1)\eta = N(\eta,u), \\ [0.5ex]
		(\eta,u)|_{t=0} = (\eta_0,u_0),
	\end{array}\right.
\end{equation}
where $Q(\eta,u)$ and $N(\eta,u)$ satisfy \eqref{eq-para-sys:ParaEq1Rem} and \eqref{eq-para-sys:EstiF2} (or \eqref{eq-para-sys:EstiF2-tame}), respectively.	

\subsection{Lipschitz continuity of source terms}\label{subsect-para:Lip}

The goal of this subsection is to prove that the source terms $Q,N$ in the right hand side of \eqref{Sec4_Main} are Lipschitz in $(\eta,u)$ in a weaker norm. More precisely, we shall prove the following Lipschitz estimates.

\begin{proposition}\label{prop-para-lip:Main}
    There exists a nondecreasing function $K\colon \R_+\to\R_+$ such that, for all $\eta_1,\eta_2\in H^2\cap W^{2,\infty}$ and for all $u_1,u_2\in L^2\cap Y^0$, there holds
    \begin{equation}\label{eq-para-lip:Main}
        \|Q(\eta_1,u_1) - Q(\eta_2,u_2)\|_{H^1} + \|N(\eta_1,u_1) - N(\eta_2,u_2)\|_{H^{-1}}\le K(A_0) (A_1+1) B_0,
    \end{equation}
    where
    \begin{equation}\label{eq-para-lip:AuxQuan}
        \begin{aligned}
            A_0:&= \|(\eta_1,u_1)\|_{H^2\times L^2} + \|(\eta_2,u_2)\|_{H^2\times L^2}, \\
            A_1:&= \|(\eta_1,u_1)\|_{W^{2,\infty}\times Y^0} + \|(\eta_2,u_2)\|_{W^{2,\infty}\times Y^0}, \\
            B_0:&= \|(\eta_1-\eta_2,u_1-u_2)\|_{H^1\times H^{-1}}.
        \end{aligned}
    \end{equation}
\end{proposition}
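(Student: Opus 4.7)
The plan is to expand each difference by a telescoping decomposition and then bound every piece using the sharp low-regularity estimates of Section \ref{sect:Reg}, the crucial one being the shape derivative bound of Proposition \ref{prop-reg-shpder:sharp}. Throughout, I will denote $\delta\eta=\eta_1-\eta_2$ and $\delta u=u_1-u_2$.

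\textbf{Analysis of $Q(\eta_1,u_1)-Q(\eta_2,u_2)$.} From \eqref{eq-para-sys:Q}, $Q(\eta,u)$ is the sum of a smoothing Fourier multiplier applied to $u$ and the main term $(\Id+|D_x|)^{-1}\mathcal{R}(\eta)(\Id+\mathcal{G}(\eta))^{-1}u$. The first summand has symbol $O(|\xi|^{-2})$ at infinity, so its contribution to the $H^1$ difference is bounded by $\|\delta u\|_{H^{-1}}$. For the main term I will insert and subtract to get the three-term decomposition
\begin{align*}
&(\Id+|D_x|)^{-1}\mathcal{R}(\eta_2)(\Id+\mathcal{G}(\eta_2))^{-1}\delta u \\
&\qquad+ (\Id+|D_x|)^{-1}\bigl[\mathcal{R}(\eta_1)-\mathcal{R}(\eta_2)\bigr](\Id+\mathcal{G}(\eta_2))^{-1}u_1 \\
&\qquad+ (\Id+|D_x|)^{-1}\mathcal{R}(\eta_1)\bigl[(\Id+\mathcal{G}(\eta_1))^{-1}-(\Id+\mathcal{G}(\eta_2))^{-1}\bigr]u_1.
\end{align*}
Since $(\Id+|D_x|)^{-1}\colon L^2\to H^1$, it suffices to bound each bracket in $L^2$. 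The first one combines $(\Id+\mathcal{G}(\eta_2))^{-1}\colon H^{-1}\to L^2$ (Proposition \ref{prop-reg-bas:InverBd}) with the $L^2$ boundedness of $\mathcal{R}(\eta_2)$ (Proposition \ref{prop-reg-dn:Main}). For the second bracket I will write the difference as an integral of the shape derivative: with $\eta_\tau=\eta_2+\tau\delta\eta$,
\[
[\mathcal{R}(\eta_1)-\mathcal{R}(\eta_2)]f = -\int_0^1 \Bigl(\mathcal{G}(\eta_\tau)(\delta\eta\,\mathcal{B}(\eta_\tau)f) + \partial_x(\delta\eta\,\mathcal{V}(\eta_\tau)f)\Bigr)\,d\tau,
\]
and Proposition \ref{prop-reg-shpder:sharp} provides the pointwise-in-$\tau$ $L^2$ bound $K(A_0)(\|f\|_{Y^1}+(A_1+1)\|f\|_{H^1})\|\delta\eta\|_{H^1}$. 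With $f=(\Id+\mathcal{G}(\eta_2))^{-1}u_1$, the norms $\|f\|_{Y^1}$ and $\|f\|_{H^1}$ are controlled in terms of $A_0,A_1$ and $\|u_1\|_{L^2}$ by Corollary \ref{cor-reg-dn:RegPsi} and Proposition \ref{prop-reg-bas:InverBd}. The third bracket reduces to the same situation via the resolvent identity $X^{-1}-Y^{-1}=X^{-1}(Y-X)Y^{-1}$, noting that $\mathcal{G}(\eta_1)-\mathcal{G}(\eta_2)=\mathcal{R}(\eta_1)-\mathcal{R}(\eta_2)$.

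\textbf{Analysis of $N(\eta_1,u_1)-N(\eta_2,u_2)$.} Using \eqref{eq-reg-bas:NAlt}, $N=B V\eta_x + \tfrac{1}{2}(V^2-B^2)$, with $B_j=\mathcal{B}(\eta_j)\psi_j$, $V_j=\mathcal{V}(\eta_j)\psi_j$, $\psi_j=(\Id+\mathcal{G}(\eta_j))^{-1}u_j$. Telescoping one factor at a time produces terms of two kinds: those where only $\eta$ varies, such as $B_1V_1\partial_x(\delta\eta)$, and those involving $B_1-B_2$ or $V_1-V_2$. For the first kind, Proposition \ref{bdr_vel_bound} shows that $B_j V_j\in L^\infty$ with norm $\le K(A_0)(A_1+1)^2$, so the product with $\partial_x(\delta\eta)\in L^2$ lies in $L^2\hookrightarrow H^{-1}$; the analogous $V^2,B^2$ contributions are estimated identically. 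For the second kind, I will use \eqref{eq-reg-bas:BVbyG} to express $B_1-B_2$ and $V_1-V_2$ in terms of $\psi_1-\psi_2$, $\delta\eta$, and $\mathcal{G}(\eta_1)\psi_1-\mathcal{G}(\eta_2)\psi_2$, where
\[
\psi_1-\psi_2 = (\Id+\mathcal{G}(\eta_1))^{-1}\delta u + (\Id+\mathcal{G}(\eta_1))^{-1}[\mathcal{G}(\eta_2)-\mathcal{G}(\eta_1)](\Id+\mathcal{G}(\eta_2))^{-1}u_2,
\]
whose $L^2$ bound of the desired form follows exactly as in the analysis of $Q$. The remaining piece $\mathcal{G}(\eta_1)(\psi_1-\psi_2)+[\mathcal{G}(\eta_1)-\mathcal{G}(\eta_2)]\psi_2$ is handled similarly, combining the $L^2$ boundedness of $\mathcal{R}(\eta_1)$ with the integrated shape derivative estimate applied to $\psi_2$. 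Every product is then paired with an $L^2$ or $H^1$ factor and placed into $L^1\hookrightarrow H^{-1}$ (valid in one space dimension) or directly into $L^2\hookrightarrow H^{-1}$.

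\textbf{Main obstacle.} The delicate point is that $\mathcal{R}(\eta_1)-\mathcal{R}(\eta_2)$ must be controlled using only the $H^1$ norm of $\delta\eta$, rather than the $H^2$ norm one would obtain by naively differentiating the paralinearization of $\mathcal{G}(\eta)$. This is precisely the content of the sharp shape derivative estimate in Proposition \ref{prop-reg-shpder:sharp}, whose integrated form is the key input throughout. It also explains the appearance of the Hardy norm $Y^0$ of $u$ (rather than the full $L^\infty$ norm) in $A_1$: the $Y^1$ norm of $\psi=(\Id+\mathcal{G}(\eta))^{-1}u$ required by Proposition \ref{prop-reg-shpder:sharp} is exactly what Corollary \ref{cor-reg-dn:RegPsi} provides in terms of $\|u\|_{Y^0}$.
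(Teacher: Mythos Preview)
Your treatment of $Q$ is essentially the paper's argument, and the $\eta$-variation of $N$ can be carried through as you outline (though for the term $B_1V_1\partial_x(\delta\eta)$ you should pair one $L^\infty$ factor with one $L^2$ factor and use $L^1\hookrightarrow H^{-1}$, otherwise you pick up $(A_1+1)^2$ rather than $(A_1+1)$).

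The genuine gap is in the $u$-variation of $N$. Your plan is to bound products such as $(B_1-B_2)V_2\eta_{2x}$ or $(B_1+B_2)(B_1-B_2)$ in $H^{-1}$ by first controlling $B_1-B_2$ and $V_1-V_2$. But with $\eta$ fixed, one has $\G(\eta)\psi_1-\G(\eta)\psi_2=\delta u-(\psi_1-\psi_2)$, which is only in $H^{-1}$ (not $L^2$) when measured against $\|\delta u\|_{H^{-1}}$; feeding this into \eqref{eq-reg-bas:BVbyG} gives $B_1-B_2\in H^{-1}$, \emph{not} $L^2$. The multiplication $H^{-1}\times (L^2\cap L^\infty)$ does not embed into $H^{-1}$ in one dimension (you would need the other factor in $H^s$ with $s>1/2$, and $V_j,\eta_{jx}$ are only in $L^2\cap L^\infty$). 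So the last step of ``pairing with an $L^2$ factor and placing into $L^1$ or $L^2$'' fails precisely for the $\delta u$ contribution.

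The paper circumvents this by a duality trick based on the identity \eqref{eq-reg-shpder:LinkShpDerNonlin}: for any $\varphi\in H^1$,
\[
2\int_\R N^{\mathrm{bi}}(\eta;u_1,u_2)\,\varphi\,\dx
=-\int_\R \psi_2\,\Big(\G(\eta)\big(\varphi\,\B(\eta)\psi_1\big)+\partial_x\big(\varphi\,\V(\eta)\psi_1\big)\Big)\dx.
\]
Now the shape derivative estimate of Proposition~\ref{prop-reg-shpder:sharp} is applied with $\delta\eta=\varphi$ and $\psi=\psi_1$, yielding an $L^2$ bound on the bracket that is linear in $\|\varphi\|_{H^1}$; the remaining factor $\psi_2=(\Id+\G(\eta))^{-1}u_2$ is paired in $L^2$, and Proposition~\ref{prop-reg-bas:InverBd} converts $\|\psi_2\|_{L^2}$ into $\|u_2\|_{H^{-1}}$. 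This is what produces the sharp $\|\delta u\|_{H^{-1}}$ dependence that the direct telescoping cannot reach.
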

In fact, it suffices to check that the derivatives with respect to $\eta,u$ of $Q$ and $N$ belong to the corresponding spaces. The derivative in $u$ is easy to study since $Q$ is linear in $u$ and $N$ is a quadratic form of $u$. The main difficulty is to study the derivative with respect to $\eta$. To do so, we will use the shape derivative formula. Hence, Proposition \ref{prop-para-lip:Main} will be deduced from the following two propositions:

\begin{proposition}\label{prop-para-lip:DerInU}Let $\eta\in H^2\cap W^{2,\infty}$ and $u\in L^2$.

$i)$  The mapping $(\eta,u)\mapsto Q(\eta,u)$ is linear in $u$ and
    \begin{equation}\label{eq-para-lip:QDerInU}
        \|Q(\eta,u)\|_{H^1} \le K\big(\|\eta\|_{H^2}\big)\big(\|\eta\|_{W^{2,\infty}}+1\big)\|u\|_{H^{-1}}.
    \end{equation}
    
$ii)$ $N(\eta,u)$ is a quadratic mapping in $u$ and the associated bilinear mapping $N^\mathrm{bi}(\eta;u_1,u_2)$ satisfies, for all $u_1,u_2$ in $L^2\cap Y^0$,
    \begin{equation}\label{eq-para-lip:NDerInU}
        \|N^\mathrm{bi}(\eta;u_1,u_2)\|_{H^{-1}} \le K\big(\|\eta\|_{H^2}\big) \Big( \big(\|\eta\|_{W^{2,\infty}}+1\big)\|u_1\|_{L^2} + \|u_1\|_{Y^{0}} \Big) \|u_2\|_{H^{-1}}.
    \end{equation}
\end{proposition}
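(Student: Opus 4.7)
\emph{Part~(i).} From the explicit formula \eqref{eq-para-sys:Q}, the map $u\mapsto Q(\eta,u)$ is manifestly linear. Its first summand $(\Dx(\Id+\Dx)^{-1}-\theta(D_x))u$ is a Fourier multiplier whose symbol vanishes for $|\xi|>1$ by the very definition of $\theta$, hence is smoothing and in particular bounded from $H^{-1}$ to $H^1$. For the second summand $(\Id+\Dx)^{-1}\mathcal{R}(\eta)(\Id+\G(\eta))^{-1}u$ we compose three bounds: Proposition~\ref{prop-reg-bas:InverBd} with $\sigma=-1$ yields $(\Id+\G(\eta))^{-1}\colon H^{-1}\to L^2$ with norm $K(\|\eta\|_{H^2})$; estimate \eqref{eq-reg-dn:Main} with $\sigma=0$ yields $\mathcal{R}(\eta)\colon L^2\to L^2$ with norm $K(\|\eta\|_{H^2})(\|\eta\|_{W^{2,\infty}}+1)$; and $(\Id+\Dx)^{-1}\colon L^2\to H^1$ is standard. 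Multiplying these produces \eqref{eq-para-lip:QDerInU}.

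\emph{Part~(ii).} Using the identities $(1+\eta_x^2)B=\G(\eta)\psi+\eta_x\psi_x$ and $V=\psi_x-\eta_x B$ (cf.~\eqref{eq-reg-bas:BVbyG}--\eqref{eq-reg-bas:PsibyBV}), a direct computation reduces \eqref{n40.5} to $N(\eta,u)=\tfrac12\psi_x^2-\tfrac12(1+\eta_x^2)B^2$ with $\psi=(\Id+\G(\eta))^{-1}u$. Since $\psi$ is linear in $u$, polarization gives the symmetric bilinear form
\begin{equation*}
2N^{\mathrm{bi}}(\eta;u_1,u_2)=\psi_{1,x}\psi_{2,x}-(1+\eta_x^2)B_1 B_2,
\end{equation*}
where $\psi_j=(\Id+\G(\eta))^{-1}u_j$ and $B_j=\B(\eta)\psi_j$, $V_j=\V(\eta)\psi_j$. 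The key obstacle is that $u_2\in H^{-1}$ yields only $\psi_2\in L^2$, so $\psi_{2,x},B_2,V_2\in H^{-1}$, while $Y^0\subset L^\infty$ does not act on $H^{-1}$ by multiplication; a naive term-by-term bound on $N^{\mathrm{bi}}$ thus fails. To bypass this, test against $\varphi\in H^1$, substitute $(1+\eta_x^2)B_1=\G\psi_1+\eta_x\psi_{1,x}$ and $B_2=(\G\psi_2+\eta_x\psi_{2,x})/(1+\eta_x^2)$, then use self-adjointness of $\G(\eta)$ together with integration by parts (legitimate first on smooth data, extended by density). After regrouping the terms carrying $\G\psi_2$ and $\psi_{2,x}$ and recognising $(\G\psi_1+\eta_x\psi_{1,x})/(1+\eta_x^2)=B_1$ and $\psi_{1,x}-\eta_x B_1=V_1$, the expression collapses to the duality identity
\begin{equation*}
2\langle N^{\mathrm{bi}}(\eta;u_1,u_2),\varphi\rangle = -\int_{\R}\psi_2\bigl[\G(\eta)(B_1\varphi)+\partial_x(V_1\varphi)\bigr]\dx.
\end{equation*}

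The bracket on the right-hand side is precisely the combination controlled by Proposition~\ref{prop-reg-shpder:sharp}, applied with $\psi=\psi_1\in H^1\cap Y^1$ and $\delta\eta=\varphi$; the restriction $\delta\eta\in H^2\cap W^{2,\infty}$ in that proposition is removed by density since only $\|\delta\eta\|_{H^1}$ enters its right-hand side. Cauchy--Schwarz against $\psi_2\in L^2$ then gives
\begin{equation*}
|\langle 2N^{\mathrm{bi}},\varphi\rangle|\le K(\|\eta\|_{H^2})\bigl(\|\psi_1\|_{Y^1}+(\|\eta\|_{W^{2,\infty}}+1)\|\psi_1\|_{H^1}\bigr)\|\psi_2\|_{L^2}\|\varphi\|_{H^1}.
\end{equation*}
Bounding $\|\psi_2\|_{L^2}\le K(\|\eta\|_{H^2})\|u_2\|_{H^{-1}}$ (Proposition~\ref{prop-reg-bas:InverBd} with $\sigma=-1$), $\|\psi_1\|_{H^1}\le K(\|\eta\|_{H^2})\|u_1\|_{L^2}$ (same, with $\sigma=0$), and $\|\psi_1\|_{Y^1}\le K(\|\eta\|_{H^2})(\|u_1\|_{Y^0}+(\|\eta\|_{W^{2,\infty}}+1)\|u_1\|_{L^2})$ (Corollary~\ref{cor-reg-dn:RegPsi}), and taking the supremum over $\|\varphi\|_{H^1}=1$, yields exactly \eqref{eq-para-lip:NDerInU}. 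The main difficulty of the argument is the extraction of the duality identity above: it rests on the same algebraic cancellation $\G(\eta)\B(\eta)\psi+\partial_x\V(\eta)\psi=0$ from \eqref{eq-reg-shpder:Cancel} that underlies Proposition~\ref{prop-reg-shpder:sharp}, and it is what allows us to trade the ill-defined product of a $Y^0$-function with an $H^{-1}$-distribution for a well-defined pairing of two $L^2$ functions.
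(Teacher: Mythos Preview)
Your proof is correct and follows essentially the same approach as the paper: part~(i) matches verbatim, and in part~(ii) both you and the paper reduce to the same duality identity
\[
2\langle N^{\mathrm{bi}}(\eta;u_1,u_2),\varphi\rangle=-\int_{\R}\psi_2\bigl[\G(\eta)(B_1\varphi)+\partial_x(V_1\varphi)\bigr]\dx,
\]
and then estimate the bracket via Proposition~\ref{prop-reg-shpder:sharp}, bound $\|\psi_2\|_{L^2}$ by $\|u_2\|_{H^{-1}}$, and convert $\psi_1$-norms to $u_1$-norms via Proposition~\ref{prop-reg-bas:InverBd} and Corollary~\ref{cor-reg-dn:RegPsi}.

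The only difference is the route to that identity: the paper obtains it by invoking the variational link \eqref{eq-reg-shpder:LinkShpDerNonlin} between $N^{\mathrm{bi}}$ and the shape derivative of $\int\psi_2\G(\eta)\psi_1$, whereas you derive it by direct algebraic manipulation of the explicit formula $2N^{\mathrm{bi}}=\psi_{1,x}\psi_{2,x}-(1+\eta_x^2)B_1B_2$ together with self-adjointness of $\G(\eta)$ and integration by parts. Your derivation is slightly more self-contained (it does not appeal to Theorem~\ref{thm-reg-shpder:Main}), but the two computations are really the same cancellation read in two directions, and the analytic content is identical.
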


\begin{proposition}\label{prop-para-lip:DerInEta}
    Let $\eta\in H^2\cap W^{2,\infty}$ and $u\in L^2\cap Y^0$. Given $\delta\eta\in H^2\cap W^{2,\infty}$, define the operator $\delta$ as the derivative in $\eta$ along $\delta\eta$. Namely, for any function $Q$ of $(\eta,u)$,
    \begin{equation*}
        \delta Q(\eta,u) := \left.\fraceps\right|_{\varepsilon=0} Q(\eta+\varepsilon\delta\eta,u),
    \end{equation*}
    which is a linear form of $\delta\eta$. Then 
    the derivatives of $Q$ and $N$ satisfy
    \begin{align}
        \|\delta Q(\eta,u)\|_{H^1} &\le K\big(\|\eta\|_{H^2}\big) \Big( \big(\|\eta\|_{W^{2,\infty}}+1\big)\|u\|_{L^2} + \|u\|_{Y^0} \Big) \|\delta\eta\|_{H^1}, \label{eq-para-lip:QDerInEta} \\
        \|\delta N(\eta,u)\|_{H^{-1}} &\le K\big(\|\eta\|_{H^2}\big) \Big( \big(\|\eta\|_{W^{2,\infty}}+1\big)\|u\|_{L^2} + \|u\|_{Y^0} \Big) \|u\|_{L^2} \|\delta\eta\|_{H^1}. \label{eq-para-lip:NDerInEta}
    \end{align}
\end{proposition}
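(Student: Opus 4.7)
The plan is to compute $\delta Q$ and $\delta N$ explicitly via the shape derivative formula of Theorem~\ref{thm-reg-shpder:Main}, and then to estimate each piece. Throughout, set $\psi := (\Id+\mathcal{G}(\eta))^{-1}u$, $B := \mathcal{B}(\eta)\psi$, $V := \mathcal{V}(\eta)\psi$. Since $u$ is held fixed under the $\eta$-variation, differentiating $u = \psi + \mathcal{G}(\eta)\psi$ along $\delta\eta$ yields $\delta\psi = -(\Id+\mathcal{G}(\eta))^{-1}\delta\mathcal{G}(\eta)\psi$ together with the crucial cancellation
\begin{equation*}
    \delta\mathcal{G}(\eta)\psi + \mathcal{G}(\eta)\delta\psi = -\delta\psi.
\end{equation*}
Two bounds on $\delta\mathcal{G}(\eta)\psi = -\mathcal{G}(\eta)(\delta\eta\, B) - \partial_x(\delta\eta\, V)$ will be needed. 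The sharp $L^2$ bound follows from Proposition~\ref{prop-reg-shpder:sharp} combined with Corollary~\ref{cor-reg-dn:RegPsi} and Proposition~\ref{prop-reg-bas:InverBd}:
\begin{equation*}
    \|\delta\mathcal{G}(\eta)\psi\|_{L^2} \le K\big(\|\eta\|_{H^2}\big)\Big[\big(\|\eta\|_{W^{2,\infty}}+1\big)\|u\|_{L^2} + \|u\|_{Y^0}\Big]\|\delta\eta\|_{H^1}.
\end{equation*}
A coarser $H^{-1}$ bound follows directly from the $L^2 \to H^{-1}$ boundedness of $\mathcal{G}(\eta)$ (Proposition~\ref{prop-reg-bas:Bd}) together with $\|\delta\eta\, B\|_{L^2}+\|\delta\eta\, V\|_{L^2}\lesssim \|\delta\eta\|_{H^1}(\|B\|_{L^2}+\|V\|_{L^2})$, yielding $\|\delta\mathcal{G}(\eta)\psi\|_{H^{-1}} \le K(\|\eta\|_{H^2})\|u\|_{L^2}\|\delta\eta\|_{H^1}$. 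Composing with $(\Id+\mathcal{G}(\eta))^{-1}$ transfers these to $H^1$- and $L^2$-bounds on $\delta\psi$, respectively.

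For $\delta Q$, only the second summand of \eqref{eq-para-sys:Q} depends on $\eta$, giving
\begin{equation*}
    \delta Q(\eta,u) = (\Id+\Dx)^{-1}\big[\delta\mathcal{G}(\eta)\psi + \mathcal{R}(\eta)\delta\psi\big].
\end{equation*}
Since $(\Id+\Dx)^{-1}$ is of order $-1$, we have $\|\delta Q\|_{H^1} \lesssim \|\delta\mathcal{G}(\eta)\psi\|_{L^2} + \|\mathcal{R}(\eta)\delta\psi\|_{L^2}$. The first term is controlled by the sharp $L^2$ bound above; for the second, Proposition~\ref{prop-reg-dn:Main} bounds $\|\mathcal{R}(\eta)\|_{\mathcal{L}(L^2)}$ by $K(\|\eta\|_{H^2})(\|\eta\|_{W^{2,\infty}}+1)$, and the coarser $L^2$-bound on $\delta\psi$ then delivers \eqref{eq-para-lip:QDerInEta}.

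For $\delta N$, starting from $N = \tfrac{1}{2}(V^2-B^2) + \eta_x VB$ and using the identities $V+\eta_x B = \psi_x$, $B-\eta_x V = \mathcal{G}(\eta)\psi$, and $(1+\eta_x^2)B = \mathcal{G}(\eta)\psi + \eta_x\psi_x$, a direct computation combined with the cancellation $\delta\mathcal{G}(\eta)\psi + \mathcal{G}(\eta)\delta\psi = -\delta\psi$ collapses the total $\eta$-derivative to the clean formula
\begin{equation*}
    \delta N = V\delta\psi_x + B\delta\psi - VB\delta\eta_x.
\end{equation*}
Each piece is then bounded in $H^{-1}$ via the one-dimensional duality inequality
\begin{equation*}
    \|fg_x\|_{H^{-1}} \lesssim \|f\|_{L^2}\|g\|_{H^1} \quad\text{for }f\in L^2,\; g\in H^1,
\end{equation*}
proved by writing $fg_x = (fg)_x - f_xg$ and using that $H^1$ is an algebra in dimension one. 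Apply this with $(f,g)=(V,\delta\psi)$ and $(f,g)=(VB,\delta\eta)$, and bound $\|B\delta\psi\|_{L^2}\le \|B\|_{L^2}\|\delta\psi\|_{L^\infty}\lesssim \|B\|_{L^2}\|\delta\psi\|_{H^1}$.

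The principal difficulty is that \eqref{eq-para-lip:NDerInEta} demands an asymmetric bilinear structure: one factor must be the weak norm $\|u\|_{L^2}$ alone while the other carries $(\|\eta\|_{W^{2,\infty}}+1)\|u\|_{L^2}+\|u\|_{Y^0}$. I will achieve this by spending $\|u\|_{L^2}$ on the $L^2$-factors $\|V\|_{L^2}, \|B\|_{L^2}$ (Proposition~\ref{prop-reg-bas:BdBV}), and invoking either the sharp $H^1$-bound on $\delta\psi$ or Proposition~\ref{bdr_vel_bound} (which controls $\|V\|_{Y^0}$ and $\|B\|_{Y^0}$) to supply the stronger factor. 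The last term $VB\delta\eta_x$ fits this split through $\|VB\|_{L^2}\le \|V\|_{Y^0}\|B\|_{L^2}$. Maintaining this balance is precisely what forces both the $L^2$ and $H^{-1}$ bounds on $\delta\mathcal{G}(\eta)\psi$ to be exploited in tandem, and it is the only place where more than soft functional-analytic input is required.
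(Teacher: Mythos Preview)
Your proof is correct. For $\delta Q$, you and the paper take essentially the same route (shape derivative plus Proposition~\ref{prop-reg-shpder:sharp}), only you apply $(\Id+\Dx)^{-1}$ to the pair $(\delta\mathcal{G}(\eta)\psi,\ \mathcal{R}(\eta)\delta\psi)$ whereas the paper applies $(\Id+\mathcal{G}(\eta))^{-1}$ directly to $\delta\mathcal{G}(\eta)\psi$; these are algebraically equivalent.

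For $\delta N$ the two arguments genuinely diverge. The paper never writes a closed formula for $\delta N$: it keeps $N$ in the form $F(\eta_x)(\eta_x\psi_x+\mathcal{G}(\eta)\psi)^2$, lets $\delta$ hit each factor, and controls every resulting piece in $H^{-1}$ through the embedding $L^1\hookrightarrow H^{-1}$, together with the claim \eqref{eq-para-lip:NDerInEta-Claim} (whose proof again rests on Proposition~\ref{prop-reg-shpder:sharp}). You instead exploit the constraint $\delta u=0$ to collapse everything to the compact identity
\[
\delta N = V\,\delta\psi_x + B\,\delta\psi - VB\,\delta\eta_x,
\]
which is correct and makes the asymmetric bilinear structure in \eqref{eq-para-lip:NDerInEta} completely transparent. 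Your duality inequality $\|fg_x\|_{H^{-1}}\lesssim \|f\|_{L^2}\|g\|_{H^1}$ then replaces the paper's $L^1\hookrightarrow H^{-1}$ step. The trade-off: your route is shorter and more structural, but it relies on an exact algebraic cancellation that one has to compute carefully; the paper's term-by-term approach is more pedestrian but does not depend on spotting that identity.
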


We highlight the non-trivial feature of \eqref{eq-para-lip:QDerInEta}-\eqref{eq-para-lip:NDerInEta}: the right-hand-side relies only on the $H^1$ norm of $\delta\eta$, although we assume $\delta\eta\in H^2\cap W^{2,\infty}$ in order the shape derivative to be well-defined.

\begin{proof}[Proof of \eqref{eq-para-lip:QDerInU}]
    The proof of \eqref{eq-para-lip:QDerInU} is easy. In the formula \eqref{eq-para-sys:Q} of $Q(\eta,u)$,
    \begin{equation*}
		Q(\eta,u) = 
		\big(\Dx(\Id+\Dx)^{-1}-\theta(D_x)\big)u + (\Id+\Dx)^{-1}\mathcal{R}(\eta)(\Id+\mathcal{G}(\eta))^{-1}u,
	\end{equation*}
	the linear operators $(\Id+\Dx)^{-1} \in \mathcal{L}(L^2;H^1)$ and $(\Id+\mathcal{G}(\eta))^{-1} \in \mathcal{L}(H^{-1};L^2)$ due to Proposition \ref{prop-reg-bas:InverBd}, where only $H^2$ norm of $\eta$ is required. In the mean time, Proposition \ref{prop-reg-dn:Main} implies that the linear operator $\mathcal{R}(\eta)$ is bounded on $L^2$, provided that $\eta\in W^{2,\infty}$, which completes the proof of \eqref{eq-para-lip:QDerInU}.
\end{proof}

\begin{proof}[Proof of \eqref{eq-para-lip:NDerInU}]
By Theorem~\ref{thm-reg-shpder:Main}, we immediately obtain the formula
\begin{equation}\label{eq-reg-shpder:LinkShpDerNonlin}
        \begin{aligned}
            \left.\fraceps\right|_{\varepsilon=0} \int_\R \psi_1 \G(\eta+\varepsilon\delta\eta)\psi_2 \dx &= \left.\fraceps\right|_{\varepsilon=0} \int_\R \psi_2 \G(\eta+\varepsilon\delta\eta)\psi_1 \dx \\
            &= 2\int_\R N^\mathrm{bi}(\eta;u_1,u_2)\delta\eta \dx.
        \end{aligned}
\end{equation}
where $\psi_k = (\Id + \G(\eta))^{-1}u_k$, $k=1,2$. The targeted inequality \eqref{eq-para-lip:NDerInU} is equivalent to the assertion that, for arbitrary $\delta\eta\in H^2\cap W^{2,\infty}$,
    \begin{equation*}
        \begin{aligned}
            \Big| \left.\fraceps\right|_{\varepsilon=0} &
            \int_\R \psi_2\G(\eta+\varepsilon\delta\eta)\psi_1 \dx \Big| \\
            &\le K\big(\|\eta\|_{H^2}\big) \Big( \big(\|\eta\|_{W^{2,\infty}}+1\big)\|u_1\|_{L^2} + \|u_1\|_{Y^{0}} \Big)\|u_2\|_{H^{-1}} \|\delta\eta\|_{H^1}.
        \end{aligned}
    \end{equation*}
    Under the condition $\eta\in H^2$, we find that $\|\psi_1\|_{L^2}$ can be controlled by $\|u_1\|_{H^{-1}}$ (see Proposition \ref{prop-reg-bas:InverBd}). Then our aim can be further reduced to the inequality
    \begin{equation*}
        \left\| \left.\fraceps\right|_{\varepsilon=0}\G(\eta+\varepsilon\delta\eta)\psi_1 \right\|_{L^2} \le K\big(\|\eta\|_{H^2}\big) \Big( \big(\|\eta\|_{W^{2,\infty}}+1\big)\|u_1\|_{L^2} + \|u_1\|_{Y^{0}} \Big) \|\delta\eta\|_{H^1},
    \end{equation*}
    which is a consequence of Theorem~\ref{thm-reg-shpder:Main} and Proposition~\ref{prop-reg-shpder:sharp}. Note that we also need Proposition~\ref{prop-reg-bas:InverBd} and Corollary~\ref{cor-reg-dn:RegPsi}: the $H^1$ and $Y^1$ norms of $\psi_1$ can be replaced by the $L^2$ and $Y^0$ norms of $u_1$. This completes the proof of inequality \eqref{eq-para-lip:NDerInU}.
\end{proof}

\begin{proof}[Proof of \eqref{eq-para-lip:QDerInEta}]
    Due to the formula \eqref{eq-para-sys:Q} of $Q(\eta,u)$, its derivative in $\eta$ reads 
    \begin{align*}
        \delta Q(\eta,u) &= \delta \left( \G(\eta)(\Id + \G(\eta))^{-1}u \right) \\
        &= - \delta \left( (\Id + \G(\eta))^{-1}u \right) \\
        &= (\Id + \G(\eta))^{-1} \left.\fraceps\right|_{\varepsilon=0}\G(\eta+\varepsilon\delta\eta) (\Id + \G(\eta))^{-1}u.
    \end{align*}
    Using the shape derivative formula \eqref{eq-reg-shpder:Main}, denoting $\psi = (\Id + \G(\eta))^{-1}u$, we have
    \begin{align*}
        \delta Q(\eta,u) &= - (\Id + \G(\eta))^{-1} \Big( \G(\eta)(\delta\eta\B(\eta)\psi) + \partial_x(\delta\eta\V(\eta)\psi) \Big).
    \end{align*}
    Then the desired result follows from Proposition \ref{prop-reg-bas:InverBd} and \ref{prop-reg-shpder:sharp},
    \begin{align*}
        \|\delta Q(\eta,u)\|_{H^1} &\le K\big(\|\eta\|_{H^2}\big) \left\| \G(\eta)(\delta\eta\B(\eta)\psi) + \partial_x(\delta\eta\V(\eta)\psi) \right\|_{L^2} \\
        &\le K\big(\|\eta\|_{H^2}\big) \left( \|\psi\|_{Y^1} + (\|\eta\|_{W^{2,\infty}}+1) \|\psi\|_{H^1} \right) \|\delta\eta\|_{H^1} \\
        &\le K\big(\|\eta\|_{H^2}\big) \left( \|u\|_{Y^0} + (\|\eta\|_{W^{2,\infty}}+1) \|u\|_{L^2} \right) \|\delta\eta\|_{H^1},
    \end{align*}
    where in the last inequality we also utilize Corollary \ref{cor-reg-dn:RegPsi}.
\end{proof}

\begin{proof}[Proof of \eqref{eq-para-lip:NDerInEta}]
    According to the formula \eqref{n40.5} of $N(\eta,u)$, it can be regarded as a summation of terms in the form
    \begin{equation*}
        F(\eta_x)\left( \eta_x\partial_x\psi + \G(\eta)\psi \right)^2,
    \end{equation*}
    where $F$ is a $C^\infty$ function. In what follows, we shall focus on the study of such terms.
    
    If $\delta$, the derivative in $\eta$, acts on $F(\eta_x)$, one obtains
    \begin{equation*}
        F'(\eta_x)\left( \partial_x\psi + \G(\eta)\psi \right)^2 \partial_x\delta\eta.
    \end{equation*}
    Applying Proposition \ref{prop-besov-bony:Paralin}, $F'(\eta_x)\partial_x\delta\eta$ lies in $L^2$. Moreover, by applying Proposition \ref{prop-reg-bas:InverBd} and \ref{prop-reg-bas:Bd}, we have
    \begin{equation}\label{eq-para-lip:NDerInEta-L2ofGPsi}
        \| \psi_x + \G(\eta)\psi \|_{L^2} \le K\big(\|\eta\|_{H^2}\big) \|u\|_{L^2},
    \end{equation}
    while Corollary \ref{cor-reg-dn:RegPsi} yields
    \begin{align*}
        | \psi_x + \G(\eta)\psi |_{L^\infty} &\le \|\psi\|_{Y^1} + | u - (\Id+\G(\eta))^{-1}u |_{L^\infty} \\
        &\le \|u\|_{Y^0} + \|\psi\|_{Y^1} \\
        &\le K\big(\|\eta\|_{H^2}\big) \big( \|u\|_{Y^0} + (\|\eta\|_{W^{2,\infty}}+1) \|u\|_{L^2} \big).
    \end{align*}
    Combining these estimates, we conclude that
    \begin{equation*}
        \begin{aligned}
            \big\| F'(\eta_x)&( \partial_x\psi + \G(\eta)\psi )^2 \partial_x\delta\eta \big\|_{L^1} \\
            &\le K\big(\|\eta\|_{H^2}\big) 
            \big( \|u\|_{Y^0} + (\|\eta\|_{W^{2,\infty}}+1) \|u\|_{L^2} \big)
            \|u\|_{L^2} \|\delta\eta\|_{H^1}.
        \end{aligned}
    \end{equation*}
    Via Sobolev embedding $L^1 \subset H^{-1}$, the left hand side can be replaced by $H^{-1}$ norm.
    
    When $\delta$ acts on $\psi_x$ or $\G(\eta)\psi$, we claim that
    \begin{equation}\label{eq-para-lip:NDerInEta-Claim}
    \begin{aligned}
    \|\delta(\psi_x)\|_{L^2} & + \|\delta(\G(\eta)\psi)\|_{L^2} \\
    &\le K\big(\|\eta\|_{H^2}\big) \big( \|u\|_{Y^0} + (\|\eta\|_{W^{2,\infty}}+1) \|u\|_{L^2} \big)\|\delta\eta\|_{H^1}.
    \end{aligned}
    \end{equation}
    As long as this claim is proved, the desired estimate will follow from \eqref{eq-para-lip:NDerInEta-L2ofGPsi} and the fact that $F(\eta_x)\in L^\infty$, by writing
    \begin{align*}
        \Big\| F(\eta_x) \delta\Big(\big( \partial_x\psi & + \G(\eta)\psi \big)^2\Big) \Big\|_{H^{-1}} \\
        &\lesssim  \left\| F(\eta_x) \delta\Big(\left( \partial_x\psi + \G(\eta)\psi \right)^2\Big) \right\|_{L^1} \\
        &\lesssim |F(\eta_x)|_{L^\infty} \| \delta(\psi_x) + \delta(\G(\eta)\psi) \|_{L^2} \| \psi_x + \G(\eta)\psi \|_{L^2} \\
        &\le K\big(\|\eta\|_{H^2}\big) \big( \|u\|_{Y^0} + (\|\eta\|_{W^{2,\infty}}+1) \|u\|_{L^2} \big) \|u\|_{L^2} \|\delta\eta\|_{H^1},
    \end{align*}
    which completes the proof of \eqref{eq-para-lip:NDerInEta}.
    
    It remains to check \eqref{eq-para-lip:NDerInEta-Claim}. For the derivative of $\psi_x$, one may apply the shape derivative formula \eqref{eq-reg-shpder:Main}, to get
    \begin{equation*}
    \begin{aligned}
        \delta(\psi_x) 
        & = \partial_x \delta\left( (\Id+\G(\eta))^{-1}u \right) \\
        & = -\partial_x(\Id+\G(\eta))^{-1} \Big( \G(\eta)(\delta\eta\B(\eta)\psi) + \partial_x(\delta\eta\V(\eta)\psi) \Big),
    \end{aligned}
    \end{equation*}
    whose $L^2$ norm can be estimated by using Propositions \ref{prop-reg-bas:InverBd}, \ref{prop-reg-shpder:sharp}, and Corollary \ref{cor-reg-dn:RegPsi},
    \begin{align*}
        \|\delta(\psi_x)\|_{L^2} &\le K\big(\|\eta\|_{H^2}\big) \left\|\mathcal{G}(\eta)(\delta\eta\B(\eta)\psi) + \partial_x(\delta\eta\V(\eta)\psi) \right\|_{L^2} \\
        &\le K\big(\|\eta\|_{H^2}\big) \left( \|\psi\|_{Y^1} + (\|\eta\|_{W^{2,\infty}}+1) \|\psi\|_{H^1} \right)\|\delta\eta\|_{H^1} \\
        &\le K\big(\|\eta\|_{H^2}\big) \left( \|u\|_{Y^0} + (\|\eta\|_{W^{2,\infty}}+1) \|u\|_{L^2} \right)\|\delta\eta\|_{H^1}.
    \end{align*}
    The same argument also works for $\delta(\G(\eta)\psi)$. In fact, one observes that
    \begin{equation*}
    \begin{aligned}
        \delta(\G(\eta)\psi) 
        & = \delta\Big( u- (\Id+\G(\eta))^{-1}u \Big) \\
        & = -(\Id+\G(\eta))^{-1} \Big( \G(\eta)(\delta\eta\B(\eta)\psi) + \partial_x(\delta\eta\V(\eta)\psi) \Big),
    \end{aligned}
    \end{equation*}
    where the right-hand side can be estimated in the same way as above.
\end{proof}

\subsection{Reduction to a Schr{\"o}dinger type equation}\label{subsect-para:Schr}

Based on \eqref{Sec4_Main}, we introduce the dispersive relation
\begin{equation}\label{p(Dx)}
p(\xi)=\theta(\xi)^{1/2}\left(1+|\xi|^4\right)^{1/2},
\end{equation}
and, as explained in the introduction part, we also introduce 
the complex variable $U$ defined by
\begin{equation}\label{Complex_U}
	U:= q(D_x)\eta + i u,\ \ q(\xi)=\theta(\xi)^{-1/2}\left(1+|\xi|^4\right)^{1/2}.
\end{equation}
The equation for $\eta,u$ is therefore equivalent to the following nonlinear dispersive equation:
\begin{equation}\label{Schrodinger}
	\left\{\begin{array}{l}
		\partial_t U + i p(D_x)U = \mathcal{F}(U), \\ [0.5ex]
		U|_{t=0} = U_0 := q(D_x)\eta_0 + i u_0.
	\end{array}\right.
\end{equation}
From \eqref{Sec4_Main}, the source term $\mathcal{F}(U)$ can be expressed as
\begin{equation}\label{eq-para-schr:Sour}
	\mathcal{F}(U) = q(D_x)Q(\eta,u) + iN(\eta,u).
\end{equation}
In what follows, we shall check that the source term $\mathcal{F}(U)$ is bounded in $L^2$, and is Lipschitz in $U$ with respect to the weaker $H^{-1}$ norm. As we have seen in previous sections, this will be possible provided that $U$ has regularity $L^2 \cap Y^0$ in $x$. Moreover, thanks to the high order estimates in Section~\ref{subsect-para:Sys}, when $U$ has higher regularity $H^{s}\cap Y^s$, the source term $\mathcal{F}(U)$ lies in $H^{s}$, while the corresponding estimate is tame.

To begin with, we clarify the regularity of $(\eta, u)$ when $U \in Y^s$. Notably, the real part of $U$ is given by $q(D_x)\eta$, where $q(D_x)$ is a second-order elliptic Fourier multiplier. This suggests that \( U \in Y^s \) implies \( \eta \in W^{s+2,\infty} \), which is proved in the lemma below. As for the imaginary part of \( U \), we immediately have \( u \in Y^s \).

\begin{lemma}\label{prop-reg-var:Main}
Given $s\ge 0$, we assume that $q(D_x)\eta$ belongs to $Y^s$, where the symbol $q(\xi)$ is defined in \eqref{Complex_U}. Then $\eta$ belongs to $W^{s+2,\infty}$ with 
\begin{equation}\label{eq-reg-var:Main}
		\|\eta\|_{W^{s+2,\infty}} \lesssim \|q(D_x)\eta\|_{Y^s}.
\end{equation}
\end{lemma}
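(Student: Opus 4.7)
The plan is to invert $q(D_x)$ and read the estimate off the mapping properties of Fourier multipliers on the $Y^s$ scale, then use the embedding into $W^{s,\infty}$.

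First I would verify that $q(\xi)^{-1} = \theta(\xi)^{1/2}(1+|\xi|^4)^{-1/2}$ is a radial symbol of order $-2$ in the sense of Definition \ref{def-pre-fct:Multi}. Since $\theta(\xi) \geq 1/4 > 0$ and is smooth and even, $\theta(\xi)^{1/2}$ is smooth and even; the factor $(1+|\xi|^4)^{-1/2}$ is also a smooth even radial function, so $q^{-1}$ is $C^\infty$ and radial. For $|\xi|>1$ we have $\theta(\xi) = |\xi|/(1+|\xi|)$, hence $\theta(\xi)^{1/2}$ is bounded with bounded derivatives of every order on this range, and $(1+|\xi|^4)^{-1/2}$ together with its $k$-th derivatives decays like $|\xi|^{-2-k}$. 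This gives the symbolic estimates of Definition~\ref{def-pre-fct:Multi}, together with the asymptotic expansion
\begin{equation*}
q(\xi)^{-1} = |\xi|^{-2} + O(|\xi|^{-3}) \quad \text{as } |\xi|\to\infty,
\end{equation*}
so the nonvanishing leading coefficient condition is satisfied with $m=-2$.

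Next I would apply Proposition~\ref{prop-pre-fct:BdMult} with the symbol $q(\xi)^{-1}$ of order $-2$ to conclude that the Fourier multiplier $q(D_x)^{-1}$ is bounded from $Y^s$ to $Y^{s+2}$ for every $s\in\R$. Writing $\eta = q(D_x)^{-1} \bigl( q(D_x)\eta \bigr)$, this immediately yields
\begin{equation*}
\|\eta\|_{Y^{s+2}} \lesssim \|q(D_x)\eta\|_{Y^{s}}.
\end{equation*}

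Finally I would invoke the continuous embedding $Y^{s+2} \subset W^{s+2,\infty}$. This is given by part~(4) of Proposition~\ref{prop-pre-fct:Basic} when $s+2 \notin \N$, and by Proposition~\ref{prop-pre-fct:Embd} when $s+2 \in \N$; together the two cases cover every $s\geq 0$. Combining this embedding with the previous step gives $\|\eta\|_{W^{s+2,\infty}} \lesssim \|q(D_x)\eta\|_{Y^s}$, as desired. There is no real obstacle here: the only genuine content is that the second-order ellipticity of $q$ is not spoiled by the low-frequency factor $\theta^{-1/2}$, which is ensured by the lower bound $\theta\geq 1/4$.
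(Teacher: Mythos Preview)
Your proof is correct and follows essentially the same approach as the paper: check that $q^{-1}$ is a radial symbol of order $-2$, apply Proposition~\ref{prop-pre-fct:BdMult} to pass from $Y^s$ to $Y^{s+2}$, and then embed into $W^{s+2,\infty}$. You are in fact slightly more careful than the paper, which only cites part~(4) of Proposition~\ref{prop-pre-fct:Basic} for the embedding $Y^{s+2}\subset W^{s+2,\infty}$; you correctly note that the integer case $s+2\in\N$ requires Proposition~\ref{prop-pre-fct:Embd} instead.
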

\begin{proof}
We notice that $1/q(\xi)$ is smooth on $\{\xi\in\R\}$, while
$$
\frac{1}{q(\xi)}=|\xi|^{-2}+O(|\xi|^{-3}),\quad 
|\xi|\to\infty.
$$
Then, by writing $\eta = q(D_x)^{-1} q(D_x) \eta$, we are able to conclude by Proposition~\ref{prop-pre-fct:BdMult},
$$
\|\eta\|_{W^{s+2,\infty}} \lesssim \|\eta\|_{Y^{s+2}} = \|q(D_x)^{-1} q(D_x) \eta\|_{Y^{s+2}} \lesssim \|q(D_x)\eta\|_{Y^s}.
$$
Note that the first inequality is ensured by the inclusion $Y^{s+2} \subset W^{s+2,\infty}$ (see (4) of Proposition~\ref{prop-pre-fct:Basic}).
\end{proof}

So far, we have seen that the $H^{s+2} \times H^s$ and $Y^{s+2} \times Y^s$ norms of $(\eta, u)$ can be controlled by the $H^s$ and $Y^s$ norm of $U$, respectively. Consequently, we can replace the norms of $(\eta, u)$ with those of the complex variable $U$ in all the estimates obtained in Section~\ref{subsect-para:Sys} and ~\ref{subsect-para:Lip} to obtain self-contained estimates for the source term $\mathcal{F}(U)$.

\begin{proposition}\label{prop-para-schr:SourEsti}
	Given $s\ge 0$, when $U\in H^{s} \cap Y^{s}$, the source term $\mathcal{F}(U)$ defined by \eqref{eq-para-schr:Sour} satisfies
	\begin{equation}\label{eq-para-schr:SourEsti}
		\| \mathcal{F}(U) \|_{L^2} \le K\big(\|U\|_{L^2}\big) \left( \|U\|_{Y^0} + 1 \right),
	\end{equation}
	and
	\begin{equation}\label{eq-para-schr:SourEsti-tame}
		\| \mathcal{F}(U) \|_{H^s} \le K\big(\|U\|_{L^2}\big) \left( \|U\|_{Y^0} + 1 \right) \Big[ (\|U\|_{Y^0}+1) \|U\|_{H^s} + \|U\|_{Y^s} \Big].
	\end{equation}
\end{proposition}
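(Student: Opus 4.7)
The plan is to estimate $q(D_x)Q(\eta,u)$ and $N(\eta,u)$ separately and then add the results. The real work has already been done in Propositions~\ref{prop-para-sys:ParaEq1} and~\ref{prop-para-sys:ParaEq2}, which bound $Q$ and $N$ in terms of norms of $\eta$ and $u$. The remaining task is to translate these bounds into the complex unknown $U = q(D_x)\eta+iu$, while preserving the tame structure.

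First I would set up the dictionary between the regularities. Since $q(\xi)$ is a radial symbol of order $2$ with nonvanishing principal part $|\xi|^2$, the Fourier multiplier $q(D_x)$ is an isomorphism $H^{s+2}\to H^s$; hence $\|\eta\|_{H^{s+2}} \lesssim \|U\|_{H^s}$, while taking imaginary parts gives $\|u\|_{H^s}\le \|U\|_{H^s}$ and $\|u\|_{L^2}\le\|U\|_{L^2}$. For the $L^\infty$-based norms, Lemma~\ref{prop-reg-var:Main} yields
\begin{equation*}
\|\eta\|_{W^{s+2,\infty}}\lesssim \|q(D_x)\eta\|_{Y^s}\le \|U\|_{Y^s},
\end{equation*}
and trivially $\|u\|_{Y^s}\le\|U\|_{Y^s}$. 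In particular, $\|\eta\|_{H^2}$ and $\|\eta\|_{W^{2,\infty}}$ are controlled by $\|U\|_{L^2}$ and $\|U\|_{Y^0}$ respectively.

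Next I would prove \eqref{eq-para-schr:SourEsti}. Using that $q(D_x)\colon H^2\to L^2$ is bounded and applying Proposition~\ref{prop-para-sys:ParaEq1} with $s=0$, one obtains
\begin{equation*}
\|q(D_x)Q(\eta,u)\|_{L^2}\lesssim \|Q(\eta,u)\|_{H^2}\le K(\|U\|_{L^2})(\|U\|_{Y^0}+1)\|U\|_{L^2},
\end{equation*}
after substituting the dictionary above. Proposition~\ref{prop-para-sys:ParaEq2} treats $N(\eta,u)$ the same way, producing
\begin{equation*}
\|N(\eta,u)\|_{L^2}\le K(\|U\|_{L^2})\big((\|U\|_{Y^0}+1)\|U\|_{L^2}+\|U\|_{Y^0}\big)\|U\|_{L^2}.
\end{equation*}
Adding and absorbing the extra factors of $\|U\|_{L^2}$ into the nondecreasing function $K(\|U\|_{L^2})$ yields \eqref{eq-para-schr:SourEsti}.

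For the tame bound \eqref{eq-para-schr:SourEsti-tame} I would apply the tame estimates \eqref{eq-para-sys:ParaEq1Rem} and \eqref{eq-para-sys:EstiF2-tame} together with the boundedness $q(D_x)\colon H^{s+2}\to H^s$. After substitution, the right-hand side of \eqref{eq-para-sys:ParaEq1Rem} becomes a sum of terms each linear in $\|U\|_{H^s}$ or $\|U\|_{Y^s}$, with coefficients depending only on $\|U\|_{L^2}$ and $\|U\|_{Y^0}$, which reorganizes into $K(\|U\|_{L^2})(\|U\|_{Y^0}+1)[(\|U\|_{Y^0}+1)\|U\|_{H^s}+\|U\|_{Y^s}]$. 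The same manipulation applied to \eqref{eq-para-sys:EstiF2-tame} produces the same structure, and summing completes the proof.

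The only real obstacle is bookkeeping: one must check that every non-tame factor (such as $\|U\|_{L^2}$ or $\|\eta\|_{H^2}$) is either absorbed into $K(\|U\|_{L^2})$ or appears multiplied by $(\|U\|_{Y^0}+1)$, so that the final estimate remains linear in the strongest norms $\|U\|_{H^s}$ and $\|U\|_{Y^s}$. No new analytic ingredient is required beyond Propositions~\ref{prop-para-sys:ParaEq1}, \ref{prop-para-sys:ParaEq2} and Lemma~\ref{prop-reg-var:Main}.
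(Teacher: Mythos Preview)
Your proposal is correct and follows essentially the same approach as the paper: decompose $\mathcal{F}(U)$ into $q(D_x)Q(\eta,u)$ and $N(\eta,u)$, invoke Propositions~\ref{prop-para-sys:ParaEq1} and~\ref{prop-para-sys:ParaEq2} for the respective $H^{s+2}$ and $H^s$ bounds, and then use Lemma~\ref{prop-reg-var:Main} together with the ellipticity of $q(D_x)$ to translate the norms of $(\eta,u)$ into norms of $U$. The paper's own proof is in fact terser than yours, but the ingredients and the logic are identical.
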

\begin{proof}
    By definition \eqref{eq-para-schr:Sour} of $\mathcal{F}(U)$, its $H^s$-norm is bounded by the $H^{s+2}$-norm of $Q(\eta,u)$ and the $H^s$-norm of $N(\eta,u)$, which have been estimated 
    in Proposition \ref{prop-para-sys:ParaEq1} and \ref{prop-para-sys:ParaEq2}, respectively. The only task is to replace the norms of $(\eta,u)$ by those of $U$. It is clear that the $H^{s+2}$-norm of $\eta$ and the $H^s$-norm of $u$ can be controlled by the $H^{s}$-norm of $U$. Meanwhile, Lemma~\ref{prop-reg-var:Main} ensures that the $W^{s+2,\infty}$-norm of $\eta$ can also be bounded by the $Y^{s}$-norm of $U$, which gives the desired inequality \eqref{eq-para-schr:SourEsti}.
\end{proof}

\begin{proposition}\label{prop-para-schr:SourLip}
	Let $U_1,U_2\in L^2 \cap Y^0$. Then there holds
	\begin{equation}\label{eq-para-schr:SourLip}
	    \begin{aligned}
	        \| \mathcal{F}(U_1) - \mathcal{F}(U_2) \|_{H^{-1}} &\le K\left( \|U_1\|_{L^2} + \|U_2\|_{L^2} \right) \\
	        &\hspace{4em}\times\left( \|U_1\|_{Y^0} + \|U_2\|_{Y^0} + 1 \right) \|U_1 - U_2\|_{H^{-1}}.
	    \end{aligned}
	\end{equation}
\end{proposition}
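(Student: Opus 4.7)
The plan is to derive Proposition \ref{prop-para-schr:SourLip} as a direct consequence of the Lipschitz estimate \eqref{eq-para-lip:Main}, which has already absorbed all the serious analysis. Writing $U_j = q(D_x)\eta_j + iu_j$ with $u_j=\Imag U_j$ and $q(D_x)\eta_j=\Real U_j$, the definition \eqref{eq-para-schr:Sour} gives
\begin{equation*}
\mathcal{F}(U_1)-\mathcal{F}(U_2)= q(D_x)\bigl(Q(\eta_1,u_1)-Q(\eta_2,u_2)\bigr)+i\bigl(N(\eta_1,u_1)-N(\eta_2,u_2)\bigr).
\end{equation*}
Since $q(\xi)=\theta(\xi)^{-1/2}(1+|\xi|^4)^{1/2}$ is a radial symbol of order $2$ bounded below by $1$, the multiplier $q(D_x)$ is bounded from $H^1$ to $H^{-1}$, and hence
\begin{equation*}
\|\mathcal{F}(U_1)-\mathcal{F}(U_2)\|_{H^{-1}}\lesssim \|Q(\eta_1,u_1)-Q(\eta_2,u_2)\|_{H^1}+\|N(\eta_1,u_1)-N(\eta_2,u_2)\|_{H^{-1}},
\end{equation*}
which is precisely the left-hand side of \eqref{eq-para-lip:Main}.

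All that remains is to translate the auxiliary quantities in \eqref{eq-para-lip:AuxQuan} back to $U$. Because $1/q$ is a smooth radial symbol of order $-2$, Proposition \ref{prop-pre-fct:BdMult} and Lemma \ref{prop-reg-var:Main} imply that $q(D_x)^{-1}$ sends $L^2\to H^2$, $Y^0\to W^{2,\infty}$, and $H^{-1}\to H^1$. Since $\Real$, $\Imag$ and the Hilbert transform $\Hi$ commute pairwise and are all bounded on the relevant spaces, this yields $\|\eta_j\|_{H^2}+\|u_j\|_{L^2}\lesssim\|U_j\|_{L^2}$, $\|\eta_j\|_{W^{2,\infty}}+\|u_j\|_{Y^0}\lesssim\|U_j\|_{Y^0}$, and $\|\eta_1-\eta_2\|_{H^1}+\|u_1-u_2\|_{H^{-1}}\lesssim\|U_1-U_2\|_{H^{-1}}$. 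Plugging the resulting bounds for $A_0$, $A_1$, $B_0$ into \eqref{eq-para-lip:Main} produces exactly \eqref{eq-para-schr:SourLip}.

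Since all the genuine analysis---shape derivative at low regularity, the endpoint commutator estimate of Lemma \ref{Commu_Esti}, and the paraproduct manipulations for $N$---has already been carried out inside Proposition \ref{prop-para-lip:Main}, there is no substantial obstacle at this stage. The one structural point worth underlining is the matching of orders: the Lipschitz constant of $Q$ is controlled in $H^1$, a full degree of smoothing, which is precisely what the order $2$ multiplier $q(D_x)$ consumes in order to bring the output down to $H^{-1}$. Having the \emph{$H^1$}, rather than merely $L^2$, Lipschitz bound on $Q$ is therefore essential; this is exactly why Proposition \ref{prop-para-lip:Main} was designed with that specific gain.
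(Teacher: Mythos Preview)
Your proof is correct and follows essentially the same approach as the paper: reduce to Proposition~\ref{prop-para-lip:Main} via the decomposition $\mathcal{F}(U)=q(D_x)Q+iN$, then convert the $(\eta,u)$-norms $A_0,A_1,B_0$ back into $U$-norms using the mapping properties of $q(D_x)^{-1}$ and Lemma~\ref{prop-reg-var:Main}. Your added remark about the order-matching between the $H^1$ Lipschitz gain on $Q$ and the order-$2$ loss from $q(D_x)$ is a nice clarification the paper leaves implicit.
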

\begin{proof}
    By definition \eqref{eq-para-schr:Sour} of $F$, the difference $\mathcal{F}(U_1) - \mathcal{F}(U_2)$ equals
    \begin{equation*}
        \mathcal{F}(U_1) - \mathcal{F}(U_2) = q(D_x) \Big( Q(\eta_1,u_1)-Q(\eta_2,u_2) \Big) + i\Big( N(\eta_1,u_1)-N(\eta_2,u_2) \Big),
    \end{equation*}
    hence it follows from Proposition \ref{prop-para-lip:Main} that
    \begin{align*}
        \|\mathcal{F}(U_1) - \mathcal{F}(U_2)\|_{H^{-1}} &\lesssim \|Q(\eta_1,u_1)-Q(\eta_2,u_2)\|_{H^1} + \|N(\eta_1,u_1)-N(\eta_2,u_2)\|_{H^{-1}} \\
        &\le K(A_0)(A_1+1)B_0,
    \end{align*}
    where recall from \eqref{eq-para-lip:AuxQuan} that
    \begin{equation*}
        \begin{aligned}
            A_0:&= \|(\eta_1,u_1)\|_{H^2\times L^2} + \|(\eta_2,u_2)\|_{H^2\times L^2}, \\
            A_1:&= \|(\eta_1,u_1)\|_{W^{2,\infty}\times Y^0} + \|(\eta_2,u_2)\|_{W^{2,\infty}\times Y^0}, \\
            B_0:&= \|(\eta_1-\eta_2,u_1-u_2)\|_{H^1\times H^{-1}}.
        \end{aligned}
    \end{equation*}
    Therefore, it suffices to check that
    \begin{equation*}
        A_0 \le \|U_1\|_{L^2} + \|U_2\|_{L^2},\ \ B_0 \le \|U_1-U_2\|_{H^{-1}},\ \ A_1\le \|U_1\|_{Y^0} + \|U_2\|_{Y^0}.
    \end{equation*}
    The first and second one can be deduced simply from the definition (\ref{Complex_U}) of $U$, and the last one is a consequence of Lemma~\ref{prop-reg-var:Main}.
\end{proof}

\section{Cauchy Problem}\label{sect:Cauchy}

We are now in a position to solve the Cauchy problem~\eqref{n40}. Recall that in the previous section, we first transformed~\eqref{n40} into a nonlinear Schr\"odinger-type equation (see~\eqref{Schrodinger}). Moreover, we have established tame estimates for the nonlinearity \( \mathcal{F}(U) \). In this section, we use this preliminary work to study the following integral equation:
\begin{equation}\label{Strong}
U(t,x) = e^{itp(D_x)}U_0(x) + \int_0^t e^{i(t-\tau)p(D_x)}\mathcal{F}(U(\tau,x))\dtau =: \mathscr{G}(U_0;U).
\end{equation}  
Specifically, we will prove the well-posedness of the Cauchy problem (\ref{n42}) by verifying the following five properties:

\begin{enumerate}[label=\textbf{(\arabic*)}]
\item\label{51} Any solution $U$ to (\ref{Strong}) in the space \( L^\infty_T L^2_x\) with $\text{Re}U\in L^4_TL^{\infty}_x$ is automatically in the smaller space \( L^\infty_T L^2_x \cap L^4_T Y^0_x \). Therefore, it suffices to solve (\ref{Strong}) in the latter smaller space. 

\item\label{52} Solutions $U=q(D_x)\eta+iu\in L^\infty_T L^2_x\cap L^4_TY^0_x$ of (\ref{Strong}) are in one-to-one correspondence to solutions of (\ref{n42}) that satisfy
$$
\eta\in L^\infty_TH^2_x\cap L^4_TW^{2,\infty}_x\cap W^{1,4}_TL^2_x,
\quad
u\in L^\infty_TL^2_x\cap W^{1,4}_TH^{-2}_x.
$$

\item\label{53} Given \( U_0 \in L^2 \), there exists \( T = T(\|U_0\|_{L^2}) > 0 \) such that \( \mathscr{G}(U_0;U) \) satisfies the requirements of the Banach fixed-point theorem in the space \( L^\infty_T L^2_x \cap L^4_T Y^0_x \), equipped with a \emph{weaker metric}.  

\item\label{54} If \( U \) solves (\ref{Strong}) with initial value $U_0\in H^s$ for some \( s>0 \), then necessarily \( U \in L^\infty_T H^s_x \). If in addition $s\geq2$, then the functions \( \eta \) and \( u \) satisfy (\ref{n42}) in the classical sense, and giving rise to a classical solution $(\eta,\psi)$ of (\ref{n40}).

\item\label{55} If \( U^n \) is a sequence of \( L^\infty_T H^s_x \) solutions with initial data \( U_0^n \in H^s \) converging to \( U_0 \) in the \( L^2 \) norm, then \( U^n \to U \) in the \( L^\infty_T H^{-\varepsilon}_x \) norm for any \( \varepsilon > 0 \).  
\end{enumerate}  
The \emph{total energy}  
\begin{equation}\label{eq-intro-csz:Energy}  
	E(t) = \frac{1}{2}  
\int_\R u\cdot\G(\eta)\big(\Id+\G(\eta)\big)^{-1}u \dx  
	+ \frac{1}{2}\int_\R \big((\partial_x^2\eta)^2 + \eta^2\big) \dx.  
\end{equation}  
of the system at a fixed time will play a crucial role in the proof. The important fact is that 
\( E(t) \leq C\|U\|_{L^2_x} \). We will justify that the total energy is non-increasing in time and exploit this fact in the sequel; in particular, it is conserved if the solution has sufficient regularity.  

\begin{remark}  
The proof of energy conservation indeed establishes that the energy serves as the Hamiltonian function for the system, with \( \eta \) and \( u \) being conjugate variables. However, we shall not explore this aspect in the present paper.  
\end{remark}

\subsection{Automatic improved integrability}
In this subsection, we verify Assertion \ref{51}: for $U_0\in L^2_x$, the mapping $\mathscr{G}(U_0;U)$ defined in (\ref{Strong}) necessarily maps a function 
$$
U=q(D_x)\eta+iu\in L^\infty_TL^2_x,
\quad
\text{such that }\eta\in L^4_TW^{2,\infty}_x,
$$
into the smaller space $L^\infty_TL^2_x\cap L^4_TY^0_x$. Therefore, it suffices to solve (\ref{Strong}) in the smaller subspace $L^\infty_TL^2_x\cap L^4_TY^0_x$.

The main difficulty is to estimate the nonlinearity $N=N(\eta,u)$ and its Hilbert transform in terms of the energy. 
Recall that, with $\psi=(\Id+\G(\eta))^{-1}u$,
\begin{equation}\label{eq-intro-csz:N2}
N(\eta,u) = \frac{1}{2}\psi_x^2 -\frac{1}{2}\frac{\left(\eta_x\psi_x + \mathcal{G}(\eta)\psi\right)^2}{1+\eta_x^2}.
\end{equation}
With the total energy $E$ defined as in (\ref{eq-intro-csz:Energy}), we shall prove that, at any time $t$, 
\begin{equation}\label{n90}
\begin{aligned}
&\lA N(\eta(t),u(t))\rA_{L^1_x}\le K(E(t)),
\\ 
&\lA \Hi N(\eta(t),u(t))\rA_{L^1_x+L^2_x}\le K(E(t))\big(\| \eta(t)\|_{W^{2,\infty}_x}+1\big), 
\end{aligned}
\end{equation}  
for some universal, non-decreasing function $K\colon\R_+\to\R_+$. These two inequalities will 
then imply the following improved integrability:
\begin{proposition}\label{Automatic}
Let $T\in(0,1]$ and $U_0\in L^2_x$. Given two functions $\eta\in L^\infty_TH^2_x\cap L^4_TW^{2,\infty}_x$ and 
$u\in L^\infty_TL^2_x$, set 
$$
U=q(D_x)\eta+iu\in L^\infty_TL^2_x.
$$
Then the mapping $\mathscr{G}$ defined in (\ref{Strong}) satisfies $\mathscr{G}(U_0;U)\in L^\infty_TL^2_x\cap L^4_TY^0_x$, with
\begin{equation}\label{n2006}
\big\|\mathscr{G}(U_0;U);L^\infty_TL^2_x\cap L^4_TY^0_x\big\|
\leq C\|U_0\|_{L^2}
+K\big(\|U;L^\infty_TL^2_x\|\big)\big(\|\eta;L^4_TW^{2,\infty}_x\|+1\big).
\end{equation}
\end{proposition}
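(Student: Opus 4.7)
The approach is to apply the Strichartz estimate of Proposition~\ref{prop-pre-stri:DisperEsti} to the Duhamel formula (\ref{Strong}). The free evolution $e^{itp(D_x)}U_0$ immediately contributes the term $C\|U_0\|_{L^2}$, so the work lies in bounding the Duhamel integral. I would split the nonlinearity as $\mathcal{F}(U) = q(D_x)Q(\eta,u) + iN(\eta,u)$ and treat the two pieces with different admissible pairs.

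For the paralinear piece $q(D_x)Q(\eta,u)$, the tame estimate (\ref{eq-para-sys:ParaEq1Rem}) with $s=0$ gives pointwise in time
\begin{equation*}
\|q(D_x)Q(\eta,u)\|_{L^2_x} \le K(\|U\|_{L^2_x})(\|\eta\|_{W^{2,\infty}}+1)\|u\|_{L^2_x}.
\end{equation*}
Taking the $L^4_t$-norm and using $T\le 1$ (so that $L^4_T \hookrightarrow L^{4/3}_T$), I would then apply Strichartz with admissible pair $(q,r)=(4,2)$, for which the source is in $L^{4/3}_TL^2_x$, to produce a contribution of the claimed form.

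The nonlinearity $N(\eta,u)$ is more delicate and requires the $r=\infty$ (and auxiliary $r=2$) case of the Strichartz estimate. The heart of the argument is to establish the pointwise-in-time bounds
\begin{align*}
\|N(\eta(t),u(t))\|_{L^1_x} &\le K(E(t)),\\
\|\mathcal{H}N(\eta(t),u(t))\|_{L^1_x + L^2_x} &\le K(E(t))(\|\eta(t)\|_{W^{2,\infty}_x}+1),
\end{align*}
together with the observation $E(t) \le K(\|U;L^\infty_TL^2_x\|)$. The first is reasonably direct: from (\ref{eq-reg-bas:NAlt}), $N = \tfrac12 (V^2 - B^2) + VB\,\eta_x$, so that $\|N\|_{L^1_x} \lesssim \|V\|_{L^2}^2 + \|B\|_{L^2}^2 + |\eta_x|_{L^\infty}\|V\|_{L^2}\|B\|_{L^2}$. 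The $L^2$-norms of $B$ and $V$ are controlled by the energy: using $u=(\mathrm{Id}+\mathcal{G}(\eta))\psi$ one has $E \ge \tfrac12 \int \psi\,\mathcal{G}(\eta)\psi\,dx + \tfrac12 \|\mathcal{G}(\eta)\psi\|_{L^2}^2$, then Rellich's inequality (Proposition~\ref{prop-reg-bas:Rellich}) controls $\|\psi_x\|_{L^2}$ and (\ref{eq-reg-bas:BVbyG}) yields the bound on $B,V$; Sobolev embedding $H^2 \hookrightarrow W^{1,\infty}$ handles the $\eta_x$ factor.

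The principal obstacle is the second pointwise bound on $\mathcal{H}N$, and in particular the fact that the right-hand side of (\ref{n2006}) depends only linearly on $\|\eta\|_{W^{2,\infty}_x}$. The key is a cancellation that, in the flat case $\eta\equiv 0$, reduces to the exact identity $\mathcal{H}(V^2-B^2) = 2VB$, reflecting the fact that $V-iB$ is the boundary trace of the anti-holomorphic complex velocity in the lower half-plane, so that $(V-iB)^2$ inherits this property. For general $\eta$, the same identity holds modulo a remainder driven by the curvature $\eta_{xx}$ and the slope $\eta_x$. The plan is to exhibit a decomposition $\mathcal{H}N = N_1 + N_2$ with $\|N_1\|_{L^1_x} \le K(E)$ and $\|N_2\|_{L^2_x} \le K(E)(\|\eta\|_{W^{2,\infty}_x}+1)$, the $N_2$-terms arising from commutators of the form $[\mathcal{H},\cdot]$ acting on quadratic expressions in $B$ and $V$, which can be estimated by arguments analogous to Lemma~\ref{Commu_Esti}. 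Once this decomposition is in hand, applying Strichartz with $r=\infty$ to the $L^1$-piece and with $r=2$ to the $L^2$-piece, combined with the paralinear contribution above, assembles the bound (\ref{n2006}).
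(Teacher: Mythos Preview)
Your overall architecture matches the paper: split $\mathcal{F}(U)=q(D_x)Q+iN$, handle the $Q$-piece in $L^2_x$ via Strichartz, and for $N$ establish the two pointwise bounds you state on $\|N\|_{L^1_x}$ and $\|\mathcal{H}N\|_{L^1_x+L^2_x}$. Two remarks.

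\medskip

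\textbf{A minor slip.} The pair $(q,r)=(4,2)$ is not admissible (the relation $2/q+1/r=1/2$ forces $q=\infty$ when $r=2$). You want $(q,r)=(\infty,2)$, placing the source in $L^1_TL^2_x$; your $L^4_T$ bound feeds into this via H\"older since $T\le1$. This is exactly what the paper does.

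\medskip

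\textbf{The real gap.} Your treatment of $\mathcal{H}N$ is where the argument is incomplete, and the paper's approach is genuinely different from what you sketch. Your plan is to perturb off the flat identity $\mathcal{H}(V^2-B^2)=2VB$ and absorb the errors into commutators $[\mathcal{H},\cdot]$ acting on quadratic expressions in $B,V$, estimated ``analogously to Lemma~\ref{Commu_Esti}''. The difficulty is that under the hypotheses here you only have $B,V\in L^2_x$ (controlled by $E$); you do \emph{not} have $u\in Y^0$, so Proposition~\ref{bdr_vel_bound} is unavailable and you cannot place $B,V$ in $L^\infty$. Commutator estimates of Calder\'on or Lemma~\ref{Commu_Esti} type require one factor in $L^\infty$ (or the quadratic expression itself in $L^2$), and $B^2,V^2,BV$ are a priori only in $L^1$. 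It is not clear how to close your scheme with only $L^2$ control on $B,V$.

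The paper circumvents this entirely via Proposition~\ref{P:5.6}. One introduces
\[
g(x)=\int_{-\infty}^{\eta(x)}\Psi_x\Psi_y\,\dy,\qquad
w(x)=\int_{-\infty}^{\eta(x)}\tfrac12(\Psi_x^2-\Psi_y^2)\,\dy,
\]
and observes (via the complex potential $F=\Psi+i\Theta$, noting that $G+iW$ is a primitive of $\tfrac12 F'(z)^2$) that $G,W$ are conjugate harmonic in the fluid domain. The Dirichlet-to-Neumann identity then gives $\mathcal{G}(\eta)g=\partial_xw$, hence the exact formula
\[
\mathcal{H}N=\mathcal{H}\partial_xg=|D_x|g=\mathcal{G}(\eta)g-\mathcal{R}(\eta)g=\partial_xw-\mathcal{R}(\eta)g.
\]
A direct computation yields $\partial_xw=\tfrac12\eta_x(V^2-B^2)+VB\in L^1_x$ with bound $K(E)$. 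For the remainder, $g\in L^1_x$ (by the energy identity) and $\partial_xg=N\in L^1_x$, so $g\in W^{1,1}_x\hookrightarrow L^2_x$; then the paralinearization bound $\|\mathcal{R}(\eta)\|_{\mathcal{L}(L^2)}\le K(\|\eta\|_{H^2})(\|\eta\|_{W^{2,\infty}}+1)$ from Proposition~\ref{prop-reg-dn:Main} gives the $L^2$ piece with exactly the linear dependence on $\|\eta\|_{W^{2,\infty}}$ that~\eqref{n2006} requires. No pointwise control on $B,V$ is ever used.
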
    
\begin{proof}
Let us prove Proposition~\ref{Automatic}, assuming that the claim \eqref{n90} has already been established.

The fact that $U$ belongs to $L^\infty_T L^2_x$ follows immediately from Lemma~\ref{prop-reg-var:Main}. 
Also, the estimate of the $L^\infty_T L^2_x \cap L^4_T Y^0_x$ norm of $e^{itp(D_x)} U_0$ follows directly from the Strichartz inequality (\ref{eq-pre-stri:DisperEsti}). Thus, the proof reduces to estimating  
the Duhamel integral in the definition of $\mathscr{G}$, which we decompose into its real and imaginary parts:  
\begin{equation}\label{Duhamel}
\begin{aligned}
\int_0^t &e^{i(t-\tau)p(D_x)}\mathcal{F}(U(\tau,x))\dtau\\
&=\int_0^t e^{i(t-\tau)p(D_x)}\RE\mathcal{F}(U(\tau,x))\dtau
+i\int_0^t e^{i(t-\tau)p(D_x)}\IM\mathcal{F}(U(\tau,x))\dtau\\
&=:I_1+I_2.
\end{aligned}
\end{equation}
Recall from (\ref{eq-para-schr:Sour}) that  
\(
\mathcal{F}(U) = q(D_x)Q(\eta,u) + iN(\eta,u).
\)
Therefore, we consider 
the real and imaginary parts of $\mathcal{F}(U)$ separately. 
We estimate the real part using (\ref{eq-para-sys:ParaEq1Rem}):  
\[
\begin{aligned}
\|\RE\mathcal{F}(U)\|_{L^2_x}
&\lesssim \|Q(\eta,u)\|_{H^2_x}\\
&\leq K\big( \|\eta\|_{H^2_x} \big) \big( \|\eta\|_{W^{2,\infty}_x} + 1 \big) \|u\|_{L^2_x}\\
&\leq K'\big(\|U;L^\infty_TL^2_x\|\big)\big(\|\eta\|_{W^{2,\infty}_x}+1\big).
\end{aligned}
\]
Hereafter, we extensively use the obvious inequality  
\begin{equation}\label{n2040}
\sup_{t\in [0,T]}E(t)\leq C\|U\|_{L^\infty_TL^2_x}.
\end{equation}
If $\|\eta;L^4_T W^{2,\infty}_x\|$ is finite, then the right-hand side is in $L^4$ in time; therefore, applying the Strichartz inequality (\ref{eq-pre-stri:DisperEsti}) with $(q,r)=(\infty,2)$ ensures the desired bound for $\|I_1;L^\infty_T L^2_x \cap L^4_T Y^0_x\|$.  

As for the imaginary part, the two important facts that we deduce from (\ref{n90}) and the inequality~\eqref{n2040} are the following: firstly, we have  
\[
\|\IM\mathcal{F}(U)\|_{L^\infty_TL^1_x}
=\|N(\eta,u)\|_{L^\infty_TL^1_x}\\
\leq K\big(\|U;L^\infty_TL^2_x\|\big),
\]
and secondly, we can decompose the Hilbert transform $\mathcal{H} \IM \mathcal{F}(U)$ as the sum $F_1 + F_2$ of two terms satisfying
\[
\|F_1(t)\|_{L^1_x}+\|F_2(t)\|_{L^2_x}
\leq K(E(t))\big(\|\eta\|_{W^{2,\infty}_x}+1\big).
\]
Since $F_2$ can be written as $\mathcal{H} \tilde{F}_2$ where $\tilde{F}_2=-\mathcal{H}F_2$ satisfies the same estimate as $\RE \mathcal{F}(U)$, its contribution is analyzed in the same way as above.  
Finally, using \eqref{n2040} again, we see that if $\|\eta;L^4_TW^{2,\infty}_x\|$ is finite, then the function $\|F_1(t)\|_{L^1_x}$ is in $L^4$ in time. Therefore, by combining the two Strichartz inequalities in (\ref{eq-pre-stri:DisperEsti}), we obtain  
the desired bound for $\|I_2;L^\infty_T L^2_x \cap L^4_T Y^0_x\|$.   
This concludes the proof.
\end{proof}

Let us proceed to the proof of (\ref{n90}). As we shall see, the estimate for $N$ follows directly from known results. Regarding the estimate for $\Hi N$, we shall need to exploit certain new identities. We start by reducing the proof of \eqref{n90} to the following statement:

\begin{proposition}\label{P:5.6}
The nonlinearity $N$ and its Hilbert transform can be written as
\begin{align}
N&=\mez\psi_x^2-\mez(1+\eta_x^2)B^2,\label{n70}\\
\mathcal{H}N&=\mez\eta_x\cdot(V^2-B^2)+VB+R,\label{n71}
\end{align}
where as in Section \ref{sect:Reg},
$$
B=\frac{\mathcal{G}(\eta)\psi+\eta_x\psi_x}{1+\eta_x^2},
\quad V=\psi_x-B\eta_x,
$$
and the remainder $R$ satisfies
\begin{equation}\label{n72}
\lA R\rA_{L^2_x}\le K(E)\big(\|\eta\|_{W^{2,\infty}_x}+1\big).
\end{equation}
\end{proposition}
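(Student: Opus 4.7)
The first identity \eqref{n70} is purely algebraic: from \eqref{eq-reg-bas:BVbyG} we have $(1+\eta_x^2)B=\mathcal{G}(\eta)\psi+\eta_x\psi_x$, so the second term of $N$ in \eqref{n40.5} equals
$\frac{1}{2}\cdot\frac{((1+\eta_x^2)B)^2}{1+\eta_x^2}=\frac{1}{2}(1+\eta_x^2)B^2$,
which gives \eqref{n70}. The $L^1$-bound of $N$ then follows since $\|\psi_x\|_{L^2}^2$ is controlled by the Dirichlet integral $\int\psi\,\mathcal{G}(\eta)\psi\,\dx$ via the Rellich inequality (Proposition~\ref{prop-reg-bas:Rellich}), and $\|B\|_{L^2}^2\le K(E)$ by Proposition~\ref{prop-reg-bas:BdBV}; since $|\eta_x|_{L^\infty}\lesssim \|\eta\|_{H^2}\le K(E)$, combining these with \eqref{n70} yields $\|N\|_{L^1}\le K(E)$.

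For \eqref{n71} I would switch to the equivalent form $N=\tfrac{1}{2}(V^2-B^2)+\eta_x BV$ from \eqref{eq-reg-bas:NAlt} and exploit the approximate holomorphicity of $V-iB$, which is the boundary value of the holomorphic function $\Psi_x-i\Psi_y$ on the fluid domain. Quantitatively, the cancellation \eqref{eq-reg-shpder:Cancel} reads $\mathcal{G}(\eta)B=-V_x$, and rewriting $\mathcal{G}(\eta)=\Dx+\mathcal{R}(\eta)$ together with $\partial_x=-\mathcal{H}\Dx$ gives
\begin{equation*}
\Dx(\mathcal{H}V-B)=\mathcal{R}(\eta)B.
\end{equation*}
Setting $\sigma:=\mathcal{H}V-B$, one has $\sigma\in L^2$ with $\|\sigma\|_{L^2}\le K(E)$ (from $\|B\|_{L^2}+\|V\|_{L^2}\le K(E)$), while $\|\sigma_x\|_{L^2}=\|\mathcal{R}(\eta)B\|_{L^2}\le K(E)(\|\eta\|_{W^{2,\infty}}+1)$ by Proposition~\ref{prop-reg-dn:Main}. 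Hence $\sigma\in H^1\hookrightarrow L^\infty$ with $\|\sigma\|_{L^\infty}\le K(E)(\|\eta\|_{W^{2,\infty}}+1)$. This is the \emph{measure of non-analyticity} that quantifies all remainders.

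Then I would invoke the exact Tricomi-type identities $\mathcal{H}(f^2-(\mathcal{H}f)^2)=2f\mathcal{H}f$ and $\mathcal{H}(f\mathcal{H}f)=\tfrac{1}{2}((\mathcal{H}f)^2-f^2)$ (obtained by squaring $f+i\mathcal{H}f$, which has positive frequency) with $f=V$ and substituting $\mathcal{H}V=B+\sigma$. Expanding, every difference between $\mathcal{H}(V^2-B^2)$ and $2VB$ (resp.\ between $\mathcal{H}(VB)$ and $(B^2-V^2)/2$) is a sum of products of the form $\sigma V$, $\sigma B$, $\sigma^2$, or their Hilbert transforms, each controlled in $L^2$ by $\|\sigma\|_{L^\infty}(\|V\|_{L^2}+\|B\|_{L^2}+\|\sigma\|_{L^2})\le K(E)(\|\eta\|_{W^{2,\infty}}+1)$. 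This handles the first summand $\mathcal{H}(\tfrac{V^2-B^2}{2})$ of $\mathcal{H}N$, producing $VB$ plus an $L^2$ remainder.

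\textbf{Main obstacle.} The delicate term is $\mathcal{H}(\eta_x VB)$, since $VB$ is a priori only in $L^1$. The plan is to write $VB=V\mathcal{H}V-V\sigma=\tfrac{1}{2}\mathcal{H}(V^2-(\mathcal{H}V)^2)-V\sigma$ and apply a further Tricomi identity
$\mathcal{H}(\eta_x\mathcal{H}g)=\mathcal{H}\eta_x\cdot\mathcal{H}g-\eta_x g-\mathcal{H}(g\,\mathcal{H}\eta_x)$
with $g=V^2-(\mathcal{H}V)^2$. The leading $-\tfrac{1}{2}\eta_x g$ contributes $\pm\tfrac{1}{2}\eta_x(V^2-B^2)$ modulo $\sigma$-corrections in $L^2$, while the other two terms involve $\mathcal{H}\eta_x\in L^\infty$ (since $\eta_x\in H^1\hookrightarrow L^\infty$ controlled by $E$) multiplied by $V\mathcal{H}V$, on which a last round of the same Tricomi identity and the endpoint commutator bound of Lemma~\ref{Commu_Esti} (together with the standard Calderón bound $\|[\mathcal{H},a]\|_{\mathcal{L}(L^2)}\lesssim |a'|_{L^\infty}$) yield $L^2$ pieces with the required bound $K(E)(\|\eta\|_{W^{2,\infty}}+1)$. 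Assembling both contributions gives \eqref{n71} with $R$ satisfying \eqref{n72}.
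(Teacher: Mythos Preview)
Your treatment of \eqref{n70} and of $\mathcal{H}\!\left(\tfrac{V^2-B^2}{2}\right)$ via the Tricomi identity is fine; the defect quantity $\sigma=\mathcal{H}V-B$ with $\|\sigma\|_{H^1}\le K(E)(\|\eta\|_{W^{2,\infty}}+1)$ is correctly identified.

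The gap is in the ``main obstacle'' paragraph. Your iterated Tricomi scheme for $\mathcal{H}(\eta_x VB)$ is \emph{circular}. Writing $f=V$, $g=f^2-(\mathcal{H}f)^2$, $c=\mathcal{H}\eta_x$, one round of Tricomi gives
\[
\mathcal{H}(\eta_x f\mathcal{H}f)=\tfrac12\eta_x\big((\mathcal{H}f)^2-f^2\big)+\tfrac12 c\,\mathcal{H}g-\tfrac12\mathcal{H}(cg),
\]
and the last two terms equal $-\tfrac12[\mathcal{H},c]g$. Applying Tricomi once more with $\mathcal{H}c=-\eta_x$ and $\mathcal{H}g=2f\mathcal{H}f$ yields $[\mathcal{H},c]g=-\eta_x g-2\mathcal{H}(\eta_x f\mathcal{H}f)$, so the three terms sum back to $\mathcal{H}(\eta_x f\mathcal{H}f)$---a tautology. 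No further round produces a reduction. Moreover, the commutator you would need, $[\mathcal{H},c]g\in L^2$ with $g\in L^1$, is not available: Lemma~\ref{Commu_Esti} concerns $[|D_x|,A]$ acting on an $L^\infty$ argument, not $[\mathcal{H},A]$ on $L^1$; and the Calder\'on bound $\|[\mathcal{H},a]\|_{\mathcal{L}(L^2)}\lesssim |a'|_{L^\infty}$ fails here because $a'=\mathcal{H}\eta_{xx}$ is only in $L^2$, not $L^\infty$.

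The paper avoids this obstruction by working in the bulk rather than on the boundary. With $\Psi$ the harmonic extension and $F=\Psi+i\Theta$ the complex potential, the primitive $G+iW$ of $\tfrac12 F'(z)^2$ is holomorphic, and its boundary traces $g=G|_{y=\eta}$, $w=W|_{y=\eta}$ satisfy $N=\partial_x g$ and $\mathcal{G}(\eta)g=\partial_x w$. Hence $\mathcal{H}N=|D_x|g=\partial_x w-\mathcal{R}(\eta)g$, where $\partial_x w=\tfrac{\eta_x}{2}(V^2-B^2)\pm VB$ by the chain rule. The point is that $g\in W^{1,1}\hookrightarrow L^2$ (since $\|g\|_{L^1}\le E$ and $\partial_x g=N\in L^1$), so $R=-\mathcal{R}(\eta)g$ obeys \eqref{n72} directly from Proposition~\ref{prop-reg-dn:Main}. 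This bulk argument is what replaces your non-terminating commutator chase.
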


Before proving this proposition, let us explain how to deduce \eqref{n90} from it. To do so, 
we begin by gathering some elementary inequalities. Using the Sobolev embedding $H^2\subset W^{1,\infty}$ and Proposition~\ref{prop-reg-bas:BdBV}, we have
$$
\begin{aligned}
|\eta_x|_{L^\infty_x}&\lesssim \lA \eta\rA_{H^2_x},\\
\quad 
\lA V\rA_{L^2_x}+\lA B\rA_{L^2_x}
&\le K(\lA \eta\rA_{H^2_x})\big(\lA \psi_x\rA_{L^2_x}+\lA G(\eta)\psi\rA_{L^2_x}\big).
\end{aligned}
$$
On the other hand, the Rellich inequality~\eqref{eq-reg-bas:Rellich} implies that
$$
\lA\psi_x\rA_{L^2_x}^2\le K(|\eta_x|_{L^\infty_x})
+\int_{\R} (\mathcal{G}(\eta)\psi)^2\dx.
$$
We infer that
$$
|\eta_x|_{L^\infty_x}+ \lA V\rA_{L^2_x}+\lA B\rA_{L^2_x}\le K(E),
$$
and it immediately follows from the Cauchy-Schwarz inequality that
\begin{equation}\label{n91}
\lA N\rA_{L^1_x}+\lA \mez\eta_x(V^2-B^2)+VB\rA_{L^1_x}\le K(E).
\end{equation}
Finally, the desired inequality for $\mathcal{H}N$ in~\eqref{n90} follows from the estimate~\eqref{n72} for the remainder $R$.

We now proceed with the proof of Proposition~\ref{P:5.6}.
\begin{proof}[Proof of Proposition~\ref{P:5.6}]
The identity \eqref{n70} follows directly from the definitions of $N$ and $B$. 
The only difficulty is to prove \eqref{n71}. For that, we begin 
with the following result whose proof is inspired by identities introduced by Lokharu~(\cite{LokharuJFM2021}) to study Stokes waves.

\begin{lemma}
Introduce the functions
$$
g(x)=\int_{-\infty}^{\eta(x)}\Psi_x(x,y_1)\Psi_y(x,y_1)\dy_1,\quad 
w(x)=\int_{-\infty}^{\eta(x)}\frac{1}{2}\big(\Psi_x^2(x,y_1)-\Psi_y^2(x,y_1)\big)\dy_1.
$$
Recall that $\Psi$ is the harmonic extension of $\psi$ (see \eqref{eq-intro-csz:HarExt} and \eqref{eq-intro-csz:psi}).
Then, $N$ and its Hilbert transform are given by 
$$
N=\partial_xg,\quad 
\Hi N=\partial_xw-\mathcal{R}(\eta)g,
$$
where $\mathcal{R}(\eta)=\mathcal{G}(\eta)-\Dx$.
\end{lemma}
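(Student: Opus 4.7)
The proof is a direct calculation based on differentiation under the integral sign, with harmonicity of $\Psi$ supplying the key algebraic identity, together with the observation that $g$ and $w$ are boundary traces of harmonic conjugate functions in $\Omega$.

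For $N = \partial_x g$: I would differentiate $g(x) = \int_{-\infty}^{\eta(x)}\Psi_x\Psi_y\dy_1$ in $x$. The boundary term from the upper limit gives $(\Psi_x\Psi_y)|_{y=\eta}\cdot\eta_x = VB\eta_x$, while the bulk integral is $\int_{-\infty}^\eta(\Psi_{xx}\Psi_y + \Psi_x\Psi_{xy})\dy_1$. Using harmonicity $\Psi_{xx} = -\Psi_{yy}$, the integrand rewrites as $\partial_{y_1}\bigl[\tfrac12(\Psi_x^2 - \Psi_y^2)\bigr]$, so after integration (and using the decay of $\nabla\Psi$ at $y_1\to-\infty$, which comes from Proposition~\ref{prop-reg-bas:EllipReg}), this term produces $\tfrac12(V^2 - B^2)$. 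Summing gives $\partial_x g = VB\eta_x + \tfrac12(V^2 - B^2)$, which is exactly $N$ in view of the alternative formulation (\ref{eq-reg-bas:NAlt}).

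For the identity for $\Hi N$: the conceptually cleanest route is to extend $g$ and $w$ to bulk functions
\begin{equation*}
G(x,y) := \int_{-\infty}^y \Psi_x\Psi_y\dy_1, \qquad W(x,y) := \int_{-\infty}^y \tfrac12\bigl(\Psi_x^2 - \Psi_y^2\bigr)\dy_1,
\end{equation*}
so that $g = G|_{y=\eta}$ and $w = W|_{y=\eta}$. Differentiating under the integral sign and using harmonicity of $\Psi$ exactly as in the first step yields $G_y = \Psi_x\Psi_y$, $G_x = \tfrac12(\Psi_x^2-\Psi_y^2) = W_y$, and $W_x = -\Psi_x\Psi_y = -G_y$. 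Hence $(G,W)$ satisfies the Cauchy--Riemann system in $\Omega$ and $G+iW$ is holomorphic; in particular $G$ is harmonic, and the decay of $\nabla\Psi$ at $y\to-\infty$ forces $\nabla G \to 0$ there, so $G$ is precisely the harmonic extension of $g$ that enters the definition of the Dirichlet-to-Neumann operator.

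The conclusion is then a boundary Cauchy--Riemann identity. Differentiating the trace formula $w(x) = W(x,\eta(x))$ and substituting $W_x = -G_y$, $W_y = G_x$ gives $\partial_x w$ as a linear combination of $G_y|_{y=\eta}$ and $\eta_x G_x|_{y=\eta}$, which coincides with $\mathcal{G}(\eta)g = (G_y - \eta_x G_x)|_{y=\eta}$ up to the orientation sign coming from the normal convention. Splitting $\mathcal{G}(\eta) = \Dx + \mathcal{R}(\eta)$ and using the elementary identity $\Dx g = \Hi\partial_x g = \Hi N$ (already established in the first part) then gives the claimed formula. The main obstacle I anticipate is \emph{not} the algebra but the justification that $G,W$ converge and decay at $y\to-\infty$ in the low regularity setting, so that they genuinely realise the harmonic extension implicit in $\mathcal{G}(\eta)$; this hinges on the square-integrability bounds on $\nabla_{x,y}\Psi$ afforded by Proposition~\ref{prop-reg-bas:EllipReg}.
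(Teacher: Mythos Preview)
Your approach is essentially the paper's: it too introduces the bulk functions $G,W$, shows they are harmonic conjugates (the paper does so by observing that $G+iW$ is the complex primitive of $\tfrac12 F'(z)^2$, where $F=\Psi+i\Theta$ is the complex velocity potential; your direct verification of the Cauchy--Riemann relations via differentiation under the integral sign and harmonicity is the same computation without the complex-analytic wrapper), expresses $\G(\eta)g$ through the trace of $W$, and then splits $\G(\eta)=|D_x|+\mathcal{R}(\eta)$ with $|D_x|=\Hi\partial_x$.

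The one point you should tighten is the hedge ``up to the orientation sign coming from the normal convention''. There is no convention to choose here: with $G_x=W_y$ and $G_y=-W_x$ one gets
\[
\partial_x w = W_x\big|_{y=\eta}+\eta_x\,W_y\big|_{y=\eta}
= -G_y\big|_{y=\eta}+\eta_x\,G_x\big|_{y=\eta}
= -\G(\eta)g,
\]
hence $\Hi N=|D_x|g=-\partial_x w-\mathcal{R}(\eta)g$, which differs from the stated formula by the sign on $\partial_x w$ (the paper's own displayed computation carries the same slip). This is harmless for the application in Proposition~\ref{P:5.6} and the bound~\eqref{n90}, where only the $L^1$ and $L^2$ norms of $\partial_x w$ and $\mathcal{R}(\eta)g$ enter, but you should resolve the sign and state the discrepancy plainly rather than attribute it to an orientation choice.
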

\begin{proof}
The relation $N=\partial_xg$ follows directly from the chain rule and is rigorously justified in \cite{AKT2024}, so we focus only on the identity for $\mathcal{H}N$. Introduce two functions
\begin{align*}
G(x,y)&=\int_{-\infty}^y\Psi_x(x,y_1)\Psi_y(x,y_1)\dy_1,\\
W(x,y)&=\int_{-\infty}^y\frac{1}{2}\big(\Psi_x^2(x,y_1)-\Psi_y^2(x,y_1)\big)\dy_1,
\end{align*}
so that $g=G\arrowvert_{y=\eta}$ and $w=W\arrowvert_{y=\eta}$. 

Let us introduce the complex variable $z=x+iy$ and the \emph{complex potential} in the fluid domain, namely, 
the holomorphic function $F(z)=\Psi(x,y)+i\Theta(x,y)$, 
where $\Theta(x,y)$ is the conjugate harmonic function 
of $\Psi(x,y)$, that tends to zero when $y\to-\infty$. 
By the Cauchy-Riemann equations, we know that $\overline{F'(z)}$ exactly 
represents the velocity field $\nabla_{x,y}\Psi(x,y)$. 
Note that 
$$
F'(z)^2=\Psi_x^2(x,y)-\Psi_y^2(x,y)-2i\Psi_x(x,y)\Psi_y(x,y),
$$
so the complex function $G(x,y)+iW(x,y)$ is nothing but the primitive of $F'(z)^2/2$ with limit value zero when $\text{Im}z\to-\infty$. Therefore, $G,W$ are conjugate harmonic functions in the fluid domain and satisfy the Cauchy-Riemann equations	
$$
\partial_xG=\partial_yW,\quad \partial_xW=-\partial_yG.
$$
Directly from the definition of the Dirichlet-to-Neumann operator, we obtain
$$
\G(\eta)g=(\partial_yG-\eta_x\partial_x G)\arrowvert_{y=\eta}=
(\partial_xW+\eta_x\partial_y W)\arrowvert_{y=\eta}
=\partial_x(W\arrowvert_{y=\eta})=\partial_xw.
$$
Therefore, we simply decompose $\mathcal{G}(\eta)g=\Dx g+\mathcal{R}(\eta)g$ and then write $\Dx=\mathcal{H}\partial_x$ to obtain the wanted identity 
$\mathcal{H}N=\partial_xw-\mathcal{R}(\eta)g$.
\end{proof}

Now, observe that $g$ belongs to $L^1_x$ since
\begin{align*}
\int_\R \la g(x)\ra\dx&\le \iint_\Omega \la \Psi_x\Psi_y\ra\dy\dx\le 
\mez \iint_\Omega \la \nabla_{x,y}\Psi\ra^2\dydx\\
&\le \mez \int_\R\psi \mathcal{G}(\eta)\psi\dx\\
&\le E.
\end{align*}
For our purpose, we need in fact to estimate the $L^2$-norm of $g$. To do so, we start by exploiting the identity $N=\partial_xg$ and the estimate~\eqref{n91}
to get that $\lA \partial_xq\rA_{L^1_x}\le K(E)$. Then, by combining this with the previous estimate for the $L^1$-norm of $g$, we conclude that $\lA g\rA_{W^{1,1}_x}\le K(E)$. 
Since $W^{1,1}_x$ is continuously embedded in $L^2_x$, it follows that $\lA g\rA_{L^2_x}\le K(E)$.
We are now in position to use the paralinearization result for the Dirichlet-to-Neumann operator, namely the operator norm 
estimate
$$
\lA \mathcal{R}(\eta)\rA_{L^2_x\to L^2_x}\le K(\lA \eta\rA_{H^2_x})(\lA \eta\rA_{W^{2,\infty}_x}+1),
$$ 
to conclude that $R=-\mathcal{R}(\eta)g$ satisfies the wanted estimate \eqref{n72}. 
This completes the proof of Proposition~\ref{P:5.6}.
\end{proof}

\subsection{Meaning of solution}
We then justify Assertion \ref{52} and give a precise meaning to the solution of the Cauchy problem (\ref{n42}). 

Suppose $U=q(D_x)\eta+iu\in L^\infty_T L^2_x\cap L^4_TY^0_x$ solves (\ref{Strong}). By Proposition \ref{prop-para-schr:SourEsti}, we find $\mathcal{F}(U)\in L^4_TL^2_x$. Using the fundamental properties of the one-parameter unitary group $e^{itp(D_x)}$ on Sobolev spaces, we easily find that $U$ solves the Schr\"{o}dinger equation (\ref{Schrodinger}), in the sense that $U$ belongs to $W^{1,4}_TH^{-2}_x$, with its time derivative understood in both the distributional and almost everywhere sense:
$$
\partial_tU=-ip(D_x)U+\mathcal{F}(U)\in L^\infty_TH^{-2}_x+L^4_TL^2_x
\subset L^4_TH^{-2}_x.
$$
By separating the real and imaginary parts, we find that the functions satisfy $\eta\in L^\infty_TH^2_x\cap L^4_TY^{2}_x$ and $u\in L^\infty_TL^2_x$, while their (distributional and almost everywhere) time derivatives are given by
\begin{equation}\label{Dt_Main_Var}
\begin{aligned}
\partial_t\eta&=\G(\eta)(\Id+\G(\eta))^{-1}u\in L^\infty_TL^2_x, \\
\partial_tu&=-\partial_x^4\eta-\eta-N(\eta,u)
\in L^\infty_TH^{-2}_x+L^\infty_TH^2_x+L^4_TL^2_x.
\end{aligned}
\end{equation}
This then justifies that $\eta\in W^{1,4}_TL^2_x$ and $u\in W^{1,4}_TH^{-2}_x$.

Conversely, suppose that the functions
$$
\eta\in L^\infty_TH^2_x\cap L^4_TY^{2}_x\cap W^{1,4}_TL^2_x,
\quad
u\in L^\infty_TL^2_x\cap W^{1,4}_TH^{-2}_x
$$
satisfy equation (\ref{n42}), meaning that their distributional 
time derivatives satisfy (\ref{Dt_Main_Var}). The complex-valued function $U=q(D_x)\eta+iu$ belongs to the space $L^\infty_TL^2_x\cap W^{1,4}_TH^{-2}_x$, with $\eta\in L^4_TY^{2}_x$, and satisfies the dispersive equation. On the other hand, by Proposition \ref{prop-para-schr:SourEsti}, we find that $\mathcal{F}(U)\in L^4_TL^2_x$. Thus, $U$ actually satisfies the integral equation (\ref{Strong}). By Assertion \ref{51}, we conclude that $U$ actually belongs to the smaller space $L^\infty_TL^2_x\cap L^4_TY^0_x$.

\subsection{Local well-posedness}\label{subsect-cauchy:LWP}

In this subsection, we shall justify Assertion \ref{53}. We first prove uniqueness of solution of (\ref{Strong}) in the space $L^\infty_TL^2_x \cap L^4_TY^0_x$, and then construct a ball in the energy space, equipped with a weaker norm, on which we can apply a Banach fixed point argument to obtain a solution. 

\begin{proposition}\label{prop-cauchy-lwp:Unique}
    Consider $T\in (0,1]$,  $U_1,U_2\in L^\infty_TL^2_x \cap L^4_TY^0_x$, and $U_{10},U_{20}\in L^2_x$. Set
    \begin{equation*}
        M := \max_{k=1,2} \big\|U_k;L^\infty_T L^2_x \cap L^4_TY^0_x\big\|.
    \end{equation*}
    Let $C$ be the constant that appeared in the Strichartz inequality (\ref{eq-pre-stri:DisperEsti}). There exists a non-decreasing function $K\colon\R_+ \rightarrow\R_+$, such that
    \begin{equation}\label{eq-cauchy-lwp:Contraction}
        \begin{aligned}
        &\big\|\mathscr{G}(U_{10};U_1)-\mathscr{G}(U_{20};U_2);L^\infty_{T}H^{-1}_x\cap L^4_{T}Y^{-1}_x \big\| \\
            &\hspace{8em}\le C \|U_{10}-U_{20}\|_{H^{-1}_x} + T^{3/4}K(M) \|U_1 - U_2\|_{L^\infty_T H^{-1}_x}.
        \end{aligned}
    \end{equation}
    Consequently, there exists a time $T' = T'(M) >0 $, which is a bounded function of $M$, such that the following estimate holds: if 
    $U_1,U_2\in L^\infty_TL^2_x \cap L^4_TY^0_x$  are two solutions of \eqref{Strong} with initial data $U_{10}$ and $U_{20}$ respectively, then 
    \begin{equation}\label{eq-cauchy-lwp:LipIniData}
        \big\|U_1-U_2;L^\infty_{T'}H^{-1}_x\cap L^4_{T'}Y^{-1}_x\big\|
        \lesssim \|U_{10}-U_{20}\|_{H^{-1}_x}.
    \end{equation}
    In particular, if $U_{10}=U_{20}$, then the solutions $U_1,U_2$ coincide on $[0,T'(M)]$.
\end{proposition}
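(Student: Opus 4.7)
The mapping $\mathscr{G}$ has the form $\mathscr{G}(U_{0};U)(t) = e^{itp(D_x)}U_{0} + \int_0^t e^{i(t-\tau)p(D_x)}\mathcal{F}(U(\tau))\,d\tau$, so by linearity
\begin{equation*}
\mathscr{G}(U_{10};U_1) - \mathscr{G}(U_{20};U_2)
= e^{itp(D_x)}(U_{10}-U_{20}) + \int_0^t e^{i(t-\tau)p(D_x)}\bigl(\mathcal{F}(U_1(\tau))-\mathcal{F}(U_2(\tau))\bigr)\,d\tau.
\end{equation*}
The first step is to observe that the Fourier multiplier $\langle D_x\rangle^{-1}$ commutes with $e^{itp(D_x)}$ and preserves the Schr\"odinger equation, so applying the Strichartz estimate of Proposition~\ref{prop-pre-stri:DisperEsti} (with the admissible pair $(q,r)=(\infty,2)$, hence $(q',r')=(1,2)$) at the $\langle D_x\rangle^{-1}$-level immediately yields the working inequality
\begin{equation*}
\bigl\|\mathscr{G}(U_{10};U_1) - \mathscr{G}(U_{20};U_2)\bigr\|_{L^\infty_T H^{-1}_x \cap L^4_T Y^{-1}_x}
\le C\|U_{10}-U_{20}\|_{H^{-1}_x} + C\bigl\|\mathcal{F}(U_1)-\mathcal{F}(U_2)\bigr\|_{L^1_T H^{-1}_x}.
\end{equation*}

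The second step handles the nonlinear term via the Lipschitz bound of Proposition~\ref{prop-para-schr:SourLip}: pointwise in time,
\begin{equation*}
\|\mathcal{F}(U_1(\tau))-\mathcal{F}(U_2(\tau))\|_{H^{-1}_x}
\le K(M)\bigl(\|U_1(\tau)\|_{Y^0_x}+\|U_2(\tau)\|_{Y^0_x}+1\bigr)\|U_1(\tau)-U_2(\tau)\|_{H^{-1}_x}.
\end{equation*}
Since $\|U_j\|_{L^4_T Y^0_x}\le M$ and $T\le 1$, H\"older's inequality in time (with exponents $4$ and $4/3$, the latter applied to the constant-in-time-dominating function $\|U_1-U_2\|_{L^\infty_T H^{-1}_x}$) gives
\begin{equation*}
\bigl\|\mathcal{F}(U_1)-\mathcal{F}(U_2)\bigr\|_{L^1_T H^{-1}_x}
\le K(M)\bigl(2M\,T^{3/4} + T\bigr)\|U_1-U_2\|_{L^\infty_T H^{-1}_x}
\le T^{3/4}K'(M)\|U_1-U_2\|_{L^\infty_T H^{-1}_x},
\end{equation*}
after enlarging $K$. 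Combining with the first step yields (\ref{eq-cauchy-lwp:Contraction}).

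The third and final step extracts uniqueness and Lipschitz dependence from the contraction estimate. Suppose $U_1,U_2$ are two solutions, so that $U_k=\mathscr{G}(U_{k0};U_k)$. Choose $T'=T'(M)\le 1$ small enough so that $T'^{3/4}K'(M)\le 1/2$; substituting $\mathscr{G}(U_{k0};U_k)=U_k$ into (\ref{eq-cauchy-lwp:Contraction}) and absorbing half of $\|U_1-U_2\|_{L^\infty_{T'}H^{-1}_x}$ into the left-hand side yields $\|U_1-U_2\|_{L^\infty_{T'}H^{-1}_x}\le 2C\|U_{10}-U_{20}\|_{H^{-1}_x}$. Re-inserting this bound into (\ref{eq-cauchy-lwp:Contraction}) also controls the $L^4_{T'}Y^{-1}_x$ norm, giving (\ref{eq-cauchy-lwp:LipIniData}); in particular $U_{10}=U_{20}$ forces $U_1=U_2$ on $[0,T']$. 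The only mildly delicate point is the bookkeeping of the time exponent $T^{3/4}$, which must come strictly below $1$ in order to run the absorption argument — this is why the Lipschitz estimate \eqref{eq-para-schr:SourLip} was formulated with a $Y^0_x$ factor that is merely $L^4_T$-integrable rather than uniformly bounded in time, matching the Strichartz admissibility.
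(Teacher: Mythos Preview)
Your proof is correct and follows essentially the same approach as the paper: write out the difference of $\mathscr{G}$, apply the Strichartz estimate at the $H^{-1}$ level (using that $e^{itp(D_x)}$ commutes with Fourier multipliers), invoke the Lipschitz bound of Proposition~\ref{prop-para-schr:SourLip} pointwise in time, and use H\"older with exponents $(4,4/3)$ to extract the factor $T^{3/4}$. The absorption argument for \eqref{eq-cauchy-lwp:LipIniData} is also identical to the paper's.
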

\begin{proof}
    Due to definition \eqref{Strong} of $\Phi$, we have 
    \begin{equation*}
    \begin{aligned}
    \mathscr{G}(U_{10};U_1) & - \mathscr{G}(U_{20};U_2) \\
    & = e^{itp(D_x)}(U_{10}-U_{20}) + \int_0^te^{i(t-\tau)p(D_x)} \left( \mathcal{F}(U_1(\tau)) - \mathcal{F}(U_2(\tau)) \right) d\tau.
    \end{aligned}
    \end{equation*}
    Then, using the Strichartz estimate (\ref{eq-pre-stri:DisperEsti}), noticing $e^{itp(D_x)}$ commutes with any Fourier multiplier, 
    we deduce from Proposition~\ref{prop-para-schr:SourLip} that
    $$
    \begin{aligned}
        &\big\|\mathscr{G}(U_{10};U_1) - \mathscr{G}(U_{20};U_2);L^\infty_{T'}H^{-1}_x\cap L^4_{T'}Y^{-1}_x\big\|\\
        &\quad\le C \|U_{10}-U_{20}\|_{H^{-1}_x} + \int_0^T \big\|\mathcal{F}(U_1)-\mathcal{F}(U_2)\big\|_{H^{-1}_x}\dtau\\
        &\quad\le C \|U_{10}-U_{20}\|_{H^{-1}_x} \\
        &\hspace{2em}+ \int_0^T K\big(\|U_1\|_{L^2_x}+\|U_2\|_{L^2_x}\big) \big(\|U_1\|_{Y^0_x}+\|U_2\|_{Y^0_x}+1\big)\|U_1-U_2\|_{H^{-1}_x}\dtau\\
        &\quad\le C \|U_{10}-U_{20}\|_{H^{-1}_x} + K(M)\|U_1-U_2\|_{L^\infty_TH^{-1}_x} \int_0^T\big(1+\|U_1\|_{Y^0_x}+\|U_2\|_{Y^0_x}\big)\dtau\\
        &\quad\le C \|U_{10}-U_{20}\|_{H^{-1}_x} +  T^{3/4}K(M)\|U_1-U_2\|_{L^\infty_TH^{-1}_x},
    \end{aligned}
    $$
    which proves \eqref{eq-cauchy-lwp:Contraction}. In the last inequality, we used H\"{o}lder's inequality to estimate e.g. 
    $$
    \|U_1;L^1_TY^0_x\|\leq T^{3/4}\|U_1;L^4_TY^0_x\|.
    $$
    To deduce \eqref{eq-cauchy-lwp:LipIniData}, it suffices to choose any $T'>0$ such that $(T')^{3/4}K(M) < 1/2$.
\end{proof}

Now, we are going to find the fixed point of $\mathscr{G}(U_0;U)$ in the space $L^\infty_TL^2_x\cap L^4_TY^0_x$, for some time $T\in(0,1]$  depending on the initial value $U_0\in L^2_x$, to be determined.

In the following, we write
$$
\bar{\mathscr{B}}_R^T
:=\left\{\big\|U; L^\infty_TL^2_x\cap L^4_TY^0_x\big\|\leq R\right\}
$$
for the closed ball of radius $R$ in $L^\infty_TL^2_x\cap L^4_TY^0_x$. We first justify that $\bar{\mathscr{B}}_R^T$ is in fact a complete metric space under a \emph{weaker} norm.

\begin{proposition}\label{complete}
Let $R>0$. Equip $\bar{\mathscr{B}}_R^T$ with the weaker distance
$$
\rho(U_1,U_2):=\big\|U_1-U_2; L^\infty_TH^{-1}_x\cap L^4_TY^{-1}_x\big\|.
$$
Then $(\bar{\mathscr{B}}_R^T,\rho)$ is a complete metric space.
\end{proposition}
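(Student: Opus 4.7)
The plan is to prove completeness by combining strong convergence in the weaker norm with weak-$\ast$ compactness in the stronger norm. Let $\{U_n\}$ be a Cauchy sequence in $(\bar{\mathscr{B}}_R^T, \rho)$.

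First I would verify that the ambient weaker space $L^\infty_T H^{-1}_x \cap L^4_T Y^{-1}_x$ is itself a Banach space. The Bochner space $L^\infty_T H^{-1}_x$ is complete by standard theory. For $Y^{-1}$, the map $u \mapsto (\langle D_x\rangle^{-1}u, \mathcal{H}\langle D_x\rangle^{-1}u)$ realizes it as a closed subspace of $L^\infty \times L^\infty$; closedness uses the continuity of the Hilbert transform on tempered distributions together with the characterization in Definition~\ref{def-intro-main:HolSpaceVar}. Hence $L^4_T Y^{-1}_x$ is complete, and the Cauchy sequence has a limit $U$ in the weaker norm $\rho$.

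The remaining task is to show that $U$ actually lies in $\bar{\mathscr{B}}_R^T$. Since $\{U_n\}$ is bounded by $R$ in $L^\infty_T L^2_x = \big(L^1_T L^2_x\big)^*$, a subsequence (still denoted $U_n$) converges weak-$\ast$ to some $V_1 \in L^\infty_T L^2_x$. Likewise, $U_n$ and $\mathcal{H}U_n$ are bounded in $L^4_T L^\infty_x = \big(L^{4/3}_T L^1_x\big)^*$, so up to a further diagonal extraction we obtain weak-$\ast$ limits $V_2$ and $V_3$ in this space. The key identification step is to show these limits agree with $U$: for any $\varphi \in C^\infty_c((0,T)\times\R)$, the pairing $\iint U_n \varphi\,\dxdt$ converges to $\iint U\varphi\,\dxdt$ (since $\varphi \in L^1_T H^1_x$ and $U_n \to U$ in $L^\infty_T H^{-1}_x$) and simultaneously to $\iint V_1\varphi\,\dxdt$ by weak-$\ast$ convergence, forcing $V_1 = U$ almost everywhere; the same argument with $\varphi$ and $\mathcal{H}\varphi$ shows $V_2 = U$ and $V_3 = \mathcal{H}U$. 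Weak-$\ast$ lower semicontinuity of the norm then gives
\begin{equation*}
\|U\|_{L^\infty_T L^2_x} \le \liminf_{n\to\infty}\|U_n\|_{L^\infty_T L^2_x},\quad \|U\|_{L^4_T L^\infty_x}+\|\mathcal{H}U\|_{L^4_T L^\infty_x} \le \liminf_{n\to\infty}\|U_n\|_{L^4_T Y^0_x},
\end{equation*}
and each right-hand side is bounded by $R$, whence $U \in \bar{\mathscr{B}}_R^T$.

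The main obstacle I anticipate is handling $Y^0$ rather than $L^\infty$ alone, since $Y^0$ is not reflexive and its definition involves the Hilbert transform. The remedy is to treat the two components $U$ and $\mathcal{H}U$ as independent elements of $L^\infty$, apply weak-$\ast$ compactness separately to each, and then exploit the continuity of $\mathcal{H}$ on the distributional side to identify the second limit as the Hilbert transform of the first. Beyond this point the argument is a soft functional-analytic unwrapping, with no further analytical input required.
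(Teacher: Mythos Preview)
Your overall strategy coincides with the paper's: obtain the limit $U$ in the weak norm, invoke weak-$\ast$ compactness in the strong spaces, and identify the limits by testing against smooth functions. One step, however, does not go through as written. The displayed inequality
\[
\|U\|_{L^4_T L^\infty_x}+\|\mathcal{H}U\|_{L^4_T L^\infty_x} \le \liminf_{n\to\infty}\|U_n\|_{L^4_T Y^0_x}
\]
does not follow from \emph{separate} weak-$\ast$ convergence of $U_n$ and $\mathcal H U_n$ in $L^4_TL^\infty_x=(L^{4/3}_TL^1_x)^*$. Componentwise lower semicontinuity only yields
\[
\|U\|_{L^4_TL^\infty_x}+\|\mathcal H U\|_{L^4_TL^\infty_x}\le \liminf_n\bigl(\|U_n\|_{L^4_TL^\infty_x}+\|\mathcal H U_n\|_{L^4_TL^\infty_x}\bigr),
\]
and by Minkowski the quantity on the right can exceed $\|U_n\|_{L^4_TY^0_x}$ (the inequality between $\|a\|_{L^4}+\|b\|_{L^4}$ and $\|a+b\|_{L^4}$ goes the wrong way for your purpose). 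As stated you would only conclude $\|U\|_{L^4_TY^0_x}\le 2R$, which does not place $U$ back in $\bar{\mathscr B}_R^T$.

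The remedy is to keep the two components bundled rather than split. View $(U_n,\mathcal H U_n)$ as a bounded sequence of functionals on the separable space $L^{4/3}_T\bigl(L^1_x\times L^1_x,\ \text{max norm}\bigr)$; since the dual of $(L^1\times L^1,\max)$ is $(L^\infty\times L^\infty,\text{sum})$, the functional norm of $(U_n,\mathcal H U_n)$ is exactly $\|U_n\|_{L^4_TY^0_x}\le R$. Banach--Alaoglu plus your identification argument gives a weak-$\ast$ limit equal to $(U,\mathcal H U)$, and lower semicontinuity of \emph{this} dual norm then delivers $\|U\|_{L^4_TY^0_x}\le R$ with no loss of constant. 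The paper implements the same idea by pairing simultaneously against $f_1$ and $f_2$ with a \emph{common} time weight $\varphi$, which is precisely what prevents the two components from being decoupled.
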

\begin{proof}
First, we observe that $L^\infty_TL^2_x$ is the dual space of $L^1_TL^2_x$, which is a separable Banach space. By the separable version of the Banach–Alaoglu theorem (which avoids the use of the axiom of choice), it follows that any closed ball in $L^\infty_TL^2_x$ is necessarily sequentially compact under the weak-$\ast$ topology. 

Now, suppose that $\{U_k\}\subset\bar{\mathscr{B}}_R^T$ is a Cauchy sequence with respect to the metric $\rho$, with $\rho$-limit $U\in L^\infty_TH^{-1}_x$. Then, by definition of $\bar{\mathscr{B}}_R^T$, the sequence has a bounded $L^\infty_TL^2_x$ norm, and hence, by weak-$\ast$ sequential compactness, we can extract a weak-$\ast$ convergent subsequence from it, which we may assume to be $\{U_k\}$ itself. Since we already have $\|U_k-U\|_{L^\infty_TH^{-1}_x}\to0$, we conclude that $U$ belongs to $L^\infty_TL^2_x$. Thus, for all $h\in L^1_TL^2_x$, it follows that  
$$
\int_0^T\int_{\R}U(t,x)h(t,x)\dx\dt
=\lim_{k\to+\infty}\int_0^T\int_{\R}U_k(t,x)h(t,x)\dx\dt.
$$

It remains only to show that the $L^4_TY^0_x$ norm $U$ is bounded by $R$. Let us fix test functions $\varphi(t)\in C^\infty_0[0,T]$ and $f_1(x),f_2(x)\in C_{0}^\infty(\R)$, such that $\|\varphi\|_{L^{4/3}_T}=1$, $\|f_1\|_{L^1_x}=\|f_2\|_{L^1_x}=1$, and compute
\begin{align}\label{dual}
\left|\int_{0}^T\varphi\int_{\R_x}
\big(Uf_1+(\Hi U)f_2\big)\dx\dt\right|
&=\left|\lim_k\int_{0}^T\varphi\int_{\R_x}
\big(U_kf_1+(\Hi U_k)f_2\big)\dx\dt\right|\notag\\
&\leq \limsup_k \left|\int_{0}^T\varphi\int_{\R_x}
\big(U_kf_1+(\Hi U_k)f_2\big)\dx\dt\right| \\
&\leq \left(\int_{0}^T
\left|\int_{\R_x}\big(U_kf_1+(\Hi U_k)f_2\big)\dx\right|^4\dt\right)^{1/4}.\notag
\end{align}
Note that taking the limit in the first step is justified for $\varphi,f\in C_0^\infty$, since $U_k$ already converges to $U$ in the weak-$\ast$ topology of $L^\infty_TL^2_x$. We continue transforming the right-hand side of (\ref{dual}): observing that 
$$
\begin{aligned}
\left|\int_{\R_x}\big(U_kf_1+(\Hi U_k)f_2\big)\dx\right|
&\leq |U_k|_{L^\infty_x}+|\Hi U_k|_{L^\infty_x}
=\|U_k\|_{Y^0_x},
\end{aligned}
$$
we deduce that (\ref{dual}) becomes
$$
\left|\int_{0}^T\varphi\int_{\R_x}
\big(Uf_1+(\Hi U)f_2\big)\dx\dt\right|
\leq \left(\int_{0}^T\|U_k\|_{Y^0_x}^4\dt\right)^{1/4}
\leq R.
$$
This proves that $\|U;L^4_TY^0_x\|\leq R$.
\end{proof}

We are ready to show that $\mathscr{G}(U_0;U)$ is a contraction on the complete metric space $(\bar{\mathscr{B}}_R^T,\rho)$, provided that $T$ is suitably small.

\begin{proposition}\label{Self}
Given $U_0\in L^2_x$, set $R=2C\|U_0\|_{L^2_x}$, where $C$ is the constant that appeared in the Strichartz inequality (\ref{eq-pre-stri:DisperEsti}). There exists $T=T(R)\in(0,1]$ such that $\mathscr{G}(U_0;U)$ maps the closed ball $\bar{\mathscr{B}}_R^T$ into itself.
\end{proposition}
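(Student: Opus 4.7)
The plan is to apply Proposition~\ref{Automatic} to $\mathscr{G}(U_0;U)$ with $U\in\bar{\mathscr{B}}_R^T$, and then shrink $T$ so that the Duhamel contribution is absorbed by $C\|U_0\|_{L^2_x}=R/2$. For any $U=q(D_x)\eta+iu\in\bar{\mathscr{B}}_R^T$, Lemma~\ref{prop-reg-var:Main} gives $\|\eta(t)\|_{W^{2,\infty}_x}\lesssim\|q(D_x)\eta(t)\|_{Y^0_x}\leq\|U(t)\|_{Y^0_x}$ pointwise in $t$, so that
$$
\|\eta;L^4_TW^{2,\infty}_x\|\lesssim\|U;L^4_TY^0_x\|\leq R.
$$

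\textbf{Extracting a $T$-factor.} The bound of Proposition~\ref{Automatic} as stated is independent of $T$, so I would sharpen it by going back into its proof and keeping track of time integrations. Split the Duhamel integral as in \eqref{Duhamel}. For the real part $I_1$, the first Strichartz estimate in \eqref{eq-pre-stri:DisperEsti} with $(q,r)=(\infty,2)$ together with Proposition~\ref{prop-para-sys:ParaEq1} and Hölder's inequality gives
$$
\|\RE\mathcal{F}(U)\|_{L^1_TL^2_x}\leq K(R)\int_0^T(\|\eta(t)\|_{W^{2,\infty}_x}+1)\,\dt\leq T^{3/4}K(R)(R+1)
$$
for $T\in(0,1]$. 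For the imaginary part $I_2$, I would invoke the decomposition $\mathcal{H}N=F_1+F_2$ of Proposition~\ref{P:5.6}: the $L^1_x$ pieces $N$ and $F_1$ are treated by the second Strichartz estimate ($r=\infty$), producing a bound of the form $T^{3/4}K(R)$ via Hölder on $\|\cdot\|_{L^{4/3}_TL^1_x}$, while the $L^2_x$ remainder can be written as $\mathcal{H}\tilde F_2$ with $\tilde F_2$ obeying the same pointwise bound as $\RE\mathcal{F}(U)$ and is therefore handled as in the real case. Combining,
$$
\|\mathscr{G}(U_0;U);L^\infty_TL^2_x\cap L^4_TY^0_x\|\leq C\|U_0\|_{L^2_x}+T^{3/4}K(R)(R+1).
$$

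\textbf{Choice of $T$ and main obstacle.} Since $C\|U_0\|_{L^2_x}=R/2$, the bound becomes $\leq R/2+T^{3/4}K(R)(R+1)$. Choosing
$$
T=T(R):=\min\!\bigl\{1,\,\bigl(R/(2K(R)(R+1))\bigr)^{4/3}\bigr\},
$$
a quantity depending only on $\|U_0\|_{L^2_x}$, makes the second term $\leq R/2$, so $\mathscr{G}(U_0;U)\in\bar{\mathscr{B}}_R^T$. The main technical hurdle is the previous step: certifying that the $T^{3/4}$ factor really does appear requires carefully matching Strichartz admissible pairs to the two distinct kinds of pointwise-in-$t$ source bounds, namely the $L^2_x$ bound on $\RE\mathcal{F}(U)$ from Proposition~\ref{prop-para-sys:ParaEq1} and the $L^1_x$ bounds on $N$ and the decomposition of $\mathcal{H}N$ from Proposition~\ref{P:5.6}, and then producing genuine smallness through Hölder's inequality against the $L^4_TW^{2,\infty}_x$ control on $\eta$.
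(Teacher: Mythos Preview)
Your approach is correct but takes a detour that the paper avoids. You invoke Proposition~\ref{Automatic} and then reopen its proof to extract a $T$-factor, splitting $\mathcal{F}(U)$ into real and imaginary parts and handling the imaginary part via the $L^1_x$ bounds on $N$ and the decomposition $\mathcal{H}N=F_1+F_2$ from Proposition~\ref{P:5.6}. That machinery was designed for the weaker hypothesis of Proposition~\ref{Automatic}, where one only assumes $\eta\in L^4_TW^{2,\infty}_x$ and must \emph{produce} $Y^0_x$ control on $U$. Here, however, $U\in\bar{\mathscr{B}}_R^T$ already gives $U\in L^4_TY^0_x$, so the bootstrap is unnecessary.

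The paper's proof is much shorter: it uses Proposition~\ref{prop-para-schr:SourEsti} directly, which gives the full $L^2_x$ bound $\|\mathcal{F}(U)\|_{L^2_x}\le K(\|U\|_{L^2_x})(\|U\|_{Y^0_x}+1)$ without any real/imaginary splitting. Feeding this into the Strichartz estimate \eqref{eq-pre-stri:DisperEsti} with source in $L^1_TL^2_x$ yields
\[
\|\mathscr{G}(U_0;U);L^\infty_TL^2_x\cap L^4_TY^0_x\|\le \frac{R}{2}+CK(R)\int_0^T(1+\|U(\tau)\|_{Y^0_x})\,\dtau\le \frac{R}{2}+CT^{3/4}K(R)(1+R),
\]
after a single H\"older step. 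Your route works and gives the same conclusion, but the paper's argument is a three-line computation once Proposition~\ref{prop-para-schr:SourEsti} is in hand; the delicate Hilbert-transform analysis of Proposition~\ref{P:5.6} is reserved for the genuinely harder situation of Proposition~\ref{Automatic}.
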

\begin{proof}
This is a simple consequence of the Strichartz estimate for $e^{itp(D_x)}$ established in Proposition \ref{prop-pre-stri:DisperEsti}. By (\ref{eq-pre-stri:DisperEsti}), we simply compute
$$
\begin{aligned}
\big\|\mathscr{G}(U_0;U); L^\infty_TL^2_x\cap L^4_TY^0_x\big\|
&\leq C\|U_0\|_{L^2_x}+C\|\mathcal{F}(U)\|_{L^1_TL^2_x}\\
&\overset{\text{Prop.} \ref{prop-para-schr:SourEsti}}{\leq}
\frac{R}{2}+C\int_0^T K\big(\|U\|_{L^2_x}\big) \big( \|U\|_{Y^0_x} + 1 \big) \dtau\\
&\leq \frac{R}{2}+CK(R)\int_0^T \big(1+\|U\|_{Y^0_x}\big)\dtau\\
&\leq \frac{R}{2}+CT^{3/4}K(R)(1+R).
\end{aligned}
$$
In the last inequality we used H\"{o}lder's inequality to estimate $\int_0^T\|U\|_{Y^0_x}\dtau$. Then it is straightforward to choose $T=T(R)$ such that the right-hand side is striclty smaller than $R$.
\end{proof}

\begin{proposition}\label{Contraction}
Let $R$ be as in Proposition \ref{Self}. There is a $T=T(R)\in(0,1]$, such that $\mathscr{G}(U_0;U)$ has a unique fixed point $\mathcal{S}(U_0)\in\bar{\mathscr{B}}_R^T$, satisfying the estimate
$$
\big\|\mathcal{S}(U_0)-\mathcal{S}(\tilde{U}_0);
L^\infty_TH^{-1}_x\cap L^4_TY^{-1}_x\big\|
\leq \|U_0-\tilde{U}_0\|_{H^{-1}_x}.
$$
\end{proposition}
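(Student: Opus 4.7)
The plan is a standard Banach fixed-point argument, but carried out on the nonstandard complete metric space $(\bar{\mathscr{B}}_R^T,\rho)$ furnished by Proposition \ref{complete}. By Proposition \ref{Self} we already have a time $T_1 = T_1(R)$ such that $\mathscr{G}(U_0;\cdot)$ maps $\bar{\mathscr{B}}_R^{T_1}$ into itself, so the remaining work is to make the same map a strict contraction in the weaker metric $\rho$ for some possibly smaller time $T \leq T_1$.

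For the contraction property, I would apply estimate \eqref{eq-cauchy-lwp:Contraction} of Proposition \ref{prop-cauchy-lwp:Unique} to $U_1, U_2 \in \bar{\mathscr{B}}_R^T$ with common initial datum $U_{10} = U_{20} = U_0$. Since $M \leq R$ in that estimate, the first term on the right drops out and we obtain
\begin{equation*}
\rho\bigl(\mathscr{G}(U_0;U_1), \mathscr{G}(U_0;U_2)\bigr) \leq T^{3/4} K(R)\,\|U_1 - U_2\|_{L^\infty_T H^{-1}_x} \leq T^{3/4} K(R)\,\rho(U_1, U_2).
\end{equation*}
Choosing $T_2 = T_2(R) \in (0,1]$ so that $T_2^{3/4} K(R) \leq 1/2$, and setting $T := \min(T_1, T_2)$, the map $\mathscr{G}(U_0;\cdot)$ becomes a $1/2$-Lipschitz self-map on the complete metric space $(\bar{\mathscr{B}}_R^T, \rho)$. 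The Banach fixed-point theorem then yields the unique fixed point $\mathcal{S}(U_0)$ inside $\bar{\mathscr{B}}_R^T$.

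For the Lipschitz estimate on initial data, observe that if we pick $R$ large enough to accommodate both $U_0$ and $\tilde U_0$ (e.g.\ $R = 2C(\|U_0\|_{L^2_x} + \|\tilde U_0\|_{L^2_x})$), then the same time $T = T(R)$ works for both, and both fixed points $\mathcal{S}(U_0), \mathcal{S}(\tilde U_0)$ lie in $\bar{\mathscr{B}}_R^T$. Applying \eqref{eq-cauchy-lwp:Contraction} once more, now with $U_1 = \mathcal{S}(U_0)$, $U_2 = \mathcal{S}(\tilde U_0)$ and the two initial data $U_0, \tilde U_0$, yields
\begin{equation*}
\rho\bigl(\mathcal{S}(U_0), \mathcal{S}(\tilde U_0)\bigr) \leq C\|U_0 - \tilde U_0\|_{H^{-1}_x} + \tfrac{1}{2}\,\rho\bigl(\mathcal{S}(U_0), \mathcal{S}(\tilde U_0)\bigr),
\end{equation*}
and absorbing the last term into the left-hand side gives the desired Lipschitz bound (up to the harmless constant $2C$, which can be absorbed into the statement by convention or by slightly shrinking $T$).

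The main conceptual point — and the only non-routine aspect — is that the contraction takes place in a norm strictly weaker than the one defining the ball $\bar{\mathscr{B}}_R^T$; this is why Proposition \ref{complete} was essential, ensuring that $\rho$-Cauchy sequences within $\bar{\mathscr{B}}_R^T$ still converge to an element of the same ball (via weak-$\ast$ compactness in $L^\infty_T L^2_x$ and lower semicontinuity of the $L^4_T Y^0_x$ norm). Once this is in hand, everything else is a direct application of the tame Lipschitz estimate \eqref{eq-cauchy-lwp:Contraction} of Proposition \ref{prop-cauchy-lwp:Unique}.
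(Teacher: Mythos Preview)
Your proposal is correct and follows essentially the same approach as the paper: apply \eqref{eq-cauchy-lwp:Contraction} with $U_{10}=U_{20}=U_0$ to get the contraction estimate $T^{3/4}K(R)$, shrink $T$ so this is at most $1/2$, invoke the Banach fixed-point theorem on the complete metric space $(\bar{\mathscr{B}}_R^T,\rho)$ of Proposition~\ref{complete}, and then reuse \eqref{eq-cauchy-lwp:Contraction} with the two fixed points to obtain the Lipschitz dependence on initial data. Your version is in fact slightly more explicit than the paper's on two points (taking $T=\min(T_1,T_2)$ and enlarging $R$ to accommodate both $U_0$ and $\tilde U_0$), but the argument is the same.
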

\begin{proof}
Let us first show that $\mathscr{G}(U_0;U)$ can be made into a contraction when $T$ is sufficiently small. We simply examine the $H^{-1}_x$ norm of the difference $\mathscr{G}(U_0;U_1)-\mathscr{G}(U_0;U_2)$: we compute, for $U_1,U_2\in\bar{\mathscr{B}}_R^T$, 
$$
\begin{aligned}
\big\|\mathscr{G}(U_0;U_1)-\mathscr{G}(U_0;U_2);L^\infty_TH^{-1}_x\cap L^4_TY^{-1}_x\big\|
\overset{(\ref{eq-cauchy-lwp:Contraction})}{\leq} T^{3/4}K(R)\|U_1-U_2\|_{L^\infty_TH^{-1}_x}.
\end{aligned}
$$
Therefore, we can choose $T=T(R)$ even smaller to make the constant multiple in the right-hand-side to be $1/2$. This shows that $U\to\mathscr{G}(U_0;U)$ is a contraction on the metric space $(\bar{\mathscr{B}}_R^T,\rho)$, which is complete by Lemma \ref{complete}. By the Banach fixed point theorem, $U\to\mathscr{G}(U_0;U)$ has a unique fixed point $U=\mathcal{S}(U_0)$. 

Finally, to obtain the Lipschitz dependence of $\mathcal{S}(U_0)$ on $U_0$, we just need to show that the mapping $\mathscr{G}(U_0;U)$ has Lipschitz dependence on $U_0$. This again follows from (\ref{eq-cauchy-lwp:Contraction}).
\end{proof}

We summarize the results in this subsection as the following local well-posedness theorem: 
\begin{theorem}\label{thm-intro-main:LWP}
For all $U_0\in L^2$, there exists $T>0$ depending only on the $L^2$ norm of $U_0$, such that the integral equation \eqref{Strong} has a unique solution 
$$
U=q(D_x)\eta+iu\in L^\infty_TL^2_x\cap L^4_TY^0_x.
$$
Moreover, we have the following blow-up criterion:
\begin{equation}\label{eq-intro-main:BlowUp}
\text{either } T^* = +\infty,\text{ or }\lim_{t\rightarrow T^*} \|U(t)\|_{L^2_x} = +\infty,
\end{equation}
where $T^*$ is the maximal existing time.
\end{theorem}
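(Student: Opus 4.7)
The bulk of the work is already done: Propositions~\ref{Self} and~\ref{Contraction} together produce, for each $U_0\in L^2_x$, a local-in-time fixed point $U=\mathcal{S}(U_0)\in\bar{\mathscr{B}}_R^T$ of the map $\mathscr{G}(U_0;\cdot)$ on a time interval $[0,T]$ with $T=T(\|U_0\|_{L^2})$ and $R=2C\|U_0\|_{L^2}$. Since $\bar{\mathscr{B}}_R^T\subset L^\infty_T L^2_x\cap L^4_T Y^0_x$, this yields a solution in the required class. The plan is to (i) upgrade uniqueness from ``uniqueness in the ball'' to uniqueness in the whole space $L^\infty_T L^2_x\cap L^4_T Y^0_x$, and then (ii) derive the blow-up criterion by a standard continuation argument.

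For uniqueness, suppose $U_1,U_2\in L^\infty_T L^2_x\cap L^4_T Y^0_x$ both solve~\eqref{Strong} with the same initial data $U_0$. Set
\[
M:=\max_{k=1,2}\big\|U_k;L^\infty_T L^2_x\cap L^4_T Y^0_x\big\|.
\]
Proposition~\ref{prop-cauchy-lwp:Unique} applied with $U_{10}=U_{20}=U_0$ yields a time $T'=T'(M)>0$ on which $U_1\equiv U_2$. The set $I:=\{t\in[0,T]:U_1(t)=U_2(t)\}$ is closed (by continuity of $U_k$ into $H^{-1}_x$, which follows from $U_k\in W^{1,4}_T H^{-2}_x$); it is also open in $[0,T]$ because at any $t_0\in I$ we can re-apply Proposition~\ref{prop-cauchy-lwp:Unique} on $[t_0,t_0+T'(M)]$ with $U_1(t_0)=U_2(t_0)$ as initial data. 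Hence $I=[0,T]$, giving uniqueness in the full class.

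For the blow-up criterion, define
\[
T^*:=\sup\bigl\{T>0:\eqref{Strong}\text{ has a solution in }L^\infty([0,T];L^2_x)\cap L^4([0,T];Y^0_x)\bigr\}.
\]
Uniqueness guarantees that local solutions glue consistently, yielding a maximal solution $U$ on $[0,T^*)$. Suppose $T^*<+\infty$ but $\liminf_{t\to T^*}\|U(t)\|_{L^2_x}<+\infty$; pick $t_n\uparrow T^*$ with $\|U(t_n)\|_{L^2_x}\leq M_0$. By Proposition~\ref{Contraction}, for each $n$ there is a solution on $[t_n,t_n+\tau]$ with $\tau=\tau(M_0)>0$ independent of $n$. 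Choosing $n$ so that $t_n+\tau>T^*$ and using uniqueness to identify this new solution with the old one on $[t_n,T^*)$ extends $U$ past $T^*$, contradicting maximality. Hence either $T^*=+\infty$ or $\lim_{t\to T^*}\|U(t)\|_{L^2_x}=+\infty$ (the limit, rather than merely the limsup, follows since along any subsequence on which the norm stays bounded we could extend, so the full lim inf must blow up, which together with the structure of the continuation argument upgrades to a genuine limit---and this will be confirmed a posteriori by the conservation of energy used to globalize).

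The main conceptual obstacle is the first step, namely extending uniqueness from $\bar{\mathscr{B}}_R^T$ to arbitrary solutions in $L^\infty_T L^2_x\cap L^4_T Y^0_x$; this is resolved cleanly by the quantitative Lipschitz estimate~\eqref{eq-cauchy-lwp:LipIniData}, whose existence time $T'(M)$ depends only on the solution size, thus permitting the standard open--closed continuation on $[0,T]$. Everything else is a routine assembly of the propositions already established.
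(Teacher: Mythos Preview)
Your proposal is correct and matches the paper's approach: the theorem is presented there as a summary of Propositions~\ref{prop-cauchy-lwp:Unique}, \ref{complete}, \ref{Self}, and \ref{Contraction}, and you have supplied the standard continuation details. One minor imprecision: from $U_k\in W^{1,4}_T H^{-2}_x$ you get continuity into $H^{-2}_x$, not $H^{-1}_x$, but this is harmless since continuity into any Hausdorff space of distributions suffices for the closedness of $I$.
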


\subsection{A posteriori regularity}\label{High_Reg}
In this subsection, we prove Assertion \ref{54}: if the initial value $U_0\in H^s$ with $s>0$, then the strong solution $U=\mathcal{S}(U_0)\in L^\infty_TL^2_x\cap L^4_TY^0_x$ in Theorem \ref{thm-intro-main:LWP} in fact belongs to $L^\infty_TH^s_x$. Therefore, if $s\geq2$, such a solution $U$ actually solves the dispersive equation (\ref{Schrodinger}), hence gives rise to a solution $(\eta,\psi)$ of the original system (\ref{n40}) in the classical sense. 

\begin{proposition}\label{U_High_reg}
Suppose $s>0$, and the initial value $U_0\in H^s$. Then the solution $\mathcal{S}(U_0)$ of the integral equation (\ref{Strong}) is automatically of class $L^\infty_TH^s_x$.
\end{proposition}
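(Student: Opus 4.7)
The strategy has three parts: (i) regularize the initial data, (ii) establish a tame \emph{a priori} $H^s$-bound on the regularized solutions that holds on the full $L^2$-existence interval $[0,T]$, and (iii) pass to the limit via the Lipschitz estimate of Proposition~\ref{prop-cauchy-lwp:Unique}.

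For the regularization, I would fix a smooth cutoff $\chi_n\in C_0^\infty(\R)$ equal to $1$ on $\{|\xi|\le n\}$ and set $U_0^n:=\chi_n(D_x)U_0\in \bigcap_\sigma H^\sigma$. Since $\|U_0^n\|_{L^2}\le\|U_0\|_{L^2}$, Theorem~\ref{thm-intro-main:LWP} supplies $U^n:=\mathcal{S}(U_0^n)$ on the common interval $[0,T]$ with uniform bound $\|U^n;L^\infty_T L^2_x\cap L^4_T Y^0_x\|\le R$. I would then rerun the fixed-point argument of Subsection~\ref{subsect-cauchy:LWP} in a ball of $L^\infty_{T'}H^s_x\cap L^4_{T'}Y^s_x$ equipped with the weaker $L^\infty_{T'}H^{-1}_x\cap L^4_{T'}Y^{-1}_x$ metric: self-mapping follows from Strichartz~\eqref{eq-pre-stri:DisperEsti} combined with the tame source bound~\eqref{eq-para-schr:SourEsti-tame}, while the contraction uses the weak Lipschitz estimate~\eqref{eq-para-schr:SourLip}; completeness under the weaker metric is proved exactly as in Proposition~\ref{complete}. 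This produces a local-in-time $H^s$ solution starting from $U_0^n$; by $L^2$-uniqueness (Proposition~\ref{prop-cauchy-lwp:Unique}), it coincides with $U^n$ on $[0,T_n']$ for some $T_n'>0$.

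The heart of the proof is the tame bootstrap. Applying Strichartz~\eqref{eq-pre-stri:DisperEsti} to $\L{D_x}^s U^n$ and invoking~\eqref{eq-para-schr:SourEsti-tame}, one obtains, for any $t\le\min(T,T_n')$,
\begin{equation*}
\|U^n\|_{L^\infty_tH^s_x\cap L^4_tY^s_x}
\le C\|U_0\|_{H^s}+C\int_0^t\|\mathcal{F}(U^n)\|_{H^s_x}\dtau.
\end{equation*}
Inserting~\eqref{eq-para-schr:SourEsti-tame} and using that $\|U^n\|_{Y^0_x}$ is bounded by $R$ in $L^4_t$, Hölder's inequality in time (with exponents $(1,\infty)$ for the $\|U^n\|_{H^s}$ term and $(2,2)$ for the $\|U^n\|_{Y^s}$ term) converts the coefficients $(1+\|U^n\|_{Y^0})^2$ and $(1+\|U^n\|_{Y^0})$ into a common factor $t^{1/2}(1+R)^2$, yielding
\begin{equation*}
\|U^n\|_{L^\infty_tH^s_x\cap L^4_tY^s_x}\le C\|U_0\|_{H^s}+t^{1/2}K(R)\|U^n\|_{L^\infty_tH^s_x\cap L^4_tY^s_x}.
\end{equation*}
Fixing $\delta>0$ with $\delta^{1/2}K(R)\le 1/2$, absorption on $[0,\delta]$ gives $\|U^n\|_{L^\infty_\delta H^s_x\cap L^4_\delta Y^s_x}\le 2C\|U_0\|_{H^s}$. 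Crucially, $\delta$ depends only on $R$, not on $\|U^n(k\delta)\|_{H^s}$: the same step can be iterated on each subinterval $[k\delta,(k+1)\delta]$ with $U^n(k\delta)$ as initial data, and after at most $N:=\lceil T/\delta\rceil$ iterations one obtains the uniform-in-$n$ bound $\|U^n;L^\infty_TH^s_x\|\le(2C)^N\|U_0\|_{H^s}$. The same iteration, via a standard continuation argument, shows that $T_n'\ge T$ (the $H^s$ norm cannot blow up while the $L^2\cap Y^0$-norm stays bounded).

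Finally, since $U_0^n\to U_0$ in $L^2\subset H^{-1}$, Proposition~\ref{prop-cauchy-lwp:Unique}, iterated finitely many times to cover $[0,T]$, gives $U^n\to\mathcal{S}(U_0)$ in $L^\infty_TH^{-1}_x$. Combined with the uniform $L^\infty_TH^s_x$ bound, weak-$*$ sequential compactness (as in the proof of Proposition~\ref{complete}) forces $\mathcal{S}(U_0)\in L^\infty_TH^s_x$. The main obstacle, and the reason the argument succeeds, is the tame structure of~\eqref{eq-para-schr:SourEsti-tame}: the highest-regularity norms $\|U\|_{H^s},\|U\|_{Y^s}$ appear only linearly, with prefactors depending only on the energy-level norms $\|U\|_{L^2}$ and $\|U\|_{Y^0}$. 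This tameness is what allows the bootstrap parameter $\delta$ to be chosen independently of $\|U_0\|_{H^s}$ and thus permits the finite-step extension across the full $L^2$-existence interval $[0,T]$.
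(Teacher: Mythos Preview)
Your proof is correct and shares the essential ingredient with the paper's argument: the tame estimate~\eqref{eq-para-schr:SourEsti-tame}, together with the observation that the bootstrap step $\delta$ depends only on the energy-level bound $R$ and not on $\|U_0\|_{H^s}$. The route, however, is more elaborate than necessary.

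The paper dispenses with the regularization of the initial data entirely. Instead of approximating $U_0$ by smooth $U_0^n$ and tracking the solutions $\mathcal{S}(U_0^n)$, it works directly with the \emph{Picard iterates} $U_{n+1}=\mathscr{G}(U_0;U_n)$, $U_1=\mathscr{G}(U_0;0)$, all with the same initial datum $U_0$. These iterates already lie in the ball $\bar{\mathscr{B}}_R^T$ of $L^\infty_TL^2_x\cap L^4_TY^0_x$ by construction (Proposition~\ref{Self}), so the prefactor $K(\|\cdot\|_{L^2})(1+\|\cdot\|_{Y^0})$ in~\eqref{eq-para-schr:SourEsti-tame} is uniformly controlled. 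One then shows by induction that each $U_n$ belongs to $L^\infty_TH^s_x\cap L^4_TY^s_x$: this is immediate for $U_1=e^{itp(D_x)}U_0$, and the tame estimate~\eqref{eq-para-schr:SourEsti-tame} propagates it. The same computation you perform yields the recursive inequality
\[
\|U_{n+1};L^\infty_TH^s_x\cap L^4_TY^s_x\|\le \|U_0\|_{H^s}+T^{1/2}K(\|U_0\|_{L^2})\,\|U_n;L^\infty_TH^s_x\cap L^4_TY^s_x\|,
\]
and choosing $T$ small in terms of $\|U_0\|_{L^2}$ alone gives a uniform bound $\|U_n\|\le 2\|U_0\|_{H^s}$. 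Passing to the weak-$*$ limit (the iterates already converge to $\mathcal{S}(U_0)$ in the weak metric by the fixed-point construction) yields the conclusion on this short interval; continuous induction in time then covers all of $[0,T]$.

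This buys a considerable simplification: there is no need for a separate local $H^s$ fixed-point argument, no continuation/blow-up alternative for the $H^s$ solution, and no appeal to the Lipschitz dependence~\eqref{eq-cauchy-lwp:LipIniData} to identify the limit. Your approach is the textbook ``regularize, bound uniformly, pass to the limit'' scheme, and it certainly works; the paper's shortcut exploits the specific fact that the solution is \emph{already} a limit of explicit iterates living in the right ball.
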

\begin{proof}
The proof is much easier since we have already obtained the solution $U$ and are now working with functions of much higher regularity in $x$.

Let us recall \eqref{eq-para-schr:SourEsti-tame}: given $U\in H^s_x\cap Y^s_x$ with $s\geq0$, there holds
$$
\| \mathcal{F}(U) \|_{H^s_x} 
\le K\big(\|U\|_{L^2_x}\big) \big( \|U\|_{Y^0_x} + 1 \big) \Big[ \big(\|U\|_{Y^0_x}+1\big) \|U\|_{H^s_x} + \|U\|_{Y^s_x} \Big].
$$
Combining with the Strichartz inequality (\ref{eq-pre-stri:DisperEsti}), we compute, for any $U\in L^\infty_TH^s_x\cap L^4_TY^s_x$ with $T\in(0,1]$,
\begin{equation}\label{G_High}
\begin{aligned}
\|\mathscr{G}(U_0;U);&L^\infty_TH^s_x\cap L^4_TY^s_x\|\\
&\leq \|U_0\|_{H^s}+C_s\int_0^T\|\mathcal{F}(U(\tau))\|_{H^s_x}\dtau\\
&\leq \|U_0\|_{H^s}
+K\big(\|U\|_{L^\infty_TL^2_x}\big)
\|U\|_{L^\infty_TH^s_x}\int_0^T\big(1+\|U(\tau)\|_{Y^0_x}
\big)^2\dtau\\
&\quad +K\big(\|U\|_{L^\infty_TL^2_x}\big)
\int_0^T\big(1+\|U(\tau)\|_{Y^0_x}\big)\|U(\tau)\|_{Y^s_x}\dtau\\
&\leq \|U_0\|_{H^s}
+T^{1/2}K\big(\|U\|_{L^\infty_TL^2_x}\big)
\big(1+\|U\|_{L^4_TY^0_x}\big)^2\|U\|_{L^\infty_TH^s_x}\\
&\quad 
+T^{1/2}K\big(\|U\|_{L^\infty_TL^2_x}\big)
\big(1+\|U\|_{L^4_TY^0_x}\big)\|U\|_{L^4_TY^s_x}.
\end{aligned}
\end{equation}
This estimate is tame in the sense that the right-hand-side is linear in the high norm $\|U;L^\infty_TH^s_x\cap L^4_TY^s_x\|$.

Let us then recall that the solution $U=\mathcal{S}(U_0)$ of (\ref{Strong}) is obtained by Banach fixed point argument. Therefore, it is the $L^\infty_TH^{-1}_x\cap L^4_TY^{-1}_x$-limit of the iterative sequence $U_{n+1}=\mathscr{G}(U_0;U_n)$, $U_1=\mathscr{G}(U_0;0)$. Note that we have already established the local well-posedness in $L^2_x\cap Y^0_x$, which implies a uniform bound:
$$
\|U_n;L^\infty_TL^2_x\cap L^4_TY^0_x\|
\leq K\big(\|U_0\|_{L^2}\big).
$$
Substituting into the inequality (\ref{G_High}), we find
$$
\begin{aligned}
\|U_{n+1};L^\infty_TH^s_x\cap L^4_TY^s_x\|
&\leq \|U_0\|_{H^s}
+T^{1/2}K\big(\|U_0\|_{L^2}\big)
\|U_n;L^\infty_TH^s_x\cap L^4_TY^s_x\|.
\end{aligned}
$$
Therefore, if we require further $K\big(\|U_0\|_{L^2_x}\big)T^{1/2}\leq1/2$, then inductively we have 
$$
\|U_n;L^\infty_TH^s_x\cap L^4_TY^s_x\|\leq 2\|U_0\|_{H^s},
$$
so the limit $U$ of $U_n$ must still satisfy the same estimate in its $H^s$ norm. Note that this choice of $T$ depends only on the low norm $\|U_0\|_{H^2}$ of the initial data, and \emph{not} on any high norm; consequently, a continuous induction on the time interval justifies $U\in L^\infty_TH^s_x$. This continuous induction resembles the proof of a posteriori regularity for semilinear Schr\"{o}dinger equation, as in Chapter 5 of \cite{CW1990}. 
\end{proof}

Now suppose $s\geq2$, and $U\in L^\infty_TH^s_x$ solves the integral equation (\ref{Strong}). Standard results on one-parameter unitary groups imply that $U$ solves the Schr\"{o}dinger type equation
$$
\partial_t U + i p(D_x)U = \mathcal{F}(U)
$$
in the usual sense: the function $U$ is of class $L^\infty_TH^{s}_x\cap C^1_TH^{s-2}_x$, and $(\partial_t+ip(D_x))U$ in fact equals to $\mathcal{F}(U)$, which is of class $L^\infty_TH^s_x$ by the $H^s$ estimate we just used. This then implies that $(\eta,u)\in L^\infty_TH^{s+2}_x\times L^\infty H^{s}_x$ solves (\ref{n40}) in the classical sense:
\begin{equation}\label{Regularity}
\begin{aligned}
\partial_t \eta & = \mathcal{G}(\eta)(\Id+\G(\eta))^{-1}u \in L^\infty_TH^{s}_x, \\ 
\partial_t u & = - \eta - \partial_x^4 \eta- N(\eta,u) 
\in L^\infty_TH^{s-2}_x.
\end{aligned}
\end{equation}
Therefore, the second time derivative 
$$
\begin{aligned}
\partial_t^2\eta
&=(\Id+\G(\eta))^{-1}\partial_t(\G(\eta))(\Id+\G(\eta))^{-1}u
+\mathcal{G}(\eta)(\Id+\G(\eta))^{-1}\partial_tu\\
&\in L^\infty_TH^{s+1}_x+L^\infty_TH^{s-2}_x
\subset L^\infty_TH^{s-2}_x.
\end{aligned}
$$
Here we used the shape derivative formula (\ref{eq-reg-shpder:Main}). This shows that the functions $\eta$ and $\psi=(\Id+\G(\eta))^{-1}u$ does solve (\ref{n40}) in the classical sense, and in fact
$$
\eta\in L^\infty_TH^{s+2}_x\cap C^1_TH^{s}_x\cap C^2_TH^{s-2}_x,\quad
\psi\in L^\infty_TH^{s+1}_x\cap C^1_{T}H^{s-2}_x.
$$

\subsection{Global well-posedness}\label{subsect-cauchy:GWP}

In this final subsection, we complete the proof of Theorem \ref{thm:main}. This relies on the following \emph{a priori estimate} of the solution $(\eta,u)$, within the proof of which we verify Assertion \ref{55}:

\begin{proposition}\label{prop-cauchy-gwp:Main}
	Consider the unique solution $(\eta,u) \in L^\infty_{T^*}(H^2_x \times L^2_x)$ obtained in Theorem \ref{thm-intro-main:LWP}, where $T^*\in(0,+\infty]$ is the maximal existing time. Then there exists a constant $C>0$ and an increasing function $K$ such that, for all time $t\in[0,T^*)$,
	\begin{equation}\label{eq-cauchy-gwp:Main}
	   \|u(t)\|_{L^2_x} + \|\eta(t)\|_{H^2_x} \le C\|u_0\|_{L^2} + K(E_0)(1+t),
	\end{equation}
	where $E_0$ is the total energy of the initial data $(\eta_0,u_0)\in H^2\times L^2$, defined in \eqref{eq-intro-csz:Energy}.
\end{proposition}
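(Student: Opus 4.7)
\emph{Strategy.} The $H^2$ control on $\eta$ will come from conservation of the total energy \eqref{eq-intro-csz:Energy}, while the linear-in-$t$ growth of $\|u\|_{L^2}$ reflects the fact that $E$ dominates only the ``high-frequency'' part of $u$: writing $\psi=(\Id+\G(\eta))^{-1}u$, a short algebraic computation gives
\[
\int u\,\G(\eta)(\Id+\G(\eta))^{-1}u\,dx=\int\psi\,\G(\eta)\psi\,dx+\|\G(\eta)\psi\|_{L^2}^2,
\]
so $E$ controls $\|\G(\eta)\psi\|_{L^2}$ but not the $\psi$-summand in $u=\psi+\G(\eta)\psi$. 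The plan is therefore: (i) prove $E(t)\le E_0$ for any $H^2\times L^2$ solution; (ii) use this together with Proposition~\ref{prop-reg-bas:InverBd} and interpolation to reduce the $L^2$ bound on $u$ to an $H^{-2}$ bound; (iii) obtain the $H^{-2}$ bound by integrating the $u$-equation in time.

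\emph{Energy conservation via approximation.} For regular data $(\eta_0,u_0)\in H^{s+2}\times H^s$ with $s\ge 2$, the a posteriori regularity of Section~\ref{High_Reg} yields a classical solution \eqref{Regularity}, and a direct Hamiltonian computation---differentiating $E$, invoking the shape-derivative identity \eqref{eq-reg-shpder:Main} for the kinetic term, and integrating by parts for the elastic term---shows $E(t)=E_0$. For general $(\eta_0,u_0)\in H^2\times L^2$, approximate in $L^2$ by $(\eta_0^n,u_0^n)\in H^{s+2}\times H^s$; since the local existence time in Theorem~\ref{thm-intro-main:LWP} depends only on the $L^2$ norm of the initial data, the classical solutions live on a common interval $[0,T]$. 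The weak continuity statement~\ref{55}, which follows from the Lipschitz estimate \eqref{eq-para-schr:SourLip} and Proposition~\ref{prop-cauchy-lwp:Unique}, provides convergence $U^n\to U$ in $L^\infty_T H^{-\varepsilon}_x$; combined with the uniform $L^\infty_T L^2_x$-bound from $E^n(t)=E_0^n$, weak-$\ast$ compactness together with lower semi-continuity of the quadratic form $E$ yields $E(t)\le E_0$ for the limiting solution.

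\emph{Reduction and conclusion.} From $E(t)\le E_0$ we immediately deduce $\|\eta(t)\|_{H^2}^2\le 2E_0$ and $\|\G(\eta)\psi\|_{L^2}\le\sqrt{2E_0}$. Proposition~\ref{prop-reg-bas:InverBd} gives $\|\psi\|_{L^2}\le K(\|\eta\|_{H^2})\|u\|_{H^{-1}}\le K(E_0)\|u\|_{H^{-1}}$, hence
\[
\|u\|_{L^2}\le\|\psi\|_{L^2}+\|\G(\eta)\psi\|_{L^2}\le K(E_0)\|u\|_{H^{-1}}+\sqrt{2E_0}.
\]
The Plancherel interpolation $\|u\|_{H^{-1}}^2\le\|u\|_{H^{-2}}\|u\|_{L^2}$ (Cauchy--Schwarz on the Fourier side) combined with Young's inequality gives $\|u\|_{L^2}\le K(E_0)(\|u\|_{H^{-2}}+1)$. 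For the $H^{-2}$ bound, integrate the second equation of \eqref{n42}: each of $\|\eta\|_{H^{-2}}\le\|\eta\|_{L^2}$, $\|\partial_x^4\eta\|_{H^{-2}}\le\|\eta\|_{H^2}$, and $\|N\|_{H^{-2}}\lesssim\|N\|_{L^1}\le K(E_0)$ (using the continuous embedding $L^1\hookrightarrow H^{-1}$ in dimension one together with the $L^1$-bound on $N$ derived in \eqref{n91}) is bounded by $K(E_0)$, so $\|u(t)\|_{H^{-2}}\le\|u_0\|_{L^2}+K(E_0)t$. Substituting into the previous inequality yields \eqref{eq-cauchy-gwp:Main}.

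\emph{Main obstacle.} The delicate point is the passage to the limit in the energy identity at low regularity. A direct differentiation of $E$ at the $H^2\times L^2$ level is not available, since pairings such as $\int u\,\partial_x^4\eta\,dx$ are not defined pointwise, so energy conservation must go through an approximation by classical solutions. The Lipschitz estimate \eqref{eq-para-schr:SourLip} in the weak $H^{-1}$ topology (together with Proposition~\ref{prop-cauchy-lwp:Unique}) supplies the needed convergence of the approximating sequence; one must then verify that the quadratic form $(\eta,u)\mapsto\int u\,\G(\eta)(\Id+\G(\eta))^{-1}u\,dx$ is lower semi-continuous for weak-$\ast$ convergence in $H^2\times L^2$, which reduces to continuity of the Dirichlet-to-Neumann operator in $\eta$ and is ultimately supplied by the shape-derivative bounds of Section~\ref{subsect-reg:ShpDer}.
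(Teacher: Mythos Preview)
Your overall strategy is sound and close to the paper's. The two substantive differences are worth flagging.

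\textbf{Low-frequency control.} Your route via $\|u\|_{H^{-2}}$ and the interpolation $\|u\|_{H^{-1}}^2\le\|u\|_{H^{-2}}\|u\|_{L^2}$ is a clean alternative to what the paper does. The paper instead isolates the single dyadic block $\Delta_0 u$, derives a differential inequality $\frac{d}{dt}\|\Delta_0 u\|_{L^2}\lesssim \|N\|_{L^1}+\|\eta\|_{L^2}\le K(E_0)$, and integrates; the high-frequency part of $u$ is handled by $\|\psi\|_{\dot H^{1/2}}+\|\G(\eta)\psi\|_{L^2}\le K(E_0)$ directly. Both arguments work; yours is slightly slicker but yields a coefficient of $\|u_0\|_{L^2}$ that depends on $E_0$ (through the constant in Proposition~\ref{prop-reg-bas:InverBd}), whereas the paper's constant $C$ is universal. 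For global existence this is harmless.

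\textbf{Order of the approximation.} Here there is a real issue. You want to first pass $E(t)\le E_0$ to the rough solution via lower semi-continuity of the kinetic energy, then derive the $L^2$ bound. But the shape-derivative bounds of Section~\ref{subsect-reg:ShpDer} that you invoke for continuity of $\eta\mapsto\G(\eta)$ do not close at this regularity: Theorem~\ref{thm-reg-shpder:Main} needs $\|\delta\eta\|_{H^2}$, which does not vanish under weak $H^2$ convergence, while Proposition~\ref{prop-reg-shpder:sharp} needs $\eta\in W^{2,\infty}$ and $\psi\in Y^1$, neither available at the limiting regularity. (Lower semi-continuity of $E_{\mathrm{kin}}$ can in fact be salvaged via the variational form $\int\psi\G(\eta)\psi=\iint\nabla v\cdot A(\eta)\nabla v$ since $A(\eta)$ depends only on $\nabla\eta$ and $\eta^n\to\eta$ strongly in $H^{2-\varepsilon}\subset W^{1,\infty}$, but this is a different argument from the one you cite.) The paper sidesteps the whole issue by reversing the order: it first proves the full bound \eqref{eq-cauchy-gwp:Main} for smooth solutions (where $E(t)=E_0$ is known), and only then passes to the limit using lower semi-continuity of the $L^2$ norm of $U$, which is trivial. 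You can import this reordering into your proof with no other change.
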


\begin{proof}
Let us first establish that if $s\ge 2$ and if $(\eta_0,u_0)$ belongs to $H^{s+2}\times H^{s}$ i.e.\ $U_0\in H^s$, then the total energy $E(t)$  is conserved. In fact, we have just proved that if $U_0\in H^s$, then the solution $U=q(D_x)\eta+iu$ of (\ref{Strong}) satisfies (\ref{Regularity}). Since $s\geq2$, we can 
differentiate the energy $E(t)$, to obtain:
$$
\begin{aligned}
E'(t)&=\frac{1}{2}\frac{\diff}{\dt}\int_\R u\cdot\G(\eta)(\Id+\G(\eta))^{-1}u \dx
	+ \frac{1}{2}\frac{\diff}{\dt}\int_\R \big((\partial_x^2\eta)^2 + \eta^2\big)\dx\\
&=\int_\R \partial_tu\cdot\G(\eta)(\Id+\G(\eta))^{-1}u\dx
+\frac{1}{2}\int_{\R}u\cdot\partial_t\big(\G(\eta)(\Id+\G(\eta))^{-1}\big)u\dx\\
&\quad+\int_\R \partial_t\eta\cdot\big(\partial_x^4\eta+\eta\big)\dx\\
&=\int_\R \partial_tu\cdot\G(\eta)\psi\dx
+\frac{1}{2}\int_{\R}\psi\big(\partial_t\G(\eta)\big)\psi\dx
+\int_\R \partial_t\eta\cdot\big(\partial_x^4\eta+\eta\big)\dx,
\end{aligned}
$$
where we used $u=(\Id+\G(\eta))^{-1}\psi$ and the fact that $\G(\eta)$ is self-adjoint. 
This computation is justified since the time derivatives $\partial_t\eta$ and $\partial_tu$ possess sufficient regularity, as ensured by (\ref{Regularity}). Substituting in (\ref{Regularity}), we find that the first and third integrals exhibit several cancellations, leaving only
$$
E'(t)=\frac{1}{2}\int_{\R}\psi\big(\partial_t\G(\eta)\big)\psi\dx
-\int_\R N(\eta,u)\G(\eta)\psi\dx.
$$
However, applying the variational version of the shape derivative formula, namely (\ref{eq-reg-shpder:LinkShpDerNonlin}), we obtain
$$
\frac{1}{2}\int_{\R}\psi\big(\partial_t\G(\eta)\big)\psi\dx
=\int_\R N(\eta,u)\partial_t\eta \dx,
$$
which exactly cancels with the last integral. As before, this computation is justified since $\partial_t\eta$ belongs to $H^s\subset H^2$ in $x$. Thus, we conclude that $E'(t)\equiv0$.

Consequently, $\sqrt{E_0}$ provides a bound for the norm $\|\eta\|_{H^2_x}$, and also $\|\mathcal{G}(\eta)\psi\|_{L^2_x}$:
$$
\|\eta\|_{H^2_x}+\|\mathcal{G}(\eta)\psi\|_{L^2_x}\lesssim \sqrt{E_0}.
$$
Applying Proposition \ref{prop-reg-bas:Trace}, we further obtain
\begin{align*}
	\|\psi\|_{\dot{H}^{1/2}_x}^2
	\lesssim&
	\big( 1+\la\eta_x\ra_{L^\infty_x} \big) \int_{\mathbb{R}} \psi \mathcal{G}(\eta)\psi \dx \\
	\lesssim& \big( 1+\|\eta\|_{H^{2}_x} \big) \int_{\mathbb{R}} \psi \mathcal{G}(\eta)\psi \dx
	\lesssim E_0+E_0^{3/2}.
\end{align*}
Since $u= \psi + \mathcal{G}(\eta)\psi$, we find that in order to bound $\|u\|_{L^2_x}$, it remains to control the low-frequency component $\|\Delta_0u\|_{L^2_x}$, where $\Delta_0$ is the dyadic building block defined in \eqref{eq-besov-def:DyaBlk}. From the second equation of \eqref{n40}, we compute
\begin{align*}
	\frac{\diff}{\dt}\|\Delta_0 u\|_{L^2_x}^2 
	&= -2 \int_{\R} \Delta_0u \cdot \Delta_0 N(\eta,u) \dx 
	-2 \int_{\R} \Delta_0u 
	\cdot \Delta_0 ( \partial_x^2\eta+ \eta ) \dx \\
	&\lesssim |\Delta_0 u|_{L^\infty_x} \|\Delta_0 N(\eta,u)\|_{L^1_x} + \|\Delta_0 u\|_{L^2_x} \cdot\big\|\Delta_0 ( \partial_x^2\eta+ \eta )\big\|_{L^2_x} \\
	&\lesssim \|\Delta_0 u\|_{L^2_x} \left( \|N(\eta,u)\|_{L^1_x} + \|\eta\|_{L^2_x} \right).
\end{align*}
Recalling the definition \eqref{eq-intro-csz:N2} of $N(\eta,u)$, the $L^1_x$-norm of $N(\eta,u)$ can be bounded by
\begin{equation*}
\begin{aligned}
	\|N(\eta,u)\|_{L^1_x}
	&\le \|\partial_x\eta\cdot\partial_x\psi\|_{L^2_x} + \|\mathcal{G}(\eta)\psi\|_{L^2_x} \\
	&\le K\big(|\partial_x\eta|_{L^\infty_x}\big)\|\mathcal{G}(\eta)\psi\|_{L^2_x}
	\le K(E_0).
\end{aligned}
\end{equation*}
The bound does not depend on the length of the time interval. Therefore, applying Grönwall's inequality to $\|\Delta_0u\|_{L^2_x}$, we obtain, for arbitrary $t\in[0,T^*)$,
$$
\|\Delta_0u(t)\|_{L^2_x}
\le \|\Delta_0u_0\|_{L^2_x} + K(E_0) (1+t) \le C\|u_0\|_{L^2_x} + K(E_0) (1+t),
$$
which completes the proof of \eqref{eq-cauchy-gwp:Main} when the initial data $(\eta_0,u_0)\in H^{s+2}\times H^s$. 

Now, consider an initial data $(\eta_0,\psi_0)\in H^2\times H^1$ for (\ref{n42}), meaning that $U_0\in L^2$ 
for (\ref{Strong}). Let $U_0^n$ be a sequence of Schwartz functions converging to $U_0$ in $L^2$. Then for the time $T=T(\|U_0\|_{L^2})$, Proposition \ref{Contraction} shows that the corresponding solution sequence $U^n=\mathcal{S}(U_0^n)$ converges to $U=\mathcal{S}(U_0)$ in $L^\infty_TH^{-1}_x\cap L^4_TY^{-1}_x$, while Proposition \ref{Self} shows that the sequences $U^n$ is bounded in $L^\infty_TL^2_x\cap L^4_TY^0_x$. Consequently, $U^n$ converges in the weak-$\ast$ topology of $L^\infty_TL^2_x$ to $U$. Since each $U^n$ satisfies the a priori bound (\ref{eq-cauchy-gwp:Main}), its weak limit $U$ does as well, 
at least on the time interval $[0,T]$. By continuous induction, we extend this result to the entire interval 
$[0,T^*)$, concluding the proof of (\ref{eq-cauchy-gwp:Main}) for initial data $(\eta_0,\psi_0)$ of minimal regularity $H^2\times H^1$.
\end{proof}

The proof of Theorem \ref{thm:main} follows as a direct corollary of Proposition \ref{prop-cauchy-gwp:Main}. In fact, the local well-posedness result, Proposition \ref{Contraction}, relies only on the finiteness of the $L^2_x$ norm of $U_0$. On the other hand, the a priori bound for $\|U(t)\|_{L^2_x}$ in terms of the initial energy is guaranteed by Proposition \ref{prop-cauchy-gwp:Main}, which remains finite for any finite time $t$. Therefore, the solution map extends beyond any prescribed time. The regularity of the solution when $(\eta_0, u_0) \in H^2 \times L^2$ follows from Assertion \ref{52}. Eventually, the regularity of the solution when $(\eta_0, u_0) \in H^{s+2} \times H^s$ for $s > 0$ follows from Assertion~\ref{54}.

Finally, we have to justify the continuous dependence on the initial data. To do so, we use the automatic continuity result proved in~\cite{ABITZ} (see Theorem 1 therein). Specifically, using the terminology in~\cite{ABITZ}, 
we begin by remembering that the weak Lipschitz estimate for the solution map $(\eta_0, u_0) \mapsto (\eta, u)$, from $H^{1} \times H^{-1}$ to $L^\infty_{\mathrm{loc}}(\mathbb{R}; H^{1} \times H^{-1})$ is a direct consequence of \eqref{eq-cauchy-lwp:LipIniData}. On the other hand, the result from the previous paragraph regarding the propagation of regularity implies that the flow map 
satisfy tame estimates (in the sense of~\cite{ABITZ}). Therefore, Theorem 1 
in~\cite{ABITZ} implies that the solution map is also continuous from $H^{s+2} \times H^{s}$ to $C^0_{\mathrm{loc}}(\mathbb{R}; H^{s+2} \times H^{s})$ for all $s\ge 0$. This completes the proof of Theorem \ref{thm:main}.

\begin{remark}During the proof of Proposition \ref{prop-cauchy-gwp:Main}, we justified the conservation of energy $E(t)$ only for solutions with high regularity. The weak convergence argument used therein guarantees that $E(t)$ is non-increasing for initial data with minimal regularity. However, it is not clear whether the energy is truly conserved in that regime; the argument in this paper does not rule out the possibility of non-viscous dissipation.
\end{remark}

\appendix
\section{Paraproducts and Zygmund spaces}\label{app:Besov}

\subsection{Basic definitions}\label{subsect-besov:Def}

We fix a dyadic decomposition on the Euclidean space $\R^d$ with general dimension $d\ge 1$,
\begin{equation*}
	1 = \chi(\xi) + \sum_{j=1}^\infty \varphi(2^{-j}\xi),\ \ \forall \xi\in\R^d,
\end{equation*}
such that $\chi,\varphi\in C^\infty_0(\R)$ are radial and supported in a ball and an annulus centered at zero, respectively, and, for all $N\in\N$,
\begin{equation*}
	\chi(2^{-N}\xi) = \chi(\xi) + \sum_{j=1}^{N}\varphi(2^{-j}\xi).
\end{equation*}
Then we denote
\begin{equation}\label{eq-besov-def:DyaBlk}
	\Delta_j := \left\{
	\begin{array}{cl}
		\varphi(2^{-j}D_x), & j\ge 1, \\
		\chi(2^{-j}D_x), & j=0, \\
		0, & j\le -1,
	\end{array}
	\right.
	\ \ 
	S_j := \left\{
	\begin{array}{cl}
		\chi(2^{-j}D_x), & j\ge 1, \\
		\chi(2^{-j}D_x), & j=0, \\
		0, & j\le -1.
	\end{array}
	\right.
\end{equation}

\begin{definition}[Function space]\label{def-besov-def:Sobo}
    Let $s\in\R$. We denote by $H^s$ the space of those tempered distributions $u$ with
    \begin{equation}\label{eq-besov-def:Sobo}
        \|u\|_{H^s} := \| \langle D \rangle^{s}u \|_{L^2} < \infty.
    \end{equation}
    We denote by $C^\sigma_*$ the Zygmund space of those tempered distributions $u$ with
    \begin{equation}\label{eq-besov-def:Zyg}
        |u|_{C^\sigma_*} := \sup_{j\in\N}2^{j\sigma}\la\Delta_j u\ra_{L^\infty}< \infty.
    \end{equation}
\end{definition}

\subsection{Paraproduct decomposition and basic inequalities}\label{subsect-besov:Bony}

Let us recall the decomposition of products introduced by Coifman--Meyer and Bony, 
known as \textit{paraproduct decomposition}:
\begin{equation}\label{eq-besov-bnoy:Decomp}
    ab = T_a b + T_b a + R(a,b),
\end{equation}
where the paraproduct $T_a$ and the remainder 
$R(a,b)$ are defined as
\begin{equation}\label{eq-besov-bony:DefPara}
    T_a b := \sum_{j\ge N} S_{j-N}a \Delta_j b,
\quad
    R(a,b) = \sum_{|j-j'|<N} \Delta_j a \Delta_{j'}b.
\end{equation}
Here $N\in\N$ is a fixed integer depending only on the choice of dyadic blocks (see \eqref{eq-besov-def:DyaBlk}), such that, for all $j\ge N$ and all tempered distribution $a,b$, the Fourier transform of $S_{j-N}a \Delta_j b$ is supported in an annulus of size $2^{j}$. We refer to the original paper \cite{bony1981calcul} for more details of this theory. In this section, we shall introduce some basic inequalities involving only Sobolev and H{\"o}lder spaces, which are frequently used in previous sections. Most of the results below can be found in Section 2 of \cite{bahouri2011fourier}, where the inequalities are stated for general Besov spaces.

\begin{proposition}[Proposition 2.82 of \cite{bahouri2011fourier}]\label{prop-besov-bony:EstiPara}
    Let $u\in H^s$ with $s\in\R$. Then, if $a\in L^\infty$, we have
    \begin{equation}\label{eq-besov-bony:EstiParaLinfty}
        \|T_a u\|_{H^s} \lesssim |a|_{L^\infty} \|u\|_{H^s};
    \end{equation}
    and if $a\in C^{-\sigma}_*$ for some $\sigma>0$, the inequality becomes
    \begin{equation}\label{eq-besov-bony:EstiParaNegInd}
        \|T_a u\|_{H^{s-\sigma}} \lesssim \|a\|_{C^{-\sigma}_*} \|u\|_{H^s}.
    \end{equation}
\end{proposition}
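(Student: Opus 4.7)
The plan is to exploit the crucial frequency-localization property of each summand in the paraproduct: for every $j \ge N$, the Fourier transform of $S_{j-N}a \, \Delta_j u$ is supported in an annulus of size $\sim 2^j$. Combined with the characterization of $H^s$ via dyadic Littlewood--Paley pieces,
\begin{equation*}
\|v\|_{H^s}^2 \sim \sum_{j \ge 0} 2^{2js} \|\Delta_j v\|_{L^2}^2,
\end{equation*}
this makes the two estimates a direct consequence of almost-orthogonality once the uniform norm of $S_{j-N}a$ is suitably controlled.

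First I would record the two pointwise estimates for the low-frequency cutoffs of $a$. If $a \in L^\infty$, then $S_{j-N}a$ is a convolution of $a$ with a function in $L^1$ (with uniformly bounded $L^1$-norm, by Young's inequality and the rescaling of the cutoff $\chi$), giving $|S_{j-N}a|_{L^\infty} \lesssim |a|_{L^\infty}$. If instead $a \in C^{-\sigma}_*$, then writing $S_{j-N}a = \sum_{k \le j-N-1} \Delta_k a$ and using the definition \eqref{eq-besov-def:Zyg},
\begin{equation*}
|S_{j-N}a|_{L^\infty} \le \sum_{k \le j-N-1} |\Delta_k a|_{L^\infty} \le \sum_{k \le j-N-1} 2^{k\sigma} \|a\|_{C^{-\sigma}_*} \lesssim 2^{j\sigma} \|a\|_{C^{-\sigma}_*},
\end{equation*}
where the geometric series converges precisely because $\sigma > 0$.

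Second, I would combine these with H\"older's inequality in $x$ to estimate each term of the paraproduct: $\|S_{j-N}a \, \Delta_j u\|_{L^2} \le |S_{j-N}a|_{L^\infty} \|\Delta_j u\|_{L^2}$. Since the Fourier support of $S_{j-N}a \, \Delta_j u$ lies in an annulus of radius $\sim 2^j$, only finitely many $\Delta_k$'s are non-zero on this term, so by almost-orthogonality
\begin{equation*}
\|T_a u\|_{H^{r}}^2 \lesssim \sum_{j \ge N} 2^{2jr} \|S_{j-N}a \, \Delta_j u\|_{L^2}^2
\end{equation*}
for any $r \in \mathbb R$. Applying this with $r=s$ and the bound $|S_{j-N}a|_{L^\infty} \lesssim |a|_{L^\infty}$ yields \eqref{eq-besov-bony:EstiParaLinfty}; applying it with $r = s-\sigma$ and the bound $|S_{j-N}a|_{L^\infty} \lesssim 2^{j\sigma}\|a\|_{C^{-\sigma}_*}$ produces the factor $2^{2j\sigma}$ that cancels against $2^{-2j\sigma}$ in $2^{2j(s-\sigma)}$, leaving $\sum_j 2^{2js}\|\Delta_j u\|_{L^2}^2 \sim \|u\|_{H^s}^2$ and hence \eqref{eq-besov-bony:EstiParaNegInd}.

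There is no genuine obstacle here: the entire argument is a routine application of Bony's almost-orthogonality, and the only subtlety is verifying the geometric-series bound for $|S_{j-N}a|_{L^\infty}$ in the $C^{-\sigma}_*$ case, which requires $\sigma > 0$ strictly (the endpoint $\sigma = 0$ would produce a logarithmic loss, which is exactly why the $L^\infty$ and $C^{-\sigma}_*$ statements are phrased separately).
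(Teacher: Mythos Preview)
Your proof is correct and follows the standard almost-orthogonality argument; the paper does not give its own proof but simply cites Proposition~2.82 of \cite{bahouri2011fourier}, where exactly this argument appears.
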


\begin{proposition}[Proposition 2.85 of \cite{bahouri2011fourier}]\label{prop-besov-bony:EstiRem}
    Let $a\in H^s$and $u\in C^\sigma_*$ with $s,\sigma\in\R$. If $s+\sigma>0$, we have
    \begin{equation}\label{eq-besov-bony:EstiRem}
        \|R(a,u)\|_{H^{s+\sigma}} \lesssim \|a\|_{H^s} \|u\|_{C^\sigma_*}.
    \end{equation}
    Moreover, for all $v\in H^{s'}$ with $s+s'>0$, we also have
    \begin{equation}\label{eq-besov-bony:EstiRemSobo}
        \|R(a,v)\|_{H^{s+s'-d/2}} \lesssim \|a\|_{H^s} \|v\|_{H^{s'}}.
    \end{equation}    
    In the limit case $s+s'=0$, we have, for any $\varepsilon>0$,
    \begin{equation}\label{eq-besov-bony:EstiRemSoboLim}
    	\|R(a,v)\|_{H^{-d/2-\varepsilon}} 
    	\lesssim \|a\|_{H^s} \|v\|_{H^{s'}}.
    \end{equation}
\end{proposition}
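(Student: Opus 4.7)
The plan is to exploit the defining structural feature of the remainder that distinguishes it from the paraproducts: for each block
$R_j := \Delta_j a \cdot \sum_{|j'-j|<N}\Delta_{j'}u$, the Fourier transform is supported in a \emph{ball} $\{|\xi| \lesssim 2^j\}$ rather than in an annulus. Consequently, $\Delta_k R_j = 0$ whenever $k > j + C_0$ for a constant $C_0$ depending only on the dyadic cutoffs, so that $\Delta_k R(a,u) = \sum_{j \geq k - C_0} \Delta_k R_j$. This observation is common to all three inequalities; the way the blocks are subsequently summed will be tailored to the integrability assumption on the second argument.

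For the first inequality, with $u \in C^\sigma_*$, I would estimate each block in $L^2$ via H\"older, writing $\tilde{\Delta}_j u := \sum_{|j'-j|<N}\Delta_{j'}u$:
\[
\|R_j\|_{L^2} \leq \|\Delta_j a\|_{L^2}\, \|\tilde{\Delta}_j u\|_{L^\infty} \lesssim 2^{-j(s+\sigma)} c_j \,\|u\|_{C^\sigma_*},
\]
where $c_j := 2^{js}\|\Delta_j a\|_{L^2}$ defines an $\ell^2(\N)$ sequence of norm $\lesssim \|a\|_{H^s}$. Multiplying by $2^{k(s+\sigma)}$ and reindexing by $m = j - k$ yields
\[
2^{k(s+\sigma)}\|\Delta_k R(a,u)\|_{L^2} \lesssim \|u\|_{C^\sigma_*}\sum_{m \geq -C_0} 2^{-m(s+\sigma)} c_{k+m}.
\]
Since $s+\sigma > 0$ by hypothesis, the kernel $m \mapsto 2^{-m(s+\sigma)}\mathbf{1}_{m \geq -C_0}$ lies in $\ell^1(\Z)$, and Young's convolution inequality $\ell^1 \ast \ell^2 \hookrightarrow \ell^2$ delivers the desired $H^{s+\sigma}$ bound.

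For the second and third inequalities, with $v \in H^{s'}$, the Bernstein bound on $\|\Delta_{j'}v\|_{L^\infty}$ would only yield the restrictive range $s+s'>d/2$, so I would instead control each block in $L^1$: Cauchy--Schwarz gives $\|R_j\|_{L^1} \lesssim 2^{-j(s+s')} c_j \tilde{d}_j$, with $\tilde{d}_j \sim 2^{js'}\|\tilde{\Delta}_j v\|_{L^2} \in \ell^2$. Since $R_j$ is Fourier-supported in $B(0,C 2^j)$, Hausdorff--Young combined with a direct computation gives
\[
\|R_j\|_{H^\sigma}^2 \leq \|R_j\|_{L^1}^2\int_{|\xi|\leq C 2^j} \langle \xi\rangle^{2\sigma}\, d\xi,
\]
whose right-hand side, for $\sigma = s+s'-d/2$, is bounded by a constant multiple of $2^{2j(s+s')}\|R_j\|_{L^1}^2$ precisely because $2\sigma + d = 2(s+s') > 0$. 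Combining, $\|R_j\|_{H^{s+s'-d/2}} \lesssim c_j \tilde{d}_j$, and then the triangle inequality followed by Cauchy--Schwarz in $j$ produces $\|R(a,v)\|_{H^{s+s'-d/2}} \lesssim \|c\|_{\ell^2}\|\tilde{d}\|_{\ell^2} \lesssim \|a\|_{H^s}\|v\|_{H^{s'}}$. For the endpoint $s+s'=0$ one chooses $\sigma = -d/2-\varepsilon$: the integral $\int_{|\xi|\leq C 2^j}\langle\xi\rangle^{-d-2\varepsilon}\,d\xi$ converges \emph{at infinity} to a finite constant independent of $j$, so the bound $\|R_j\|_{H^{-d/2-\varepsilon}} \lesssim \|R_j\|_{L^1} \lesssim c_j\tilde{d}_j$ suffices to close the argument in the same manner.

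The main technical subtlety lies in the choice of functional framework for each regime. In the H\"older case the bare sum of the $\ell^2$ sequence $(c_j)$ would be unbounded, so a triangle inequality at the level of $H^{s+\sigma}$ is too lossy and the $\ell^2$ structure must be preserved via the convolution argument above. In the Sobolev case, by contrast, the product $c_j \tilde{d}_j$ of two $\ell^2$ sequences is $\ell^1$-summable by Cauchy--Schwarz, so the triangle inequality suffices; the price is a Fourier-support estimate originating from $L^1$ whose $\sigma$-dependent integrability is precisely what forces the hypothesis $s+s'>0$ and the accompanying loss of $d/2$ in the target index.
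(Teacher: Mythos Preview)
Your argument is correct and follows the standard Littlewood--Paley approach; the paper itself does not supply a proof but simply cites Proposition~2.85 of \cite{bahouri2011fourier}, whose proof proceeds along essentially the same lines (general Besov remainder bound $R:B^{s}_{p_1,r_1}\times B^{s'}_{p_2,r_2}\to B^{s+s'}_{p,r}$ followed by the embedding $B^{s+s'}_{1,1}\hookrightarrow H^{s+s'-d/2}$). Your direct Hausdorff--Young treatment of the Sobolev case is a minor, equally valid shortcut relative to that embedding step.
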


By combing the results above, one obtains the following product law.
\begin{corollary}\label{cor-besov-bony:ProdLaw}
    Consider real numbers $s,s',r,\varepsilon\in\R$ and functions $a\in H^{s}$, $b\in H^{s'}$. The following estimates on the product $ab$ hold.
    \begin{enumerate}
        \item If $s+s'>0$ and $r\le \min(s,s',s+s'-d/2)$, then the product $ab$ belongs to $H^{r}$ with 
        \begin{equation}\label{eq-besov-bony:ProdLaw}
        \|ab\|_{H^r} \lesssim \|a\|_{H^s} \|b\|_{H^{s'}}.
        \end{equation}
        
        \item In the end point case $s+s'=0$, the product $ab$ belongs to $H^{-d/2-\varepsilon}$ for all $\varepsilon>0$ with
        \begin{equation}\label{eq-besov-bony:ProdLawEndPt}
            \|ab\|_{H^{-d/2-\varepsilon}} \lesssim \|a\|_{H^s} \|b\|_{H^{s'}}.
        \end{equation}
        
        \item When $s,s'\ge 0$, if additionally $a,b\in L^\infty$, then there holds
        \begin{equation}\label{eq-besov-bony:ProdLaw-tame}
            \|ab\|_{H^{\min(s,s')}} \lesssim |a|_{L^\infty} \|b\|_{H^{s'}} + \|a\|_{H^s} |b|_{L^\infty}. 
        \end{equation}
    \end{enumerate}
\end{corollary}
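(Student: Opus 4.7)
The plan is to apply Bony's decomposition $ab = T_a b + T_b a + R(a,b)$ and estimate each of the three pieces separately via Propositions \ref{prop-besov-bony:EstiPara} and \ref{prop-besov-bony:EstiRem}. The one systematic tool used throughout will be the Sobolev embedding $H^\sigma \hookrightarrow C^{\sigma-d/2}_*$ (for $\sigma \le d/2$), which converts a Sobolev regularity of a paramultiplier into the H\"older-type regularity required by the paraproduct estimate \eqref{eq-besov-bony:EstiParaNegInd}.

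For case (1), consider $T_a b$. If $s > d/2$, then $a \in L^\infty$ with $|a|_{L^\infty} \lesssim \|a\|_{H^s}$ and \eqref{eq-besov-bony:EstiParaLinfty} gives $\|T_a b\|_{H^{s'}} \lesssim \|a\|_{H^s}\|b\|_{H^{s'}}$; if instead $s \le d/2$, then $H^s \hookrightarrow C^{s-d/2}_*$ and \eqref{eq-besov-bony:EstiParaNegInd} gives $\|T_a b\|_{H^{s+s'-d/2}} \lesssim \|a\|_{H^s}\|b\|_{H^{s'}}$. In both situations the $H^r$-bound with $r \le \min(s', s+s'-d/2)$ follows by the trivial embedding of Sobolev spaces. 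The symmetric paraproduct $T_b a$ is handled by exchanging the roles of $a$ and $b$, and the remainder is covered directly by \eqref{eq-besov-bony:EstiRemSobo} since $s+s'>0$. Combining the three estimates produces \eqref{eq-besov-bony:ProdLaw}.

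Case (2) follows exactly the same scheme at the endpoint $s+s'=0$: the two paraproducts are of order $s+s'-d/2 = -d/2$, so they embed into $H^{-d/2-\varepsilon}$, while for the remainder one invokes \eqref{eq-besov-bony:EstiRemSoboLim} instead of \eqref{eq-besov-bony:EstiRemSobo}. This endpoint is the main technical obstacle, as the $\varepsilon$-loss originates precisely there and cannot be avoided.

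For the tame estimate (3), the strategy is to keep the $L^\infty$ norm on one factor throughout. Without loss of generality assume $s \le s'$, so $\min(s,s')=s$. Then \eqref{eq-besov-bony:EstiParaLinfty} gives $\|T_a b\|_{H^{s'}} \lesssim |a|_{L^\infty}\|b\|_{H^{s'}}$ and $\|T_b a\|_{H^s} \lesssim |b|_{L^\infty}\|a\|_{H^s}$, both of which embed into $H^s$. For the remainder $R(a,b) = \sum_{|j-j'|<N} \Delta_j a \cdot \Delta_{j'} b$, since $s,s' \ge 0$ I would control each diagonal dyadic block in $L^2$ by an $L^\infty \cdot L^2$ pairing, placing either $\Delta_j a$ or $\Delta_{j'} b$ in $L^\infty$ and summing dyadically; this yields $\|R(a,b)\|_{H^s} \lesssim |a|_{L^\infty}\|b\|_{H^s} + \|a\|_{H^s}|b|_{L^\infty}$, where $\|b\|_{H^s} \lesssim \|b\|_{H^{s'}}$ handles the imbalance in regularity.
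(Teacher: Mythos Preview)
Your proposal is correct and follows exactly the approach the paper intends: the corollary is stated there without proof, as an immediate consequence (``By combining the results above'') of Propositions~\ref{prop-besov-bony:EstiPara} and~\ref{prop-besov-bony:EstiRem} applied to the Bony decomposition $ab = T_a b + T_b a + R(a,b)$, which is precisely what you carry out.
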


By combining the two previous points with the embedding~$H^\mu({\R}^d)\subset C^{\mu-d/2}_*({\R}^d)$ (for any~$\mu\in\R$) 
we immediately obtain the following results. 
\begin{proposition}\label{lemPa}
Let~$r,\mu\in \R$ be such that~$r+\mu>0$. If~$\gamma\in\R$ satisfies 
$$
\gamma\le r  \quad\text{and}\quad \gamma < r+\mu-\frac{d}{2},
$$
then there exists a constant~$K$ such that, for all 
$a\in H^{r}({\R}^d)$ and all~$u\in H^{\mu}({\R}^d)$, 
$$
\lA au - T_a u\rA_{H^{\gamma}}\le K \lA a\rA_{H^{r}}\lA u\rA_{H^\mu}.
$$

Furthermore, in the limit case $r+\mu=0$, $\mu\le 0$, for any $\varepsilon>0$, we also have
$$
\lA au - T_a u\rA_{H^{-d/2-\varepsilon}}\le K \lA a\rA_{H^{r}}\lA u\rA_{H^\mu}.
$$
\end{proposition}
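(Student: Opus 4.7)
The plan is to start from Bony's decomposition $au = T_au + T_ua + R(a,u)$, which immediately reduces the claim to estimating the two terms $T_ua$ and $R(a,u)$ in $H^\gamma$.

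The remainder is the easier piece. Since $r+\mu>0$, estimate \eqref{eq-besov-bony:EstiRemSobo} yields directly
$$
\|R(a,u)\|_{H^{r+\mu-d/2}}\lesssim \|a\|_{H^r}\|u\|_{H^\mu},
$$
which covers any $\gamma\le r+\mu-d/2$. In the endpoint case $r+\mu=0$, I would replace \eqref{eq-besov-bony:EstiRemSobo} by \eqref{eq-besov-bony:EstiRemSoboLim}, which gives the same bound in $H^{-d/2-\varepsilon}$ for any $\varepsilon>0$.

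The paraproduct $T_ua$ requires splitting according to the sign of $\mu-d/2$. When $\mu>d/2$, Sobolev embedding places $u$ in $L^\infty$ and \eqref{eq-besov-bony:EstiParaLinfty} gives $\|T_ua\|_{H^r}\lesssim |u|_{L^\infty}\|a\|_{H^r}$, matching the constraint $\gamma\le r$. When $\mu<d/2$, I would use the embedding $H^\mu\subset C^{\mu-d/2}_*$ and apply the negative-index estimate \eqref{eq-besov-bony:EstiParaNegInd} with $\sigma=d/2-\mu>0$ to get $\|T_ua\|_{H^{r+\mu-d/2}}\lesssim \|u\|_{C^{\mu-d/2}_*}\|a\|_{H^r}$, matching $\gamma\le r+\mu-d/2$. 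In both non-threshold cases, the hypothesis $\gamma\le r$ and $\gamma<r+\mu-d/2$ is enough (the non-binding one being automatic). In the endpoint case $r+\mu=0$, $\mu\le 0$, we have $\mu-d/2<0$, so the negative-index route applies and lands $T_ua$ in $H^{-d/2}\subset H^{-d/2-\varepsilon}$.

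The main obstacle, and the reason one of the two hypotheses must be strict, is the threshold case $\mu=d/2$. There $H^{d/2}\not\subset L^\infty$, so neither of the two paraproduct estimates quoted above applies directly. I would handle it by accepting an $\varepsilon$-loss: via the embedding $H^{d/2}\subset C^{-\varepsilon}_*$ for any $\varepsilon>0$, estimate \eqref{eq-besov-bony:EstiParaNegInd} yields $\|T_ua\|_{H^{r-\varepsilon}}\lesssim \|u\|_{H^{d/2}}\|a\|_{H^r}$, which covers any $\gamma<r=r+\mu-d/2$ by choosing $\varepsilon$ small enough. This is precisely where the strict inequality $\gamma<r+\mu-d/2$ is used; combining with the bound on $R(a,u)$ completes the proof in all cases.
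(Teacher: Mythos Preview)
Your proof is correct and follows exactly the approach the paper sketches in the sentence preceding the proposition: decompose $au-T_au=T_ua+R(a,u)$, bound the remainder via \eqref{eq-besov-bony:EstiRemSobo}--\eqref{eq-besov-bony:EstiRemSoboLim}, and bound $T_ua$ via the embedding $H^\mu\subset C^{\mu-d/2}_*$ together with Proposition~\ref{prop-besov-bony:EstiPara}. Your explicit treatment of the threshold $\mu=d/2$ (where neither \eqref{eq-besov-bony:EstiParaLinfty} nor \eqref{eq-besov-bony:EstiParaNegInd} applies on the nose and one must trade an~$\varepsilon$ via $H^{d/2}\subset C_*^{-\varepsilon}$) is a detail the paper's one-line sketch leaves implicit, and it is precisely what forces the strict inequality $\gamma<r+\mu-d/2$.
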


Another important result is about the composition of smooth functions with Sobolev functions. 

\begin{proposition}[Theorem 2.89 of \cite{bahouri2011fourier}]\label{prop-besov-bony:Paralin}
    For all function $F\in C^\infty$ and for all $u\in H^s$ with $s>d/2$, the difference
    $F(u) - F(0)$ belongs to $H^{s}$ and the following tame estimate holds:
    \begin{equation}\label{eq-besov-bony:Compo}
        \left\| F(u) - F(0) \right\|_{H^{s}} \le K\big(|u|_{L^\infty}\big)\|u\|_{H^s}.
    \end{equation}
    Similarly, for all~$s\geq 0$ and all $F\in C^\infty$,
\begin{equation}\label{esti:F(u)bis}
\lA F(u)-F(0)\rA_{C^{s}_*}\le K(\la u\ra_{L^\infty})\lA u\rA_{C^{s}_*},
\end{equation}
for all~$u\in C^{s}_*$. 
\end{proposition}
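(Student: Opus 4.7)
The plan is to establish both tame estimates by Meyer's telescoping decomposition. Setting $S_{-1}u = 0$, I will write
$$F(u)-F(0) \;=\; \sum_{j \geq -1} \bigl[F(S_{j+1}u) - F(S_j u)\bigr] \;=\; \sum_{j\geq -1} m_j\,(S_{j+1}u - S_ju),$$
where $m_j := \int_0^1 F'\bigl((1-\tau)S_j u + \tau S_{j+1}u\bigr)\,d\tau$ comes from the fundamental theorem of calculus. Uniform boundedness of the Littlewood--Paley projections on $L^\infty$ gives $|m_j|_{L^\infty} \leq K(|u|_{L^\infty})$, with $K$ depending on $F$ only through $\sup_{|y|\leq C|u|_{L^\infty}}|F'(y)|$; meanwhile $S_{j+1}u - S_j u$ is spectrally supported in an annulus of size $\sim 2^j$, with $L^2$-norm (resp.\ $L^\infty$-norm) of the form $c_j 2^{-js}$ for some $(c_j)\in\ell^2$ (resp.\ $\ell^\infty$) whose norm is controlled by $\|u\|_{H^s}$ (resp.\ $\|u\|_{C^s_*}$).

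The delicate point is that $m_j$ itself is not spectrally localized. I will handle this by splitting $m_j = S_{j+N}m_j + (\mathrm{Id}-S_{j+N})m_j$ for an integer $N$ to be fixed large. The chain rule combined with Bernstein's inequality yields $|\partial^\alpha m_j|_{L^\infty} \leq C_\alpha K(|u|_{L^\infty})\, 2^{j|\alpha|}$, so $|(\mathrm{Id}-S_{j+N})m_j|_{L^\infty}$ can be made arbitrarily small in $N$. Meanwhile $S_{j+N}m_j\cdot(S_{j+1}u-S_j u)$ has Fourier spectrum contained in a ball of radius $\lesssim 2^{j+N}$, with $L^p$-norm $\lesssim K(|u|_{L^\infty})\, c_j\, 2^{-js}$ for $p\in\{2,\infty\}$. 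The Littlewood--Paley reconstruction lemma for ball-supported pieces (Lemma~2.84 of \cite{bahouri2011fourier}), valid precisely because $s > 0$, will then give the contribution of the main term in $H^s$ (resp.\ $C^s_*$) with norm $\lesssim K(|u|_{L^\infty})\|u\|_{H^s}$ (resp.\ $\|u\|_{C^s_*}$); the tail $(\mathrm{Id}-S_{j+N})m_j\,(S_{j+1}u-S_j u)$ is absorbed by choosing $N$ large enough and summing the resulting geometric series. The hypothesis $s>d/2$ in the Sobolev statement enters only to guarantee $u\in L^\infty$ so that $F(u)$ is pointwise defined.

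The hard part will be the spectral-localization bookkeeping for $m_j$: because $F'$ composed with a frequency-truncated function no longer has compact spectrum, the low-frequency truncation plus Bernstein smallness must be quantified uniformly in $j$ so that the bound on the tail depends only on $|u|_{L^\infty}$ and not on the strong norm of $u$. A streamlined alternative is Bony's paralinearization theorem, which writes $F(u) - F(0) = T_{F'(u)}u + R_F(u)$ with $R_F(u)$ in a strictly higher Sobolev class; the paraproduct piece is then controlled directly by Proposition~\ref{prop-besov-bony:EstiPara} using $|F'(u)|_{L^\infty}\leq K(|u|_{L^\infty})$, and the smoother remainder is absorbed with room to spare. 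Either route produces the stated tame inequalities.
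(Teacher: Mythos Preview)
The paper does not supply its own proof of this proposition; it is stated as Theorem~2.89 of \cite{bahouri2011fourier} and used as a black box. Your Meyer telescoping argument is precisely the proof given in that reference, so the proposal is correct and matches the cited source.

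One minor point of bookkeeping: the tail piece $(\mathrm{Id}-S_{j+N})m_j\cdot(S_{j+1}u-S_ju)$ does not itself have compact Fourier support, so the ball-reconstruction lemma does not apply to it directly. In the standard execution one further decomposes $(\mathrm{Id}-S_{j+N})m_j=\sum_{k\ge j+N}\Delta_k m_j$; each resulting product then has spectrum in a ball of size $\sim 2^k$, and the Bernstein bound $|\Delta_k m_j|_{L^\infty}\lesssim 2^{j-k}K(|u|_{L^\infty})$ produces the geometric factor you allude to. Your phrase ``summing the resulting geometric series'' suggests you have this in mind, so this is only a matter of making the sketch explicit.
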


\subsection{Further estimates on paraproduct}\label{subsect-besov:Para}

In the previous section, we have studied the Sobolev regularity of the paraproduct $T_a b$ where $a$ belongs to spaces based on $L^\infty$ and $b$ lies in Sobolev spaces. A natural question is whether the same Sobolev regularity remains true when one reverses the spaces to which $a,b$ belong. 

To begin with, we focus on the case where $a\in H^s$ with strictly negative index $s<0$.

\begin{proposition}\label{prop-besov-para:NegInd}
    Let $a\in H^s$ and $b\in C^\sigma_*$ with $s<0$ and $\sigma\in\R$, we have
    \begin{equation}\label{eq-besov-para:NegInd}
        \|T_a b\|_{H^{s+\sigma}} \lesssim \|a\|_{H^s}\|b\|_{C^\sigma_*}.
    \end{equation}
\end{proposition}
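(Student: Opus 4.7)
The plan is to argue by dyadic decomposition, exploiting the spectral localization of the building blocks of $T_a b$ together with the crucial fact $s<0$, which makes a geometric series converge.

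First, I will recall that the definition
\[
T_a b = \sum_{j \ge N} S_{j-N} a \cdot \Delta_j b
\]
is constructed precisely so that each summand has Fourier spectrum contained in an annulus $\{|\xi|\sim 2^j\}$, since the low-frequency cut-off $S_{j-N}a$ lives in a ball of radius $\sim 2^{j-N}$ while $\Delta_j b$ lives in an annulus of radius $\sim 2^j$ (this is exactly what the integer $N$ is chosen for). By the standard spectral almost-orthogonality characterization of Sobolev norms for frequency-localized sums, this gives
\[
\|T_a b\|_{H^{s+\sigma}}^2
\lesssim \sum_{j \ge N} 2^{2j(s+\sigma)}\, \|S_{j-N}a \cdot \Delta_j b\|_{L^2}^2.
\]

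Next I will estimate each summand by Hölder as $\|S_{j-N}a \cdot \Delta_j b\|_{L^2} \le \|S_{j-N}a\|_{L^2}\, |\Delta_j b|_{L^\infty}$, and use the defining bound $|\Delta_j b|_{L^\infty} \le 2^{-j\sigma}\|b\|_{C^\sigma_*}$ to pull out a factor $2^{-2j\sigma}\|b\|_{C^\sigma_*}^2$. After this step the estimate reduces to
\[
\|T_a b\|_{H^{s+\sigma}}^2
\lesssim \|b\|_{C^\sigma_*}^2 \sum_{j \ge N} 2^{2js}\, \|S_{j-N}a\|_{L^2}^2.
\]

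The main (and only real) point is then to bound $\sum_j 2^{2js}\|S_{j-N}a\|_{L^2}^2$ by $\|a\|_{H^s}^2$. Writing $\|S_{j-N}a\|_{L^2}^2 \lesssim \sum_{k\le j-N} \|\Delta_k a\|_{L^2}^2$ and swapping the order of summation via Fubini, this becomes
\[
\sum_k \|\Delta_k a\|_{L^2}^2 \sum_{j \ge k+N} 2^{2js}.
\]
Here is where the assumption $s<0$ is used decisively: the inner geometric series converges and equals $\sim 2^{2(k+N)s} \sim 2^{2ks}$. The remaining sum is then exactly $\sum_k 2^{2ks}\|\Delta_k a\|_{L^2}^2 \sim \|a\|_{H^s}^2$, yielding the claim. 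The step I view as the main (though modest) obstacle is the bookkeeping of this summation-switch and the verification that the hypothesis $s<0$ is the precise threshold for convergence; no more delicate tool than a geometric series is required.
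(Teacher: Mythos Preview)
Your proof is correct and follows essentially the same approach as the paper: both use the spectral localization of each building block $S_{j-N}a\cdot\Delta_j b$ to an annulus of size $2^j$, estimate it via H\"older and the Zygmund bound $|\Delta_j b|_{L^\infty}\le 2^{-j\sigma}\|b\|_{C^\sigma_*}$, and then reduce everything to the low-frequency characterization $\bigl(\sum_j 2^{2js}\|S_j a\|_{L^2}^2\bigr)^{1/2}\sim\|a\|_{H^s}$ valid for $s<0$. The only cosmetic difference is that the paper cites this last characterization from Proposition~2.79 of Bahouri--Chemin--Danchin, whereas you prove it inline by the geometric-series/Fubini argument; conversely, the paper first applies $\Delta_j$ to $T_ab$ and then estimates, while you invoke almost-orthogonality of the frequency-localized summands directly---these are equivalent bookkeeping choices.
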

\begin{proof}
    Due to the construction \eqref{eq-besov-def:DyaBlk} of dyadic blocks, we have
    \begin{equation*}
        \Delta_j \left(T_a b\right) = \sum_{|j-j'|<N} S_{j'}a \Delta_{j'}b,
    \end{equation*}
    for some $N\in\N$ independent of $j,j'$. Then we have
    \begin{align*}
    	2^{j(s+\sigma)}\|\Delta_j\left(T_ab\right)\|_{L^2} &\le 2^{j(s+\sigma)}\sum_{|j-j'|<N} \|S_{j'} a \Delta_{j'} b\|_{L^2} \\
    	&\le 2^{j(s+\sigma)}\sum_{|j-j'|<N} \|S_{j'} a\|_{L^2} |\Delta_{j'} b|_{L^\infty} \\
    	&\le 2^{js}\sum_{|j-j'|<N} 2^{(j-j')\sigma} \|S_{j'} a\|_{L^2} \|b\|_{C_*^\sigma} \\
    	&= \sum_{|j-j'|<N} 2^{(j-j')(s+\sigma)} 2^{j's}\|S_{j'} a\|_{L^2} \|b\|_{C_*^\sigma} \\
    	&\lesssim \sum_{|j-j'|<N} 2^{j's}\|S_{j'} a\|_{L^2} \|b\|_{C_*^\sigma}.
    \end{align*}
    By taking $\ell^2$-norm in $j$ and apply the characterization
    $$
    \sum_{j\in\N}2^{js}\|S_j a\|_{L^2} \sim \|a\|_{H^s}, \quad \text{for }s<0,
    $$
    which is proved in Proposition 2.79 of \cite{bahouri2011fourier}, one can conclude the desired inequality \eqref{eq-besov-para:NegInd}.
\end{proof}

Eventually, we also need the 
following estimate which complements the previous one when $s=0$. 

\begin{corollary}\label{cor-besov-para:L2Hol}
    Let $a\in L^2$ and $b\in W^{k,\infty}$ with $k\in\N$. Then, we have
    \begin{equation}\label{eq-besov-para:L2Hol}
        \|T_a b\|_{H^k} \lesssim \|a\|_{L^2} \|b\|_{W^{k,\infty}}.
    \end{equation}
\end{corollary}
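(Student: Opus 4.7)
The plan is to reduce the corollary to the endpoint case $k = 0$—namely, $\|T_a c\|_{L^2} \lesssim \|a\|_{L^2}|c|_{L^\infty}$ for $c\in L^\infty$—and then recover the general case via the Leibniz rule. Since $\partial_x$ commutes with both Fourier multipliers $S_{j-N}$ and $\Delta_j$, distributing the derivative inside the dyadic sum yields
\[
\partial_x^k T_a b \;=\; \sum_{j \geq N} \partial_x^k\!\left( S_{j-N} a \cdot \Delta_j b \right) \;=\; \sum_{\ell=0}^k \binom{k}{\ell}\, T_{\partial_x^\ell a}\!\left(\partial_x^{k-\ell} b\right).
\]

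For each $\ell \geq 1$, I would invoke Proposition \ref{prop-besov-para:NegInd} with indices $s = -\ell < 0$ and $\sigma = \ell$, so that $s + \sigma = 0$ and $s<0$ places us in its scope. Since $\partial_x^\ell \colon L^2 \to H^{-\ell}$ is bounded, and since for integer $\ell\ge 0$ one has the embedding $W^{\ell,\infty} \hookrightarrow C^\ell_*$ (a consequence of $|\Delta_j f|_{L^\infty} \lesssim 2^{-j\ell}|\partial_x^\ell f|_{L^\infty}$ for $j \geq 1$), the proposition produces the clean bound $\|T_{\partial_x^\ell a}(\partial_x^{k-\ell} b)\|_{L^2} \lesssim \|a\|_{L^2}\|b\|_{W^{k,\infty}}$. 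The remaining $\ell = 0$ term, $T_a(\partial_x^k b)$ with $\partial_x^k b \in L^\infty$, is precisely the $k = 0$ endpoint applied to the pair $(a, \partial_x^k b)$; combining the finitely many $\ell$-contributions with the $L^2$-bound on $T_a b$ itself (again from the $k=0$ step, applied this time to $b$) then yields $\|T_a b\|_{H^k} \lesssim \|a\|_{L^2}\|b\|_{W^{k,\infty}}$.

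The main obstacle is the endpoint $\ell=0$ case, which Proposition \ref{prop-besov-para:NegInd} does not cover: its proof requires $s<0$ strictly, since the $\ell^2$-in-$j$ sum of $\|S_j a\|_{L^2}^2$ diverges at $s=0$. To settle it, I would use Bony's full decomposition $ac = T_a c + T_c a + R(a, c)$ with $c = \partial_x^k b \in L^\infty$: the product $ac$ satisfies $\|ac\|_{L^2} \leq \|a\|_{L^2}|c|_{L^\infty}$; the paraproduct $T_c a$ is controlled by $|c|_{L^\infty}\|a\|_{L^2}$ via \eqref{eq-besov-bony:EstiParaLinfty}; and the remainder $R(a,c)$ is bounded in $L^2$ by the classical Coifman--Meyer estimate $R \colon L^2 \times \mathrm{BMO} \to L^2$ combined with the inclusion $L^\infty \subset \mathrm{BMO}$. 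The hard part is this last ingredient, which goes slightly beyond the dyadic estimates catalogued in the appendix and is the only genuinely new input needed.
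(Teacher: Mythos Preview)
Your proof is correct and follows essentially the same route as the paper: Leibniz rule on $\partial^k T_a b$, Proposition~\ref{prop-besov-para:NegInd} for the terms with $\ell\ge 1$, and the Coifman--Meyer bilinear estimate (Proposition~\ref{Bilinear} in the paper) for the $\ell=0$ endpoint. The only cosmetic difference is that the paper applies Proposition~\ref{Bilinear} directly to $T_a(\partial^k b)$, whereas you reach it via the Bony decomposition $T_a c = ac - T_c a - R(a,c)$ and then invoke the same Coifman--Meyer input on $R(a,c)$; both routes rest on the identical key lemma.
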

\begin{proof}
The proof follows easily from  Proposition~ \ref{Bilinear}; however, for lack of a reference we include a proof. 

We begin by noticing that Proposition~\ref{Bilinear} implies that $T_a b$ belongs to $L^2$ so that it suffices to show that all the $k$-order derivatives of $T_a b$ belong to $L^2$. Consider an arbitrary index $\alpha\in\N^d$ with $|\alpha|=k$ and write
    \begin{equation*}
        \partial^\alpha \left( T_ab \right) = T_a \partial^\alpha b + \sum_{\beta+\gamma=\alpha,\ |\beta|\ge 1} T_{\partial^\beta a}\partial^\gamma b.
    \end{equation*}
According to Proposition \ref{Bilinear}, the first term belongs to $L^2$, since the condition $b\in W^{k,\infty}$ guarantees that $\partial^\alpha b\in L^\infty \subset BMO$. As for the second term, observe that $\partial^\beta a \in H^{-|\beta|}$ with $|\beta|>0$. Therefore, it is possible to apply Proposition~\ref{prop-besov-para:NegInd} with $s=-|\beta|$ and $\sigma=k-|\gamma|$, to get
    \begin{equation*}
        \left\| T_{\partial^\beta a}\partial^\gamma b \right\|_{L^2} \lesssim \|\partial^\beta a\|_{H^{-|\beta|}} \|\partial^\gamma b\|_{C^{k-|\gamma|}_*} \lesssim \|a\|_{L^2} \|b\|_{C^k_*} \lesssim \|a\|_{L^2} \|b\|_{W^{k,\infty}},
    \end{equation*}
    which completes the proof by taking the summation over all indices such that $\beta+\gamma=\alpha$ and $ |\beta|\ge 1$.
\end{proof}

\section{Paradifferential calculus}\label{app:ParaDiff}

In Appendix \ref{app:Besov}, we have seen the definition \ref{eq-besov-bony:DefPara} of paraproduct, which can be regarded as a refinement of multiplication with rough functions. The \textit{paradifferential operator} to be introduced below is a generalization of this notion. For a symbol $a(x,\xi)$, we shall modify it such that its frequency in $x$ is controlled by the size of $\xi$, which allows us to re-prove the results of pseudo-differential operators (boundedness on Sobolev space, symbolic calculus, commutator estimate, etc) for symbols rough in $x$.

To begin with, we review some basic notions about pseudo-differential operators.

\begin{definition}[Pseudo-differential operator]\label{def-paradiff:PDO}
	Let $a$ be a tempered distribution on $\R^d \times \R^d$. For all Schwartz function $u\in \mathcal{S}(\R^d)$, we define
	\begin{equation}\label{eq-paradiff:PDO}
		\left\langle \Op{a}u, v\right\rangle_{\mathcal{S}'\times\mathcal{S}} := \frac{1}{(2\pi)^d} \iint e^{ix\cdot\xi}a(x,\xi)\hat{u}(\xi) \overline{v(x)}\dxi \dx.
	\end{equation}
	Then $\Op{a}$ is a continuous application from Schwartz functions $\mathcal{S}(\R^d)$ to tempered distributions $\mathcal{S}'(\R^d)$.  The mapping $\Op{a}$ is called the  \textit{pseudo-differential operator} with \textit{symbol} $a$.
\end{definition}

Now, we are in the position to define  \textit{paradifferential operators}. We use the Fourier multiplier $S_j$ 
and the cutoff function $\varphi$ defined in \eqref{eq-besov-def:DyaBlk}. 
We also used an index $N\in\N$ 
chosen large enough as in the definition \eqref{eq-besov-bony:DefPara} of paraproduct.

\begin{definition}[Paradifferential operator]\label{def-paradiff:Paradiff}
	Let $a = a(x,\xi)$ be a symbol smooth in $\xi\neq 0$ with Sobolev or H{\"o}lder regularity in $x$. Then the paradifferential operator of symbol $a$ is defined as
	\begin{equation}\label{eq-paradiff:Paradiff}
		T_a := \Op{\sigma_a},\ \ \sigma_a(x,\xi) := \sum_{j\ge N} (S_{j-N}a)(x,\xi) \varphi\left(\frac{\xi}{2^j}\right),
	\end{equation}
	where $S_{j-N}$ acts on $x$ variable only:
	$$
	(S_{j-N} a)(x,\xi)=(\chi(2^{-(j-N)}D_x)a(\cdot,\xi))(x) \quad\text{for}\quad j\ge N,
	$$
	and $S_{j-N}a=0$ for $j<N$.
\end{definition}
\begin{remark}
	In particular, when $a = a(x)$ is independent of $\xi$, this definition coincides with the one of the 
	paraproduct $T_a$ (see \eqref{eq-besov-bony:DefPara}). 
\end{remark}

\begin{remark}
Note that we consider symbols $a(x,\xi)$ that need not be smooth for $\xi=0$ (for instance $a(x,\xi)=|\xi|^{m}$ for any $m\in\R$). In fact, by construction, the low-frequency information $|\xi| \ll 1$ of $u$ is eliminated, meaning that paradifferential operators are never bijective. Nevertheless, for elliptic symbols, it is possible to construct left and right inverses, up to some reasonable remainders, which are known as \textit{parametrices}.
\end{remark}

\begin{definition}\label{def-paradiff:SymbClass}
	Let $\rho\ge0$ and $m\in\R$. The symbol class $\Gamma_\rho^m$ is defined as the collection of symbols $a=a(x,\xi)$ H{\"o}lder in $x$ and smooth in $\xi$, such that the following semi-norm is finite,
	\begin{equation}\label{eq-paradiff:SymbNorm}
		M^m_\rho(a) := \sup_{|\alpha|\le \rho+1+d/2} \sup_{|\xi|>1/2} \left\| \langle\xi\rangle^{-m+|\alpha|} \partial_\xi^\alpha a(\cdot,\xi) \right\|_{W^{\rho,\infty}_x} < +\infty.
	\end{equation}
	Recall that $W^{\rho,\infty}$ is the usual space of Lipschitz continuous functions when $\rho\in\N$, and H{\"o}lder space otherwise (see Definition \ref{def-intro-main:HolSpace}).
\end{definition}

Now, we are able to show boundedness and symbolic calculus for paradifferential operators. The proof of following results can be found in Chapter 5 of \cite{metivier2008para}.
\begin{proposition}\label{prop-paradiff:Bd}
	Given $m\in\R$, for all $a\in\Gamma^m_0$, the paradifferential operator $T_a$ is of order $m$, namely
	\begin{equation}\label{eq-paradiff:Bd}
		\|T_a\|_{\mathcal{L}\left(H^{s};H^{s-m}\right)} \lesssim_{s} M^m_0(a),\ \ \forall s\in\R.
	\end{equation}
\end{proposition}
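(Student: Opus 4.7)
The plan is to carry out a dyadic Littlewood--Paley analysis of $T_a$ that exploits the key frequency-localization property hidden in Definition~\ref{def-paradiff:Paradiff}: the $j$-th building block of the symbol,
$$
\sigma_a^{(j)}(x,\xi) := (S_{j-N}a)(x,\xi)\,\varphi(\xi/2^j),
$$
is localized at frequency $\sim 2^{j-N}$ in $x$ (through $S_{j-N}$) and in an annulus $|\xi|\sim 2^j$ in $\xi$. Writing $T_a = \sum_{j\ge N} T_j$ with $T_j := \operatorname{Op}(\sigma_a^{(j)})$, the first observation is that, because $\varphi$ is supported in an annulus, one has $T_j u = T_j\tilde\Delta_j u$, where $\tilde\Delta_j := \Delta_{j-1}+\Delta_j+\Delta_{j+1}$ is a slightly fattened dyadic block. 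Moreover, since $\widehat{(T_ju)}(\xi)$ is a convolution in $\xi$ of a function supported in $|\eta|\lesssim 2^{j-N}$ (the $x$-Fourier transform of $S_{j-N}a$) with one supported in $|\zeta|\sim 2^j$, the output is itself localized in an annulus $|\xi|\sim 2^j$ provided $N$ is chosen large enough (independent of $j$).

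The heart of the proof is the uniform estimate
\begin{equation}\label{eq:unif-Tj}
\|T_j u\|_{L^2} \lesssim 2^{jm}\,M^m_0(a)\,\|\tilde\Delta_j u\|_{L^2},
\end{equation}
with an implicit constant independent of $j$. To prove this one rewrites $T_j$ as an integral operator,
$$
T_j u(x) = \int K_j(x,x-y)\, u(y)\, dy,\qquad K_j(x,z) := \frac{1}{(2\pi)^d}\int e^{iz\cdot\xi}\sigma_a^{(j)}(x,\xi)\, d\xi,
$$
and controls $K_j(x,z)$ via integration by parts in $\xi$: using $|\partial_\xi^\alpha \sigma_a^{(j)}(x,\xi)|\lesssim M^m_0(a)\,2^{j(m-|\alpha|)}$ uniformly in $x$, choosing $|\alpha|\le[d/2]+1$ one obtains
$$
\sup_x \int |K_j(x,z)|\, dz + \sup_z \int |K_j(x,z)|\, dx \lesssim M^m_0(a)\, 2^{jm},
$$
so that Schur's lemma yields \eqref{eq:unif-Tj}. (This is where we use the full range $|\alpha|\le \rho+1+d/2$ in the definition \eqref{eq-paradiff:SymbNorm} of $M^m_0$.)

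Once \eqref{eq:unif-Tj} is in hand, the result follows by almost-orthogonality. Because the $(T_j u)_j$ are spectrally localized in dyadic annuli (up to a uniformly bounded overlap), one has
$$
\|T_a u\|_{H^{s-m}}^2 \sim \sum_{j\ge N} 2^{2j(s-m)}\|T_j u\|_{L^2}^2 \lesssim M^m_0(a)^2\sum_{j\ge N} 2^{2js}\|\tilde\Delta_j u\|_{L^2}^2 \lesssim_{s} M^m_0(a)^2\,\|u\|_{H^s}^2,
$$
which is exactly \eqref{eq-paradiff:Bd}. The main obstacle is the kernel/Schur estimate \eqref{eq:unif-Tj}: one must carefully exploit the cancellation of derivatives in $\xi$ on the symbol, and ensure that the bounds on $\partial_\xi^\alpha \sigma_a^{(j)}$ do not lose powers of $2^j$ coming from differentiating $\varphi(\xi/2^j)$ (each such derivative gives $2^{-j}$, matching perfectly the $2^{j(m-|\alpha|)}$ coming from the symbol class). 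Everything else is standard Littlewood--Paley bookkeeping.
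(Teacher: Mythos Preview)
The paper does not give its own proof of this proposition; it simply cites Chapter~5 of M\'etivier's book. Your overall strategy---dyadic decomposition $T_a=\sum_j T_j$, frequency localization of each $T_j$ to an annulus of size $2^j$, uniform $L^2\to L^2$ bound on each block, and almost-orthogonality---is exactly the standard one found there, and the final Littlewood--Paley summation is correct.

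There is, however, a genuine gap in your kernel estimate. With $|\alpha|\le [d/2]+1$ integrations by parts you obtain the pointwise bound
\[
\sup_x|K_j(x,z)|\lesssim M^m_0(a)\,2^{j(m+d)}\bigl(1+2^j|z|\bigr)^{-([d/2]+1)},
\]
but $(1+|\cdot|)^{-([d/2]+1)}$ is \emph{not} in $L^1(\R^d)$ (one needs exponent $>d$), so $\sup_x\int|K_j(x,z)|\,dz$ is not controlled this way---the argument fails already for $d=1$. Moreover, the second Schur condition $\sup_z\int|K_j(x,z)|\,dx$ cannot hold at all, since the kernel has no decay in~$x$. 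The correct route is to set $G_j(z):=\sup_x|K_j(x,z)|$, so that $|T_ju(x)|\le (G_j*|\tilde\Delta_j u|)(x)$, and then bound $\|G_j\|_{L^1}$ via Cauchy--Schwarz: after rescaling to unit frequency,
\[
\int|K_j(x,z)|\,dz\le\Bigl(\int(1+|z|^2)^{-L}\,dz\Bigr)^{1/2}\Bigl(\int(1+|z|^2)^{L}|K_j(x,z)|^2\,dz\Bigr)^{1/2},
\]
with $L=[d/2]+1$. The first factor is finite since $2L\ge d+1>d$, and the second is $\sim\|\sigma_a^{(j)}(x,\cdot)\|_{H^L_\xi}\lesssim M^m_0(a)\,2^{jm}$ by Plancherel and the compact $\xi$-support. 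This is precisely where the seemingly odd exponent $\rho+1+d/2$ in the seminorm $M^m_\rho$ is tuned: it gives just enough $\xi$-derivatives for the weighted-$L^2$ argument, not for a naive pointwise Schur bound.
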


\begin{proposition}\label{prop-paradiff:SymbCal}
	Let $a\in\Gamma^m_\rho$ and $b\in\Gamma^{m'}_\rho$ with $m,m'\in\R$ and $\rho> 0$. Then the composition $T_a T_b$ and adjoint $T_a^*$ are both paradifferential operators, such that, for all $s\in\R$,
	\begin{align}
		\big\| T_aT_b - T_{a\#_\rho b};{\mathcal{L}(H^s;H^{s-m-m'+\rho})}\|
		\lesssim_s& M^m_\rho(a)M^{m'}_0(b) + M^m_0(a)M^{m'}_\rho(b), \label{eq-paradiff:SymbCalComp} \\
		\| T_a - T_{a^*} \|_{\mathcal{L}(H^s;H^{s-m+\rho})} \lesssim_s& M^m_\rho(a), \label{eq-paradiff:SymbCalAdj}
	\end{align}
	where
	\begin{equation*}
		a \#_\rho b = \sum_{|\alpha|<\rho} \frac{1}{\alpha!} \partial_\xi^\alpha a D_x^\alpha b,
		\quad a^* = \sum_{|\alpha|<\rho} \frac{1}{\alpha!} \partial_\xi^\alpha D_x^\alpha \overline{a}. 
	\end{equation*}
\end{proposition}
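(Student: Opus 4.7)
The plan is to verify each of the two estimates by explicitly computing the symbol of the composition $T_aT_b$ (respectively the adjoint $T_a^*$) and comparing it to the candidate symbol $a\#_\rho b$ (respectively $a^*$), with all discrepancies packaged as paradifferential operators of lower order to which Proposition~\ref{prop-paradiff:Bd} applies. The starting point is the key spectral property of the smoothed symbol $\sigma_a$ introduced in Definition~\ref{def-paradiff:Paradiff}: by the construction via $S_{j-N}a \cdot \varphi(\xi/2^j)$ and the choice of $N$ large, the partial Fourier transform $\widehat{\sigma_a}(\eta,\xi)$ is supported in a cone $\{|\eta|\le \epsilon|\xi|\}$ for some small $\epsilon$ depending only on $N$. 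This "low frequency of the coefficient versus frequency of the argument" is what drives the entire calculus.

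For the composition estimate~\eqref{eq-paradiff:SymbCalComp}, I would write $T_aT_b = \Op{\sigma_a}\Op{\sigma_b}$ via the oscillatory integral representation, and Taylor-expand $\sigma_a(x,\xi+\eta)$ in the frequency increment $\eta$ up to order $\lfloor\rho\rfloor$:
\begin{equation*}
\sigma_a(x,\xi+\eta) = \sum_{|\alpha|<\rho}\frac{\eta^\alpha}{\alpha!}\partial_\xi^\alpha\sigma_a(x,\xi) + r_\rho(x,\xi,\eta).
\end{equation*}
The leading polynomial part produces, after converting factors of $\eta^\alpha$ into derivatives $D_x^\alpha$ acting on $\sigma_b$, the truncated composition symbol $\sigma_{a\#_\rho b}$ plus terms that are automatically smoothing because $\partial_\xi^\alpha\sigma_a$ still satisfies a spectral condition. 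The Taylor remainder $r_\rho$ carries a factor $|\eta|^\rho$, which, thanks to the spectral localization $|\eta|\lesssim|\xi|$, gains $\rho$ orders of smoothing, while the Hölder regularity in $x$ controls $|\widehat{\sigma_a}(\eta,\xi)|$ so that the resulting symbol lies in $\Gamma^{m+m'-\rho}_0$ with seminorm bounded by $M^m_\rho(a)M^{m'}_0(b)+M^m_0(a)M^{m'}_\rho(b)$. Applying Proposition~\ref{prop-paradiff:Bd} to the corresponding paradifferential operator delivers the claim.

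For the adjoint estimate~\eqref{eq-paradiff:SymbCalAdj}, the formal symbol is
\begin{equation*}
\sigma_{T_a^*}(x,\xi) = \frac{1}{(2\pi)^d}\iint e^{-i(x-y)\cdot\eta}\,\overline{\sigma_a(y,\xi+\eta)}\,dy\,d\eta,
\end{equation*}
and I would Taylor-expand $\overline{\sigma_a(y,\xi+\eta)}$ jointly in $y$ about $x$ and in $\eta$ about $0$, to total order $\lfloor\rho\rfloor$. Each term $\partial_\xi^\alpha D_x^\alpha \overline{\sigma_a}/\alpha!$ reproduces a summand of $\sigma_{a^*}$ modulo a smoothing operator (again by the spectral condition on $\sigma_a$), and the joint Taylor remainder factor $(y-x)^\beta \eta^\gamma$ combined with the Hölder regularity of $a$ yields a symbol in $\Gamma^{m-\rho}_0$ with seminorm bounded by $M^m_\rho(a)$.

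The principal obstacle in this program is the careful bookkeeping of the Taylor remainders when $\rho$ is not an integer: one cannot simply integrate $\rho$ times by parts, and one must instead exploit the Zygmund space structure encoded in $W^{\rho,\infty}_x$ to extract the correct order of decay on $\widehat{\sigma_a}(\eta,\cdot)$ in $|\eta|$. A second delicate point is that the composition $\Op{\sigma_a}\Op{\sigma_b}$ and the adjoint $\Op{\sigma_a}^*$ are not themselves of paradifferential form on the nose — their symbols fail the spectral condition — so each intermediate symbol must be decomposed into a "paradifferential part" plus an error that acts as a regularizing Fourier integral operator on $H^s$. Once these decompositions are in place, the estimates follow by routine application of Proposition~\ref{prop-paradiff:Bd} and the paraproduct estimates from Appendix~\ref{app:Besov}.
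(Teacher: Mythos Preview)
The paper does not give its own proof of this proposition: immediately before Proposition~\ref{prop-paradiff:Bd} it states that ``The proof of following results can be found in Chapter 5 of \cite{metivier2008para}'', and this reference covers both Propositions~\ref{prop-paradiff:Bd} and~\ref{prop-paradiff:SymbCal}. Your outline is precisely the standard argument one finds in that reference (spectral localization of $\sigma_a$, Taylor expansion in the frequency variable, control of the non-integer remainder via the $W^{\rho,\infty}$ regularity of the symbol in $x$), so there is nothing to compare and your sketch is appropriate.
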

As a corollary, we have the following commutator estimate:
\begin{corollary}\label{cor-paradiff:CommuEsti}
	If $a\in\Gamma^m_\rho$, $b\in\Gamma^{m'}_\rho$ with $m,m'\in\R$ and $\rho>0$, the commutator $[T_a,T_b]$ is of order $m+m'-\min(\rho,1)$ in the sense that
	\begin{equation}\label{eq-paradiff:CommuEsti}
		\big\| [T_a,T_b] ;{\mathcal{L}(H^s;H^{s-m-m'+\min(\rho,1)})}\big\| \lesssim_s M^m_\rho(a)M^{m'}_0(b) + M^m_0(a)M^{m'}_\rho(b).
	\end{equation}
\end{corollary}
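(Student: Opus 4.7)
The plan is to obtain the commutator estimate as a direct consequence of the symbolic calculus developed in Proposition~\ref{prop-paradiff:SymbCal}, combined with the elementary observation that principal-order commutators cancel. Specifically, I apply Proposition~\ref{prop-paradiff:SymbCal} twice, to $T_aT_b$ and to $T_bT_a$, to write
\begin{equation*}
[T_a,T_b] \;=\; \bigl(T_aT_b - T_{a\#_\rho b}\bigr) \;-\; \bigl(T_bT_a - T_{b\#_\rho a}\bigr) \;+\; T_{a\#_\rho b - b\#_\rho a}.
\end{equation*}
By Proposition~\ref{prop-paradiff:SymbCal}, the two parenthesized terms are bounded in $\mathcal{L}(H^s;H^{s-m-m'+\rho})$ by $M^m_\rho(a)M^{m'}_0(b)+M^m_0(a)M^{m'}_\rho(b)$. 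Since $\rho\ge\min(\rho,1)$, the Sobolev embedding $H^{s-m-m'+\rho}\hookrightarrow H^{s-m-m'+\min(\rho,1)}$ shows these two symbolic errors already satisfy the required estimate \eqref{eq-paradiff:CommuEsti}. It remains to control $T_{a\#_\rho b - b\#_\rho a}$.

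The key observation is that in the asymptotic expansion
\begin{equation*}
a\#_\rho b - b\#_\rho a \;=\; \sum_{|\alpha|<\rho} \frac{1}{\alpha!}\bigl(\partial_\xi^\alpha a \cdot D_x^\alpha b - \partial_\xi^\alpha b \cdot D_x^\alpha a\bigr),
\end{equation*}
the $\alpha=0$ term is $ab-ba\equiv 0$. Hence, in the case $\rho\le 1$, the sum is empty and $T_{a\#_\rho b - b\#_\rho a}=0$, so the estimate with exponent $\min(\rho,1)=\rho$ follows immediately. In the case $\rho>1$, each surviving term has $1\le|\alpha|<\rho$; the dominant contribution is of order $m+m'-1$, coming from $|\alpha|=1$, and terms with $|\alpha|\ge 2$ are of strictly lower order. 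Applying Proposition~\ref{prop-paradiff:Bd} to each such paradifferential operator yields a bound in $\mathcal{L}(H^s;H^{s-m-m'+1})=\mathcal{L}(H^s;H^{s-m-m'+\min(\rho,1)})$ by the $M^{m+m'-|\alpha|}_0$-seminorm of the summand.

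The main obstacle — though not a deep one — is the bookkeeping for these seminorm controls: one must verify that for each multi-index $\alpha$ with $1\le|\alpha|<\rho$,
\begin{equation*}
M^{m+m'-|\alpha|}_0\bigl(\partial_\xi^\alpha a \cdot D_x^\alpha b\bigr) \;\lesssim\; M^m_{|\alpha|}(a)\, M^{m'}_0(b),
\end{equation*}
and the symmetric estimate exchanging the roles of $a$ and $b$. This follows from the Leibniz rule in $\xi$, the pointwise inequality $|\partial_\xi^{\alpha'}\partial_\xi^\alpha a(x,\xi)|\lesssim M^m_0(a)\,\langle\xi\rangle^{m-|\alpha|-|\alpha'|}$ (extending the range of $\xi$-derivatives, which only costs an enlarged semi-norm), and the control of $|D_x^\alpha b(\cdot,\xi)|_{L^\infty_x}$ by $M^{m'}_{|\alpha|}(b)\langle\xi\rangle^{m'}$ thanks to $|\alpha|\le\rho$. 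Since $|\alpha|\le\rho$ gives $M^m_{|\alpha|}(a)\lesssim M^m_\rho(a)$ and likewise for $b$, the resulting bound is absorbed into the right-hand side of \eqref{eq-paradiff:CommuEsti}. Summing the finitely many terms completes the proof.
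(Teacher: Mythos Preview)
Your approach is the standard one and is exactly what the paper intends: the corollary is stated immediately after Proposition~\ref{prop-paradiff:SymbCal} without proof. The case $0<\rho\le 1$ is handled cleanly, since then $a\#_\rho b - b\#_\rho a$ reduces to $ab-ba=0$ and only the two composition errors survive, giving the stated bound.

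For $\rho>1$ there is a small gap in the bookkeeping. The displayed inequality $M^{m+m'-|\alpha|}_0(\partial_\xi^\alpha a\,D_x^\alpha b)\lesssim M^m_{|\alpha|}(a)M^{m'}_0(b)$ cannot hold as written: $D_x^\alpha b$ requires $|\alpha|$ derivatives of $b$ in~$x$, which $M^{m'}_0(b)$ does not control (your own justification correctly invokes $M^{m'}_{|\alpha|}(b)$). Once corrected, each surviving symbol term is bounded by a multiple of $M^m_\rho(a)M^{m'}_\rho(b)$, and this product is \emph{not} dominated by $M^m_\rho(a)M^{m'}_0(b)+M^m_0(a)M^{m'}_\rho(b)$ in general. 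The simple fix is to apply Proposition~\ref{prop-paradiff:SymbCal} with regularity parameter $\min(\rho,1)$ rather than $\rho$: since $\Gamma^m_\rho\subset\Gamma^m_{\min(\rho,1)}$ with $M^m_{\min(\rho,1)}(a)\le M^m_\rho(a)$, and since $a\#_{\min(\rho,1)}b=ab=b\#_{\min(\rho,1)}a$, the symbol difference vanishes identically and the two composition remainders directly yield the claimed bound in $\mathcal{L}(H^s;H^{s-m-m'+\min(\rho,1)})$.
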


To end this part, we introduce a result concerning paralinear parabolic problems,
\begin{proposition}\label{prop-paradiff:Parabolic}
	Let~$r\in \R$,~$\rho\in (0,1)$,~$J=[z_0,z_1]\subset\R$ and let 
	$p\in \Gamma^{1}_{\rho}({\R}^d\times J)$ 
	satisfying$$
	\RE p(z;x,\xi) \geq c \la\xi\ra,
	$$
	for some positive constant~$c$. Then for any ~$f\in L^2_z(J;H^{r-1/2}_x)$ and~$w_0\in H^{r}$, there exists $w\in C^0_z(J;H^r_x)\cap L^2_z(J;H^{r+1/2}_x)$ solution of the parabolic evolution equation
	\begin{equation}\label{eq-paradiff:Parabolic}
		\partial_z w + T_p w =f,\quad w\arrowvert_{z=z_0}=w_0,
	\end{equation}
	satisfying 
	\begin{equation}\label{eq-paradiff:ParabolicEstiBas}
		\lA w \rA_{C^0_z(J;H^r_x)\cap L^2_z(J;H^{r+1/2}_x)}\le C\cdot\left(\lA w_0\rA_{H^{r}_x}+ \lA f\rA_{L^2(J;H^{r-1/2}_x)}\right),
	\end{equation}
	for some positive constant~$C$ depending only on~$r,\rho,c$ and~$M^1_\rho(p)$. Moreover, this solution is unique in ~$C^0_z(J;H^s_x)\cap L^2_z(J;H^{s+1/2}_x)$ for any~$s\in \R$.
\end{proposition}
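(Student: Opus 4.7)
The plan is to proceed by the classical energy method, treating the paralinear evolution equation as a parabolic one thanks to the coercivity hypothesis $\operatorname{Re} p \geq c\langle\xi\rangle$. The core of the argument is an a priori estimate of the form
$$
\frac{d}{dz}\|w\|_{H^r_x}^2 + 2c \|w\|_{H^{r+1/2}_x}^2 \leq C \|w\|_{H^r_x}^2 + C\|f\|_{H^{r-1/2}_x} \|w\|_{H^{r+1/2}_x},
$$
obtained by taking the $H^r_x$ inner product of \eqref{eq-paradiff:Parabolic} with $w$. Once this is established, an application of Young's inequality to absorb the cross term into $2c\|w\|_{H^{r+1/2}_x}^2$, followed by Gronwall's inequality, yields the desired bound \eqref{eq-paradiff:ParabolicEstiBas}.

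The key step, and the main obstacle, is a sharp Gårding-type inequality for $T_p$: one must show there exist constants $C_1, C_2 > 0$ (depending only on $r$, $\rho$, $c$, and $M^1_\rho(p)$) such that
$$
\operatorname{Re} \bigl\langle T_p w, \langle D_x\rangle^{2r} w \bigr\rangle_{L^2_x} \geq C_1 \|w\|_{H^{r+1/2}_x}^2 - C_2 \|w\|_{H^r_x}^2.
$$
To derive this, I would first use symbolic calculus (Proposition~\ref{prop-paradiff:SymbCal}) to commute $\langle D_x\rangle^r$ with $T_p$, reducing the estimate to the case $r=0$: the commutator is of order $1-\rho < 1$, so it only produces terms absorbable into $C_2 \|w\|_{H^r_x}^2$ after an interpolation between $H^{r+1/2}_x$ and $H^{r-1/2}_x$. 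For the $r=0$ case, one writes $\operatorname{Re} T_p = \frac{1}{2}(T_p + T_p^*)$ and invokes \eqref{eq-paradiff:SymbCalAdj} to replace $T_p^*$ by $T_{\overline{p}}$ modulo an operator of order $1-\rho$. This reduces the problem to establishing Gårding for the symbol $\operatorname{Re} p \in \Gamma^1_\rho$ satisfying $\operatorname{Re} p \geq c \langle\xi\rangle$. Such a positivity estimate for paradifferential operators with rough symbols is standard (see e.g.\ Métivier \cite{metivier2008para}) and follows from a square-root construction: write $\operatorname{Re} p = q \cdot q$ with $q := (\operatorname{Re} p)^{1/2} \in \Gamma^{1/2}_\rho$, then use symbolic calculus again to get $T_{\operatorname{Re} p} = T_q^* T_q + R$ where $R$ is of order $1-\rho$, giving the desired lower bound.

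With the Gårding inequality in hand, the energy estimate is immediate by pairing the equation with $\langle D_x\rangle^{2r} w$. For existence, I would regularize by considering $\partial_z w_\varepsilon + T_p w_\varepsilon + \varepsilon \langle D_x\rangle^2 w_\varepsilon = J_\varepsilon f$ with $w_\varepsilon|_{z_0} = J_\varepsilon w_0$, where $J_\varepsilon$ is a Friedrichs mollifier; the added viscosity makes this problem well-posed in $H^r_x$ for each fixed $z$ by standard ODE theory in Hilbert spaces (the symbol $\varepsilon|\xi|^2$ dominates, and $T_p$ is a bounded perturbation on compactly-supported-in-frequency data). The a priori estimate derived above is uniform in $\varepsilon$ (the extra term $\varepsilon\|w_\varepsilon\|_{H^{r+1}_x}^2$ is nonnegative and can be discarded), so one extracts a weak limit $w_\varepsilon \rightharpoonup w$ in $L^2_z H^{r+1/2}_x$ satisfying \eqref{eq-paradiff:Parabolic}, and the continuity in $z$ with values in $H^r_x$ follows from the usual argument combining weak continuity with the energy identity.

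Uniqueness is a direct consequence of the a priori estimate applied to the difference of two solutions with vanishing data; this works in any $H^s_x$ since the estimate relies only on the coercivity of $T_p$, not on the specific value of $r$.
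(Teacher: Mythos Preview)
The paper does not provide a proof of this proposition: it is stated in Appendix~\ref{app:ParaDiff} as a background result from paradifferential calculus, without proof, alongside the other standard facts (boundedness, symbolic calculus) imported from~\cite{metivier2008para} and the literature on water waves. Your energy-method approach via a G\aa rding inequality for $T_p$ is exactly the standard route taken in those references (see e.g.\ \cite{ABZ2014}, Proposition~2.18, or the parabolic estimates in \cite{metivier2008para}), so there is nothing to compare against and your outline is correct.

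One minor imprecision worth flagging: you write that the commutator $[\langle D_x\rangle^r, T_p]$ is ``of order $1-\rho$''. Strictly, it is of order $r+1-\rho$; what you mean is that after pairing with $\langle D_x\rangle^r w$ the resulting bilinear form is controlled by $\|w\|_{H^{r+1/2-\rho}_x}\|w\|_{H^{r+1/2}_x}$, which is indeed absorbable by interpolation and Young. This is a phrasing issue, not a gap.
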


\section{Elliptic regularity}\label{app:Ellip}

The purpose of this appendix is to prove Proposition \ref{prop-reg-bas:EllipReg} about the elliptic regularity for the equation \eqref{eq-reg-bas:EllipEq}. For the sake of generality, we shall prove a result which holds in arbitrary dimension. Given $d\ge 1$, we consider the following boundary value problem in the lower half-space $\R^{d+1}_-=\{(x,z)\in\R^{d}\times\R:z<0\}$,
\begin{equation}\label{eq-ellip:MainEq}
	\left\{\begin{array}{ll}
		\alpha\partial_z^2 v + \beta\cdot\nabla_x\partial_zv + \Delta_x v - \gamma\partial_z v = 0 & \text{on }\R^{d+1}_-, \\ [0.5ex]
		v|_{z=0} = \psi, & \\ [0.5ex]
		\lim_{z\rightarrow -\infty} \nabla_{x,z}v = 0, & 
	\end{array}\right.
\end{equation}
where the coefficients $\alpha=\alpha(x),\beta=\beta(x)$ and $\gamma=\gamma(x)$ 
are given by:
\begin{equation}\label{eq-ellip:Coeff}
	\alpha := 1 + |\nabla_x\eta|^2,\ \ \beta := -2\nabla_x\eta,\ \ \gamma :=\Delta_x\eta.
\end{equation}
In what follows, we shall prove the following:
\begin{proposition}\label{prop-ellip:Main}
	Consider three real numbers $s,s_0,\sigma$ satisfying
	\begin{equation}\label{n200}
	s\ge s_0, \ \mez \le \sigma \le s+\frac{1}{2}, \quad \text{with} \quad  s_0>d/2,\ s_0\ge 1.
	\end{equation}
	Given $\eta\in H^{s+1}(\R^d)$, $\psi\in H^{\sigma}(\R^d)$, 
	the equation \eqref{eq-ellip:MainEq} admits a unique variational solution $v\in H^{1,0}$ (see Definition~\ref{def-ellip-var:VarSol} below). Moreover, this solution verifies, for any $z_0<0$,
	\begin{equation}\label{eq-ellip:Main}
	\begin{aligned}
	    &\|\nabla_{x,z}v\|_{L^2_z(z_0,0;H_x^{\sigma-1/2})} + \|\nabla_{x,z}^2v\|_{L^2_z(z_0,0;H_x^{\sigma-3/2})} \\
	    &\hspace{8em}\le K(\|\eta\|_{H^{s_0+1}}) \big( \|\eta\|_{H^{s+1}} \|\psi\|_{H^{1/2}} + \|\psi\|_{H^{\sigma}} \big).
	\end{aligned}
	\end{equation}
\end{proposition}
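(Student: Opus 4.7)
The plan is to follow the classical three-step strategy for elliptic boundary-value problems in graph domains: first establish a variational solution together with the energy-level estimate, then upgrade the tangential regularity by a tangential Littlewood--Paley decomposition, and finally recover the normal regularity from the equation itself. The novelty lies not in the strategy but in the careful paraproduct analysis required to obtain \emph{tame} bounds, in which the highest-regularity norms $\|\eta\|_{H^{s+1}}$ and $\|\psi\|_{H^\sigma}$ appear only linearly, with coefficients depending on the low-regularity norms $\|\eta\|_{H^{s_0+1}}$ and $\|\psi\|_{H^{1/2}}$.

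\textbf{Step 1 (variational solution).} The symmetric matrix of top-order coefficients of~\eqref{eq-ellip:MainEq},
\[
M(x)=\begin{pmatrix} \alpha(x) & \tfrac{1}{2}\beta(x)^{T} \\ \tfrac{1}{2}\beta(x) & I_d \end{pmatrix},
\]
has determinant $\alpha - |\beta|^2/4 = 1$, so the associated quadratic form is uniformly coercive on $\dot{H}^1(\R^{d+1}_-)$, with constants depending only on $|\nabla_x\eta|_{L^\infty}$. Lifting $\psi$ by $\underline{\psi}(x,z)=(e^{z\langle D_x\rangle}\psi)(x)$, which belongs to $\dot{H}^1(\R^{d+1}_-)$ with $\|\nabla_{x,z}\underline{\psi}\|_{L^2}\simeq \|\psi\|_{\dot{H}^{1/2}}$, reduces~\eqref{eq-ellip:MainEq} to a homogeneous problem for $v-\underline{\psi}$. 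Lax--Milgram then yields a unique variational solution and the base estimate $\|\nabla_{x,z} v\|_{L^2(\R^{d+1}_-)}\le K(\|\eta\|_{H^{s_0+1}})\|\psi\|_{H^{1/2}}$.

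\textbf{Step 2 (tangential regularity by Littlewood--Paley).} For $j\ge 0$, let $v_j=\Delta_j^x v$ denote the tangential dyadic block. Applying $\Delta_j^x$ to~\eqref{eq-ellip:MainEq} yields
\begin{equation*}
\alpha\partial_z^2 v_j+\beta\cdot\nabla_x\partial_z v_j+\Delta_x v_j-\gamma\partial_z v_j=F_j,
\end{equation*}
where $F_j=[\alpha,\Delta_j^x]\partial_z^2 v+[\beta,\Delta_j^x]\cdot\nabla_x\partial_z v+[\gamma,\Delta_j^x]\partial_z v$. Testing against $\partial_z v_j$ on the strip $(z_0,0)$ and integrating by parts, the ellipticity of $M$ gives a weighted energy inequality for $\|\nabla_{x,z}v_j\|_{L^2_z((z_0,0);L^2_x)}$. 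Multiplying by $2^{2j(\sigma-1/2)}$ and summing in $j$ produces control of $\|\nabla_{x,z} v\|_{L^2_z H^{\sigma-1/2}_x}$ as soon as $\sum_j 2^{2j(\sigma-1/2)}\|F_j\|_{L^2_z H^{-1}_x}^2$ is tamely bounded. The bound on $\nabla^2_{x,z}v$ in $L^2_z H^{\sigma-3/2}_x$ then follows from solving the equation for $\partial_z^2 v$ and applying the product laws of Corollary~\ref{cor-besov-bony:ProdLaw}.

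\textbf{Step 3 (tame control of $F_j$).} The heart of the argument is the paraproduct decomposition of each product $\mathfrak{c}\cdot w$, where $\mathfrak{c}\in\{\alpha,\beta,\gamma\}$ and $w$ is a derivative of $v$: Bony's formula gives $\mathfrak{c}w=T_\mathfrak{c}w+T_w\mathfrak{c}+R(\mathfrak{c},w)$. The paraproduct $T_\mathfrak{c}$ produces a zero-order commutator with $\Delta_j^x$ whose $L^2$ norm is bounded by $K(\|\eta\|_{H^{s_0+1}})$ (via Corollary~\ref{cor-paradiff:CommuEsti} and Proposition~\ref{prop-besov-bony:Paralin}); the weighted $\ell^2$ summation yields a contribution which is absorbed by the left-hand side. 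The remainder $T_w\mathfrak{c}+R(\mathfrak{c},w)$ is controlled by Propositions~\ref{prop-besov-bony:EstiPara}, \ref{prop-besov-bony:EstiRem} and~\ref{prop-besov-para:NegInd} by $\|\mathfrak{c}\|_{H^s}\|w\|_{L^2_z H^{-1/2}_x}$; the second factor is estimated at the \emph{base level} via Step~1, and yields the tame term $\|\eta\|_{H^{s+1}}\|\psi\|_{H^{1/2}}$ on the right-hand side of~\eqref{eq-ellip:Main}. An induction on $\sigma$, or equivalently a bootstrap between levels $\sigma$ and $\sigma+1/2$, closes the estimate.

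\textbf{Expected main obstacle.} The delicate point is the coefficient $\gamma=\Delta_x\eta$: at the threshold $s_0>d/2$, $s_0\ge 1$, it only belongs to $H^{s_0-1}_x$, which need not be in $L^\infty$, so the product $\gamma\partial_z v$ cannot be handled pointwise and the paraproduct decomposition is unavoidable. Moreover, tameness forbids pairing $\partial_z v$ at high regularity with $\gamma$ at high regularity simultaneously; one must always trade one against the other using the base estimate, which forces the careful bookkeeping and the induction over $\sigma$ outlined above.
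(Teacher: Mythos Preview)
Your overall strategy---tangential Littlewood--Paley decomposition plus energy estimates on dyadic blocks and paraproduct commutator bounds---is a legitimate alternative, essentially the direct approach in Chapter~2 of~\cite{lannes2013water}. The paper takes a genuinely different route: it paralinearizes the elliptic operator, \emph{factorizes} it as $T_\alpha(\partial_z+T_a)(\partial_z-T_A)$ (Lemma~\ref{lem-ellip-reg:Facto}), and then solves the resulting forward and backward first-order parabolic-in-$z$ equations via Proposition~\ref{prop-paradiff:Parabolic}. The factorization buys a clean $z$-localization mechanism (the cutoff $\chi(z)$ is applied to a first-order equation, so only $\chi'$ and not $\chi''$ appears), and the same machinery is reused verbatim for the sharp paralinearization of the Dirichlet-to-Neumann operator in Section~\ref{subsect-reg:DN}. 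Your approach is more elementary and avoids the factorization, but the strip localization and the $C^0_z$ control in Proposition~\ref{prop-reg-bas:EllipReg} are less transparent.

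There is, however, a concrete error in your Step~2: testing the equation against $\partial_z v_j$ and integrating over the strip yields only Rellich-type \emph{boundary} terms,
\[
\int_{z_0}^0\!\!\int_{\R^d} \big(\alpha\,\partial_z^2 v_j+\Delta_x v_j\big)\,\partial_z v_j\,\dx\dz
=\tfrac12\int_{\R^d} \Big[\alpha(\partial_z v_j)^2-|\nabla_x v_j|^2\Big]_{z=z_0}^{z=0}\dx,
\]
and \emph{not} the bulk quantity $\|\nabla_{x,z} v_j\|_{L^2_zL^2_x}^2$ that you need before weighting by $2^{2j(\sigma-1/2)}$ and summing. The correct test function is $w_j=\Delta_j^x(v-\underline\psi)$ itself, paired against the divergence-form equation over the \emph{whole} half-space $\R^{d+1}_-$; coercivity of $A$ then controls $\|\nabla_{x,z}w_j\|_{L^2(\R^{d+1}_-)}$ in terms of $\|[A,\Delta_j^x]\nabla_{x,z}w\|_{L^2}$. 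With that fix, and provided your Step~3 is made quantitative (the paraproduct piece $[T_{\mathfrak c},\Delta_j^x]$ must gain $\varepsilon$ tangential derivatives per iteration step, which it does since $\mathfrak c\in C^{s_0-d/2}_*$ and one takes $0<\varepsilon<s_0-d/2$), the bootstrap on $\sigma$ closes along the same lines as the paper's Proposition~\ref{prop-ellip-reg:Iteration}.
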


The rest of this section is devoted to the proof of Proposition~\ref{prop-ellip:Main}.

\subsection{Variational formulation}\label{subsect-ellip:Var}
The first part of the proof consists in reformulating System \eqref{eq-ellip:MainEq} into a  variational form. To begin with, we transform the system \eqref{eq-ellip:MainEq} to an elliptic problem with a homogeneous 
Dirichlet boundary condition at $z=0$ and a non-trivial source term. Consider the following simple extension $\Psi=\Psi(x,z)$ of $\psi=\psi(x)$ defined in the lower half-space $\R^{d+1}_-$ by:
\begin{equation}\label{eq-ellip-var:DefPsi}
	\Psi(x,z) = \Big(e^{z\Dx}\psi\Big)(x),
\end{equation}
where $e^{z\Dx}$ is the Fourier multiplier with symbol $e^{z\la\xi\ra}$. 
For any $\psi\in H^\sigma$ with $\sigma\in\R$, the Plancherel theorem ensures that, for all $k\in\N$,
\begin{equation}\label{eq-ellip-var:EstiPsi}
	\|\partial_z^k \Psi\|_{L^2(\R_-;H^{\sigma+1/2-k})} \lesssim \|\psi\|_{H^\sigma}.
\end{equation}
Moreover, $\Psi$ decays exponentially to zero as $z\to-\infty$. Then the difference $w:= v-\Psi$ satisfies
\begin{equation}\label{eq-ellip-var:EqW}
	\left\{\begin{array}{ll}
		\nabla_{x,z}\cdot \left( A\nabla_{x,z}w \right) = -\nabla_{x,z}\cdot \left( A\nabla_{x,z}\Psi \right) & \text{in }\R^{d+1}_-, \\ [0.5ex]
		w|_{z=0} = 0, & \\ [0.5ex]
		\lim_{z\rightarrow -\infty} \nabla_{x,z}w = 0, & 
	\end{array}\right.
\end{equation}
where the matrix $A$ is given by
\begin{equation}\label{eq-ellip-var:DefA}
	A = \left(\begin{array}{cc}
		1 & \nabla_x\eta \\
		\nabla_x\eta^T & 1+|\nabla_x\eta|^2
	\end{array}\right).
\end{equation}

Before stating the variational formulation, we clarify the functional space to be used, following \cite{ABZ2014}. For arbitrary $f,g\in C^\infty_0(\R^{d+1}_-)$, we define a bilinear form,
\begin{equation}\label{eq-ellip-var:SpaceScalProd}
	\langle f,g \rangle_{H^{1,0}} :=  \iint_{\R^{d+1}_-} \nabla_{x,z} f \cdot \nabla_{x,z} g \dx\dz.
\end{equation}
One can verify that $\langle \cdot,\cdot \rangle_{H^{1,0}}$ defines an inner product on $C^\infty_0(\R^{d+1}_-)$. Then, we denote by $H^{1,0}$ the completion of $C^\infty_0(\R^{d+1}_-)$ with respect to the inner product $\langle \cdot,\cdot \rangle_{H^{1,0}}$. 

We claim that $H^{1,0}$ is a Hilbert space contained in the space of distributions $\mathcal{D}'(\R^{d+1}_-)$.
\begin{lemma}\label{lem-ellip-var:Space}
	Let $f\in H^{1,0}$. Then $f$ is a locally integrable function satisfying 
	\begin{equation}\label{eq-ellip-var:Space}
		\iint |f(x,z)|^2 e^{-|z|} \dx\dz \le C \|f\|_{H^{1,0}},
	\end{equation}
	where $C>0$ is a universal constant.
\end{lemma}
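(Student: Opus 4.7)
\bigskip

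\noindent\textbf{Proof plan.} First note that by homogeneity the stated inequality must have $\|f\|_{H^{1,0}}^2$ on the right-hand side; the argument will yield exactly that. My plan is to first establish the estimate on the dense subspace $C_0^\infty(\R^{d+1}_-)$, and then use the estimate itself to identify an element of the abstract completion with a bona fide function.

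For $f\in C_0^\infty(\R^{d+1}_-)$, since $f$ is compactly supported in the open lower half-space, in particular $f(x,0)=0$. Therefore the fundamental theorem of calculus gives
\begin{equation*}
f(x,z)=-\int_{z}^{0}\partial_s f(x,s)\,\diff\!s,
\end{equation*}
and the Cauchy--Schwarz inequality yields
\begin{equation*}
|f(x,z)|^{2}\le |z|\int_{-\infty}^{0}|\partial_s f(x,s)|^{2}\,\diff\!s.
\end{equation*}
Multiplying by $e^{-|z|}$, integrating in $(x,z)$, and using $\int_{-\infty}^{0}|z|e^{-|z|}\,\diff\!z=1$, I obtain
\begin{equation*}
\iint_{\R^{d+1}_-}|f(x,z)|^{2}e^{-|z|}\,\dx\dz\le \iint_{\R^{d+1}_-}|\partial_z f|^{2}\,\dx\dz\le \|f\|_{H^{1,0}}^{2},
\end{equation*}
so the inequality holds with $C=1$ on the dense subspace.

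For the density step, let $L^{2}_{w}$ denote the Hilbert space with norm $\big(\iint |g|^{2}e^{-|z|}\,\dx\dz\big)^{1/2}$. The computation above shows that the inclusion $C_0^\infty(\R^{d+1}_-)\hookrightarrow L^{2}_{w}$ is continuous with respect to the $H^{1,0}$-norm. Given any $f\in H^{1,0}$, pick a sequence $f_n\in C_0^\infty(\R^{d+1}_-)$ that is Cauchy for $\|\cdot\|_{H^{1,0}}$ and represents $f$; by the estimate $(f_n)$ is also Cauchy in $L^{2}_{w}$, hence converges to some $\tilde f\in L^{2}_{w}\subset L^{1}_{\mathrm{loc}}(\R^{d+1}_-)$. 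The limit $\tilde f$ is independent of the choice of approximating sequence (two such sequences differ by a sequence converging to $0$ in $H^{1,0}$, hence in $L^{2}_{w}$), which identifies the abstract completion with a subspace of $L^{1}_{\mathrm{loc}}$ and yields the desired bound by passing to the limit.

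There is no real obstacle here beyond some care in the density argument; the only mildly subtle point is to verify that distinct elements of the abstract completion are sent to distinct locally integrable functions, which is precisely the content of the estimate applied to differences $f_n-f_m$.
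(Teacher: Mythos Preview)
Your proof is correct and follows essentially the same approach as the paper: the fundamental theorem of calculus plus Cauchy--Schwarz for $C_0^\infty$ functions, followed by density. You are actually a bit more careful than the paper in spelling out the identification of the abstract completion with a space of locally integrable functions, and you correctly flag that the right-hand side should be $\|f\|_{H^{1,0}}^{2}$ (the paper's displayed inequality has a typo, though its proof gives the squared bound).
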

\begin{proof}
    Consider first a smooth function $f\in C^\infty_0(\R^{d+1}_-)$ and observe that, for all $(x,z)\in\R^{d+1}_-$, we have
	$$ 
	|f(x,z)| = \left| \int_0^z f_z(x,z') \dz' \right| \le \sqrt{|z|} \sqrt{\int_{-\infty}^0 |f_z(x,z')|^2 \dz'},
	$$
	to deduce that
	$$
	\iint_{\R^{d+1}_-} |f(x,z)|^2 e^{-|z|}\dx\dz \le \|f_z\|_{L^2(\dx\dz)}^2 \int_{-\infty}^0 e^{-|z|}|z|\dz  \le C \|f\|_{H^{1,0}}^2.
	$$
	To extend this result to $f\in H^{1,0}$, it is sufficient 
	to consider a sequence $f_k\in C^\infty_0(\R^{d+1}_-)$ converging to $f$ in $H^{1,0}$.
\end{proof}

Now, we are in a position to introduce the variational problem associated to our main system \eqref{eq-ellip:MainEq}. For later use, the variational formulation will be presented in the following slightly general form: given $F\in L^2(\R^{d+1}_-)$, we seek $w\in H^{1,0}$ such that
\begin{equation}\label{eq-ellip-var:VarForm}
	\iint_{\R^{d+1}_-} A\nabla_{x,z}w \cdot \nabla_{x,z}\varphi \dx\dz = - \iint_{\R^{d+1}_-} F\cdot\nabla_{x,z} \varphi \dx\dz,\ \ \forall \varphi\in H^{1,0}.
\end{equation}

\begin{remark}\label{rmk-ellip-var:DistriForm}
    From the definition of $H^{1,0}$ above, the class of smooth compactly supported functions $C^\infty_0(\R^{d+1}_-)$ is dense in $H^{1,0}$. Thus, for $w\in H^{1,0}$ and $F\in L^2(\R^{d+1}_-)$, the variational problem \eqref{eq-ellip-var:VarForm} holds if and only if 
    \begin{equation}\label{eq-ellip-var:DistriForm}
		\nabla_{x,z}\cdot\left(A\nabla_{x,z}w\right) = -\nabla_{x,z}\cdot F, \quad\text{in }\mathcal{D}'(\R^{d+1}_-),
	\end{equation}
	where $\mathcal{D}'(\R^{d+1}_-)$ represents the class of distributions on $\R^{d+1}_-$. For the sake of simplicity, in Section~\ref{subsect-ellip:reg} below, we shall use the distributional formulation \eqref{eq-ellip-var:DistriForm} instead of the original one \eqref{eq-ellip-var:VarForm}.
\end{remark}

The following proposition asserts that this variational problem has a solution, and also includes a statement about higher elliptic regularity. We will use the following notations: given $r\in\R$ and $z_0<0$,
\begin{align}
	\| w \|_{\mathcal{X}^{r}_{z_0}} :=& \|\nabla_{x,z}w\|_{L^2(\R^{d+1}_-)} + \|\nabla_{x,z}w\|_{L^2(z_0,0;H^{r+1/2})}  \label{eq-ellip-var:AuxSpX} \\
	&\hspace{12em}+ \|\nabla_{x,z}^2w\|_{L^2(z_0,0;H^{r-1/2})}, \nonumber \\
	\| F \|_{\mathcal{Y}^{r}_{z_0}} :=& \|F\|_{L^2(\R^{d+1}_-)} + \|\nabla_{x,z}\cdot F\|_{L^2(z_0,0;H^{r-1/2})}. \label{eq-ellip-var:AuxSpY}
\end{align}

\begin{proposition}\label{prop-ellip-var:Main}
	Let $\eta\in W^{1,\infty}(\R^d)$ and $F\in L^2(\R^{d+1}_-)$.
	\begin{itemize}
		\item(Existence and uniqueness) The variational problem \eqref{eq-ellip-var:VarForm} admits a unique solution $w\in H^{1,0}$, denoted as $\mathscr{R}(\eta)F$. Moreover, this solution satisfies
		\begin{equation}\label{eq-ellip-var:MainBasicReg}
			\|\nabla_{x,z}\mathscr{R}(\eta)F\|_{L^2(\R^{d+1}_-)} \le K(\|\eta\|_{W^{1,\infty}}) \|F\|_{L^2(\R^{d+1}_-)}.
		\end{equation}
		\item(Regularity) Consider real numbers $s,s_0,\sigma$ satisfying
		\begin{equation}\label{eq-ellip-var:MainCond}
			s\ge s_0,\ \mez \le \sigma \le s+\frac{1}{2}, \quad{with}\quad s_0>d/2,\ s_0\ge 1.
		\end{equation} 
		If we further assume that $\eta\in H^{s+1}(\R^d)$ and $\nabla_{x,z}F \in L^2(z_0,0;H^{\sigma-3/2})$ for some $z_0<0$, then the unique solution $w = \mathscr{R}(\eta)F$ satisfies, for all $z_0<z_0'<0$,
		\begin{equation}\label{eq-ellip-var:Main}
			\| \mathscr{R}(\eta)F \|_{\mathcal{X}^{\sigma-1}_{z_0'}} \le K(\|\eta\|_{H^{s_0+1}})\big( \|\eta\|_{H^{s+1}} \| F \|_{\mathcal{Y}^{-1/2}_{z_0}} +  \| F \|_{\mathcal{Y}^{\sigma-1}_{z_0}} \big) .
		\end{equation}
	\end{itemize}
\end{proposition}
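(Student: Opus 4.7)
My first step is to establish existence, uniqueness, and \eqref{eq-ellip-var:MainBasicReg} via the Lax--Milgram theorem. The matrix $A$ defined in \eqref{eq-ellip-var:DefA} is symmetric and its eigenvalues are pinched between $(1+|\nabla_x\eta|_{L^\infty}^2)^{-1}$ and $1+|\nabla_x\eta|_{L^\infty}^2$; consequently the bilinear form $a(w,\varphi)=\iint_{\R^{d+1}_-}A\nabla_{x,z}w\cdot\nabla_{x,z}\varphi\,\dx\dz$ is continuous and coercive on the Hilbert space $H^{1,0}$. Since $\ell(\varphi)=-\iint F\cdot\nabla_{x,z}\varphi\,\dx\dz$ is bounded on $H^{1,0}$ by $\|F\|_{L^2}\|\varphi\|_{H^{1,0}}$, Lax--Milgram produces a unique $w=\mathscr{R}(\eta)F\in H^{1,0}$ with $\|\nabla_{x,z}w\|_{L^2}\le K(|\nabla_x\eta|_{L^\infty})\|F\|_{L^2}$, and Sobolev embedding $H^{s_0+1}\hookrightarrow W^{1,\infty}$ (requiring $s_0>d/2$) converts this into \eqref{eq-ellip-var:MainBasicReg}.

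\textbf{Strategy for the regularity estimate.} For the tame estimate I would argue by induction on $\sigma$, climbing from $\sigma=1/2$ up to $\sigma=s+1/2$ in steps of $1/2$. The base case $\sigma=1/2$ follows from the $L^2$ control of $\nabla_{x,z}w$ combined with a Caccioppoli-type argument to pick up the extra half derivative on the slab $(z_0,0)$. For the inductive step I would apply a tangential derivative (or a tangential dyadic block $\Delta_j$) to the equation \eqref{eq-ellip-var:DistriForm}: setting $w'=\partial_x w$, a direct commutator computation yields
\begin{equation*}
\nabla_{x,z}\cdot(A\nabla_{x,z}w')=-\nabla_{x,z}\cdot\widetilde{F},\qquad \widetilde{F}:=\partial_x F+(\partial_x A)\nabla_{x,z}w,
\end{equation*}
and one re-applies the regularity already proved at level $\sigma$, the gain coming from the fact that $\widetilde F$ is controlled in $\mathcal{Y}^{\sigma-3/2}$ whenever $F$ is in $\mathcal{Y}^{\sigma-1}$. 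Once all tangential derivatives are controlled, normal second derivatives are recovered algebraically from the equation via $\alpha\,\partial_z^2 w=-\beta\cdot\nabla_x\partial_z w-\Delta_x w+\gamma\,\partial_z w+(\text{divergence of }F)$, after multiplying through by $\alpha^{-1}\in H^{s+1}\cap L^\infty$, which has a tame estimate by Proposition~\ref{prop-besov-bony:Paralin}. A cutoff $\chi(z)$ equal to $1$ on $(z_0',0)$ and supported in $(z_0,0)$ reduces everything to localized estimates, at the price of a lower-order error term supported near $z_0$ that is harmless because of \eqref{eq-ellip-var:MainBasicReg}.

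\textbf{Tame estimates via paraproducts.} The tame structure of \eqref{eq-ellip-var:Main} hinges on how derivatives are distributed between $A$ (whose regularity reflects $\|\eta\|_{H^{s+1}}$) and $\nabla_{x,z}w$ (whose regularity is $\sigma-1$). Using Bony's decomposition on the commutator term, I would write
\begin{equation*}
(\partial_x A)\nabla_{x,z}w=T_{\partial_x A}\nabla_{x,z}w+T_{\nabla_{x,z}w}(\partial_x A)+R(\partial_x A,\nabla_{x,z}w)
\end{equation*}
and estimate each piece in the Sobolev scale via Corollary~\ref{cor-besov-bony:ProdLaw}. The paraproduct $T_{\partial_x A}$ acts on $\nabla_{x,z}w$ with cost $\|A\|_{H^{s_0+1}}\lesssim K(\|\eta\|_{H^{s_0+1}})$, producing a term linear in the highest norm $\|w\|_{\mathcal{X}^{\sigma-1}}$ that can be absorbed on the left-hand side by choosing the cutoff appropriately. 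The other two pieces contribute factors of $\|A\|_{H^{s+1}}\lesssim K(\|\eta\|_{H^{s_0+1}})\|\eta\|_{H^{s+1}}$ but only multiply the low norm $\|\nabla_{x,z}w\|_{L^2}$, which by the base estimate is in turn controlled by $\|F\|_{L^2}$. This gives precisely the tame dichotomy of \eqref{eq-ellip-var:Main}.

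\textbf{Main obstacle.} The serious technical difficulty is ensuring that the induction closes with the correct tame constants: every time the bootstrap is iterated, one must verify that the paraproduct/remainder estimates do not force constants to depend on $\|\eta\|_{H^{s+1}}$ in a nonlinear way. This is exactly where $s_0>d/2$ and $s_0\ge 1$ are used, since $H^{s_0}$ must both be an algebra and contain $\nabla_x\eta$; and it is where Proposition~\ref{prop-besov-bony:Paralin} is invoked to control nonlinear expressions like $\alpha^{-1}$ tamely. A secondary nuisance is that $H^{1,0}$ is not a standard weighted Sobolev space, so the truncation that passes from the global solution to the local slab must rely on Lemma~\ref{lem-ellip-var:Space} to legitimately multiply $w$ by a $z$-cutoff without losing any control; the resulting commutator terms live in $(z_0,z_0')$ and are bounded by $\|F\|_{L^2}$ through \eqref{eq-ellip-var:MainBasicReg}, thereby contributing only to the low-norm part of the right-hand side of \eqref{eq-ellip-var:Main}.
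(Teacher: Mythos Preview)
Your treatment of existence and uniqueness matches the paper exactly: both invoke the coercivity of the bilinear form associated with $A$ (the paper phrases it as the Riesz theorem, you call it Lax--Milgram) to obtain \eqref{eq-ellip-var:MainBasicReg}.

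For the regularity statement, however, the paper takes a genuinely different route. Rather than differentiating the equation tangentially and iterating on the source term as you propose, the paper first paralinearizes the operator $\alpha\partial_z^2+\beta\cdot\nabla_x\partial_z+\Delta_x$ to $T_\alpha\partial_z^2+T_\beta\cdot\nabla_x\partial_z-T_{|\xi|^2}$ (Lemma~\ref{lem-ellip-reg:Paralin}), then \emph{factorizes} this as $T_\alpha(\partial_z+T_a)(\partial_z-T_A)$ plus a remainder of order $1-\varepsilon$ (Lemma~\ref{lem-ellip-reg:Facto}), and finally solves the resulting forward/backward parabolic evolution equations in $z$ via the maximal regularity of Proposition~\ref{prop-paradiff:Parabolic}. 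Each pass through this machinery gains $\varepsilon$ derivatives (any $0<\varepsilon<\min(s_0-d/2,1)$), not $1/2$, and the iteration is Proposition~\ref{prop-ellip-reg:Iteration}. The base case $\sigma=1/2$ is not obtained by a Caccioppoli argument but simply by reading off $\partial_z^2 w$ from the algebraic formula \eqref{eq-ellip-wp:FormulaWzz} and using product rules.

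Your commutator-based approach is a legitimate classical alternative, and the Bony decomposition you describe would indeed produce the tame structure. Two points need tightening, though: your step size is inconsistent (you announce increments of $1/2$ but apply a full $\partial_x$), and the ``absorption'' you describe is not actually needed in that scheme---once you re-apply the inductive estimate to the differentiated equation, the commutator term $(\partial_x A)\nabla_{x,z}w$ is controlled at the \emph{previous} regularity level, so nothing has to be absorbed. The paper's factorization approach trades these bookkeeping issues for the cost of setting up the paradifferential symbolic calculus, with the payoff that the parabolic smoothing in $z$ and the tameness come for free from Proposition~\ref{prop-paradiff:Parabolic}.
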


\begin{definition}\label{def-ellip-var:VarSol}	
	With the notation $\mathscr{R}(\eta)$ as defined in the previous statement, 
	we shall say that $v$ is a variational solution to \eqref{eq-ellip:MainEq} if
	\begin{equation}\label{eq-ellip-var:VarSol}
		v = \Psi + \mathscr{R}(\eta)(A\nabla_{x,z}\Psi),
	\end{equation}
	where $\Psi$ is defined by \eqref{eq-ellip-var:DefPsi} and $A$ is defined in \eqref{eq-ellip-var:DefA}.
\end{definition}
Now observe that, since the extension $\Psi = e^{z|D_x|}\psi$ of $\psi$ satisfies \eqref{eq-ellip-var:EstiPsi}, implying \eqref{eq-ellip:Main} with $v$ replaced by $\Psi$, we see that Proposition~\ref{prop-ellip:Main} follows from Proposition~\ref{prop-ellip-var:Main}.

The rest of this section is dedicated to the proof of Proposition~\ref{prop-ellip-var:Main}. We divide the proof into two steps. We begin by noting that the first statement about the existence and uniqueness of a variational solution is straightforward. In the second step, we will examine the higher regularity of the solutions.

\subsection{Existence and uniqueness of a variational solution}
	Since the elements of the coefficient matrix $A$ (see \eqref{eq-ellip-var:DefA}) are all smooth functions of $\nabla\eta$, there exists a constant $C_0>0$ depending only on $\|\eta\|_{W^{1,\infty}}$ such that, for all $x\in\R^d$ and $\Xi\in\R^{d+1}$,
	\begin{equation*}
		C_0^{-1}|\Xi|^2 \le \Xi \cdot A(x)\Xi \le C_0|\Xi|^2.
	\end{equation*}
	On the other hand, for all $F\in L^2(\R^{d+1}_-)$, the right-hand side of the variational form \eqref{eq-ellip-var:VarForm} is a bounded linear form on $H^{1,0}$, whose operator norm equals $\|\nabla_{x,z}F\|_{L^2(\R^{d+1}_-)}$. As a result, the Riesz Theorem implies that there exists a unique solution to the variational problem \eqref{eq-ellip-var:VarForm} and that the estimate \eqref{eq-ellip-var:MainBasicReg} holds.

\subsection{Regularity of the solution}\label{subsect-ellip:reg}

In this part, we focus on the regularity part of Proposition~\ref{prop-ellip-var:Main}. The strategy is to prove \eqref{eq-ellip-var:Main} for $\sigma=1/2$ and then use an iteration in $\sigma$ to conclude the remaining case $1/2<\sigma\le s+1/2$. 

\begin{proof}[Proof of \eqref{eq-ellip-var:Main} when $\sigma=1/2$]
	From \eqref{eq-ellip-var:MainBasicReg}, 
	we immediately get
	$$
	\|\nabla_{x,z}w\|_{L^2(\R^{d+1}_-)} \le K(\|\eta\|_{W^{1,\infty}}) \|F\|_{L^2(\R^{d+1}_-)}.
	$$
	This also yields a bound for $\nabla_x\nabla_{x,z}v$ in $L^2_z(\R_-;H_x^{-1})$ and hence, 
	to prove~\eqref{eq-ellip-var:Main}, it remains only to estimate $\partial_z^2 w$. 
	To do so, we notice that the equivalent formulation \eqref{eq-ellip-var:DistriForm} provides an expression of $\partial_z^2w$ in the sense of distribution. Specifically, in terms of $\alpha$, $\beta$, and $\gamma$ defined in \eqref{eq-ellip:Coeff}, $\partial_z^2 w$ can be expressed as
	\begin{equation}\label{eq-ellip-wp:FormulaWzz}
	\begin{aligned}
		\partial_z^2 w &= -\frac{\beta}{\alpha}\cdot \nabla_x\partial_zw - \left(\frac{1}{\alpha}-1\right)\Delta_x w - \Delta_x w + \frac{\gamma}{\alpha} \partial_zw 
		\\
		&\quad - \left(\frac{1}{\alpha}-1\right)\nabla_{x,z}\cdot F - \nabla_{x,z}\cdot F,
	\end{aligned}
	\end{equation}
	where the coefficients $\beta/\alpha$ and 
	$\alpha^{-1} -1$ belong to $H^s$ (in light of Proposition \ref{prop-besov-bony:Paralin}) and $\gamma/\alpha$ belongs to $L^2$ due to Corollary~\ref{cor-besov-bony:ProdLaw}. Then the desired estimate of $\partial_z^2w$ follows from the product law in Sobolev spaces (see Corollary~\ref{cor-besov-bony:ProdLaw}). For example, $\gamma/\alpha\in L^\infty_zL^2_x$ and $\partial_zw\in L^2_zL^2_x$, so there product is in $L^2_zL^1_x\subset L^2_zH^{-1}_x$. Note that, when $d=1$, the condition $s\ge1$ is necessary in this step to ensure that the assumptions of Corollary~\ref{cor-besov-bony:ProdLaw} are verified.
\end{proof}

As for the remaining case $1/2<\sigma\le s+1/2$, we will apply the following iteration in $\sigma$.
\begin{proposition}\label{prop-ellip-reg:Iteration}
	Consider $s, s_0,\sigma$ and $\varepsilon$ such that 
	\begin{equation}\label{n201}
	s\ge s_0,\ \frac{1}{2} \le \sigma \le s+\frac{1}{2}-\varepsilon, \quad\text{with}\ s_0\ge 1,\ s_0>\frac{d}{2}, \ \text{and}\  0<\varepsilon<\min\left(s_0-\frac{d}{2},1\right).
	\end{equation}
	Let $\eta\in H^{s+1}_x$, $F\in \mathcal{Y}_{z_0}^{\sigma-1+\varepsilon}$, for some $z_0<0$, and let $w$ be the unique solution to the variational problem \eqref{eq-ellip-var:VarForm}. If there exists $z_0<z_0'<0$ such that
	\begin{equation}\label{eq-ellip-reg:IterAssump}
		\|w\|_{\mathcal{X}^{\sigma-1}_{z_0'}} \le K(\|\eta\|_{H^{s_0+1}})\big( \|\eta\|_{H^{s+1}} \| F \|_{\mathcal{Y}^{-1/2}_{z_0}} +  \| F \|_{\mathcal{Y}^{\sigma-1}_{z_0}} \big),
	\end{equation}
	then, for arbitrary $z_0'<z_0''<0$, there holds		
	\begin{equation}\label{eq-ellip-reg:IterConcl}
		\|w\|_{\mathcal{X}^{\sigma-1+\varepsilon}_{z_0''}} \le K(\|\eta\|_{H^{s_0+1}})\big( \|\eta\|_{H^{s+1}} \| F \|_{\mathcal{Y}^{-1/2}_{z_0}} +  \| F \|_{\mathcal{Y}^{\sigma-1+\varepsilon}_{z_0}} \big).
	\end{equation}
	Recall that the notations $\mathcal{X}^{r}_{z_0}$ and $\mathcal{Y}^{r}_{z_0}$ were introduced in \eqref{eq-ellip-var:AuxSpX} and \eqref{eq-ellip-var:AuxSpY}.
\end{proposition}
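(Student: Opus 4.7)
The plan is to invoke the inductive hypothesis \eqref{eq-ellip-reg:IterAssump} a second time, this time applied to a tangentially-lifted and normally-cut-off version of $w$. Fix a smooth cutoff $\chi(z)$ with $\chi\equiv 1$ on $[z_0'',0]$ and $\chi\equiv 0$ on $(-\infty,(z_0+z_0')/2]$, and set $\tilde w:=\chi(z)\langle D_x\rangle^\varepsilon w$. Since $w|_{z=0}=0$ and $\chi$ is supported in $\R_-$, one has $\tilde w\in H^{1,0}$. A direct computation shows that $\tilde w$ solves in $\mathcal D'(\R^{d+1}_-)$:
$$
\nabla_{x,z}\cdot(A\nabla_{x,z}\tilde w)=-\nabla_{x,z}\cdot\tilde F,
$$
with $\tilde F=\chi\langle D_x\rangle^\varepsilon F+\chi\,[\langle D_x\rangle^\varepsilon,A]\nabla_{x,z}w+R_\chi$, where $R_\chi$ collects the cutoff contributions involving $\chi'$ and $\chi''$, all supported strictly inside the strip $[z_0',0]$ where the inductive assumption already controls $w,\nabla_{x,z}w$ and $\nabla_{x,z}^2 w$.

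Applying \eqref{eq-ellip-reg:IterAssump} to the pair $(\tilde w,\tilde F)$ yields a bound on $\|\tilde w\|_{\mathcal X^{\sigma-1}_{z_0''}}$, which on the region $\{\chi=1\}=[z_0'',0]$ is equivalent to $\|w\|_{\mathcal X^{\sigma-1+\varepsilon}_{z_0''}}$, i.e.\ to the target left-hand side of \eqref{eq-ellip-reg:IterConcl}. The proof then reduces to controlling the new source $\tilde F$ in the $\mathcal Y^{\sigma-1}$- and $\mathcal Y^{-1/2}$-norms. The lifted piece $\chi\langle D_x\rangle^\varepsilon F$ is handled using that $\nabla_{x,z}\cdot F\in L^2_zH^{\sigma-3/2+\varepsilon}_x$, coming from $F\in\mathcal Y^{\sigma-1+\varepsilon}_{z_0}$. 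The cutoff remainder $R_\chi$ is supported away from $z=0$ and is directly controlled by applying the inductive assumption \eqref{eq-ellip-reg:IterAssump} to $w$ on $[z_0',0]$.

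The technical heart of the argument, and the main obstacle, is the commutator $\chi\,[\langle D_x\rangle^\varepsilon,A]\nabla_{x,z}w$. Since the entries of $A$ are smooth functions of $\nabla_x\eta\in H^s$, Proposition~\ref{prop-besov-bony:Paralin} gives $A\in H^s$ while $H^{s_0}\subset C^{\varepsilon+\delta}_*$ for some $\delta>0$ (the condition $\varepsilon<s_0-d/2$ being exactly what secures the requisite Zygmund regularity). One then splits $A=T_A+(A-T_A)$: the paraproduct piece produces a paradifferential commutator of order $\varepsilon-1$ via Proposition~\ref{prop-paradiff:SymbCal} (with symbol class parameter $\rho=\varepsilon+\delta<1$), while the non-paraproduct remainder is handled by Proposition~\ref{lemPa}. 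The delicate point is that these bounds must be \emph{tame}, in the sense that only a single factor of $\|\eta\|_{H^{s+1}}$ appears linearly on the right-hand side, the remaining constant depending only on $\|\eta\|_{H^{s_0+1}}$---this is exactly what is needed so that the output of the iteration preserves the form of the inductive hypothesis.

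A minor technical subtlety, which I would address but not emphasize, is that $\chi\langle D_x\rangle^\varepsilon F$ need not lie in $L^2(\R^{d+1}_-)$ if $F\in L^2$ only; this is easily circumvented by frequency-truncating $F$ and passing to the limit, or equivalently by observing that the $\mathcal Y$-norms really depend on $F$ only through $\|F\|_{L^2}$ and $\|\nabla_{x,z}\cdot F\|_{L^2_zH^{r-1/2}_x}$ of the original (unlifted) source.
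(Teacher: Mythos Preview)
Your approach is genuinely different from the paper's, and the core idea is sound, but there is one logical wrinkle worth flagging.

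\textbf{What the paper does.} The paper does \emph{not} bootstrap by tangential lifting. Instead it paralinearizes the divergence-form equation into $(T_\alpha\partial_z^2+T_\beta\cdot\nabla_x\partial_z-\Delta_x)w=f$, factors the symbol as $T_\alpha(\partial_z+T_a)(\partial_z-T_A)$ modulo operators of order $1-\varepsilon$ (this is where the hypothesis $\varepsilon<s_0-d/2$ enters, to place $a,A\in\Gamma^1_\varepsilon$), and then applies the paradifferential parabolic estimate of Proposition~\ref{prop-paradiff:Parabolic} twice: once forward in $z$ to gain regularity for $\chi(\partial_z-T_A)w$, and once backward from $z=0$ (using $w|_{z=0}=0$) to recover $w$ itself. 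The $\partial_z^2 w$ bound is then read off from the equation. All of this uses only the assumed bound~\eqref{eq-ellip-reg:IterAssump} on the \emph{given} $w$.

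\textbf{Comparison.} Your commutator-and-reapply scheme is the classical tangential-difference-quotient argument; the paper's is the factor-into-parabolic-evolutions argument. Both are standard for elliptic boundary problems. Your route avoids the factorization and Proposition~\ref{prop-paradiff:Parabolic} entirely, trading them for commutator estimates on $[\langle D_x\rangle^\varepsilon,A]$; the paper's route is more in keeping with the paradifferential machinery used elsewhere in the article (and recycles the same factorization idea as in Section~\ref{subsect-reg:DN}).

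\textbf{The wrinkle.} You write that you will ``apply \eqref{eq-ellip-reg:IterAssump} to the pair $(\tilde w,\tilde F)$''. But \eqref{eq-ellip-reg:IterAssump} is not a theorem you can apply: it is a \emph{hypothesis} about the specific $w$ under consideration. What your argument actually requires is that the level-$\sigma$ estimate (i.e.\ the conclusion of Proposition~\ref{prop-ellip-var:Main} at regularity $\sigma$) be already established for \emph{arbitrary} data, so that it can be invoked with $(\tilde w,\tilde F)$ in place of $(w,F)$. In the outer induction proving Proposition~\ref{prop-ellip-var:Main}, that is indeed available; but as a self-contained proof of Proposition~\ref{prop-ellip-reg:Iteration} \emph{as stated}, your argument assumes more than is given. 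The paper's proof avoids this: it never leaves the specific $w$, extracting the gain purely from the parabolic smoothing in Proposition~\ref{prop-paradiff:Parabolic}. The fix on your side is simply to rephrase the induction so that the hypothesis is the full level-$\sigma$ statement for all data, which is harmless but should be said explicitly.
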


The argument follows that of \cite{ABZ2011,ABZ2014} (see also~\cite{MR3649370,MR3730012,MR4400907,shao2023toolboxparadifferentialcalculuscompact}). We divide the proof of this iteration into three steps: $(i)$ paralinearization of the equation \eqref{eq-ellip-var:DistriForm}, $(ii)$ factorization of the paralinearized 
elliptic operator, and $(iii)$ reduction to a 
parabolic problem. The first rigorous statement is as follows.

\begin{lemma}[Paralinearization of \eqref{eq-ellip-var:DistriForm}]\label{lem-ellip-reg:Paralin}
    	Under the assumptions of Proposition \ref{prop-ellip-reg:Iteration}, the solution $w$ to the variational problem \eqref{eq-ellip-var:VarForm} satisfies an equation of the form
	\begin{equation}\label{eq-ellip-reg:Paralin}
		\left( T_\alpha \partial_z^2 + T_\beta \cdot \nabla_x\partial_z -\Delta_x \right) w = f,
	\end{equation}
	where the right-hand side $f$ satisfies,
	\begin{equation}\label{eq-ellip-reg:EstiF}
	    \|f\|_{L^2(z_0',0;H^{\sigma-3/2+\varepsilon})}\le K(\|\eta\|_{H^{s_0+1}})\big( \|\eta\|_{H^{s+1}} \| F \|_{\mathcal{Y}^{-1/2}_{z_0}} +  \| F \|_{\mathcal{Y}^{\sigma-1+\varepsilon}_{z_0}} \big).
	\end{equation}
\end{lemma}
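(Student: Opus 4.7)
The strategy is to start from the distributional formulation of the equation, expand $\nabla_{x,z}\cdot(A\nabla_{x,z}w)$ into a non-divergence expression, and then replace each coefficient $\alpha$ and $\beta$ by the corresponding paramultiplication, absorbing all arising discrepancies into the source term $f$. From Remark~\ref{rmk-ellip-var:DistriForm} and the definitions~\eqref{eq-ellip:Coeff}--\eqref{eq-ellip-var:DefA}, the expansion yields, up to signs,
\[
\alpha\,\partial_z^2 w + \beta\cdot\nabla_x\partial_z w + \Delta_x w = \pm\gamma\,\partial_z w - \nabla_{x,z}\cdot F.
\]
Bony's decomposition $(\alpha-1)\partial_z^2 w = T_{\alpha-1}\partial_z^2 w + T_{\partial_z^2 w}(\alpha-1) + R(\alpha-1,\partial_z^2 w)$ (and the analogue for $\beta\cdot\nabla_x\partial_z w$), together with the fact that $\mathrm{Id}-T_1$ is smoothing, produces~\eqref{eq-ellip-reg:Paralin} with
\[
f=\mp\gamma\,\partial_z w - T_{\partial_z^2 w}(\alpha-1)-R(\alpha-1,\partial_z^2 w)-T_{\nabla_x\partial_z w}\cdot\beta-R(\beta,\nabla_x\partial_z w)-\nabla_{x,z}\cdot F + \text{(smoothing)}.
\]

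Next I would quantify the regularity of every ingredient. The composition estimate~\eqref{eq-besov-bony:Compo} combined with the Sobolev embedding $H^s\subset C^{s-d/2}_*$ (valid since $s\ge s_0>d/2$) yields the two-scale bounds
\[
\|\alpha-1\|_{H^s}+\|\beta\|_{H^s}\le K(\|\eta\|_{H^{s_0+1}})\|\eta\|_{H^{s+1}},\qquad \|\alpha-1\|_{C^{s-d/2}_*}+\|\beta\|_{C^{s-d/2}_*}\le K(\|\eta\|_{H^{s_0+1}}),
\]
together with an analogous estimate for $\gamma=\Delta_x\eta\in H^{s-1}$. On the side of $w$, the iteration hypothesis~\eqref{eq-ellip-reg:IterAssump} supplies $\nabla_{x,z}w\in L^2_z(z_0',0;H^{\sigma-1/2}_x)$ and $\nabla_{x,z}^2 w\in L^2_z(z_0',0;H^{\sigma-3/2}_x)$ with norms bounded by the right-hand side of~\eqref{eq-ellip-reg:IterConcl}.

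With these inputs in hand, I would estimate each piece of $f$ in $L^2_z(z_0',0;H^{\sigma-3/2+\varepsilon}_x)$: the remainders $R(\alpha-1,\partial_z^2 w)$ and $R(\beta,\nabla_x\partial_z w)$ fall under~\eqref{eq-besov-bony:EstiRemSobo} (applicable since $s+\sigma-3/2>0$) and gain $s-d/2>\varepsilon$ derivatives; the paraproducts $T_{\partial_z^2 w}(\alpha-1)$ and $T_{\nabla_x\partial_z w}\cdot\beta$ are treated by Proposition~\ref{prop-besov-para:NegInd} in the regime $\sigma-3/2<0$ (where the H\"older regularity of the coefficient provides the $\varepsilon$ gain) and by the product law~\eqref{eq-besov-bony:ProdLaw} when $\sigma-3/2\ge 0$; the term $\gamma\,\partial_z w$ is controlled by the standard Sobolev product rule applied to $H^{s-1}\times H^{\sigma-1/2}$, which lands comfortably in $H^{\sigma-3/2+\varepsilon}$; and $\nabla_{x,z}\cdot F$ sits directly in $L^2_z(z_0,0;H^{\sigma-3/2+\varepsilon}_x)$ by the hypothesis $F\in\mathcal{Y}^{\sigma-1+\varepsilon}_{z_0}$. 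Summing the individual estimates yields~\eqref{eq-ellip-reg:EstiF}; the tame structure is preserved because the highest-regularity factor $\|\eta\|_{H^{s+1}}$ always enters linearly, while the dependence on $\|\eta\|_{H^{s_0+1}}$ is confined to the nonlinear prefactor $K(\cdot)$.

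The main obstacle will be the sharp management of the critical gap $s-d/2>\varepsilon$. This is precisely the strict inequality built into~\eqref{n201}, and it cannot be relaxed: it is exactly what allows the Sobolev embedding $H^s\subset C^{s-d/2}_*$ to deliver enough H\"older regularity to feed into Proposition~\ref{prop-besov-para:NegInd} and~\eqref{eq-besov-bony:EstiRemSobo} with the required $\varepsilon$ surplus. Every paraproduct/remainder estimate is borderline, and the whole iteration would stall, for instance at $s_0=d/2$. A secondary technical nuisance is the passage through $\sigma=3/2$, where the Sobolev index of $\partial_z^2 w$ changes sign and forces a case distinction on the paraproduct estimates; both branches are nonetheless handled by analogous applications of Bony's calculus, so this is bookkeeping rather than a conceptual difficulty.
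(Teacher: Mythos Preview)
Your approach is essentially the paper's: expand the divergence form, replace $\alpha,\beta$ by $T_\alpha,T_\beta$, and estimate the discrepancies via Bony's calculus. The paper simply packages $au-T_au=T_ua+R(a,u)$ into a single application of Proposition~\ref{lemPa}, whereas you treat the two pieces separately; this is cosmetic.

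There is, however, a genuine index slip that breaks the tame structure you claim. Your bound $\|\alpha-1\|_{C^{s-d/2}_*}\le K(\|\eta\|_{H^{s_0+1}})$ is false as written: the left-hand side grows with $s$, the right does not. The correct low-norm Hölder (or Sobolev) bound is at level $s_0$, namely $\|\alpha-1\|_{H^{s_0}}+\|\alpha-1\|_{C^{s_0-d/2}_*}\le K(\|\eta\|_{H^{s_0+1}})$, and correspondingly the sharp gap built into~\eqref{n201} is $s_0-d/2>\varepsilon$, not $s-d/2>\varepsilon$. This matters: if you pair the \emph{high} coefficient norm $\|\alpha-1\|_{H^s}\sim\|\eta\|_{H^{s+1}}$ with $\|\partial_z^2 w\|_{H^{\sigma-3/2}}$ from the iteration hypothesis~\eqref{eq-ellip-reg:IterAssump}---which already carries a factor $\|\eta\|_{H^{s+1}}$---you produce $\|\eta\|_{H^{s+1}}^2\|F\|_{\mathcal{Y}^{-1/2}}$, destroying tameness. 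The paper avoids this by pairing the \emph{low} coefficient norm (at level $s_0$) with the iteration hypothesis, which is already tame; the base-case bound $\|\nabla_{x,z}^2 w\|_{L^2_zH^{-1}_x}\le K(\|\eta\|_{H^{s_0+1}})\|F\|_{\mathcal{Y}^{-1/2}_{z_0}}$ is recorded precisely for this purpose. Once you replace $s$ by $s_0$ in the coefficient estimates and in the gap condition, your argument goes through and coincides with the paper's.
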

\begin{proof}
	Clearly, the right-hand side $f$ equals
	\begin{equation*}
	    f = f_1 + f_2 + \nabla_{x,z}\cdot F,
	\end{equation*}
	with
	\begin{align*}
	    f_1 =& -\left( \alpha\partial_z^2w - T_{\alpha}\partial_z^2w \right) - \left( \beta\cdot\nabla_x\partial_zw - T_{\beta}\cdot\nabla_x\partial_zw \right), \\
	    f_2 =& - \gamma \partial_zw = -\left( \gamma \partial_zw - T_{\gamma}\partial_zw \right) - T_{\gamma}\partial_zw
	\end{align*}
	From $\eta\in H^{s+1}$ ($s>d/2$) and Proposition \ref{prop-besov-bony:Paralin}, we deduce that
	\begin{equation*}
		\| \alpha-1 \|_{H^s} + \|\beta\|_{H^s} + \|\gamma\|_{H^{s-1}} \le K(\|\eta\|_{W^{1,\infty}})\|\eta\|_{H^{s+1}}.
	\end{equation*}
	Then the desired estimate \eqref{eq-ellip-reg:EstiF} follows from Proposition~\ref{lemPa} and~\ref{prop-besov-bony:EstiPara}, along with the assumption~\eqref{eq-ellip-reg:IterAssump}. Here we also used the fact that 
	\begin{equation*}
	    \|\nabla_{x,z}w\|_{L^2(z_0'',0;L^2)} + \|\nabla_{x,z}^2w\|_{L^2(z_0'',0;H^{-1})} \le K(\|\eta\|_{H^{s_0+1}}) \| F \|_{\mathcal{Y}^{-1/2}_{z_0}},
	\end{equation*}
	which is no more than the inequality \eqref{eq-ellip-var:Main} with $\sigma=1/2$ and $s=s_0$ (already proved above).
\end{proof}

\begin{lemma}[Factorization of the paralinearized elliptic operator]\label{lem-ellip-reg:Facto}
	Consider two real numbers $s_0$ and $\varepsilon$ such that
	$s_0>d/2$ and $0<\varepsilon<\min(s-d/2,1)$. Then, for any $\eta\in H^{s_0+1}$, 
	the following factorization holds:
	\begin{equation}\label{eq-ellip-reg:Facto}
		T_\alpha \partial_z^2 + T_\beta \cdot \nabla_x\partial_z + T_{-|\xi|^2} = T_\alpha (\partial_z + T_a)(\partial_z - T_A) + R_1|D_x| + R_2\partial_z,
	\end{equation}
	where $a=a(x,\xi)$ and $A=A(x,\xi)$ are two symbols in $\Gamma^1_{\varepsilon}$ (see Definition \ref{def-paradiff:SymbClass}) defined by
	\begin{equation}\label{eq-ellip-reg:FactoSymb}
		\begin{aligned}
			a(x, \xi) =& \frac{1}{2\alpha}\left( \sqrt{ 4\alpha \la \xi \ra^2 -  (\beta \cdot \xi)^2} + i \beta\cdot \xi \right),\\ 
			A(x, \xi) =& \frac{1}{2\alpha}\left( \sqrt{ 4\alpha \la \xi \ra^2 -  (\beta \cdot \xi)^2} - i \beta\cdot \xi \right),
		\end{aligned}
	\end{equation}
	and the reminders $R_1$, $R_2$ are of order $(1-\varepsilon)$, which means that, 
	for all $r\in\R$, there holds
	\begin{equation}\label{eq-ellip-reg:FactoRem}
		\|R_1\|_{\mathcal{L}(H^r;H^{r-1+\varepsilon})} + \|R_2\|_{\mathcal{L}(H^r;H^{r-1+\varepsilon})} \le K_r(\|\eta\|_{H^{s_0+1}}).
	\end{equation}
\end{lemma}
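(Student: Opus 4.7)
The approach is to expand $T_\alpha(\partial_z + T_a)(\partial_z - T_A)$ and compare the result to the left-hand side of~\eqref{eq-ellip-reg:Facto}, quantifying the discrepancy via Proposition~\ref{prop-paradiff:SymbCal}. Since $a$ and $A$ depend only on $(x,\xi)$, the derivative $\partial_z$ commutes with $T_a$ and $T_A$, giving
\[
T_\alpha(\partial_z + T_a)(\partial_z - T_A) = T_\alpha\partial_z^2 + T_\alpha T_{a-A}\partial_z - T_\alpha T_a T_A.
\]
The definitions~\eqref{eq-ellip-reg:FactoSymb} yield the two key symbolic identities $\alpha(a-A) = i\beta\cdot\xi$ and $\alpha\cdot aA = \langle\xi\rangle^2$, while the discriminant satisfies $4\alpha\langle\xi\rangle^2 - (\beta\cdot\xi)^2 \geq 4\langle\xi\rangle^2 > 0$, so the square roots defining $a,A$ are smooth for $\xi \ne 0$. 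Combined with the Sobolev embedding $H^{s_0} \hookrightarrow C^{s_0-d/2}_* \hookrightarrow C^\varepsilon_*$ (valid since $s_0 - d/2 > \varepsilon$ by assumption), this shows that $\alpha \in \Gamma^0_\varepsilon$ and $a,A \in \Gamma^1_\varepsilon$, with all semi-norms bounded by $K(\|\eta\|_{H^{s_0+1}})$.

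Next I would apply Proposition~\ref{prop-paradiff:SymbCal} with $\rho = \varepsilon \in (0,1)$, for which $\#_\varepsilon$ reduces to the ordinary product of symbols. This gives the two approximate identities
\[
T_\alpha T_{a-A} = T_{i\beta\cdot\xi} + \mathcal{O}(1-\varepsilon), \qquad T_\alpha T_a T_A = T_{\langle\xi\rangle^2} + \mathcal{O}(2-\varepsilon),
\]
where $\mathcal{O}(\sigma)$ denotes an operator of order $\sigma$ with operator norm $\le K(\|\eta\|_{H^{s_0+1}})$ on each Sobolev scale; the second identity uses Proposition~\ref{prop-paradiff:SymbCal} twice (first on $T_aT_A$, then on $T_\alpha T_{aA}$). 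Noting also that $T_\beta\cdot\nabla_x\partial_z = T_{i\beta\cdot\xi}\partial_z$, the difference between the two sides of~\eqref{eq-ellip-reg:Facto} thus reduces to
\[
\underbrace{\big(T_{i\beta\cdot\xi} - T_\alpha T_{a-A}\big)}_{=:\,R_2}\partial_z \;+\; \underbrace{\big(T_{-|\xi|^2} + T_\alpha T_a T_A\big)}_{=:\,P},
\]
and the exact cancellation $-|\xi|^2 + \langle\xi\rangle^2 = 1$ gives $P = T_1 + \mathcal{O}(2-\varepsilon)$, an operator of order at most $2-\varepsilon$. In particular, $R_2$ already has the required order $1-\varepsilon$.

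The remaining task, and the main technical obstacle, is to extract a factor of $|D_x|$ from $P$ so as to obtain $R_1$ of order $1-\varepsilon$. The naive choice $R_1 := P\circ|D_x|^{-1}$ fails because $|D_x|$ is not invertible near $\xi = 0$. My plan is to exploit a structural feature of paradifferential operators: by Definition~\ref{def-paradiff:Paradiff}, the symbol of every $T_p$ vanishes on $\{|\xi| < c\,2^N\}$ for some $c>0$ and the fixed integer $N$, so $T_p$ annihilates any input whose Fourier transform is supported in that region. Choosing a smooth radial cutoff $\chi_0$ with $\chi_0 \equiv 1$ near $0$ and $\supp\chi_0 \subset \{|\xi| < c\,2^N\}$, one verifies (by tracking the Fourier supports through the compositions $T_\alpha T_a T_A \chi_0(D_x)$ and $T_{-|\xi|^2}\chi_0(D_x)$) that $P\,\chi_0(D_x) = 0$, hence $P = P\,(1-\chi_0(D_x))$. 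Setting $G(\xi) := (1-\chi_0(\xi))/|\xi|$, which is smooth, radial, and of order $-1$, the Fourier-multiplier identity $1-\chi_0(D_x) = G(D_x)\,|D_x|$ gives the factorization $P = R_1|D_x|$ with $R_1 := P\,G(D_x)$; composing an operator of order $2-\varepsilon$ with one of order $-1$ yields the required order $1-\varepsilon$ for $R_1$, with the operator-norm bound inherited from the estimates of Proposition~\ref{prop-paradiff:SymbCal}. This completes the factorization.
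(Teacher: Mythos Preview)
Your proof is correct and follows the same route as the paper's: compute the algebraic factorization of the symbols, then invoke Proposition~\ref{prop-paradiff:SymbCal} with $\rho=\varepsilon$ to bound the discrepancy. One small notational slip: in this paper $\la\xi\ra$ denotes $|\xi|$ (see the macros), not $\langle\xi\rangle$, so in fact $\alpha\,aA=|\xi|^2$ and the term you call $P$ satisfies $P=T_\alpha T_aT_A - T_{|\xi|^2}=\mathcal{O}(2-\varepsilon)$ directly, with no residual $T_1$; this only simplifies your bookkeeping and does not affect the argument. Your explicit low-frequency cutoff to factor $P=R_1|D_x|$ is a cleaner justification of what the paper writes more tersely as $R_1=T_\alpha T_a T_A\,|D_x|^{-1}+\Delta_x|D_x|^{-1}$, relying implicitly on the fact that paradifferential symbols are supported away from $\xi=0$.
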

\begin{proof}
    By Sobolev embedding, we have $\alpha-1,\beta\in H^{s_0}\subset W^{\varepsilon,\infty}$, which guarantees that the symbols $a,A$ defined by \eqref{eq-ellip-reg:FactoSymb} belong to $\Gamma^{1}_{\varepsilon}$. To obtain the factorization \eqref{eq-ellip-reg:Facto}, we observe that they verify
	\begin{equation*}
		aA=\frac{\la \xi\ra^2}{\alpha},\ \ a - A = \frac{i\beta\cdot\xi}{\alpha},
	\end{equation*}
	Therefore, according to Proposition~\ref{prop-paradiff:SymbCal} applied with $\rho=\varepsilon$, we have that
	\begin{align*}
		& R_1 := T_{\alpha} T_a T_{A} |D_x|^{-1} + \Delta_x |D_x|^{-1}  \quad \text{is of order } 1-\varepsilon,\\
		& R_2 := -T_{\alpha} T_{a-A} + T_{\beta}\cdot \nabla_x \quad \text{is of order } 1-\varepsilon.
	\end{align*}
	Now, since
	\begin{align*}
		T_\alpha (\partial_z + T_a)(\partial_z - T_A) =& T_\alpha\partial_z^2 + T_\alpha T_{a-A} \partial_z -T_{\alpha} T_a T_{A|\xi|^{-1}} \\
		=& T_\alpha \partial_z^2 + T_\beta \cdot \nabla_x\partial_z + T_{-|\xi|^2} - R_1 |D_x| - R_2 \partial_z,
	\end{align*}
	as can be verified by an elementary calculation, 
	we get the desired equality \eqref{eq-ellip-reg:Facto}.
\end{proof}

The next step is to combine the previous two lemmas.
\begin{lemma}\label{lem-ellip-reg:Parabolic}
	Under the hypothesis of Proposition \ref{prop-ellip-reg:Iteration}, the solution $v$ to the variational problem \eqref{eq-ellip-var:VarForm} satisfies
	\begin{equation}\label{eq-ellip-reg:Parabolic}
		(\partial_z + T_a)(\partial_z - T_A) w = g,
	\end{equation}
	where the remainder $g$ equals
	\begin{equation*}
		g := (\Id-T_{\alpha^{-1}}T_\alpha) (\partial_z + T_a)(\partial_z - T_A) w + T_{\alpha^{-1}}(R_1|D_x|w + R_2\partial_zw) - T_{\alpha^{-1}}f,
	\end{equation*}
	and $f$ is defined in \eqref{eq-ellip-reg:Paralin}. Moreover, $g$ satisfies,
	\begin{equation}\label{eq-ellip-reg:EstiG}
		\|g\|_{L^2(z_0',0;H^{\sigma-3/2+\varepsilon})} \le K(\|\eta\|_{H^{s_0+1}})\big( \|\eta\|_{H^{s+1}} \| F \|_{\mathcal{Y}^{-1/2}_{z_0}} +  \| F \|_{\mathcal{Y}^{\sigma-1+\varepsilon}_{z_0}} \big).
	\end{equation}
\end{lemma}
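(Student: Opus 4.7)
The plan is to combine the paralinearization from Lemma~\ref{lem-ellip-reg:Paralin} with the factorization from Lemma~\ref{lem-ellip-reg:Facto}, and then approximately invert the leading paradifferential coefficient $T_\alpha$ by applying $T_{\alpha^{-1}}$. Explicitly, applying the factorization identity to $w$ gives
\[
T_\alpha(\partial_z + T_a)(\partial_z - T_A)w = (T_\alpha\partial_z^2 + T_\beta\cdot\nabla_x\partial_z + T_{-|\xi|^2})w - R_1|D_x|w - R_2\partial_z w.
\]
Since $T_{-|\xi|^2}$ differs from $\Delta_x$ only by a Fourier multiplier with compactly supported symbol (hence smoothing of infinite order), the paralinearization from Lemma~\ref{lem-ellip-reg:Paralin} lets us rewrite the right-hand side as $-f + R_1|D_x|w + R_2\partial_z w + Sw$, where $S$ is a smoothing operator. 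Applying $T_{\alpha^{-1}}$ to both sides, decomposing $\Id = T_{\alpha^{-1}}T_\alpha + (\Id - T_{\alpha^{-1}}T_\alpha)$, and absorbing the resulting $T_{\alpha^{-1}}Sw$ contribution into the other pieces then produces the stated formula for $g$.

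To bound each piece of $g$ in $L^2(z_0',0;H^{\sigma-3/2+\varepsilon})$: first, applying Proposition~\ref{prop-besov-bony:Paralin} to $F(u) = (1+u)^{-1}$ with $u = |\nabla_x\eta|^2$ (using $\alpha \ge 1$) places $\alpha^{-1}\in H^{s_0}\cap W^{\varepsilon,\infty}$ with norm $K(\|\eta\|_{H^{s_0+1}})$, so by Proposition~\ref{prop-paradiff:Bd}, $T_{\alpha^{-1}}$ is bounded of order $0$, and the estimate for $T_{\alpha^{-1}}f$ follows from \eqref{eq-ellip-reg:EstiF}. The bound for $T_{\alpha^{-1}}(R_1|D_x|w + R_2\partial_z w)$ follows from the order-$(1-\varepsilon)$ bounds \eqref{eq-ellip-reg:FactoRem} together with the iteration hypothesis's control on $\|\nabla_{x,z}w\|_{L^2_zH^{\sigma-1/2}}$. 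For the remaining piece $(\Id - T_{\alpha^{-1}}T_\alpha)(\partial_z + T_a)(\partial_z - T_A)w$, Proposition~\ref{prop-paradiff:SymbCal} applied with $\rho = \varepsilon$ yields $\|\Id - T_{\alpha^{-1}}T_\alpha\|_{\mathcal{L}(H^r;H^{r+\varepsilon})}\le K(\|\eta\|_{H^{s_0+1}})$, while expanding $(\partial_z + T_a)(\partial_z - T_A)w = \partial_z^2 w + T_{a-A}\partial_z w - T_a T_A w + (\text{lower order})$ and using the iteration hypothesis's control on $\|\nabla_{x,z}^2 w\|_{L^2_zH^{\sigma-3/2}}$ shows this last term lies in $L^2_zH^{\sigma-3/2}$.

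The most delicate step is the bound on $T_a T_A w$ within the last piece: since $T_a T_A$ is of order $2$ in $x$, a naive estimate would require $w\in L^2_zH^{\sigma+1/2}$, which is not directly supplied by the iteration hypothesis (which controls only $\nabla_{x,z}w$, not $w$ itself). The key point is that paradifferential operators truncate low frequencies by construction, so $T_aT_Aw$ depends only on the high-frequency part of $w$, on which $\|T_aT_Aw\|_{L^2_zH^{\sigma-3/2}}$ is controlled by $\|\nabla_x w\|_{L^2_zH^{\sigma-1/2}}\le \|w\|_{\mathcal{X}^{\sigma-1}_{z_0'}}$ from the iteration hypothesis. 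Verifying this, together with confirming that all operator norms involving $\alpha$ and $\alpha^{-1}$ are bounded by $K(\|\eta\|_{H^{s_0+1}})$ via Sobolev embedding and Proposition~\ref{prop-besov-bony:Paralin}, constitutes the technical crux of the proof.
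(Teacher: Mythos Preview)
Your proof is correct and follows the same approach as the paper's: combine Lemmas~\ref{lem-ellip-reg:Paralin} and~\ref{lem-ellip-reg:Facto}, apply $T_{\alpha^{-1}}$, and estimate each piece of $g$ using symbolic calculus (Proposition~\ref{prop-paradiff:SymbCal} with $\rho=\varepsilon$) together with the iteration hypothesis~\eqref{eq-ellip-reg:IterAssump}. Your careful handling of $T_aT_Aw$ via the built-in low-frequency cutoff (equivalently, writing $T_A = T_{A|\xi|^{-1}}|D_x|$ so that only $\|\nabla_x w\|_{L^2_zH^{\sigma-1/2}}$ is needed) makes explicit what the paper abbreviates as ``the boundedness of $T_a, T_A$,'' and your remark on the smoothing discrepancy $T_{-|\xi|^2}-\Delta_x$ is a harmless detail the paper suppresses.
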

\begin{proof}[Proof of Lemma \ref{lem-ellip-reg:Parabolic}]
	The formula \eqref{eq-ellip-reg:Parabolic} can be directly deduced from \eqref{eq-ellip-reg:Paralin} and \eqref{eq-ellip-reg:Facto}. To prove the estimate \eqref{eq-ellip-reg:EstiG}, we first recall that the condition $\eta\in H^{s_0+1} \subset W^{\epsilon,\infty}$ guarantees that $\alpha, \alpha^{-1} \in \Gamma^0_\varepsilon$, $a,A\in \Gamma^1_\varepsilon$. As a consequence, an application of Proposition \ref{prop-paradiff:Bd} gives
	\begin{align*}
		&\hspace{-2em}\|T_{\alpha^{-1}}(R_1|D_x|w + R_2\partial_zw) - T_{\alpha^{-1}}f\|_{L^2(z_0',0;H^{\sigma-3/2+\varepsilon})}  \\
		&\le K(\|\eta\|_{H^{s_0+1}}) \|R_1|D_x|w + R_2\partial_zw - f\|_{L^2(z_0',0;H^{\sigma-3/2+\varepsilon})} \\
		&\le K(\|\eta\|_{H^{s_0+1}}) \left( \|\partial_z w + |D_x|w\|_{L^2(z_0',0;H^{\sigma-1/2})} + \|f\|_{L^2(z_0',0;H^{\sigma-3/2+\varepsilon})} \right) \\
		&\le  K(\|\eta\|_{H^{s_0+1}})\big( \|\eta\|_{H^{s+1}} \| F \|_{\mathcal{Y}^{-1/2}_{z_0}} +  \| F \|_{\mathcal{Y}^{\sigma-1+\varepsilon}_{z_0}} \big),
	\end{align*}
	where the last inequality follows from the assumption \eqref{eq-ellip-reg:IterAssump} and Lemma~\ref{lem-ellip-reg:Paralin}.
	Moreover, the symbolic calculus \eqref{eq-paradiff:SymbCalComp} implies that $(\Id-T_{\alpha^{-1}}T_\alpha)$ is of order $(-\varepsilon)$, i.e.
	\begin{align*}
		&\hspace{-2em}\|(\Id-T_{\alpha^{-1}}T_\alpha) (\partial_z + T_a)(\partial_z - T_A) w\|_{L^2(z_0',0;H^{\sigma-3/2+\varepsilon})}  \\
		&\le K(\|\eta\|_{H^{s_0+1}}) \|(\partial_z + T_a)(\partial_z - T_A) w\|_{L^2(z_0',0;H^{\sigma-3/2})} \\
		&\le K(\|\eta\|_{H^{s_0+1}})\big( \|\eta\|_{H^{s+1}} \| F \|_{\mathcal{Y}^{-1/2}_{z_0}} +  \| F \|_{\mathcal{Y}^{\sigma-1+\varepsilon}_{z_0}} \big).
	\end{align*}
	Note that, in the last inequality, we applied the bound \eqref{eq-ellip-reg:IterAssump} of $\nabla_{x,z}^2w$ and the boundedness of $T_a,T_A$ (see Proposition \ref{prop-paradiff:Bd}).
\end{proof}

We are now in a position to conclude Proposition~\ref{prop-ellip-reg:Iteration}. Our goal is to deduce \eqref{eq-ellip-reg:IterConcl} from Lemma~\ref{lem-ellip-reg:Parabolic}. To do so, let us fix a bump function $\chi\in C^\infty_0(\R)$ supported in $(z_0',+\infty)$ and equals to $1$ on $[z_0'',0]$ (remembering  that $z_0'<z_0''<0$). Then it is clear that
\begin{equation*}
	(\partial_z + T_a)\left( \chi(z)(\partial_z-T_A)w \right) = \chi(z)g + \chi'(z)(\partial_z-T_A)w,
\end{equation*}
where the right-hand side belongs to $L^2(z_0',0;H^{\sigma-3/2+\varepsilon})$ due to \eqref{eq-ellip-reg:EstiG} and the iteration assumption \eqref{eq-ellip-reg:IterAssump}. 
The proof then relies on two observations. Firstly, note that $a\in \Gamma^1_\varepsilon$ (recall that $\eta\in H^{s_0+1}\subset W^{\varepsilon,\infty}$) satisfies $\Real a \ge c|\xi|$ for some constant $c>0$ 
depending only on $\|\nabla\eta\|_{L^\infty_x}$, and secondly, observe that $\chi(z)(\partial_z-T_A)w$ vanishes at $z=z_0$. Thus, by applying Proposition \ref{prop-paradiff:Parabolic}, we have 
$$
\chi(z)(\partial_z-T_A)w \in L^2(z_0'',0;H^{\sigma-1/2+\varepsilon}),
$$
and, due to our choice of $\chi$, $(\partial_z-T_A)w$ lies in $L^2(z_0'',0;H^{\sigma-1/2+\varepsilon})$. Now, since $\Real A \ge c|\xi|$ and $w|_{z=0}=0$, we are able to apply Proposition \ref{prop-paradiff:Parabolic} again for $(\partial_z-T_A)w$ and conclude that
$$
w\in L^2(z_0'',0;H^{\sigma+1/2+\varepsilon}),\quad (\partial_z-T_A)w \in L^2(z_0'',0;H_x^{\sigma-1/2+\varepsilon}),
$$
which yields the bounds for $\nabla_{x,z}w$ in \eqref{eq-ellip-reg:IterConcl}, so as the bounds for $\nabla_{x,z}\nabla w$. As for the term $\partial_z^2 w$, we proceed as above, 
by using formula \eqref{eq-ellip-wp:FormulaWzz} together with Proposition~\ref{lemPa} and~\ref{prop-besov-bony:EstiPara}.

\section{Shape derivative}\label{app:ShpDer}

The aim of this section is to prove Theorem~\ref{thm-reg-shpder:Main}. As this result is of independent interest, we will prove a more general version that holds in any dimension.

\begin{theorem}\label{thm-shp:ShpDer}
    Consider an integer $d\ge1$ and two real numbers $s,\sigma$ satisfying
    \begin{equation*}
    	s>\frac{d}{2},\quad s\ge 1,\quad \text{and} \quad \frac{1}{2} \le \sigma \le s+\frac{1}{2}.
    \end{equation*}
    For all $\eta,\delta\eta\in H^{s+1}(\R^d)$ and $\psi\in H^\sigma(\R^d)$, the following limit exists in $H^{\sigma-1}(\R^d)$:
    \begin{equation}\label{eq-shp:ShpDer}
        \lim_{\varepsilon\rightarrow 0}\frac{ \mathcal{G}(\eta+\varepsilon\delta\eta)\psi - \mathcal{G}(\eta)\psi }{\varepsilon} 
        = -\mathcal{G}(\eta)\left(\delta\eta \mathcal{B}(\eta)\psi \right) - \nabla_x\cdot\left(\delta\eta\mathcal{V}(\eta)\psi \right).
    \end{equation}
    Note that the right-hand side is defined in the sense of distribution $\mathcal{D}'(\R^d)$ as follows: for all $\varphi\in C^\infty_0(\R^d)$,
    \begin{equation}\label{eq-shp:ShpDerWeak}
    	\begin{aligned}
    		&\langle -\mathcal{G}(\eta)\left(\delta\eta \mathcal{B}(\eta)\psi \right) - \nabla_x\cdot\left(\delta\eta\mathcal{V}(\eta)\psi \right), \varphi \rangle_{\mathcal{D}'(\R^d)\times\mathcal{D}(\R^d)} \\
    		&\hspace{6em}= \int_{\R^d}\left[ -(\delta\eta\G(\eta)\varphi) \mathcal{B}(\eta)\psi + (\delta\eta\nabla_x\varphi)\cdot\mathcal{V}(\eta)\psi \right] \dx,
    	\end{aligned}
    \end{equation}
    where the integral in the right-hand side makes sense due to Corollary~\ref{cor-besov-bony:ProdLaw}. Moreover, it satisfies
    \begin{equation}\label{eq-shp:ShpDerEsti}
        \left\| -\mathcal{G}(\eta)\left(\delta\eta \mathcal{B}(\eta)\psi \right) - \nabla_x\cdot\left(\delta\eta\mathcal{V}(\eta)\psi \right) \right\|_{H^{\sigma-1}} \le K(\|\eta\|_{H^{s+1}}) \|\delta\eta\|_{H^{s+1}} \|\psi\|_{H^{\sigma}}.
    \end{equation}
\end{theorem}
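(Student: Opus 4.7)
The plan is to work with the family of harmonic extensions $\Psi_\varepsilon$ of $\psi$ on the perturbed domains $\Omega_\varepsilon = \{y<\eta_\varepsilon(x)\}$, where $\eta_\varepsilon := \eta + \varepsilon\,\delta\eta$, together with their flattened counterparts $v_\varepsilon(x,z) := \Psi_\varepsilon(x, z+\eta_\varepsilon(x))$, which solve an elliptic equation in the fixed lower half-space with $\varepsilon$-dependent coefficient matrix built from $\eta_\varepsilon$ as in \eqref{eq-ellip-var:DefA}. The strategy proceeds in three stages: (i) derive the formula by a formal $\varepsilon$-differentiation, valid rigorously for smooth data; (ii) prove the $H^{\sigma-1}$-bound \eqref{eq-shp:ShpDerEsti} for the right-hand side independently of the limit procedure; and (iii) pass to the limit by density using the continuity provided by that bound.

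For stage (i), the key observation is that differentiating the boundary identity $\Psi_\varepsilon(x,\eta_\varepsilon(x)) = \psi(x)$ at $\varepsilon=0$ yields $\dot\Psi|_{y=\eta} = -\delta\eta\,\mathcal{B}(\eta)\psi$, so the good unknown $\dot\Psi := \partial_\varepsilon \Psi_\varepsilon|_{\varepsilon=0}$ is the harmonic extension of $-\delta\eta\,\mathcal{B}(\eta)\psi$. Writing $\mathcal{G}(\eta_\varepsilon)\psi = (\partial_y\Psi_\varepsilon - \nabla_x\eta_\varepsilon\cdot\nabla_x\Psi_\varepsilon)|_{y=\eta_\varepsilon}$ and differentiating term by term produces a contribution $\mathcal{G}(\eta)(\dot\Psi|_{y=\eta}) = -\mathcal{G}(\eta)(\delta\eta\,\mathcal{B}(\eta)\psi)$ together with three boundary terms involving $\delta\eta\,\partial_y^2\Psi$, $\nabla\delta\eta\cdot\nabla_x\Psi$, and $\delta\eta\,\nabla\eta\cdot\partial_y\nabla_x\Psi$. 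Using $\partial_y^2\Psi = -\Delta_x\Psi$ and the identity $\mathcal{V}(\eta)\psi = (\nabla_x\Psi)|_{y=\eta}$ together with the chain rule applied to $\nabla_x\cdot [(\nabla_x\Psi)(x,\eta(x))]$, these three terms collapse to precisely $-\nabla_x\cdot(\delta\eta\,\mathcal{V}(\eta)\psi)$, yielding the formula \eqref{eq-shp:ShpDer}. To justify this rigorously in the smooth regime, I would pass to the flattened variable $z$ and apply Proposition~\ref{prop-ellip-var:Main} to $\dot v := \partial_\varepsilon v_\varepsilon|_{\varepsilon=0}$, which satisfies an elliptic equation with homogeneous Dirichlet data at $z=0$ and a divergence-form source depending bilinearly on $\delta\eta$ and $\nabla v$.

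For the estimate \eqref{eq-shp:ShpDerEsti}, I would bound each piece separately. The operator $\mathcal{G}(\eta)$ maps $H^\sigma$ to $H^{\sigma-1}$ via the analogue of Proposition~\ref{prop-reg-bas:Bd}, so it remains to control $\delta\eta\,\mathcal{B}(\eta)\psi$ in $H^\sigma$; since $\delta\eta\in H^{s+1}$ with $s+1>d/2+1/2$ and $\mathcal{B}(\eta)\psi\in H^{\sigma-1}$ by Proposition~\ref{prop-reg-bas:BdBV}, the product law in Corollary~\ref{cor-besov-bony:ProdLaw} yields the required bound throughout the range $1/2\le\sigma\le s+1/2$, and the $\mathcal{V}$-term is handled symmetrically. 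The endpoint $\sigma=1/2$ has to be interpreted in the distributional sense of \eqref{eq-shp:ShpDerWeak}, pairing against $\varphi\in C^\infty_c$ and re-expressing the duality bracket via the boundedness of $\mathcal{G}(\eta)\varphi$ combined with the same product law. The main obstacle is making the differentiation of $v_\varepsilon$ in $\varepsilon$ rigorous for $\psi$ only in $H^\sigma$, in particular justifying trace convergence at $z=0$; I would circumvent this by first proving \eqref{eq-shp:ShpDer} for $\psi\in H^{s+3/2}$, where the flattened elliptic theory makes $v_\varepsilon$ jointly smooth in $\varepsilon$ and classical differentiation applies verbatim, and then extending to $\psi\in H^\sigma$ by density, using the continuity of both sides in $\psi$ guaranteed by \eqref{eq-shp:ShpDerEsti} on the one hand and the bound \eqref{eq-reg-bas:BdSobo} for $\mathcal{G}(\eta)$ on the other.
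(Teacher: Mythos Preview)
Your stage (ii) contains a genuine gap. You propose to bound the two pieces of the right-hand side of \eqref{eq-shp:ShpDer} separately, claiming that it suffices to place $\delta\eta\,\mathcal{B}(\eta)\psi$ in $H^\sigma$ so that $\mathcal{G}(\eta)$ maps it to $H^{\sigma-1}$. But $\mathcal{B}(\eta)\psi$ lies only in $H^{\sigma-1}$ (Proposition~\ref{prop-reg-bas:BdBV}), and multiplication by $\delta\eta\in H^{s+1}$, which acts as an algebra multiplier since $s+1>d/2$, preserves $H^{\sigma-1}$ without improving it to $H^\sigma$. Consequently $\mathcal{G}(\eta)(\delta\eta\,\mathcal{B}(\eta)\psi)$ is a priori only in $H^{\sigma-2}$, and the same is true of $\nabla_x\cdot(\delta\eta\,\mathcal{V}(\eta)\psi)$. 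Each term taken alone is one derivative too rough; it is only their \emph{sum} that lands in $H^{\sigma-1}$, due to a cancellation (flagged explicitly in Section~\ref{subsect-reg:ShpDer}) that a termwise estimate cannot capture.

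The paper avoids this by not attempting to estimate the formula directly. Instead, Proposition~\ref{prop-shp:Esti} works with the flattened variational solutions and the identity \eqref{eq-shp:DiffFormu} to prove that both $\|v^\varepsilon-v\|_{\mathcal{X}^{\sigma-1}_{-1}}$ and $\|(v^\varepsilon-v)/\varepsilon-\delta v\|_{\mathcal{X}^{\sigma-1}_{-1}}$ are $O(\varepsilon)$ with constants controlled by $\|\delta\eta\|_{H^{s+1}}\|\psi\|_{H^\sigma}$. Taking traces at $z=0$ then yields simultaneously the existence of the limit in $H^{\sigma-1}$ and the bound \eqref{eq-shp:MainEstiLim}; the estimate \eqref{eq-shp:ShpDerEsti} is thus a \emph{consequence} of the convergence analysis, not an independent input. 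The identification with the explicit formula is then obtained by approximating all three of $(\eta,\delta\eta,\psi)$ by $H^\infty$ data (where Lannes' result applies) and using the uniform-in-$k$ rate \eqref{eq-shp:MainEstiDeri} to interchange the limits $\varepsilon\to0$ and $k\to\infty$. Your density-in-$\psi$ plan could be salvaged, but only after you replace stage (ii) by this route through the difference quotient.
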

\begin{remark}
	In order to establish the differentiability of~\( \mathcal{G}(\eta) \psi \) with respect to~\( \eta \) (see Theorem~\ref{thm-shp:ShpDer} and Proposition~\ref{prop-shp:Main} below), the condition \( s \geq 1 \) can be replaced by a weaker condition $s+\sigma\ge \frac{3}{2}$. To do so, in the flattening of \( y = \eta \), one must apply a regularizing diffeomorphism instead of the trivial one \( z = y - \eta \) used in this article (see \cite{lannes2013water} for the definition of such transformations). Then, by repeating exactly the same arguments, Theorem~\ref{thm-shp:ShpDer} and Proposition~\ref{prop-shp:Main} below can be proved without assuming \( s \geq 1 \). Meanwhile, the condition $s+\sigma\ge \frac{3}{2}$ is required so that the right-hand side of formula \eqref{eq-shp:ShpDer} is well-defined, and is automatically satisfied in the setting \( s \geq 1 \) and \( \sigma \geq \frac{1}{2} \).
\end{remark}

In Section 3.7.3 of \cite{lannes2013water}, Lannes proved this result for \( \psi \) satisfying \( |D_x|^{1/2} \psi \in H^1(\mathbb{R}^d) \). We will consider a more general setting that allows for the case where \( \psi \in H^{1/2} \). Our motivation for doing so is to cover the case \( (\eta, \psi) \in H^2 \times H^1 \), which corresponds to the regularity in Theorem~\ref{thm:main}. The strategy is to approximate \( (\eta, \delta\eta, \psi) \) by a sequence of smooth functions \( (\eta_k, \delta\eta_k, \psi_k)_{k\in\mathbb{N}} \) and then pass to the limit as \( k \to +\infty \). In this process, uniform convergence is required on the left-hand side of \eqref{eq-shp:ShpDer} to ensure that the order of limits, \( \varepsilon \to 0 \) and \( k \to +\infty \), can be interchanged. To achieve this, we will apply the regularity results established in the previous section to obtain a precise estimate for the difference between \( \mathcal{G}(\eta+\varepsilon\delta\eta)\psi \) and \( \mathcal{G}(\eta)\psi \).

\begin{proposition}\label{prop-shp:Esti}
	Assume that $\varepsilon$, $s$, and $\sigma$ are 
	three real numbers satisfying
	\begin{equation}\label{eq-shp:EstiCond}
		0<\varepsilon\le 1,\quad s>d/2,\quad s\ge 1\quad\text{and}\quad \mez \le \sigma \le s+\frac{1}{2}.
	\end{equation}
	Given $\eta,\delta\eta\in H^{s+1}$ and $\psi\in H^{\sigma}$, we denote by $v^\varepsilon$ and $v$ the variational solution to \eqref{eq-ellip:MainEq} (see Definition~\ref{def-ellip-var:VarSol}) associated to $(\eta+\varepsilon\delta\eta,\psi)$ and $(\eta,\psi)$, respectively. Namely,
	\begin{equation}\label{eq-shp:DefVVeps}
		v^\varepsilon := \Psi + \mathscr{R}(\eta+\varepsilon\delta\eta)(A^\varepsilon\nabla_{x,z}\Psi), \quad v := \Psi + \mathscr{R}(\eta)(A\nabla_{x,z}\Psi),
	\end{equation}
	where $\Psi = e^{z|D_x|}\psi$ and $A^{\varepsilon}$ is given by
	\begin{equation}\label{eq-shp:DefAeps}
		A^\varepsilon = \left(\begin{array}{cc}
			1 & \nabla_x(\eta + \varepsilon\delta\eta) \\
			\nabla_x(\eta + \varepsilon\delta\eta)^T & 1+|\nabla_x(\eta + \varepsilon\delta\eta)|^2
		\end{array}\right).
	\end{equation}
	Then the following estimate holds true,
	\begin{equation}\label{eq-shp:EstiDiff}
		\|v^\varepsilon-v\|_{\mathcal{X}^{\sigma-1}_{-1}} \le \varepsilon K(\|\eta\|_{H^{s+1}}+\|\delta\eta\|_{H^{s+1}}) \|\delta\eta\|_{H^{s+1}} \|\psi\|_{H^\sigma}.
	\end{equation}
	Recall that the notation $\mathcal{X}^{\sigma-1}_{-1}$ is defined in \eqref{eq-ellip-var:AuxSpX}.
	
	Furthermore, let $\delta v$ be defined as
	\begin{equation}\label{eq-shp:DefDeltav}
		\delta v := \mathscr{R}(\eta)(\delta A\nabla_{x,z}v)
	\end{equation}
	with
	\begin{equation}\label{eq-shp:DefDeltaA}
		\delta A = \left(
		\begin{array}{cc}
			0 & \nabla_x\delta\eta \\
			\nabla_x\delta\eta^T & 2\nabla_x\eta\cdot\nabla_x\delta\eta
		\end{array}
		\right).
	\end{equation}
	The following estimate also holds,
	\begin{equation}\label{eq-shp:EstiDeri}
		\left\|\frac{v^\varepsilon-v}{\varepsilon} - \delta v\right\|_{\mathcal{X}^{\sigma-1}_{-1}} \le \varepsilon K(\|\eta\|_{H^{s+1}}+\|\delta\eta\|_{H^{s+1}}) \|\delta\eta\|_{H^{s+1}} \|\psi\|_{H^\sigma}.
	\end{equation}
\end{proposition}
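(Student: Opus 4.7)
\textbf{Proof proposal for Proposition \ref{prop-shp:Esti}.}

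The plan is to reduce both inequalities to a single differential identity, to which the tame elliptic estimate \eqref{eq-ellip-var:Main} can be applied. Since $(v^\varepsilon - v)|_{z=0} = 0$ and, by construction, $\nabla_{x,z}\cdot(A^\varepsilon\nabla_{x,z}v^\varepsilon) = 0 = \nabla_{x,z}\cdot(A\nabla_{x,z}v)$ in $\mathcal{D}'(\R^{d+1}_-)$, subtracting yields
\begin{equation*}
    \nabla_{x,z}\cdot\bigl(A^\varepsilon\nabla_{x,z}(v^\varepsilon - v)\bigr) = -\nabla_{x,z}\cdot\bigl((A^\varepsilon - A)\nabla_{x,z}v\bigr),
\end{equation*}
so by uniqueness of the variational solution (Remark~\ref{rmk-ellip-var:DistriForm}),
\begin{equation*}
    v^\varepsilon - v = \mathscr{R}(\eta + \varepsilon\delta\eta)\bigl((A^\varepsilon - A)\nabla_{x,z}v\bigr).
\end{equation*}

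To establish \eqref{eq-shp:EstiDiff}, I set $F_\varepsilon := (A^\varepsilon - A)\nabla_{x,z}v$ and apply Proposition \ref{prop-ellip-var:Main} with $s_0 = s$ to $\mathscr{R}(\eta+\varepsilon\delta\eta)F_\varepsilon$. The explicit Taylor expansion
\begin{equation*}
    A^\varepsilon - A = \varepsilon\,\delta A + \varepsilon^2\begin{pmatrix} 0 & 0 \\ 0 & |\nabla_x\delta\eta|^2 \end{pmatrix}
\end{equation*}
shows that $A^\varepsilon - A$ is of order $\varepsilon$ in $L^\infty_x$ (via the embedding $H^{s+1}\subset W^{1,\infty}$) and in $H^s_x$ (via the Moser-type estimate \eqref{eq-besov-bony:Compo}), with implicit constants depending only on $\|\eta\|_{H^{s+1}} + \|\delta\eta\|_{H^{s+1}}$. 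The $L^2$-part of $\|F_\varepsilon\|_{\mathcal{Y}^{\cdot}_{z_0}}$ is controlled by $|A^\varepsilon - A|_{L^\infty_x}\|\nabla_{x,z}v\|_{L^2}$, while the divergence part is expanded as
\begin{equation*}
    \nabla_{x,z}\cdot F_\varepsilon = (A^\varepsilon - A):\nabla_{x,z}^2 v + \bigl(\nabla_x\cdot(A^\varepsilon - A)\bigr)\cdot\nabla_{x,z} v,
\end{equation*}
and estimated through Corollary \ref{cor-besov-bony:ProdLaw} combined with the a priori bounds \eqref{eq-ellip-var:Main} for $\|\nabla_{x,z}^2 v\|_{L^2_z H^{\sigma-3/2}_x}$ and $\|\nabla_{x,z}v\|_{L^2_z H^{\sigma-1/2}_x}$. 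Every entry of $A^\varepsilon - A$ and of its $x$-derivatives carries the factor $\varepsilon$, which produces \eqref{eq-shp:EstiDiff}.

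For \eqref{eq-shp:EstiDeri}, using the definition of $\delta v$ together with the identity above, I decompose
\begin{equation*}
\begin{aligned}
    \frac{v^\varepsilon - v}{\varepsilon} - \delta v &= \mathscr{R}(\eta+\varepsilon\delta\eta)\Bigl(\Bigl[\tfrac{A^\varepsilon - A}{\varepsilon} - \delta A\Bigr]\nabla_{x,z}v\Bigr) \\
    &\quad + \bigl(\mathscr{R}(\eta+\varepsilon\delta\eta) - \mathscr{R}(\eta)\bigr)\bigl(\delta A\,\nabla_{x,z}v\bigr).
\end{aligned}
\end{equation*}
The first term is $O(\varepsilon)$ because $\frac{A^\varepsilon - A}{\varepsilon} - \delta A$ is exactly $\varepsilon$ times a matrix of the same form as $\delta A$ but quadratic in $\nabla_x\delta\eta$, so the argument of Step~2 applies verbatim. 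For the second term, I re-apply the differential identity with source term $G := \delta A\,\nabla_{x,z}v$ in place of $A\nabla_{x,z}\Psi$, obtaining
\begin{equation*}
    \bigl(\mathscr{R}(\eta+\varepsilon\delta\eta) - \mathscr{R}(\eta)\bigr)G = \mathscr{R}(\eta+\varepsilon\delta\eta)\bigl((A^\varepsilon - A)\nabla_{x,z}\mathscr{R}(\eta)G\bigr),
\end{equation*}
and estimate it by invoking Proposition \ref{prop-ellip-var:Main} twice: once to control $\mathscr{R}(\eta)G = \delta v$ in an appropriate $\mathcal{X}^{\cdot}$-norm, and once to the outer operator, with the prefactor $\varepsilon$ again coming from $A^\varepsilon - A$.

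The main obstacle is carefully tracking the tame dependence at the low-regularity endpoint $\sigma = 1/2$ and $s$ close to $d/2$, where the product laws of Corollary \ref{cor-besov-bony:ProdLaw} have to be applied at the edge of their range and the divergence $\nabla_{x,z}\cdot F_\varepsilon$ must be analyzed without losing either the $\varepsilon$-scaling or the desired dependence $\|\delta\eta\|_{H^{s+1}}\|\psi\|_{H^\sigma}$. Once these product estimates are pushed through, both conclusions follow directly from the elliptic regularity statement.
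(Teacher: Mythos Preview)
Your approach is correct and close in spirit to the paper's, but the key identity is written with the roles of $v$ and $v^\varepsilon$ swapped. The paper uses
\[
v^\varepsilon - v = \mathscr{R}(\eta)\bigl((A^\varepsilon - A)\nabla_{x,z}v^\varepsilon\bigr),
\]
i.e.\ the unperturbed resolvent $\mathscr{R}(\eta)$ applied to a source involving $\nabla_{x,z}v^\varepsilon$, whereas you use the perturbed resolvent $\mathscr{R}(\eta+\varepsilon\delta\eta)$ applied to a source involving $\nabla_{x,z}v$. Both identities follow from subtracting the two elliptic equations, and both lead to \eqref{eq-shp:EstiDiff} via Proposition~\ref{prop-ellip-var:Main} and the product rules, so for the first estimate there is no real difference.

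The distinction shows up in the derivative estimate \eqref{eq-shp:EstiDeri}. With the paper's choice, one gets directly
\[
\frac{v^\varepsilon-v}{\varepsilon} - \delta v = \mathscr{R}(\eta)\Bigl[\varepsilon|\nabla_x\delta\eta|^2\,\partial_z v^\varepsilon + \delta A\,\nabla_{x,z}(v^\varepsilon - v)\Bigr],
\]
so both error terms live under a single $\mathscr{R}(\eta)$ and the bound follows from one application of \eqref{eq-ellip-var:Main} together with \eqref{eq-shp:EstiDiff}. Your decomposition instead produces the term $(\mathscr{R}(\eta+\varepsilon\delta\eta)-\mathscr{R}(\eta))(\delta A\,\nabla_{x,z}v)$, which forces you to re-apply the resolvent identity and invoke Proposition~\ref{prop-ellip-var:Main} twice in a nested fashion. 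This is perfectly valid (the extra factor of $\|\delta\eta\|_{H^{s+1}}$ it generates is harmless since $K$ may depend on that norm), but it is a detour; the paper's choice of identity avoids it entirely.
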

\begin{proof}[Proof of Proposition~\ref{prop-shp:Esti}]
The desired estimates \eqref{eq-shp:EstiDiff} and \eqref{eq-shp:EstiDeri} can be obtained by combining Proposition~\ref{prop-ellip-var:Main} and the following identity, which can be proved directly from Definition~\ref{def-ellip-var:VarSol}.
\begin{equation}\label{eq-shp:DiffFormu}
	\mathscr{R}(\eta+\varepsilon\delta\eta)(A^\varepsilon\nabla_{x,z}\Psi) - \mathscr{R}(\eta)(A\nabla_{x,z}\Psi) = \mathscr{R}(\eta)\left( (A^\varepsilon-A)\nabla_{x,z}v^\varepsilon \right).
\end{equation}
	Through the identity \eqref{eq-shp:DiffFormu} above and the inequality \eqref{eq-ellip-var:Main}, we have
	\begin{equation*}
	\begin{aligned}
	\|v^\varepsilon-v\|_{\mathcal{X}^{\sigma-1}_{-1}} 
	&= \|\mathscr{R}(\eta)\left( (A^\varepsilon-A)\nabla_{x,z}v^\varepsilon \right)\|_{\mathcal{X}^{\sigma-1}_{-1}} \\
	&\le K(\|\eta\|_{H^{s+1}}) \| (A^\varepsilon-A)\nabla_{x,z}v^\varepsilon \|_{\mathcal{Y}^{\sigma-1}_{-2}},
	\end{aligned}
	\end{equation*}
	where $\mathcal{Y}^{\sigma-1}_{-2}$ is defined in \eqref{eq-ellip-var:AuxSpY}. Then we apply the product law in Sobolev spaces (Corollary~\ref{cor-besov-bony:ProdLaw}) and the estimate \eqref{eq-ellip:Main} to obtain
	\begin{align*}
		\| (A^\varepsilon-A)\nabla_{x,z}v^\varepsilon \|_{\mathcal{Y}^{\sigma-1}_{-2}} \lesssim& \|A^\varepsilon-A\|_{H^s} \| \nabla_{x,z}v^\varepsilon \|_{\mathcal{Y}^{\sigma-1}_{-2}} \\
		\lesssim& \|A^\varepsilon-A\|_{H^s} \| v^\varepsilon \|_{\mathcal{X}^{\sigma-1}_{-2}} \\
		\le& K(\|\eta + \varepsilon\delta\eta\|_{H^{s+1}}) \|A^\varepsilon-A\|_{H^s} \|\psi\|_{H^\sigma}.
	\end{align*}
	Then the estimate \eqref{eq-shp:EstiDiff} follows from the following inequality
	\begin{equation*}
		\|A^\varepsilon-A\|_{H^s} \lesssim \varepsilon K(\|\eta\|_{H^{s+1}}+\varepsilon\|\delta\eta\|_{H^{s+1}}) \|\delta\eta\|_{H^{s+1}},
	\end{equation*}
	which in turn easily follows from the definitions \eqref{eq-shp:DefAeps} and \eqref{eq-ellip-var:DefA} of $A^\varepsilon$ and $A$.
	
	The proof of \eqref{eq-shp:EstiDeri} is similar. We apply again the identity \eqref{eq-shp:DiffFormu} to compute
	\begin{align*}
		\frac{v^\varepsilon-v}{\varepsilon} - \delta v =& \mathscr{R}(\eta)\left( \frac{A^\varepsilon-A}{\varepsilon}\nabla_{x,z}v^\varepsilon \right) - \mathscr{R}(\eta)(\delta A\nabla_{x,z}v) \\
		=& \mathscr{R}(\eta)\left[\left(\frac{A^\varepsilon-A}{\varepsilon}-\delta A\right) \nabla_{x,z}v^\varepsilon + \delta A \nabla_{x,z}(v^\varepsilon-v)\right] \\
		=& \mathscr{R}(\eta)\left[\varepsilon|\nabla_x\delta\eta|^2 \partial_z v^\varepsilon + \delta A \nabla_{x,z}(v^\varepsilon-v)\right].
	\end{align*}
	An application of \eqref{eq-ellip-var:Main}, together with Corollary~\ref{cor-besov-bony:ProdLaw}, gives 
	\begin{align*}
		\left\|\frac{v^\varepsilon-v}{\varepsilon} - \delta v\right\|_{\mathcal{X}^{\sigma-1}_{-1}} &\le K(\|\eta\|_{H^{s+1}}) \left\| \varepsilon|\nabla_x\delta\eta|^2 \partial_z v^\varepsilon + \delta A \nabla_{x,z}(v^\varepsilon-v) \right\|_{\mathcal{Y}^{\sigma-1}_{-2}}  \\
		&\hspace{-4em}\le K(\|\eta\|_{H^{s+1}}+\|\delta\eta\|_{H^{s+1}}) \left( \varepsilon \|\delta\eta\|_{H^{s+1}} \left\|\partial_z v^\varepsilon\right\|_{\mathcal{Y}^{\sigma-1}_{-2}} + \left\| \nabla_{x,z}(v^\varepsilon-v) \right\|_{\mathcal{Y}^{\sigma-1}_{-2}}\right) \\
		&\hspace{-4em}\le K(\|\eta\|_{H^{s+1}}+\|\delta\eta\|_{H^{s+1}}) \left( \varepsilon \|\delta\eta\|_{H^{s+1}} \left\|v^\varepsilon\right\|_{\mathcal{X}^{\sigma-1}_{-2}} + \left\| v^\varepsilon-v \right\|_{\mathcal{X}^{\sigma-1}_{-2}}\right).
	\end{align*}
	Thanks to estimates \eqref{eq-ellip:Main} and \eqref{eq-shp:EstiDiff}, the right-hand side is bounded by 
	\begin{equation*}
		\varepsilon K(\|\eta\|_{H^{s+1}}+\|\delta\eta\|_{H^{s+1}}) \|\delta\eta\|_{H^{s+1}} \|\psi\|_{H^\sigma},
	\end{equation*}
	which completes the proof of \eqref{eq-shp:EstiDeri}.
\end{proof}

Through a classical interpolation result (see Theorem~$3.1$ in~\cite{Lions-Magenes-Vol1}), the estimates \eqref{eq-shp:EstiDiff} and \eqref{eq-shp:EstiDeri} imply
\begin{align*}
	\|v^\varepsilon|_{z=0}-v|_{z=0}\|_{H^{\sigma-1}} \le& \varepsilon K(\|\eta\|_{H^{s+1}}+\|\delta\eta\|_{H^{s+1}}) \|\delta\eta\|_{H^{s+1}}\|\psi\|_{H^\sigma}, \\
	\left\|\left.\frac{v^\varepsilon-v}{\varepsilon}\right|_{z=0} - \delta v|_{z=0}\right\|_{\mathcal{X}^{\sigma-1}_{-1}} \le& \varepsilon K(\|\eta\|_{H^{s+1}}+\|\delta\eta\|_{H^{s+1}}) \|\delta\eta\|_{H^{s+1}}\|\psi\|_{H^\sigma}.
\end{align*}
Consequently, the linear operators $\G(\eta)\psi$, $\B(\eta)\psi$, and $\V(\eta)\psi$, defined by
\begin{align*}
	&\G(\eta)\psi = (1+|\nabla_x\eta|^2)\partial_zv|_{z=0} - \nabla_x\eta\cdot\nabla v|_{z=0}, \quad \B(\eta)\psi = \partial_z|_{z=0}, \\
	&\V(\eta)\psi = \nabla_xv|_{z=0} - \nabla_x\eta \partial_zv|_{z=0}.
\end{align*}
satisfies the properties listed in the following proposition.
\begin{proposition}\label{prop-shp:Main}
	Consider two real numbers $s,\sigma$ satisfying
	\begin{equation*}
		s>d/2,\quad s\ge 1\quad\text{and}\quad \mez \le \sigma \le s+\frac{1}{2}.
	\end{equation*}
	For all $\eta,\delta\eta\in H^{s+1}(\R^d)$, $\psi\in H^\sigma(\R^d)$, and $0<\varepsilon\le 1$, the following estimates hold true:
	\begin{equation}\label{eq-shp:MainEstiDiff}
		\| \mathcal{A}(\eta+\varepsilon\delta\eta)\psi - \mathcal{A}(\eta)\psi\|_{H^{\sigma-1}} \le \varepsilon K(\|\eta\|_{H^{s+1}}+\|\delta\eta\|_{H^{s+1}}) \|\delta\eta\|_{H^{s+1}} \|\psi\|_{H^\sigma},
	\end{equation}
	where $\mathcal{A}$ is taken among $\G$, $\B$, and $\V$. Moreover, there exists $\mathcal{A}_\eta(\eta;\psi,\delta\eta) \in H^{\sigma-1}(\R^d)$ bilinear in $(\psi,\delta\eta)$ such that, for all $0<\varepsilon\le 1$,
	\begin{equation}\label{eq-shp:MainEstiDeri}
		\begin{aligned}
			&\left\| \frac{\mathcal{A}(\eta+\varepsilon\delta\eta)\psi - \mathcal{A}(\eta)\psi}{\varepsilon} - \mathcal{A}_\eta(\eta;\psi,\delta\eta) \right\|_{H^{\sigma-1}} \\
			&\hspace{8em}\le \varepsilon K(\|\eta\|_{H^{s+1}}+\|\delta\eta\|_{H^{s+1}}) \|\delta\eta\|_{H^{s+1}} \|\psi\|_{H^\sigma}.
		\end{aligned}
	\end{equation}
	Consequently, $\mathcal{A}_\eta(\eta;\psi,\delta\eta)$ satisfies the following estimate:
	\begin{equation}\label{eq-shp:MainEstiLim}
	    \|\mathcal{A}_\eta(\eta;\psi,\delta\eta)\|_{H^{\sigma-1}} \le K(\|\eta\|_{H^{s+1}}) \|\delta\eta\|_{H^{s+1}} \|\psi\|_{H^{\sigma}}.
	\end{equation}
\end{proposition}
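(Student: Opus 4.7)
The plan is to deduce all three claims from Proposition~\ref{prop-shp:Esti} by trading the interior estimates in the anisotropic space $\mathcal{X}^{\sigma-1}_{-1}$ for boundary trace estimates in $H^{\sigma-1}$, and then assembling the operators $\G$, $\B$, $\V$ on the boundary. The only analytic tool beyond what appears in the excerpt is the classical interpolation trace theorem of Lions--Magenes cited in the paragraph just before the proposition: if $u\in L^2(-1,0;H^a)$ with $\partial_z u\in L^2(-1,0;H^{a-1})$ then $u|_{z=0}\in H^{a-1/2}$ with continuity of the trace map.

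First, I will apply this trace theorem with $u=\partial_z(v^\varepsilon-v)$ and $a=\sigma-\tfrac12$. The definition of the $\mathcal{X}^{\sigma-1}_{-1}$ norm in \eqref{eq-ellip-var:AuxSpX} together with \eqref{eq-shp:EstiDiff} yields
\begin{equation*}
\big\|\partial_z v^\varepsilon|_{z=0}-\partial_z v|_{z=0}\big\|_{H^{\sigma-1}}
\le \varepsilon\, K\big(\|\eta\|_{H^{s+1}}+\|\delta\eta\|_{H^{s+1}}\big)\|\delta\eta\|_{H^{s+1}}\|\psi\|_{H^\sigma},
\end{equation*}
and the analogous bound with $\partial_z v^\varepsilon|_{z=0}-\partial_z v|_{z=0}$ replaced by $\varepsilon^{-1}(\partial_z v^\varepsilon-\partial_z v)|_{z=0}-\partial_z\delta v|_{z=0}$ follows from \eqref{eq-shp:EstiDeri} in the same way. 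Note that the tangential traces of $v^\varepsilon$ and $v$ at $z=0$ coincide (both equal $\psi$), so the only boundary data that change with $\varepsilon$ are the normal traces.

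Next, I will expand $\mathcal{A}(\eta+\varepsilon\delta\eta)\psi-\mathcal{A}(\eta)\psi$ using the formulas for $\G$, $\B$, $\V$ stated just before Proposition~\ref{prop-shp:Main}. The case of $\B$ is immediate. For $\G$, the difference splits into a ``solution'' piece $(1+|\nabla_x\eta|^2)(\partial_z v^\varepsilon-\partial_z v)|_{z=0}$ and a ``coefficient'' piece containing $2\varepsilon\nabla_x\eta\cdot\nabla_x\delta\eta+\varepsilon^2|\nabla_x\delta\eta|^2$ multiplied by $\partial_z v^\varepsilon|_{z=0}$, minus $\varepsilon\nabla_x\delta\eta\cdot\nabla_x\psi$; an analogous decomposition holds for $\V$. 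Applying the product law in Sobolev spaces (Corollary~\ref{cor-besov-bony:ProdLaw}) with factors in $H^s$ (coefficients depending on $\eta,\delta\eta$, using $s>d/2$) and in $H^{\sigma-1}\subset H^{-1/2}$ (the boundary traces), together with the previous step, gives \eqref{eq-shp:MainEstiDiff}. For \eqref{eq-shp:MainEstiDeri}, I will define $\mathcal{A}_\eta(\eta;\psi,\delta\eta)$ as the formal derivative, e.g.
\begin{equation*}
\G_\eta(\eta;\psi,\delta\eta)
= (1+|\nabla_x\eta|^2)\partial_z\delta v|_{z=0}
+2(\nabla_x\eta\cdot\nabla_x\delta\eta)\partial_z v|_{z=0}
-\nabla_x\delta\eta\cdot\nabla_x\psi,
\end{equation*}
(using $\delta v|_{z=0}=0$), and prove the quantitative convergence \eqref{eq-shp:MainEstiDeri} through the same decomposition: the difference reduces to quadratic-in-$\varepsilon\nabla_x\delta\eta$ products, controlled by Corollary~\ref{cor-besov-bony:ProdLaw}, plus the trace estimate on $\varepsilon^{-1}(\partial_z v^\varepsilon-\partial_z v)|_{z=0}-\partial_z\delta v|_{z=0}$. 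The a priori bound \eqref{eq-shp:MainEstiLim} then follows by combining \eqref{eq-shp:MainEstiDiff} (with $\varepsilon=1$) and \eqref{eq-shp:MainEstiDeri} via the triangle inequality, since the left-hand side of \eqref{eq-shp:MainEstiLim} is bounded by the sum of the left-hand sides of \eqref{eq-shp:MainEstiDiff} and \eqref{eq-shp:MainEstiDeri}.

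The main obstacle, which dictates the whole tame structure, is to keep all product estimates well-posed at the lowest endpoint $\sigma=\tfrac12$, where the boundary factors live merely in $H^{-1/2}$. This is precisely the range where Corollary~\ref{cor-besov-bony:ProdLaw} applies (as $s>d/2$ and $s\geq1$ ensure $s+(\sigma-1)>0$ and $s+(\sigma-1)-d/2\ge \sigma-1$), so the high-low product $(\text{coefficient in }H^s)\cdot(\text{trace in }H^{\sigma-1})$ returns to $H^{\sigma-1}$ without loss, and no regularization of the diffeomorphism flattening $y=\eta$ is needed in this step. The only delicate point is that Proposition~\ref{prop-shp:Esti} already bakes in the condition $s\ge 1$, which is exactly what allows the boundary estimate on $\partial_z v$ to reach $H^{\sigma-1}$ with $\sigma$ as high as $s+1/2$.
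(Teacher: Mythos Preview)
Your approach is exactly the one the paper takes: pass from the interior estimates \eqref{eq-shp:EstiDiff}--\eqref{eq-shp:EstiDeri} to boundary traces via the Lions--Magenes interpolation trace theorem, then assemble $\G,\B,\V$ from their boundary formulas using the Sobolev product rule. Your decomposition of the $\G$-difference into a ``solution'' piece and a ``coefficient'' piece, and your verification that Corollary~\ref{cor-besov-bony:ProdLaw} applies at the endpoint $\sigma=\tfrac12$, are both correct and more explicit than what the paper writes.

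One small point: your derivation of \eqref{eq-shp:MainEstiLim} by the triangle inequality from \eqref{eq-shp:MainEstiDiff} and \eqref{eq-shp:MainEstiDeri} only yields a constant $K(\|\eta\|_{H^{s+1}}+\|\delta\eta\|_{H^{s+1}})$, not $K(\|\eta\|_{H^{s+1}})$ as stated. To get the sharper dependence you can either (i) use the bilinearity of $\mathcal{A}_\eta$ in $\delta\eta$ and rescale $\delta\eta\mapsto\lambda\delta\eta$, letting $\lambda\to 0$, or (ii) estimate $\mathcal{A}_\eta$ directly from its explicit formula, noting that $\delta v=\mathscr{R}(\eta)(\delta A\nabla_{x,z}v)$ satisfies $\|\delta v\|_{\mathcal{X}^{\sigma-1}_{-1}}\le K(\|\eta\|_{H^{s+1}})\|\delta\eta\|_{H^{s+1}}\|\psi\|_{H^\sigma}$ by Proposition~\ref{prop-ellip-var:Main}, since $\delta A$ is linear in $\nabla_x\delta\eta$.
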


Now we are in a position to finish the proof of Theorem~\ref{thm-shp:ShpDer}.
\begin{proof}[Proof of Theorem~\ref{thm-shp:ShpDer}]
	Consider sequences $\eta_k,\delta\eta_k,\psi_k\in H^\infty := \cap_{s\in\R}H^s$ converging to $\eta,\delta\eta,\psi$ in $H^{s+1},H^{s+1},H^{\sigma}$, respectively. From \eqref{eq-shp:MainEstiDeri} and Section 3.7.3 of \cite{lannes2013water}, one can deduce that the limit
	\begin{align*}
		\frac{ \mathcal{G}(\eta_k+\varepsilon\delta\eta_k)\psi_k - \mathcal{G}(\eta_k)\psi_k }{\varepsilon} 
		\overset{\varepsilon\to0}{\rightarrow}& \mathcal{A}_\eta(\eta_k;\delta\eta_k,\psi_k) \\
		=&  -\mathcal{G}(\eta)\left(\delta\eta_k \mathcal{B}(\eta_k)\psi_k \right) - \nabla_x\cdot\left(\delta\eta_k\mathcal{V}(\eta_k)\psi_k \right)
	\end{align*}
	exists in $H^{\sigma-1}$ uniformly for $k\in\N$, since $(\eta_k,\delta\eta_k,\psi_k)$'s are uniformly bounded. Besides, the estimate \eqref{eq-shp:MainEstiDiff}, together with the boundedness of Dirichlet-to-Neumann operator $\G(\eta)$ (see Proposition~\ref{prop-reg-bas:Bd} and also Theorem 3.12 of \cite{ABZ2014}), implies that, for all $0<\varepsilon\le 1$, the limit 
	\begin{equation*}
		\lim_{k\to+\infty}\frac{ \mathcal{G}(\eta_k+\varepsilon\delta\eta_k)\psi_k - \mathcal{G}(\eta_k)\psi_k }{\varepsilon} = \frac{ \mathcal{G}(\eta+\varepsilon\delta\eta)\psi - \mathcal{G}(\eta)\psi }{\varepsilon} 
	\end{equation*}
	exists in $H^{\sigma-1}$. Then, by uniform limit theorem, 
	\begin{equation}\label{eq-shp:RegLim}
		-\mathcal{G}(\eta_k)\left(\delta\eta_k \mathcal{B}(\eta_k)\psi_k \right) - \nabla_x\cdot\left(\delta\eta_k\mathcal{V}(\eta_k)\psi_k \right)
	\end{equation}
	converges in $H^{\sigma-1}$ and its limit coincides with the limit on the left-hand side of \eqref{eq-shp:ShpDer}, whose existence is guaranteed by \eqref{eq-shp:MainEstiDeri}. Therefore, it suffices to show that \eqref{eq-shp:RegLim} tends to the right-hand side of \eqref{eq-shp:ShpDer} in the sense of distributions $\mathcal{D}'(\R^d)$.
	
	Consider a test function $\varphi\in C^\infty_0(\R^d)$. From \eqref{eq-shp:ShpDerWeak}, we have
	\begin{align*}
	\big\langle\!-\mathcal{G}(\eta_k)&\big(\delta\eta_k \mathcal{B}(\eta_k)\psi_k \big) - \nabla_x\cdot\big(\delta\eta_k\mathcal{V}(\eta_k)\psi_k \big), \varphi \big\rangle_{\mathcal{D}'(\R^d)\times\mathcal{D}(\R^d)} \\
		&= \int_{\R^d}\left[ -(\delta\eta_k\G(\eta_k)\varphi) \mathcal{B}(\eta_k)\psi_k + (\delta\eta_k\nabla_x\varphi)\cdot\mathcal{V}(\eta_k)\psi_k \right] \dx \\
		&\overset{k\to+\infty}{\longrightarrow} \int_{\R^d}\left[ -(\delta\eta\G(\eta)\varphi) \mathcal{B}(\eta)\psi + (\delta\eta\nabla_x\varphi)\cdot\mathcal{V}(\eta)\psi \right] \dx \\
		&=\langle -\mathcal{G}(\eta)\left(\delta\eta \mathcal{B}(\eta)\psi \right) - \nabla_x\cdot\left(\delta\eta\mathcal{V}(\eta)\psi \right), \varphi \rangle_{\mathcal{D}'(\R^d)\times\mathcal{D}(\R^d)}.
	\end{align*}
	To prove the limit, we first observe that
	$$
	\lim_{k\to+\infty} \left(\|\B(\eta_k)\psi_k - \B(\eta)\psi\|_{H^{\sigma-1}} + \|\V(\eta_k)\psi_k - \V(\eta)\psi\|_{H^{\sigma-1}}\right) = 0,
	$$
	thanks to the estimate \eqref{eq-shp:MainEstiDiff} and the boundedness of $\B(\eta)$ and $\V(\eta)$, which can be deduced directly from Proposition~\ref{prop-ellip:Main} and Corollary~\ref{cor-besov-bony:ProdLaw}. Thus, it remains to check $\delta\eta_k\G(\eta_k)\varphi$ converges to $\delta\eta\G(\eta)\varphi$ in $H^{1-\sigma}$. Through the product law in Sobolev space (see Corollary~\ref{cor-besov-bony:ProdLaw}), it suffices to show that 
	$$
	\lim_{k\to+\infty} \left(\| \delta\eta_k - \delta\eta \|_{H^{s+1}} + \|\G(\eta_k)\varphi - \G(\eta)\varphi\|_{H^{1-\sigma}}\right) = 0,
	$$
	which is no more than a consequence of \eqref{eq-shp:MainEstiDiff} (applied with $\sigma=s+1/2$) and the fact $s+\sigma\ge3/2$ (recall that $s\ge 1$ and $\sigma\ge 1/2$). The estimate \eqref{eq-shp:ShpDerEsti} has been proved by \eqref{eq-shp:MainEstiLim}.
\end{proof}

\medskip

\noindent\textbf{Acknowledgements.} The authors would like to warmly thank Jean-Marc Delort, Carlos Kenig and Hui Zhu, for numerous conversations concerning the problems studied here, as well as the 
Institut des Hautes \'Etudes Scientifiques (IHES) for its hospitality at the beginning of this research, which benefited from its stimulating scientific environment.

T.A. is partially
supported by the BOURGEONS project, grant ANR-23-CE40-0014-01 of the French National
Research Agency (ANR). 

\printbibliography[heading=bibliography,title=References]
\end{document}